\setlist{nosep} 
\DeclareFontFamily{U}{wncy}{}
\DeclareFontShape{U}{wncy}{m}{n}{<->wncyr10}{}
\DeclareSymbolFont{mcy}{U}{wncy}{m}{n}
\DeclareMathSymbol{\Sh}{\mathord}{mcy}{"58}
\renewcommand{\implies}{\Rightarrow}
\newtheorem{theo}{Theorem}[section]
\newtheorem{prop}[theo]{Proposition}
\newtheorem{coro}[theo]{Corollary}
\newtheorem{lemm}[theo]{Lemma}
\theoremstyle{definition}
\newtheorem{defi}[theo]{Definition}
\newtheorem{exam}[theo]{Example}
\newtheorem{coun}[theo]{Counterexample}
\newtheorem{rema}[theo]{Remark}
\newtheorem{rede}[theo]{Remark/Definition}
\newtheorem{conj}[theo]{Conjecture}
\newtheorem{warn}[theo]{Warning}
\DeclareMathOperator{\uSpec}{\underline{Spec}}
\newcommand{\cA}{\mathcal{A}}
\renewcommand{\AA}{\mathbb{A}}
\newcommand{\cC}{\mathcal{C}}
\newcommand{\cE}{\mathcal{E}}
\newcommand{\FF}{\mathbb{F}}
\newcommand{\GG}{\mathbb{G}}
\newcommand{\cK}{\mathcal{K}}
\newcommand{\m}{\mathfrak{m}}
\newcommand{\cM}{\mathcal{M}}
\newcommand{\sU}{\mathcal{U}}
\newcommand{\n}{\mathfrak{n}}
\newcommand{\NN}{\mathbb{N}}
\newcommand{\OO}{\mathcal{O}}
\newcommand{\p}{\mathfrak{p}}
\newcommand{\QQ}{\mathbb{Q}}
\newcommand{\cS}{\mathcal{S}}
\newcommand{\XX}{\mathbb{X}}
\newcommand{\cY}{\mathcal{Y}}
\newcommand{\cZ}{\mathcal{Z}}
\newcommand{\ZZ}{\mathbb{Z}}
\DeclareMathOperator{\Ab}{Ab}
\DeclareMathOperator{\Spec}{Spec}
\DeclareMathOperator{\id}{id}
\DeclareMathOperator{\Map}{Map}
\DeclareMathOperator{\fib}{fib}
\DeclareMathOperator{\length}{length}
\newcommand{\fN}{\frak{N}}
\newcommand{\Set}{\mathrm{Set}}
\newcommand{\Spt}{\mathrm{Spt}}
\newcommand{\PSh}{\mathrm{PSh}}
\newcommand{\Shv}{\mathrm{Shv}}
\newcommand{\Sm}{\mathrm{Sm}}
\newcommand{\Sch}{\mathrm{Sch}}
\DeclareMathOperator{\RZ}{RZ}
\newcommand{\Ind}{\mathrm{Ind}}
\DeclareMathOperator{\Zar}{Zar}
\DeclareMathOperator{\Nis}{Nis}
\DeclareMathOperator{\car}{car}
\newcommand{\cdh}{\mathrm{cdh}}
\newcommand{\procdh}{\mathrm{procdh}}
\DeclareMathOperator{\Arr}{Arr}
\newcommand{\Fun}{\mathrm{Fun}}
\DeclareMathOperator{\Mod}{Mod}
\newcommand{\Frac}{\mathrm{Frac}}
\newcommand{\tr}{{\operatorname{tr}}}
\newcommand{\qfor}{\quad\text{for }}
\newcommand{\colim}{\operatorname{colim}}
\newcommand{\qcqs}{\mathrm{qcqs}}
\newcommand{\noe}{\mathrm{noe}}
\newcommand{\gen}{\mathrm{gen}}
\newcommand{\op}{\mathrm{op}}
\newcommand{\red}{\mathrm{red}}
\def\sI{\mathcal{I}}
\newcommand{\Nil}{\operatorname{Nil}}
\def\conc{\tau_{\geq 0}}
\def\tK{\widetilde{K}}
\newcommand{\Fmot}{Fil_{\operatorname{mot}}}
\newcommand{\Znsm}{\ZZ(n)^{\operatorname{sm}}}
\newcommand{\Znem}{\ZZ(n)^{\operatorname{EM}}}
\newcommand{\Zncdh}{\ZZ(n)^{\cdh}}
\newcommand{\Znpcdh}{\ZZ(n)^{\procdh}}
\newcommand{\Lsm}{L^{\operatorname{sm}}}
\newcommand{\Fpcdh}{Fil_{\procdh}}
\newcommand{\HN}{\operatorname{HC}^-}
\newcommand{\TC}{\operatorname{TC}}
\newcommand{\HKR}{\operatorname{HKR}}
\newcommand{\FHKR}{\ensuremath{Fil_{\HKR}}}
\newcommand{\BMS}{\operatorname{BMS}}
\newcommand{\FBMS}{\ensuremath{Fil_{\BMS}}}
\newcommand{\gr}{\operatorname{gr}}
\newcommand{\LO}[1]{L\Omega_{#1}}
\newcommand{\hLO}[1]{\widehat{L\Omega}_{#1}}
\newcommand{\hLOH}[2]{\widehat{L\Omega}^{\geq #1}_{#2}}
\newcommand{\Znsyn}{{\ZZ(n)^{\operatorname{syn}}}}
\newcommand{\conDescent}{Desc}
\newcommand{\conFinitary}{Fin}
\newcommand{\conBB}{BB}%
\newcounter{spec}
{\end{list}}%
\begin{document}
\title{A procdh topology}
\author{Shane Kelly and Shuji Saito}
\maketitle

\begin{abstract}
In this article we propose a definition of a procdh topos. We show that it encodes procdh excision, has bounded homotopy dimension and therefore is hypercomplete and admits a conservative family of fibre functors. We also describe the local rings.

As an application, we show that nonconnective $K$-theory is the procdh sheafification of connective $K$-theory, and that the motivic cohomology recently proposed by Elmanto and Morrow is the procdh sheafification of Voevodsky's motivic cohomology.
\end{abstract}

\tableofcontents

\section{Introduction}


It has been known for many decades that blowups of smooth varieties in smooth centres give rise to a long exact sequence in algebraic $K$-theory \cite[Exp.VII]{SGA6}, \cite{TT90}. 
As part of his work on the Bloch-Kato conjecture, Voevodsky encoded such blowup exact sequences in a Grothendieck topology---the cdh topology---giving access to the full arsenal of topos theory, \cite{SVBK}.

For varieties which are not necessarily smooth, the picture has not been so complete. Algebraic $K$-theory fails to associate long exact sequences to blowups in general, as easy examples show. %
The failure is in some sense a ``quasi-coherent'' part of algebraic $K$-theory. 
%
Indeed, quasi-coherent cohomology also fails to have long exact sequences for blowups. On the other hand, if one remembers infinitesimal information around the centre, one \emph{does} get long exact sequences for quasi-coherent cohomology. This is described in Grothendieck's theorem on formal functions, \cite[Thm.4.1.5]{EGAIII1}.

The analogous formal blowup sequences for algebraic $K$-theory have been studied for some 20 years now, \cite[\S 3-4]{Wei01}, \cite{KS02}, \cite{GH06}, \cite{GH11}, \cite{Mor18}, and feature in Kerz, Strunk, Tamme's celebrated proof of Weibel's conjecture, \cite{KST18}; see \cite{Mor16} for a historical overview. However, the absence of an associated topology has meant that the topos theoretic techniques so skillfully employed by Voevodsky were not available.

In this article we propose the following definition for a procdh topos. 

\begin{defi}[Definition~\ref{defi:procdh}]
Let $S$ be a qcqs scheme. The \emph{procdh topology} on the category $\Sch_S$ of $S$-schemes of finite presentation is generated by the following coverings.
\begin{enumerate}
 \item Distinguished Nisnevich coverings: families of the form
\[  \{ U \stackrel{i}{\to} X, V \stackrel{j}{\to} X\} \]
 such that $i$ is a quasi-compact open immersion, $j$ is an étale morphism, and $j$ is an isomorphism over $X \setminus U$ equipped with any (equivalently all) closed subscheme structure(s) of finite presentation.

 \item Proabstract blowup squares: families of the form
 \[ \{ Z_n \to X\}_{n \in \NN} \sqcup \{Y \to X\} \]
 where $Y \to X$ is a proper morphism of finite presentation which is an isomorphism outside of a closed subscheme $Z_0 \subseteq X$ of finite presentation, and $Z_n = \uSpec \OO_X / \sI_Z^n$ is the $n$th infinitesimal thickening of $Z_0$.
\end{enumerate}
\end{defi}

A sign that this is a ``correct'' topology is that it captures procdh excision: presheaves with the long exact sequences mentioned above are precisely the procdh sheaves.

\begin{theo}[Theorem~\ref{theo:descentConditions}, Corollary~\ref{coro:procdhHypercomplete}, Proposition~\ref{prop:hyperExciPoint}] \label{theo:decsentStateIntro}
Let $S$ be a scheme and consider the following conditions on a presheaf of spaces $F \in \PSh(\Sch_S, \cS$).%
\footnote{As usual, $\cS := N\cK an$ is the quasi-category associated to the simplicial category of Kan complexes, and $\PSh(\Sch_S, \cS)$ is the quasi-category $\Fun(\Sch_S^{\op}, \cS)$. Before \cite{HTT}, the category $\PSh(\Sch_S, \cS)$ would have been the category $\PSh(\Sch_S, \Set_{\Delta})$ of presheaves of simplicial sets with any model structure for which weak equivalences are objectwise weak equivalences. If one reads the statements using this latter interpretation, all limits should be replaced with homotopy limits as described in \cite{BK72} or \cite{Hir03}.}.
\begin{enumerate}
 \item
 \textnormal{
 \emph{Excision.} 
 For every distighished Nisnevich square $\{U \to X, V \to X\}$ and every proabstract blowup square $\{Z_n \to X\}_{n \in \NN} \sqcup \{Y \to X\}$ in $\Sch_S$, Def.\ref{defi:procdh}, we have
\begin{align}
F(X) &\stackrel{\sim}{\to} F(U) \times_{F(U \times_X V)} F(V), %
\\
F(X) &\stackrel{\sim}{\to} F(Y)\times_{\lim_n F(Z_n \times_X Y)} {\lim}_n F(Z_n).
\label{equa:procdhExcision}
\end{align}
}

 \item
 \textnormal{
 \emph{\v{C}ech descent.} 
 For every procdh covering family $\{Y_λ \to X\}_{λ \in \Lambda}$ we have 
 \begin{equation}
 F(X) \stackrel{\sim}{\to} \lim_{n} 
F(\cY_n)
 \end{equation}
where we write $F(\cY_n)$ for $\prod_{i \in I^n} F(Y_{i_1} \times_X \dots \times_X F_{i_n})$.
That is, in the terminology of \cite{HTT}, $F$ is a procdh sheaf.
}

 \item
 \textnormal{
 \emph{Hyperdescent.} 
 For every $X \in \Sch_S$ and procdh hypercovering $\cY_\bullet \to X$ we have 
 \begin{equation}
 F(X) \stackrel{\sim}{\to} \lim_n \Map(\cY_n, F).
\end{equation}
In the terminology of \cite{HTT}, $F$ is a hypercomplete procdh sheaf.
}
\end{enumerate}
If $S$ is qcqs then (Excision) $\Leftrightarrow$ (\v{C}ech descent). If $S$ is qcqs, has finite valuative dimension and Noetherian topological space, then (\v{C}ech descent) $\Leftrightarrow$ (Hyperdescent).
\end{theo}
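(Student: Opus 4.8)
The plan is to handle the two equivalences by different mechanisms: (Excision) $\Leftrightarrow$ (\v{C}ech descent) via cd-structure techniques adapted to the pro-system of thickenings, and (\v{C}ech descent) $\Leftrightarrow$ (Hyperdescent) via a homotopy-dimension bound that forces hypercompleteness.

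First I would prove (\v{C}ech descent) $\Rightarrow$ (Excision) by computing the \v{C}ech nerve of each generating cover. For a distinguished Nisnevich square $\{U \to X, V \to X\}$ the two-element cover yields the pullback square directly. For a proabstract blowup square $\{Z_n \to X\}_{n} \sqcup \{Y \to X\}$ the essential input is that the infinitesimal thickenings satisfy $Z_n \times_X Z_m \cong Z_{\min(n,m)}$, since $\sI_Z^n + \sI_Z^m = \sI_Z^{\min(n,m)}$. This makes the \v{C}ech nerve of the countable cover telescope, so that its totalization $\lim_n F(\cY_n)$ collapses to the claimed fiber product $F(Y) \times_{\lim_n F(Z_n \times_X Y)} \lim_n F(Z_n)$. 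For the converse (Excision) $\Rightarrow$ (\v{C}ech descent) I would organize the two families of generating covers as a (pro-)cd-structure and verify Voevodsky's completeness, regularity, and boundedness axioms; these then guarantee that descent for the distinguished squares propagates to \v{C}ech descent for every cover in the generated topology.

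The main obstacle in this first equivalence is precisely the pro-nature of the blowup squares: Voevodsky's formalism is built for finite distinguished squares, whereas here one leg is the countable tower $\{Z_n\}_{n \in \NN}$ together with its limit. I would therefore need to extend the cd-structure machinery to accommodate the pro-system, checking in particular that the limit $\lim_n$ commutes with the relevant \v{C}ech-nerve constructions and that the boundedness axiom continues to control the heights of the generated hypercoverings. This is the technical heart of the argument, and the fact that $S$ is assumed qcqs is what keeps the relevant covers within $\Sch_S$ and makes the fiber products of finite presentation.

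For the second equivalence, the direction (Hyperdescent) $\Rightarrow$ (\v{C}ech descent) is automatic, since a hypercomplete sheaf is in particular a sheaf. For the reverse I would show that, under the hypotheses that $S$ is qcqs of finite valuative dimension with Noetherian underlying space, the procdh $\infty$-topos has finite homotopy dimension. By the criterion of \cite{HTT} finite homotopy dimension implies hypercompleteness, and in a hypercomplete $\infty$-topos every sheaf is automatically hypercomplete, so (\v{C}ech descent) $\Rightarrow$ (Hyperdescent). The dimension bound would come from an analysis of the points and local rings of the topos: the valuative dimension of $S$ controls the homotopy dimension, while the Noetherian hypothesis on the topological space secures the finitariness needed both for the bound and for the reduction to finite covers. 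I expect the dimension estimate itself, rather than its formal consequence via hypercompleteness, to carry the real weight of this half.
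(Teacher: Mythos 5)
Your overall architecture for the second equivalence matches the paper: finite homotopy dimension of $\Shv_{\procdh}(\Sch_S,\cS)$ forces hypercompleteness, and in a hypercomplete $\infty$-topos \v{C}ech sheaves satisfy hyperdescent. (The paper gets the bound $\leq 2d$ by induction on valuative dimension, comparing with the Nisnevich--Riemann--Zariski topos $\Shv(\RZ(S_{\Nis}),\cS)$ of homotopy dimension $\leq d$ and paying an extra $+1$ for each $\lim_{\NN}$; your ``analysis of points and local rings'' is vaguer but aims at the same place.) The first equivalence, however, has two genuine gaps. First, in the direction (\v{C}ech descent) $\Rightarrow$ (Excision), the identity $Z_n\times_X Z_m\cong Z_{\min(n,m)}$ only tames the $Z$-part of the \v{C}ech nerve. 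The nerve of $\{Z_n\to X\}_{n}\sqcup\{Y\to X\}$ also contains the terms $F(Y\times_X\dots\times_X Y)$, and since $Y\to X$ is a proper morphism, not a monomorphism, these self-products do \emph{not} telescope; this is exactly where the content lies. The paper's Proposition~\ref{prop:hyperExci} handles it by observing that $\{E_\mu^{\times_{Z_\mu}(n+1)}\to Y^{\times_X(n+1)}\}_{\mu}\sqcup\{Y\xrightarrow{\Delta} Y^{\times_X(n+1)}\}$ is again a procdh covering in which the proper member is now a \emph{closed immersion} (so its own nerve is constant), and then concludes by two-out-of-three for cocartesian squares after sheafification. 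Without an argument of this kind your ``collapse to the fiber product'' is unjustified.

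Second, for (Excision) $\Rightarrow$ (\v{C}ech descent) you defer the entire proof to an unstated extension of Voevodsky's cd-structure formalism (completeness, regularity, boundedness) to pro-cd-structures. That extension is precisely what does not exist off the shelf, and you supply no construction of it; as written this direction is not proved. The paper avoids cd-structures here altogether: a procdh covering is by definition refined by a finite-height tree of generating coverings (Remark~\ref{rema:genDefi}), and Proposition~\ref{prop:excisionCech} argues by induction on the height of that tree, using only that the length-zero case follows from the refinement being a split monomorphism of sieves and that limits commute with limits (so the excision fiber product passes through $\lim_n$ of the \v{C}ech tower). If you want to keep your proposal, you should either carry out the pro-cd-structure theory in detail or replace it with this direct induction.
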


Our procdh ∞-topoi have finite homotopy dimension, albeit not quite as optimal as one could hope. 

\begin{theo}[Theorem~\ref{theo:bounded}, Example~\ref{exam:counterCohDim}]
Let $S$ be a qcqs scheme of finite valuative dimension $d \geq 0$ with Noetherian underlying topological space. 
Then $\Shv_{\procdh}(\Sch_S, \cS)$ has homotopy dimension $\leq 2d$. 

There exists a Noetherian scheme of Krull dimension one with procdh homotopy dimension two.
\end{theo}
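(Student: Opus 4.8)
The plan is to verify Lurie's criterion \cite{HTT} directly: $\Shv_{\procdh}(\Sch_S,\cS)$ has homotopy dimension $\le 2d$ as soon as every $2d$-connective sheaf admits a global section. Since the topos is hypercomplete (Corollary~\ref{coro:procdhHypercomplete}) and carries a conservative family of fibre functors, I would first reduce this to a pointwise connectivity estimate: I claim that for every $X \in \Sch_S$ of valuative dimension $\dim X = k$ and every $n$-connective procdh sheaf $F$, the space $F(X)$ is $(n-2k)$-connective. Granting the claim and taking $X=S$, so that $k \le d$, any $2d$-connective $F$ has $F(S)$ which is $(2d-2k)$-connective, hence inhabited; as $S$ is terminal in $\Sch_S$, this is exactly a global section. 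The claim would be proved by induction on $k$, the base case $k=0$ being the statement that over a $0$-dimensional scheme (henselian local after Nisnevich localization) the procdh value agrees with the stalk and so inherits the connectivity of $F$.

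For the inductive step I would separate the two families of generating covers, and it is precisely this separation that produces the factor of $2$. First, distinguished Nisnevich squares assemble $F(X)$ from its values on the henselian local schemes appearing as Nisnevich points; since the space of such points has combinatorial dimension equal to the valuative dimension $k$, this step costs at most $k$ degrees of connectivity. It then remains to treat a henselian local $X$ of dimension $k$, where I would apply a proabstract blowup square $\{Z_n \to X\}_{n}\sqcup\{Y\to X\}$ with $Z_0 \subseteq X$ a closed subscheme of dimension $<k$ chosen so that $Y \to X$ is an isomorphism over $X \setminus Z_0$. Excision~\eqref{equa:procdhExcision} then expresses $F(X)$ as the fibre product of $F(Y)$ and $\lim_n F(Z_n)$ over $\lim_n F(Z_n\times_X Y)$. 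Each $Z_n$ and each $Z_n\times_X Y$ has dimension $<k$, so the inductive hypothesis bounds their connectivity, while the Milnor sequence for the tower shows that passing to $\lim_n$ loses at most one further degree to the $\lim^1$ term. Combining the Nisnevich contribution of $k$ with this single additional degree, accumulated along the induction, yields the estimate $(n-2k)$.

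The principal difficulty, and the reason the bound is $2d$ rather than $d$, lies in the proper term $Y$: a blowup does not lower the dimension of $Y$, so the inductive hypothesis does not apply to $F(Y)$ directly. Here I would pass to the Riemann--Zariski description of the fibre functors, where the valuative criterion makes the proper map $Y \to X$ an equivalence on points, so that the content is carried entirely by the combinatorial dimension of the space of valuations (bounded by $d$) together with the one-degree cost of each infinitesimal tower. Making the bookkeeping of these two independent sources of connectivity loss precise --- in particular showing that the $\lim^1$ contributions do not interact so as to cost more than one degree per step --- is the main technical obstacle; the hypercompleteness and the explicit local rings described earlier are exactly what permit this pointwise analysis.

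For the final assertion I would establish sharpness by producing an abelian procdh sheaf $A$ and a Noetherian scheme $X$ of Krull dimension one with $H^2_{\procdh}(X,A)\neq 0$; since homotopy dimension $\le 1$ would force $H^{\ge 2}_{\procdh}(X,-)=0$, this shows the homotopy dimension is exactly $2$, the upper bound being the case $d=1$ of the first part. Taking $X=\Spec R$ for a suitable $1$-dimensional Noetherian local domain with closed point $z$ and thickenings $Z_n=\uSpec\,\OO_X/\sI_z^n$, the excision sequence~\eqref{equa:procdhExcision} reduces the computation to the derived inverse limit over the tower $\{Z_n\}$. I would then choose $R$ and $A$ so that the inverse system $\{A(Z_n)\}_n$ (or $\{H^1(Z_n,A)\}_n$) fails the Mittag--Leffler condition and has nonvanishing $\lim^1$, the resulting class living in degree two. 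A convenient model is a tower of the shape $\ZZ \xleftarrow{\times p}\ZZ\xleftarrow{\times p}\cdots$, whose $\lim^1$ is nonzero, realized geometrically on a singular curve where the unit groups or Picard groups of the thickenings assemble into such a system.
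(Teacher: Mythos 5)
Your overall skeleton---induction on valuative dimension, excision plus a one-degree loss for each $\lim_{\NN}$, Riemann--Zariski spaces for the proper part, and a ${\lim}^1$-based counterexample---matches the paper's, but the step you yourself flag as ``the main technical obstacle'' is exactly the step that carries the proof, and it is not resolved. Your inductive claim is stated scheme-by-scheme (``for every $X$ of valuative dimension $k$, $F(X)$ is $(n-2k)$-connective''), and then you apply excision to a \emph{chosen} proabstract blowup square $\{Z_n\to X\}\sqcup\{Y\to X\}$. The induction cannot close this way, because $\dim_v Y=\dim_v X$, so nothing bounds the connectivity of $F(Y)$. The paper's resolution is not to bound $F(Y)$ for a given $Y$ at all: the functor $\rho_X^*$ to $\Shv(\RZ(X_{\Nis}),\cS)$ preserves colimits and finite limits (Corollary~\ref{coro:RZColimitLim}), the Clausen--Mathew bound gives that topos homotopy dimension $\leq d$ (Example~\ref{exam:RZhomDim}), and $(\rho_X^*F)(X)=\colim_{Y\in\Mod_X}F(Y)$ (Remark~\ref{rema;rhoX}); nonemptiness of this filtered colimit produces \emph{some} modification $Y$ with $F(Y)\neq\varnothing$, and only then is the excision square built around that $Y$. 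The induction hypothesis is applied only to the $Z_n$ and $E_n$, which do drop in dimension, yielding $F(Z_n)_{\leq 1}\cong\ast$, hence $(\lim_n F(Z_n))_{\leq 0}\cong\ast$ after the one-degree loss from $\PSh(\NN,\cS)$, and the fibre product of a nonempty space with a nonempty connected space over a nonempty connected space is nonempty. This ``choose $Y$ after sheafifying over all modifications'' move is the missing idea; without it your bookkeeping of the two sources of loss cannot be made precise. (Your attribution of one factor of $d$ to ``Nisnevich combinatorial dimension'' is also not quite how the count works: the recursion is $h(d)=\max(d,\,h(d-1)+2)$, with $d$ coming from the RZ topos once and $+2$ per dimension step coming from the $\lim_{\NN}$ plus the $\pi_1$-obstruction to the fibre product being nonempty.)

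The counterexample sketch also has a gap. Your proposed geometric realizations of a non--Mittag--Leffler tower fail: for infinitesimal thickenings $Z_{n+1}\to Z_n$ of a point, the restriction maps on units are \emph{surjective} (units lift along nilpotent extensions) and Picard groups of Artinian schemes vanish, so ${\lim}^1$ of either system is zero. Moreover, with $X=\Spec R$ a local \emph{domain} you still must arrange the vanishings $H^i_{\procdh}(Y,F)=0$ on the normalisation and $H^{>0}_{\procdh}(Z_n,F)=H^{>0}_{\procdh}(E_n,F)=0$ simultaneously with ${\lim}^1_nF(E_n)\neq 0$, which your sheaf does not obviously do. The paper instead takes $S=\AA^1\sqcup_{\{0\}}\AA^1$ (two branches, so that $E_n=E_{n,x}\sqcup E_{n,y}$) and \emph{defines} an ad hoc presheaf supported only on schemes admitting an $S$-map to some $E_{i,x}$, with value $A_m$ for $m$ minimal; this forces $F_{\procdh}$ to vanish on the normalisation and on the $Z_n$ (via Proposition~\ref{prop:Nisprocdh}), makes each $E_n$ a disjoint union of procdh local schemes so that $H^{>0}$ vanishes there, and leaves ${\lim}^1_nA_n$ sitting in $H^2_{\procdh}(S,F)$ for any tower with nonvanishing ${\lim}^1$.
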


The appearance of $2d$ instead of $d$ is essentially because the topos $\PSh(\NN, \cS)$ has homotopy dimension one. So every time we perform a $\lim_{\NN}$, for example in Eq.\eqref{equa:procdhExcision} above,  we potentially increase the homotopy dimension by one. In a future article we consider a way around this using a site involving formal schemes. 

Despite the unwanted factor of two, finite homotopy dimension is sufficient to imply that the topos is hypercomplete.


\begin{coro}[Corollary~\ref{coro:procdhHypercomplete}]
Let $S$ be a qcqs scheme of finite valuative dimension with Noetherian underlying topological space. Then $\Shv_{\procdh}(\Sch_S, \cS)$ is hypercomplete.
\end{coro}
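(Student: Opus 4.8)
The plan is to obtain hypercompleteness formally from the homotopy dimension bound of Theorem~\ref{theo:bounded}, via the principle that an $\infty$-topos which is \emph{locally} of finite homotopy dimension is hypercomplete \cite[\S 7.2.1]{HTT}. Write $\cX = \Shv_{\procdh}(\Sch_S, \cS)$. The point is that hypercompleteness is a statement about all $\infty$-connective \emph{morphisms}, equivalently about $\infty$-connective objects in every slice, so a single global bound on $\cX$ is not directly what one feeds into the machine; instead one wants finite homotopy dimension of a generating family of slices.

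First I would identify the slices over representables. For $X \in \Sch_S$ the slice topos $\cX_{/h_X}$ is the topos of sheaves on the slice site $(\Sch_S)_{/X} \simeq \Sch_X$ for the induced topology; since the two covering families of Definition~\ref{defi:procdh} are defined in absolute terms and restrict to coverings of the same type over $X$, the induced topology is the procdh topology and $\cX_{/h_X} \simeq \Shv_{\procdh}(\Sch_X, \cS)$. As $X \to S$ is of finite presentation, $X$ is again qcqs with finite valuative dimension and Noetherian underlying space, so Theorem~\ref{theo:bounded} applies and gives $\cX_{/h_X}$ homotopy dimension $\leq 2\dim_{\mathsf{val}}(X) < \infty$. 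Since the $h_X$ generate $\cX$ under colimits, this exhibits $\cX$ as locally of finite homotopy dimension, and it remains to feed this into the topos-theoretic principle. The mechanism is that in a slice of finite homotopy dimension the global sections functor sends $\infty$-connective objects to contractible spaces; applying this to $f \times \mathrm{id}_{h_X}$ for an $\infty$-connective morphism $f$ shows that $\Map(h_X, f)$ is an equivalence for every $X$, whence $f$ is an equivalence because the $h_X$ detect equivalences.

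The main obstacle is that the bound is genuinely \emph{non-uniform}: $2\dim_{\mathsf{val}}(X)$ grows with the relative dimension of $X$ over $S$ (it jumps by two under each affine line), and there is no generating family of $\cX$ whose slices have homotopy dimension bounded by a single integer---so the clean statement ``locally of homotopy dimension $\leq n$'' for a fixed $n$ is unavailable, and the global bound on $\cX$ by itself is insufficient. The crux is therefore to run the argument of \cite[\S 7.2.1]{HTT} slicewise with a varying finite bound, using only that in each individual slice $\cX_{/h_X}$ the functor $\Gamma$ shifts connectivity down by the finite amount $2\dim_{\mathsf{val}}(X)$ and hence kills $\infty$-connective objects. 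Verifying that this connectivity-shift argument needs only finiteness of the bound in each slice---rather than a uniform bound across all of them---is the one place where a little care beyond a direct citation is required; the remaining ingredients (the slice identification and the descent of the hypotheses to $X$) are routine.
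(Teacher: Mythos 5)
Your proof is correct and follows essentially the same route as the paper: each $X \in \Sch_S$ inherits finite valuative dimension and a Noetherian underlying space (Lemma~\ref{lemm:valDimProp}\eqref{lemm:valDimProp:ft} and Lemma~\ref{lemm:noethTopFt}), so Theorem~\ref{theo:bounded} gives every slice finite homotopy dimension, and hypercompleteness follows from the ``locally of finite homotopy dimension'' criterion. The non-uniformity issue you rightly flag is exactly what the paper absorbs into its definition of locally finite homotopy dimension (Remark~\ref{rema:locFHD}, Example~\ref{exam:hypercomplete}), where the bound $d_X$ is allowed to depend on $X$; as you observe, the slicewise argument behind \cite[Cor.7.2.1.12]{HTT} only needs finiteness in each individual slice.
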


Closely related to hypercompleteness is existence of enough points. Recall that a topos, resp. ∞-topos $T$, is said to have \emph{enough points} when the collection of all geometric morphisms of topoi $\phi^*: T \to \Set$, resp. ∞-topoi $\phi^*: T \to \cS$, detect isomorphisms, resp. equivalences.

\begin{theo}[Theorem~\ref{theo:procdhEnoughPoints}, Corollary~\ref{coro:enoughInfinityPoints}] \label{theo:enoughPointsIntro}
Suppose $S$ is a qcqs scheme with Noetherian topological space of finite Krull dimension. Then both the classical topos $\Shv_{\procdh}(\Sch_S)$, and the ∞-topos $\Shv_{\procdh}(\Sch_S, \cS)$ have enough points.
\end{theo}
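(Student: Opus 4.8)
The plan is to deduce the $\infty$-categorical assertion (Corollary~\ref{coro:enoughInfinityPoints}) from the classical one (Theorem~\ref{theo:procdhEnoughPoints}), and to prove the classical case by exhibiting an explicit conservative family of stalks. First I would carry out the reduction. Since $S$ has Noetherian topological space of finite Krull dimension it has finite valuative dimension, so Corollary~\ref{coro:procdhHypercomplete} applies and $\Shv_{\procdh}(\Sch_S, \cS)$ is hypercomplete. In a hypercomplete $\infty$-topos a morphism $f$ is an equivalence as soon as it induces isomorphisms on the truncation $\tau_{\leq 0}$ and on all homotopy sheaves $\pi_n$, $n \geq 1$; these homotopy sheaves are $0$-truncated, hence lie in the underlying classical topos $\Shv_{\procdh}(\Sch_S) \simeq \tau_{\leq 0}\Shv_{\procdh}(\Sch_S, \cS)$. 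As the inverse image $p^*$ of any point preserves connectivity and truncation, one has $\pi_n(p^* f) \cong p^* \pi_n(f)$, so a map that is an equivalence on every stalk induces isomorphisms on each $\pi_n(f)$; if the classical points are jointly conservative these isomorphisms force $f$ to be an equivalence by Whitehead's theorem in the hypercomplete setting \cite{HTT}. Thus it suffices to treat the classical topos.

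The real content is therefore Theorem~\ref{theo:procdhEnoughPoints}. Here the obstacle to a direct appeal to Deligne's theorem, that coherent topoi have enough points, is structural: although the distinguished Nisnevich coverings are finite and $\Sch_S$ carries the relevant finite limits, the proabstract blowup families $\{Z_n \to X\}_{n \in \NN} \sqcup \{Y \to X\}$ are infinite, so the procdh topos is not coherent. The plan is instead to build the points by hand. Using the description of the procdh local rings obtained earlier, I would attach to each topological point of each $X \in \Sch_S$ a procdh point whose stalk is the associated procdh local ring, namely a henselian-type local ring carrying, along any centre through the point, the infinitesimal data recorded by the tower $\{Z_n\}$. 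One then checks that each such stalk functor is exact and sends procdh coverings to jointly surjective families, hence defines a point.

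To prove conservativity I would argue by Noetherian induction on $\dim X$, which is legitimate because the space is Noetherian of finite Krull dimension. Given $\phi \colon F \to G$ inducing isomorphisms on all the stalks above, over a dense open one reduces, via distinguished Nisnevich coverings, to henselian local rings where $\phi$ is invertible by construction of the Nisnevich points. Over the closed complement one chooses a proabstract blowup square and invokes the excision isomorphism \eqref{equa:procdhExcision}: the induction hypothesis controls $Y$ and the pullbacks $Z_n \times_X Y$, which have strictly smaller dimension, while the remaining term $\lim_n F(Z_n)$ is exactly the part detected by the formal-completion component of the stalks along the centre. Finite-dimensionality guarantees that this descent terminates.

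The main obstacle is precisely the interaction of the points with the cofiltered limit $\lim_n$ occurring in \eqref{equa:procdhExcision}: a point does not commute with such a limit, which is what breaks coherence and forbids simply transporting enough points from the coarser $\cdh$ topos. Overcoming it rests on two things. First, the precise identification of the procdh local rings, so that the formal data along each centre is genuinely present in the stalk; and second, the Noetherian and finite-dimensional hypotheses, which bound the induction and ensure that at each stage only finitely much of the tower $\{Z_n\}$ is needed to detect a failure of $\phi$ to be an isomorphism. I expect the bookkeeping for the formal part of the stalks, together with the verification that the constructed functors really are points, to be the most delicate steps.
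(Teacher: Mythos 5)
Your reduction of the $\infty$-categorical statement to the classical one is sound and is essentially the route the paper takes: hypercompleteness (Corollary~\ref{coro:procdhHypercomplete}) plus a Whitehead-type argument, with the one implicit ingredient being that classical points of $\Shv_{\procdh}(\Sch_S,\Set)$ lift to points of the hypercomplete $\infty$-topos so that the stalkwise hypothesis actually applies to them (this is Lemma~\ref{lemm:truncatedPoints}, and the paper packages the whole argument as Proposition~\ref{prop:enoughPoints}, phrased via $n$-connectivity rather than homotopy sheaves). The problem is the classical half, Theorem~\ref{theo:procdhEnoughPoints}, where your sketch has a genuine gap: you never explain how to pass from ``the family $\{Y_i\to X\}$ is surjective on every stalk'' to an actual procdh covering refining it. Saying that over a dense open one reduces via Nisnevich coverings and that over the closed complement ``one chooses a proabstract blowup square'' begs the question, because producing the proper morphism $Y\to X$ (isomorphism off a closed subscheme) through which the family lifts is precisely the content of the theorem; stalkwise surjectivity at the procdh local rings $\OO\times_K A$ gives you, for each valuation, \emph{some} modification on which a lift exists, but you need a single modification of finite presentation that works simultaneously, and nothing in your induction produces it.

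The paper's mechanism for this is the Nisnevich--Riemann--Zariski site $\RZ(X_{\Nis})$ of Section~\ref{sec:NRZspaces}: a finitary site (so Deligne's theorem applies, Example~\ref{exam:RZNisenoughPoints}) whose topos receives a colimit- and finite-limit-preserving comparison functor $\rho_X^*$ from the procdh topos (Corollary~\ref{coro:RZColimitLim}). The hard input is the cocontinuity of $\rho_X$ (Proposition~\ref{prop:cocont}), proved via Raynaud--Gruson platification, which is exactly what converts stalkwise surjectivity into a concrete modification $Y'\to Y$ and a Nisnevich covering of $Y'$ refining the family; the nowhere dense centre of that modification has strictly smaller dimension, and only then does the induction on $\dim Y$ close by applying the hypothesis to each $Z_n$ separately. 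Note also that your remark that ``only finitely much of the tower $\{Z_n\}$ is needed'' is misleading: the infinite tower is essential (it is why the topos fails to be coherent and Deligne cannot be applied directly), and the paper handles it not by truncating but by refining the pullback of the family to every $Z_n$ via the induction hypothesis and composing. As written, your induction step is not something one could execute without supplying this Riemann--Zariski/platification argument or a substitute for it.
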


We prove the classical version of Theorem~\ref{theo:enoughPointsIntro} first and then upgrade it to the ∞-version. Something worth noting is that since the generating covering families of the procdh topology are not finite, the topos is not coherent, so we cannot simply apply Deligne's completeness theorem. We use the following proposition for the upgrade. For sites whose associated topos is coherent, Prop.\ref{prop:enoughPoints} plus Deligne's theorem imply Lurie's version, \cite[Thm.A.4.0.5]{SAG}.

\begin{prop}[Proposition~\ref{prop:enoughPoints}]
Let $(C, τ)$ be a small site admitting finite limits such that the topos $\Shv(C, \Set)$ has enough points in the classical sense. Then the hypercompletion $\Shv(C, \cS)^\wedge$ has enough points as an ∞-topos.
\end{prop}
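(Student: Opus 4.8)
The plan is to attach to every point of the classical topos $\Shv(C,\Set)$ a point of the hypercomplete $\infty$-topos $\Shv(C,\cS)^\wedge$, and then to show that the resulting family of $\infty$-points jointly detects equivalences. The mechanism connecting the two worlds is the homotopy sheaf functor: because $\Shv(C,\cS)^\wedge$ is by construction hypercomplete, a map $f$ is an equivalence if and only if every homotopy sheaf $\pi_n^{\Shv}f$ is an isomorphism, and these $\pi_n^{\Shv}$ are sheaves of sets (resp.\ groups for $n\ge 1$), precisely the kind of object to which the classical hypothesis on enough points applies.

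First I would recall the standard description of the stalk of a classical point $p$: it is the filtered colimit $p^*F=\colim_{(U,s)}F(U)$ over the category of elements of the corresponding flat continuous functor $C\to\Set$ (flatness being equivalent to left exactness here, since $C$ admits finite limits). I would then define a candidate $\infty$-stalk $\tilde p^*_{\PSh}\colon\PSh(C,\cS)\to\cS$ by the identical filtered colimit. By construction it preserves all colimits, and it is left exact because filtered colimits commute with finite limits in $\cS$. The crucial step is to show that $\tilde p^*_{\PSh}$ inverts the hyper-local equivalences, hence descends along the hypersheafification $L^\wedge\colon\PSh(C,\cS)\to\Shv(C,\cS)^\wedge$. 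This follows from the computation
\[
\pi_n\bigl(\tilde p^*_{\PSh}F\bigr)\;\cong\;\colim_{(U,s)}\pi_n\bigl(F(U)\bigr)\;=\;p^*\bigl(\pi_n^{\PSh}F\bigr)\;=\;p^*\bigl(\pi_n^{\Shv}F\bigr),
\]
where the first isomorphism is commutation of $\pi_n$ with filtered colimits and the last equality is that the classical stalk of a presheaf agrees with that of its sheafification. Thus a hyper-local equivalence $f$ (i.e.\ $\pi_n^{\Shv}f$ an isomorphism for all $n$) is sent to a $\pi_*$-isomorphism of spaces, hence an equivalence.

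The induced functor $\tilde p^*\colon\Shv(C,\cS)^\wedge\to\cS$ then preserves colimits (being a composite of colimit-preserving functors) and is left exact, since $\tilde p^*=\tilde p^*_{\PSh}\circ i$ for the limit-preserving inclusion $i$ of the hypercompletion; by presentability and the adjoint functor theorem it is a left adjoint, so $\tilde p$ is a genuine $\infty$-point. For detection I would reuse the identity $\pi_n(\tilde p^*F)\cong p^*(\pi_n^{\Shv}F)$. If $\tilde p^*f$ is an equivalence for every classical point $p$, then $p^*(\pi_n^{\Shv}f)$ is an isomorphism for all $p$ and all $n$; since $\Shv(C,\Set)$ has enough points, each $\pi_n^{\Shv}f$ is then an isomorphism of sheaves; and since $\Shv(C,\cS)^\wedge$ is hypercomplete, $f$ is an equivalence. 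Hence $\{\tilde p^*\}_p$ is a conservative family of fibre functors, which is exactly the assertion that $\Shv(C,\cS)^\wedge$ has enough points.

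The step I expect to demand the most care is the bookkeeping of base points in the homotopy sheaves for $n\ge 1$: these are defined relative to a section, so the identity relating $\infty$-stalks to classical stalks of homotopy sheaves must be established sectionwise, and one must confirm that the group-valued sheaves $\pi_n^{\Shv}(F,s)$ over each object are exactly what the classical enough-points hypothesis and the hypercomplete equivalence criterion each consume. A secondary, more routine verification is that the filtering property of a classical point makes $\tilde p^*_{\PSh}$ left exact, which is the only place the hypothesis that $C$ admits finite limits is genuinely used.
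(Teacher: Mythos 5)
Your argument is correct, and it is in essence the same as the second (``alternative'') proof the paper gives of this proposition: both rest on the characterisation of hypercomplete local equivalences as morphisms inducing isomorphisms on all homotopy sheaves (Jardine, Dugger--Hollander--Isaksen), together with the observation that the $\infty$-stalk at a classical point is computed by the same filtered colimit as the classical stalk and therefore commutes with $\pi_n$. The paper's primary proof takes a slightly different route, designed to avoid exactly the basepoint bookkeeping you flag at the end: instead of homotopy sheaves it works with $n$-connectivity of morphisms, characterised by the $n{+}1$ iterated diagonals being effective epimorphisms (Definition~\ref{defi:minusOneConnective}, Lemma~\ref{lemm:detectConnectivity}), which reduces detection at points to the purely $1$-categorical fact that effective epimorphisms in a topos with enough points are detected by points; hypercompleteness then enters through $f=\lim_n f_{\leq n}$ (Lemma~\ref{lemm:connTruncEquiv}) rather than through the homotopy-sheaf criterion. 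Your route buys an explicit description of the induced $\infty$-points and of what they see; the paper's main route buys statements about unpointed objects only, so no slice sites or basepoint-compatible colimits need to be tracked (though, as you note, that bookkeeping is standard and does go through, since points of $\Shv(C_{/X},\Set)$ are induced from points of $\Shv(C,\Set)$ together with an element of the stalk of $X$). One small presentational point: $\tilde p^*$ does not preserve colimits ``being a composite of colimit-preserving functors,'' since the inclusion of the hypercompletion is not colimit-preserving; rather, it preserves colimits because it is the functor induced on the localisation by the colimit-preserving $\tilde p^*_{\PSh}$, which inverts the hyper-local equivalences. This is a phrasing issue, not a gap.
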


After all this talk of procdh points, the reader is surely asking for a characterisation of the local rings. Procdh local rings are essentially versions of cdh local rings---henselian valuation rings---with some mild nilpotent thickening.

\begin{prop}[Characterisation of procdh local rings] \label{prop:procdhLocal}
An affine $S$-scheme $\Spec(R) \to S$ is procdh local, Def.\ref{defi:tauLocal}, if and only if it is of the form
\[ R \cong \OO \times_K A \]
with $A$ a local ring of Krull dimension zero and $\OO \subseteq K$ a henselian valuation ring of $K {=} A / \m_A$. In other words, $R \subseteq A$ is the set of elements whose residue mod $\m_A$ lies in $\OO$.
\end{prop}

We also discuss how a general procdh local ring can often be written as a filtered colimit of smaller local rings, see Proposition~\ref{prop:colimOfSmaller} and Theorem~\ref{theo:colimfgNilredhvr}.

In the latter sections, we apply the above theory to show the following topos-theoretic interpretation of the Bass construction.

\begin{theo}[Theorem~\ref{thm;apcdhKconn=K}]
For any Noetherian scheme $X$ with $\dim(X)<\infty$, there exists a natural equivalence
\[ (a_{\procdh} τ_{\geq 0}K)(X) \simeq K(X).\]
Here $K(X)$ is the non-connective algebraic $K$-theory of $X$ and $τ_{\geq 0}K(X)$ the connective $K$-theory.
\end{theo}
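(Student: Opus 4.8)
The plan is to leverage that non-connective $K$-theory is \emph{already} a procdh sheaf, so that sheafifying the truncation map $τ_{\geq 0}K \to K$ can only alter the negative-homotopy part, and then to show that this part is procdh-locally trivial. Throughout I take $S=X$, which is qcqs with Noetherian underlying space and finite valuative dimension because $X$ is Noetherian of finite Krull dimension, and I evaluate at the terminal object; the passage from $\cS$-valued to $\Sp$-valued sheaves is routine, with the homotopy-dimension bound of Theorem~\ref{theo:bounded} ensuring that the procdh sheafification of a bounded-below spectrum is computed by the expected limit. The key input is that $K$ satisfies both procdh excision conditions of Theorem~\ref{theo:descentConditions}: Nisnevich descent is Thomason--Trobaugh, and the pro-blowup equivalence \eqref{equa:procdhExcision} for $F=K$ is exactly the pro-cdh descent theorem of Kerz--Strunk--Tamme \cite{KST18} (after Morrow \cite{Mor18}). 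Hence $a_{\procdh}K \simeq K$, and the unit $τ_{\geq 0}K \to K$ induces a canonical comparison $a_{\procdh}τ_{\geq 0}K \to K$ of hypercomplete sheaves (Corollary~\ref{coro:procdhHypercomplete}).

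Next I would check this comparison on stalks, which suffices because the procdh $\infty$-topos has enough points (Theorem~\ref{theo:enoughPointsIntro}, Corollary~\ref{coro:enoughInfinityPoints}) and is hypercomplete, so a stalkwise equivalence of hypercomplete sheaves of spectra is an equivalence. Sheafification preserves stalks, and since $K$-theory is finitary the stalk at the procdh point with local ring $R$ is $τ_{\geq 0}K(R)$ on the left and $K(R)$ on the right, the map being truncation. The statement therefore reduces to the vanishing $K_{-n}(R)=0$ for all $n\geq 1$ and every procdh local ring $R$.

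Finally, Proposition~\ref{prop:procdhLocal} writes $R \cong \OO \times_K A$, and since $A \to K=A/\m_A$ is surjective this is a Milnor square
\[ \begin{array}{ccc} R & \to & A \\ \downarrow & & \downarrow \\ \OO & \to & K \end{array} \]
Negative $K$-theory carries Milnor squares to long exact Mayer--Vietoris sequences (Bass--Milnor), so $K_{<0}(R)=0$ follows by downward induction from $K_{<0}(K)=0$ ($K$ a field), $K_{<0}(A)=0$ ($A$ zero-dimensional), and $K_{<0}(\OO)=0$ (henselian valuation rings have no negative $K$-theory); at $n=1$ the connecting map out of $K_0(K)=\ZZ$ vanishes because $K_0(\OO)\oplus K_0(A)\to K_0(K)$ is already surjective. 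When $\OO$ or $A$ fails to be Noetherian I would first use Proposition~\ref{prop:colimOfSmaller} and Theorem~\ref{theo:colimfgNilredhvr} to present $R$ as a filtered colimit of smaller local rings of the same shape with finite-type, hence Noetherian, pieces, and conclude by finitariness of $K$.

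The main obstacle is twofold: the deep external inputs---pro-cdh descent for non-connective $K$ and the vanishing of negative $K$-theory for henselian valuation rings---and, more intrinsically to this paper, the verification that enough points together with the finite homotopy dimension genuinely license the reduction of the global statement to the pointwise vanishing. It is precisely this last point that makes the result interesting: the negative $K$-groups of a general $X$ are invisible on stalks, yet are reconstructed as the procdh cohomology of the connective sheaf $a_{\procdh}τ_{\geq 0}K$, which en route also recovers Weibel's bound $K_{-n}(X)=0$ for $n>\dim X$.
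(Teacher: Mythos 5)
Your proposal is correct and its skeleton matches the paper's proof: procdh descent for nonconnective $K$-theory of Noetherian schemes (the paper cites \cite{KST-Weibel}), finiteness of procdh cohomological dimension to make the local-to-global step legitimate, enough points to reduce to procdh local rings, and a vanishing statement for negative $K$-theory on stalks. Two points of divergence are worth recording. First, the paper does not compare the two sheaves of spectra stalkwise in the hypercomplete $\infty$-topos; it instead writes down the descent spectral sequences $H^p_{\procdh}(X,\widetilde{K}_{-q})\Rightarrow K_{-p-q}(X)$ and the connective analogue, notes both are bounded by Corollary~\ref{coro:finCohDim}, and reduces to the vanishing of the sheaves of abelian groups $\widetilde{K}_i$ for $i<0$, checked on points of the $1$-topos via Theorem~\ref{theo:procdhEnoughPoints}. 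This sidesteps the delicate point you rightly flag --- conservativity of stalks for unbounded sheaves of spectra does not follow from hypercompleteness of the topos of spaces alone --- whereas your route has to invoke Postnikov completeness coming from the finite cohomological dimension; your argument works, but this step deserves to be made explicit rather than deferred. Second, for the stalk vanishing $K_{-n}(R)=0$, $n\geq 1$, the paper argues $K_{-n}(R)=K_{-n}(R_\red)=0$ using nil-invariance of negative $K$-theory together with \cite[Thm.1.3]{KM} for the henselian valuation ring $R_\red$; your Mayer--Vietoris argument for the Milnor square $R\cong\OO\times_K A$ is a valid alternative buying essentially the same conclusion from the same external input. One small inaccuracy there: after applying Proposition~\ref{prop:colimOfSmaller} the pieces are not ``finite-type, hence Noetherian'' --- only the Artinian factors $A_\lambda=Q(R_\lambda)$ become finite length, while the valuation-ring factors $\OO_\lambda$ remain non-Noetherian in general --- so you still need the vanishing of \cite{KM} for arbitrary (henselian) valuation rings; this is harmless but the phrasing should be corrected.
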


The above equivalence is an analogue of the equivalence%
\footnote{%
That $KH$ has cdh descent 
was proven by Haesemeyer, \cite{Hae04},
for finite type schemes over a characteristic zero base field 
and for finite dimensional Noetherian schemes by Cisinski in \cite[Thm.3.9]{Cis13} using Ayoub's proper base change theorem. Given this cdh descent, the equivalence $(a_{cdh} \tau_{\geq 0})K(X) \simeq KH(X)$ follows from resolution of singularities (if existing), connectivity of $K(X)$ for $X$ regular, \cite[Prop.6.8]{TT90}, and the equivalence $K_{\geq 0}(X) \cong KH_{\geq 0}(X)$ for $X$ regular Noetherian, \cite[Thm.2.7]{Qui73}, \cite[Exa.1.4]{Wei89}. In the absence of resolution of singularities it can be deduced from homotopy invariance and connectivity for $K$-theory of valuation rings, \cite[Thm.1.3]{KM}, and the fact that hensel valuation rings form a conservative family of fibre functors for the cdh topology, \cite{GL01}, \cite{GK15}. The equivalence $(a_{cdh} \tau_{\geq 0})K(X) \simeq KH(X)$ for finite dimensional Noetherian schemes is also proven in the landmark Kerz-Strunk-Tamme paper \cite[Thm.6.3]{KST18}.
}
\[ (a_{cdh} \tau_{\geq 0})K(X) \simeq KH(X) \]
with $X$ still finite dimensional and Noetherian.

Using Theorem~\ref{thm;apcdhKconn=K} we propose the following construction of a non-$\AA^1$-invariant motivic cohomology. For $X$ smooth over a field $\FF$, let $Fil^n_{mot} K(X)$ denote the motivic filtration on $K$-theory whose associated spectral sequence is the Atiyah-Hirzebruch spectral sequence, \cite{FrSus} and \cite{Lev08},
\[ E_2^{p,q} = H_{\cM}^{p-q}(X, \ZZ(-q)) \implies K_{-p-q}(X). \]
Here $H_{\cM}^{m}(X, \ZZ(n)) = R^m\Gamma_{\Zar}(X, \Znsm)$ with $\Znsm \in \PSh(\Sm_\FF, D(\ZZ))$ the graded pieces of $Fil^n_{mot} K$ on the category $\Sm_\FF$ of smooth $\FF$-schemes. Note that $\Znsm$ is identified with Voevodsky's $\AA^1$-invariant motivic complex and Bloch's cycle complex.

\begin{defi}[Definition~\ref{def;Znpcdh}]
For integers $n\geq 0$, we define the \emph{procdh local motivic complex}
\[\Znpcdh:=a_{\procdh} \Lsm \Znsm \in \Shv_{\procdh}(\Sch^\qcqs_\FF,D(\ZZ))),\]
as the procdh sheafification of the left Kan extension $\Lsm \Znsm$ of $\Znsm$ along $\Sm_\FF \to \Sch^{\qcqs}_\FF$. Here $\Sch^{\qcqs}_\FF$ is the category of qcqs $\FF$-schemes.
\end{defi}

It follows essentially from the definition, together with a result of Bhatt-Lurie and the fact that the procdh site of a Noetherian scheme has finite cohomological dimension that we get an Atiyah-Hirzebruch spectral sequence.

\begin{theo}[Theorem~\ref{thm;AH-Kpcdh}]
For any Noetherian $\FF$-scheme $X$ with $\dim(X)<\infty$, there exists a 
complete 
multiplicative 
 decreasing $\NN$-indexed filtration $\biggl\{\Fpcdh^n K(X)\biggl\}_{n\in \NN}$ on $K(X)$ and identifications 
\[ \gr_{\Fpcdh}^n K(X) \simeq \Znpcdh(X)[2n].\]
\end{theo}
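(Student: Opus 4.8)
The plan is to obtain $\Fpcdh^\bullet K$ by transporting the motivic filtration from smooth schemes through left Kan extension and procdh sheafification, so that the graded pieces fall out of the definition of $\Znpcdh$. On $\Sm_\FF$ the motivic filtration $\Fmot^\bullet K$ is a complete multiplicative decreasing $\NN$-indexed filtration of the presheaf $\tau_{\geq 0}K|_{\Sm_\FF}$ of connective $K$-theory (note $\tau_{\geq 0}K = K$ on regular schemes, so $\Fmot^0 K = \tau_{\geq 0}K$), with $\gr^{n}_{\Fmot}K \simeq \Znsm[2n]$. Regarding this as one presheaf valued in filtered spectra $\mathrm{Fil}(\Sp)$, I would set
\[ \Fpcdh^n K := a_{\procdh}\Lsm\,\Fmot^n K . \]

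First I would compute the graded pieces. Since $\Lsm$ is a left adjoint and $a_{\procdh}$ is exact, both commute with cofibres and shifts, whence
\[ \gr^{n}_{\Fpcdh}K \simeq a_{\procdh}\Lsm\,\gr^{n}_{\Fmot}K \simeq a_{\procdh}\Lsm\bigl(\Znsm[2n]\bigr) \simeq \bigl(a_{\procdh}\Lsm\Znsm\bigr)[2n] = \Znpcdh[2n], \]
which is the asserted identification and is essentially just the definition of $\Znpcdh$ (Definition~\ref{def;Znpcdh}). Next, for the underlying object, $\Fpcdh^0 K = a_{\procdh}\Lsm\,\tau_{\geq 0}K$, and on a Noetherian finite-dimensional $X$ this is identified with $K(X)$ by Theorem~\ref{thm;apcdhKconn=K}; concretely one checks that the counit $\Lsm(\tau_{\geq 0}K|_{\Sm_\FF}) \to \tau_{\geq 0}K$ becomes a procdh-local equivalence, which follows from (the proof of) that theorem, whose conservative points are the procdh-local rings of Proposition~\ref{prop:procdhLocal}. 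Multiplicativity is inherited from the multiplicative filtration $\Fmot^\bullet$ because $\Lsm$ and $a_{\procdh}$ are lax symmetric monoidal.

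The main obstacle is completeness, since $a_{\procdh}$ does not commute with the inverse limit $\lim_n \Fpcdh^n K$ that defines it, so completeness on $\Sm_\FF$ does not descend formally. Here I would invoke finite cohomological dimension: by Theorem~\ref{theo:bounded}, for $X$ Noetherian of Krull dimension $d$ the procdh topos has homotopy dimension $\leq 2d$. Combining this with the connectivity and weight bound of $\Znsm$ shows that $\pi_j\bigl(\gr^{n}_{\Fpcdh}K(X)\bigr) = H^{2n-j}_{\procdh}(X,\Znpcdh)$ vanishes unless $j/2 \leq n \leq j + 2d$, so for each fixed degree $j$ only finitely many graded pieces contribute. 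This boundedness is precisely the hypothesis of the Bhatt--Lurie completeness and convergence criterion for filtered objects: the tower $\{\pi_j \Fpcdh^n K(X)\}_n$ is eventually zero, hence $\lim_n \Fpcdh^n K(X) = 0$, giving completeness, and the associated spectral sequence with $E_2$-page procdh motivic cohomology converges to $K_*(X)$. Assembling these steps yields the claimed filtration, its graded pieces, and the Atiyah--Hirzebruch spectral sequence.
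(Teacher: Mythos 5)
Your construction is the same as the paper's: set $\Fpcdh^\bullet K = a_{\procdh}\Lsm \Fmot^\bullet K$, read off the graded pieces from exactness of $\Lsm$ and $a_{\procdh}$ (this is indeed immediate from Definition~\ref{def;Znpcdh}), get multiplicativity from lax monoidality, and get completeness from the cohomological-dimension bound of Corollary~\ref{coro:finCohDim} combined with the fact that $\Znsm$ lives in cohomological degrees $\leq n$, exactly as in Remark~\ref{rmk;thm;AH-Kpcdh}. Those parts are fine.

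The gap is in the identification of the underlying object. You need \emph{two} independent inputs: (i) the counit $\Lsm(\tau_{\geq 0}K|_{\Sm_\FF}) \to \tau_{\geq 0}K$ is an equivalence, and (ii) $a_{\procdh}\tau_{\geq 0}K(X) \simeq K(X)$. Input (ii) is Theorem~\ref{thm;apcdhKconn=K}, which you cite correctly. But you then assert that (i) (or its procdh-local weakening) ``follows from (the proof of) that theorem, whose conservative points are the procdh-local rings.'' It does not. The proof of Theorem~\ref{thm;apcdhKconn=K} only shows that the negative procdh-sheafified $K$-groups vanish, via nil-invariance of negative $K$-theory and vanishing for valuation rings; it says nothing about left Kan extension from smooth schemes. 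Even restricted to procdh points, the statement $\Lsm(\tau_{\geq 0}K|_{\Sm_\FF})(R) \simeq \tau_{\geq 0}K(R)$ is a nontrivial theorem: already for the dimension-zero procdh local ring $R = k[\epsilon]/\epsilon^2$ it asserts that the relative $K$-theory of the dual numbers is computed as a filtered colimit over smooth algebras mapping to $R$, which is precisely the content of the Bhatt--Lurie result $K_{\geq 0} \simeq \Lsm(K|_{\Sm_\FF})$ cited in the paper as \cite[Ex.~A.0.6]{EHKSY}, and is not deducible from anything proved in this article. So you must import that result explicitly; once you do, the rest of your argument goes through and coincides with the paper's proof.
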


Recently, in \cite{EM} Elmanto-Morrow have also proposed a non-$\AA^1$-invariant motivic cohomology for qcqs $\FF$-schemes. We will write $\Znem$ for these presheaves. They are constructed by modifying the cdh sheafification $\Zncdh$ of the left Kan extension of $\Znsm$ along $\Sm_\FF \to \Sch^\qcqs_\FF$ by using Hodge-completed derived de Rham complexes in case $\FF=\QQ$ and syntomic complexes in case $\FF=\FF_p$.
The construction is motivated by trace methods in algebraic $K$-theory using the cyclotomic trace map $\tr$.

We conclude the article with the following comparison.

\begin{coro}[Corollary~\ref{cor;Znem-comparison}]
Let $\FF = \FF_p$ or $\QQ$ and take a Noetherian $\FF$-scheme $X$. Then there are equivalences 
\[ \Znpcdh(X) \simeq \Znem(X), \]
functorial in $X$.
 \end{coro}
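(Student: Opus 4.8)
The plan is to realise $\Znem$, like $\Znpcdh$, as the procdh sheafification of $\Lsm\Znsm$, and then to compare the two procdh sheaves on stalks. Both complexes restrict to Voevodsky's complex on smooth schemes: for $\Znpcdh$ this holds by construction, while $\Znem|_{\Sm_\FF} \simeq \Znsm$ is one of the defining features of the Elmanto--Morrow theory. The universal property of sheafification then produces, once we know $\Znem$ is a procdh sheaf, a canonical comparison map
\[ \Znpcdh = a_{\procdh}\Lsm\Znsm \longrightarrow \Znem, \]
natural in $X$, and the corollary amounts to showing this map is an equivalence (functoriality of the resulting equivalence being then automatic). The procdh topos over $\FF$ has enough points (Theorem~\ref{theo:enoughPointsIntro}) and is hypercomplete, so it suffices to check the map on procdh stalks, i.e. to prove $\Znpcdh(R) \simeq \Znem(R)$ for every procdh local ring $R$.

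The first and most substantial step is to verify that $\Znem$ satisfies procdh descent. By Theorem~\ref{theo:descentConditions} it is enough to check the two excision conditions. Distinguished Nisnevich excision and the cdh part are part of the Elmanto--Morrow package, so the real content is descent along proabstract blowup squares, namely the isomorphism of Eq.\eqref{equa:procdhExcision} involving the infinitesimal thickenings $Z_n = \uSpec\,\OO_X/\sI_Z^n$. Here I would reduce the statement for $\Znem$ to the corresponding pro-excision for the correction terms of Elmanto and Morrow: Hodge-completed derived de Rham cohomology when $\FF=\QQ$, and syntomic cohomology when $\FF=\FF_p$. Formal (pro-)descent for these theories along nilpotent thickenings is known in the spirit of Kerz--Strunk--Tamme, and combining it with the cdh descent of $\Zncdh$ should yield the required proabstract blowup excision; the two characteristic cases must be treated separately because the corrections are genuinely different.

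With $\Znem$ a procdh sheaf, I would compute both stalks using the explicit description of procdh local rings, $R \cong \OO \times_K A$ with $\OO$ a henselian valuation ring and $A$ local of Krull dimension zero (Proposition~\ref{prop:procdhLocal}). Using Proposition~\ref{prop:colimOfSmaller} and Theorem~\ref{theo:colimfgNilredhvr}, such an $R$ is a filtered colimit of smaller local rings of the same shape, and since both $\Znpcdh$ and $\Znem$ are finitary this reduces the comparison to manageable building blocks. On the henselian valuation ring $\OO$ the two complexes agree because valuation rings are already cdh points, on which the de Rham/syntomic correction is captured by the cdh-local term $\Zncdh$; the Artinian factor $A$ contributes exactly the nilpotent information, which the procdh side reconstructs through the pro-system $\{Z_n\}$ and the Elmanto--Morrow side through its de Rham/syntomic modification. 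Matching these two accounts of the nilpotent part gives the stalkwise equivalence. As an independent cross-check, both $\Znpcdh$ and $\Znem$ are the graded pieces of complete motivic filtrations on the same non-connective $K(X)$---ours by Theorem~\ref{thm;AH-Kpcdh} together with the identification $a_{\procdh}\tau_{\geq 0}K \simeq K$ of Theorem~\ref{thm;apcdhKconn=K}, and Elmanto--Morrow's by their own Atiyah--Hirzebruch spectral sequence---so comparing these two filtrations, both restricting to the motivic filtration on smooth schemes and both satisfying procdh descent, recovers the same conclusion.

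The main obstacle is exactly the procdh (proabstract blowup) descent for $\Znem$ together with the identification of its nilpotent correction: one must show that the de Rham/syntomic modification of $\Zncdh$ introduced by Elmanto and Morrow is precisely what is forced by imposing descent along the infinitesimal thickenings $Z_n$. Everything else---the comparison map, the reduction to stalks, and the structure of procdh local rings---is formal given the topos theory developed above, but this last matching is where the real input lies, and it is here that finite cohomological dimension of the procdh site (so that the relevant $\lim_n$ behave well) and the explicit colimit presentations of procdh local rings are essential.
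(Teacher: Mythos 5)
Your overall architecture agrees with the paper's: a comparison map out of $\Znpcdh=a_{\procdh}\Lsm\Znsm$ induced by the map on smooth schemes, reduction to procdh stalks via enough points and finite cohomological dimension, and use of finitariness together with Proposition~\ref{prop:colimOfSmaller} and Theorem~\ref{theo:colimfgNilredhvr} to control the local rings. (One remark on emphasis: the paper does not reprove procdh descent for $\Znem$; it cites it from Elmanto--Morrow. Your plan to rederive it from pro-excision for the de Rham/syntomic corrections is a much larger undertaking than the corollary requires.)

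The genuine gap is at the decisive step, the stalkwise equivalence. First, a misconception: sheafification does not change stalks, so the procdh stalk of $\Znpcdh$ at a procdh local ring $R$ is simply $\Lsm\Znsm(R)$ (by finitariness of the left Kan extension); it does not ``reconstruct the nilpotent information through the pro-system $\{Z_n\}$''. The statement to be proved is therefore that $\phi(R):\Lsm\Znsm(R)\to\Znem(R)$ is an equivalence for procdh local $R$ with $\length Q(R)$ and $\dim R$ finite, and your ``matching the two accounts of the nilpotent part'' is not an argument for this. The paper's mechanism is concrete: by a theorem of Elmanto--Morrow (their Th.~7.7) it suffices to show that $\Znem(R)$ is supported in cohomological degrees $\leq n$; this bound is then verified using the fibre sequences $\Nil\hLOH n {R/\QQ}\to\Znem(R)\to\Zncdh(R)$, resp. $\Nil\Znsyn(R)\to\Znem(R)\to\Zncdh(R)$, the connectivity statements of Lemma~\ref{lemm:nilhLO} and Lemma~\ref{prop:NilZnsyn} (the latter requiring Theorem~\ref{theo:colimfgNilredhvr} to reduce to finitely generated nilradicals before invoking the syntomic bound), and the identification $\Zncdh(R)=\Lsm\Znsm(R/\fN)$ together with the Zariski-local vanishing \eqref{LZnsm-vanishing}. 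Without some such input your stalk comparison does not close. Finally, your proposed cross-check is also not sound as stated: two complete filtrations on the same spectrum $K(X)$ that agree on smooth schemes need not have equivalent graded pieces unless one first produces a filtered comparison map, which is essentially the problem being solved.
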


\subsection{Future work} As mentioned above, a version of the material in this article for formal schemes is in progress. We also have a version for derived schemes.

\subsection{Notation and conventions}

Throughout we write $\Sch_S$ for the category of $S$-schemes of finite presentation over a scheme $S$.  %
We write $X^{\gen}$ for the set of generic points of a scheme $X$ equipped with the topology induced from the underlying topological space of $X$.

The first sections deal exclusively with presheaves of sets. When we pass to presheaves of spaces we will write $\PSh(-, \cS), \PSh(-, \Spt), \PSh(-, D(\ZZ))$, etc, and sometimes use $\PSh(-, \Set)$ if we want to emphasise that a presheaf takes values in discrete spaces. 

\subsection{Acknowledgements}

We are very grateful to Matthew Morrow for many interesting questions and discussions about the procdh topology and particularly for a very active scientific exchange without which the comparison theorem would not have been possible.

We thank Marc Hoyois for answering the first author's annoying emails about infinity categories.

We also thank Dustin Clausen and Ryomei Iwasa for their encouraging interest in the project.

Much of the research in this article was conducted while the first author was supported by JSPS KAKENHI Grant (19K14498) and Bilateral Joint Research Projects (120213206). The second author is supported by JSPS Grant-in-aid (B) \#20H01791 representative Shuji Saito.

\section{Definition of a procdh topology}

In this section we define a procdh topology.

\begin{defi} \label{defi:procdh}
Let $S$ be a scheme. The \emph{procdh topology} on $\Sch_S$ is generated by the following coverings.
\begin{enumerate} \setcounter{enumi}{-1}
 \item Zariski coverings.

 \item Distinguished Nisnevich coverings: families of the form
\[  \{ U \stackrel{i}{\to} X, V \stackrel{j}{\to} X\} \]
 such that $i$ is a quasi-compact open immersion, $j$ is an étale morphism, and $j$ is an isomorphism over $X \setminus U$ equipped with any (equivalently all) closed subscheme structure(s) of finite presentation.

 \item Proabstract blowup squares: families of the form
 \[ \{ Z_n \to X\}_{n \in \NN} \sqcup \{Y \to X\} \]
 where $Y \to X$ is a proper morphism (of finite presentation) which is an isomorphism outside of a closed subscheme $Z_0 \subseteq X$ of finite presentation, and $Z_n = \uSpec \OO_X / \sI_Z^n$ is the $n$th infinitesimal thickening of $Z_0$.
\end{enumerate}
\end{defi}

By convention the empty covering of the empty scheme is also a covering family. %
The following remarks can be skipped on a first reading.

\begin{rema} \label{rema:genDefi}
More precisely, a family $\cY = \{Y_i \to X\}_{i \in I}$ in $\Sch_S$ is a procdh covering if it can be refined by a composition of finite length of families of the above form. 

Even more precisely, $\cY$ is a procdh covering if there exists a rooted tree $T$ of finite height and a functor $T \to \Sch_S$, such that the root is sent to $X$, for each vertex $v$ the family $\{ W_c \to W_v \}_{c \in Child(v)}$ is of one of the above forms, and for each leaf $l$ there exists a factorisation $W_l \to Y_{i_{l}} \to X$ for some $i_l \in I$. 
\end{rema}

\begin{rema}
If $S$ is quasicompact, then all $S$-schemes of finite presentation are also quasicompact, so the $\procdh$ topology is generated by just distinguished Nisnevich coverings and proabstract blowup squares.
\end{rema}

\begin{rema}
Since the proabstract blowup square family $\{ Z_n \to X\}_{n \in \NN} \sqcup \{Y \to X\}$ contains the subfamily $\{ Z_0 \to X, Y \to X\}$, every cdh sheaf is a procdh sheaf.
\end{rema}



\begin{rema}
If one replaces $\Sch_S$ with the category $\Sch^{\qcqs}$ of quasi-compact quasi-separated schemes then one gets the coarsest topology such that for all $X \in \Sch^{\qcqs}$ the functor $\Sch_X \to \Sch^{\qcqs}$ is a continuous morphism of sites, \cite[00WV]{stacks-project}, \cite[Def.III.1.1]{SGA41}. Fibre functors of the topos $\Shv_{\procdh}(\Sch^{\qcqs})$ are inconvenient as they correspond to ind-rings, not only rings. Indeed, there are easy examples of non-zero sheaves of abelian groups in $\Shv_{\procdh}(\Sch^{\qcqs})$ which are zero when evaluated on every procdh local ring---a phenomenon also occurring in the Zariski topology.
Of course, for presheaves $F$ that preserve filtered colimits of rings, such as $K$-theory, the fibre $\colim_λ F(R_λ)$ at an ind-ring $``\colim" R_λ$ is isomorphic to the fibre $F(\colim R_λ)$ at its colimit $\colim R_λ$.
\end{rema}

\begin{warn}
In the cdh topology, the composition $L'$ of left Kan extension $L$ with sheafification fits into a commutative diagram
\[ \xymatrix{
\PSh(\Sch_\ZZ) \ar[r]^-L & \PSh(\Sch^{\qcqs}) \\
\Shv_{\cdh}(\Sch_\ZZ) \ar[r]^-{L'} \ar[u] & \Shv_{\cdh}(\Sch^{\qcqs}) \ar[u] 
} \]
because, up to refinement, any cdh-covering is the pullback of one in $\Sch_\ZZ$, and due to the fact that sheaves are characterised by excision squares. The corresponding procdh square does not necessarily commute.
\end{warn}

\begin{warn}
Similarly, when $k$ admits resolutions of singularities, the square
\[ \xymatrix{
\PSh(\Sm_k) \ar[r]^-L & \PSh(\Sch_k) \\
\Shv_{\cdh}(\Sm_k) \ar[r]^-{L'} \ar[u] & \Shv_{\cdh}(\Sch_k) \ar[u] 
} \]
is commutative, where $\Sm_k$ is equipped with the induced topology and, again, $L'$ is the composition of left Kan extension with sheafification. We see no reason for the corresponding procdh square to commute.
\end{warn}


\section{Local rings}

In this section we characterise procdh local rings, Prop.\ref{prop:procdhLocal}. We also discuss how a general procdh local ring can often be written as a filtered colimit of smaller local rings, Prop.\ref{prop:colimOfSmaller}, Theo.\ref{theo:colimfgNilredhvr}.
The latter is used in the proof of the comparison theorem, Corollary~\ref{cor;Znem-comparison}.

\begin{defi} \label{defi:FF}
Recall that a \emph{fibre functor} of a topos $\Shv_τ(C)$ is a continuous morphism of topoi $\phi^*: \Shv_τ(C) \rightleftarrows \Set: \phi_*$, or equivalently, a functor $\phi^*: \Shv_τ(C) \to \Set$ which preserves colimits and finite limits. 
\end{defi}

For topologies $τ$ on $\Sch_S$ for which every scheme is covered by affine ones (e.g., anything finer than the Zariski topology), there is a bijection between fibre functors of $\Shv_\tau(\Sch_S)$ and affine $S$-schemes $\Spec(R) \to S$ which are \emph{$τ$-local} in the following sense.

\begin{defi}[{cf.\cite[Thm.III.4.1]{SGA41}, 
\cite[I.8.10.14]{SGA41}, 
\cite[Cor.8.13.2]{EGAIV3}
}] \label{defi:tauLocal}
Let $τ$ be a topology on $\Sch_S$ such that every scheme is covered by affine ones. An affine $S$-scheme $\Spec(R) \to S$ is said to be \emph{$τ$-local} if for every $τ$-covering $\{Y_i \to X\}_{i \in I}$ the morphism of sets
\begin{equation} \label{equa:RYXEpi}
\coprod_{i \in I} \hom(\Spec(R), Y_i) \to \hom(\Spec(R), X)
\end{equation}
is surjective.
\end{defi}

\begin{exam}
The cdh local $S$-schemes are those $\Spec(R) \to S$ such that $R$ is a henselian valuation ring, \cite{GK15}, \cite{GL01}. 
\end{exam}

\begin{rema}
For many topologies, including the procdh topology as we will see below in Thm.\ref{theo:procdhEnoughPoints}, covering families are detected by the collection of fibre functors, \cite[Prop.IV.6.5]{SGA41}. That is, we could have defined procdh coverings as those families such that for every procdh local ring, Eq.\eqref{equa:RYXEpi} is surjective.
\end{rema}

\begin{prop}[Characterisation of procdh local rings] \label{prop:procdhLocal}
An affine $S$-scheme $\Spec(R) \to S$ is procdh local, Def.\ref{defi:tauLocal}, if and only if it is of the form
\[ R \cong \OO \times_K A \]
with $A$ a local ring of Krull dimension zero and $\OO \subseteq K$ a henselian valuation ring of $K {=} A / \m_A$. In other words, $R \subseteq A$ is the set of elements whose residue mod $\m_A$ lies in $\OO$.
\end{prop}

\begin{rema}
Explicitly, to get the fibre functor associated to a ring $R$ one first replaces $R$ with the pro-object $``\colim"_{\Spec(R) \to X \to S} X$ of $\Sch_S$ where the colimit is over factorisations with $X \in \Sch_S$. Then the fibre functor is evaluation of a sheaf $F \in \Shv_{\procdh}(\Sch_S)$ on this pro-object $F \mapsto ``\colim"_{\Spec(R) \to X \to S} F(X)$. 

This distinction between pro-objects and their limits becomes important if one wants to work with, say, the procdh site of all qcqs schemes with Noetherian topological space, since in this case, evaluation on a pro-object and evaluation on its limit give different answers in general.
\end{rema}

\begin{rema}
Since Nisnevich coverings are procdh coverings, every procdh local ring is necessarily a Nisnevich local ring. That is, rings of the form described in Prop.\ref{prop:procdhLocal} are henselian.
\end{rema}

\begin{rema}
For a ring of the above form, the ideal $\n = \Nil(R)$ of nilpotents is the unique minimal prime, and we necessarily have 
\[ A = R_\n,\  K = k(\n),\  \OO = R / \n. \]
Writing $Q(R)$ for the ring of total quotients, we also have 
\[ A = Q(R),\  K = Q(R)_{\red},\  \OO = R_\red. \]
\end{rema}

\begin{exam}\ 
\begin{enumerate}
 \item Every henselian valuation ring is a procdh local ring; in this case $R_\n = k(\n)$.
 \item Every dimension zero local ring is a procdh local ring; in this case
 $R/\n = k(\n)$.
 \item The ring $\ZZ_p + \QQ_p \epsilon \subseteq \QQ_p[\epsilon] / \epsilon^2$ is a procdh local ring.
\end{enumerate}
\end{exam}


\begin{proof}[Proof of Proposition~\ref{prop:procdhLocal}]
%
($\Rightarrow$) 
The following composition of a Zariski covering and the procdh covering 
\begin{equation}
\left \{ \uSpec \frac{\OO_S[x,y]}{\langle x^n, y^n  \rangle} \right \}_{n \in \NN} 
\sqcup 
\biggl \{ \uSpec \OO_S[x,\tfrac{y}{x}], \uSpec \OO_S[\tfrac{x}{y},y] \biggr \}
\end{equation}
shows that procdh local rings satisfy: 
\begin{enumerate}
 \item[{($*$)}] $\forall a, b \in R$;  we have $a | b$ or $b|a$ or $a$ and $b$ are both nilpotent.
\end{enumerate}
It follows from this that $R_{\red}$ is a valuation ring, and in particular, $R$ has a unique minimal prime ideal $\n$, which equals the set of nilpotents. All zero divisors are nilpotent by virtue of the procdh covering
\begin{equation}
\left \{ \uSpec \frac{\OO_S[x,y]}{\langle x^n, xy  \rangle} \right \}_{n \in \NN} \sqcup 
\left \{ \uSpec \frac{\OO_S[x,y]}{\langle y \rangle} \right \}
\end{equation}
of $\uSpec \frac{\OO_S[x,y]}{\langle xy \rangle}$, so $R \to R_\n$, and therefore $R \to (R / \n) \times_{k(\n)} R_\n$ is injective. We claim that the latter is also surjective. It follows from a diagram chase that $\n \to \n R_\n$ is surjective implies $R \to (R / \n) \times_{k(\n)} R_\n$ is surjective. 
\begin{equation} \label{equa:nRnRnR}
\xymatrix@R=12pt{
0 \ar[r] &
\n \ar[r] \ar[d] & 
R \ar[r] \ar[d] &
R / \n \ar[r] \ar[d] & 
0 \\
0 \ar[r] & 
\n R_\n \ar[r] &
R_\n \ar[r] &
R_\n / \n R_\n \ar[r] &
0
}
\end{equation}
So suppose we have $a \in \n$ and $s \in R \setminus \n$. We claim there is $b \in \n$ such that $b/1 = a/s$. Indeed, this follows from $a \in \n$, $s \in R \setminus \n$,  and ($*$).

So we have shown $R \to (R / \n) \times_{k(\n)} R_\n$ is both injective and surjective. The Krull dimension of $R_\n$ is zero because $\n$ is a minimal prime, and we have already observed that $R/\n = R_\red$ is a valuation ring, so it suffices to show that $R/\n$ is henselian. But procdh local rings are Nisnevich local rings, also known as henselian local rings, and quotients of henselian local rings are henselian local rings.

($\Leftarrow$). Suppose $R = \OO \times_K A$ as in the statement. 
We want to show that \eqref{equa:RYXEpi} is an epimorphism for all procdh coverings. Certainly it suffices to consider the generator coverings described in the definition, cf. Remark~\ref{rema:genDefi}.

%
%

We immediately notice that $R$ is henselian: $\OO = R_{\red}$ is henselian by assumption and $-\otimes_R (R_{\red})$ induces an equivalence ${\operatorname {Et}}_R \stackrel{\sim}{\to} {\operatorname {Et}}_{R_{\red}}$ of categories of étale algebras, \cite[039R]{stacks-project}.  So the desired lifting condition with respect to Nisnevich coverings is satisfied, cf.\cite[04GG, Item(7)]{stacks-project}.

Suppose we have a proabstract blowup square $\{Z_n \to X\}_{n \in \NN} \sqcup \{ Y \to X\}$ and a morphism $\Spec(R) \to X$. If the generic point $\Spec(K)$ of $\Spec(R)$ doesn't land in $Z_0$, then it lifts through $Y$ because $Y \to X$ is an isomorphism over $X \setminus Z_0$. By the valuative criterion for properness, this lifting extends to a lifting of $\Spec(\OO)$. Since $A$ is local %
the morphism $\Spec(K) \to Y$ also extends to $\Spec(A) \to Y$. These three morphisms factor through some open affine of $Y$, so they glue to give a lifting $\Spec(R) \to Y \to X$ since $\Spec(\OO \times_K A) = \Spec(\OO) \sqcup_{\Spec(K)} \Spec(A)$ is the categorical pushout in the category of affine schemes.

On the other hand, if $\Spec(K) \to X$ does factor through $Z_0$, then $\Spec(\OO) \to X$ also factors through $Z_0$. The morphism $\Spec(A) \to X$ doesn't necessarily factor through $Z_0$ but the (finitely many) generators of $\sI_{Z_0}$ are sent inside $\m_A = $ Nilpotents$(A)$ so $\Spec(A) \to X$ factors through some $Z_n$. Then we glue as in the previous case.
\end{proof}

\begin{rema}[{Cf.\cite{Kel23}}] \label{rema:genprocdh}
The proof of Proposition~\ref{prop:procdhLocal} shows that an $S$-scheme $\Spec(R) \to S$ is procdh local if and only if it is local with respect to Nisnevich coverings $\{V_i \to S\}_{i \in I}$ and the two coverings
\begin{equation}
\tag{2a} \label{equa:twodash}
\{\{0\}_n \to \AA^2\}_{n \in \NN} \sqcup \{Bl_{\AA^2} \{0\} \to \AA^2\}
\end{equation} 
and 
\begin{equation}
\tag{2b} \label{equa:twodash}
\left \{ \uSpec \frac{\OO_S[x,y]}{\langle x^n, xy  \rangle} \right \}_{n \in \NN} \sqcup 
\left \{ \uSpec \frac{\OO_S[x,y]}{\langle y \rangle} \right \}.
\end{equation}
If the topology generated by these coverings has enough points, e.g., if $S$ is countable, \cite[Theorem 6.2.4]{MR06}, then it follows that these coverings generate the procdh topology. For more details see \cite{Kel23}, specifically Observation 2.13.
\end{rema}

General procdh local rings can be quite large, but they are often filtered colimits of smaller local rings. The rest of this section is devoted to such reductions and some consequences, for example Corollary~\ref{coro:smallFamily}.

\begin{rede}[Prorh local rings]
One can define a version of the procdh topology omitting the Nisnevich coverings from Definition~\ref{defi:procdh} (but keeping Zariski ones). This could be called the \emph{prorh topology}. The same argument as in Proposition~\ref{prop:procdhLocal} then characterises prorh local rings as those rings of the form $\OO \times_K A$ with $A$ a local ring of dimension zero, and $\OO \subseteq K$ a (not necessarily henselian) valuation ring of $K = A /\m$. For the purposes of this article, let's just define a prorh local ring to be a ring of this form.
\end{rede}

One way to construct procdh local rings is to first build a prorh local ring, and then take the henselisation.

We write $Q(R)$ for the ring of total fractions. That is, $Q(R) = R[S^{-1}]$ where $S$ is the set of nonzero divisors.

\begin{prop} \label{prop:etaleProrh}
Suppose that $R$ is a prorh local ring. 
That is, $R = \OO \times_K A$ with $\OO$ a (not necessarily henselian) valuation ring, $A$ a dimension zero local ring, and $K = \Frac(\OO) \cong A/\m$. Let $R \to S$ be an étale morphism towards a local ring $S$.

Then $S$ is prorh local, and the henselisation $R^h$ is a procdh local ring.
Moreover, in this situation we have $\dim S \leq \dim R$, $\length Q(R) = \length Q(S)$ and $\dim R^h = \dim R$, $\length Q(R) = \length Q(R^h)$.
\end{prop}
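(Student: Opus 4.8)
The plan is to treat the two ``coordinates'' of a prorh local ring separately: its reduction $\OO = R_{\red}$ (the valuation ring) and its total quotient ring $A = Q(R) = R_\n$ (the dimension-zero local part), where $\n = \Nil(R)$ is the unique minimal prime. First I would check that the étale map $R\to S$ respects this decomposition. Since $R\to S$ is flat and $\n$ is nil, $\n S$ is nil; and $S\otimes_R\OO = S/\n S$ is étale over the valuation ring $\OO$, hence reduced, so in fact $\n S = \Nil(S)$ and $S_{\red} = S\otimes_R\OO$ is a local étale $\OO$-algebra. Localising instead at $\n$, the fact established in the proof of Proposition~\ref{prop:procdhLocal} that every zerodivisor of $R$ is nilpotent shows $R\setminus\n$ consists of nonzerodivisors of $S$; and the primes of $S\otimes_R A = S_\n$ are exactly those $\q$ of $S$ with $\q\cap R\subseteq\n$, i.e. $\q\cap R=\n$, which are minimal since the fibre over $\n$ is étale over the field $K$. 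Hence $S_\n$ is zero-dimensional and $Q(S) = S\otimes_R A$ is étale over the dimension-zero local ring $A$.

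The crux, and the step I expect to be the main obstacle, is the lemma that a local étale algebra $\OO'$ over a valuation ring $\OO$ is again a valuation ring. Étale morphisms preserve normality, so $\OO'$ is a normal ring; being local it is connected, and a connected normal ring is a normal domain. To upgrade ``normal domain'' to ``valuation ring'' I would embed $\OO'$ into the strict henselisation $\OO^{sh}$, which is known to be a valuation ring, via a local $\OO$-algebra map refining the finite separable residue extension $\OO'/\m_{\OO'}$ into the separably closed residue field of $\OO^{sh}$. The map $\OO'\to\OO^{sh}$ is ind-étale and local, hence faithfully flat, so $s\OO^{sh}\cap\OO' = s\OO'$ for every $s\in\OO'$; writing an element of $\OO^{sh}\cap\Frac(\OO')$ as $a/s$ then forces $a\in s\OO'$, giving $\OO' = \OO^{sh}\cap\Frac(\OO')$. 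As the restriction of the valuation ring $\OO^{sh}$ to the subfield $\Frac(\OO')$, this is a valuation ring. (Alternatively one may cite this fact directly.)

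Granting the lemma, $S_{\red} = \OO'$ is a valuation ring, so $S$ has a unique minimal prime and $Q(S) = S\otimes_R A$ is local; its generic fibre $S\otimes_R K = \OO'\otimes_\OO K = \Frac(\OO')$ is a field, so $Q(S)$ is zero-dimensional local with residue field $K' := \Frac(\OO')$. Hence $S\cong \OO'\times_{K'}Q(S)$ is prorh local. For dimensions, the surjection $R\to R_{\red}$ is a homeomorphism on spectra, so $\dim S = \dim S_{\red} = \dim\OO'$; writing $\q = \m_{\OO'}\cap\OO$, the local étale map $\OO_\q\to\OO'$ gives $\dim\OO' = \dim\OO_\q = \mathrm{ht}(\q)\le \dim\OO = \dim R$. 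For lengths, $A\to Q(S)$ is flat, local and unramified, so $\m_A Q(S) = \m_{Q(S)}$; dévissage along the $\m_A$-adic filtration, using $\dim_{K'}(V\otimes_K K') = \dim_K V$, yields $\length Q(S) = \length Q(R)$ (with the evident reading when the common value is infinite).

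Finally, for the henselisation I would write $R^h = \colim_\alpha S_\alpha$ as the filtered colimit of the pointed local étale $R$-algebras $S_\alpha$, each of which is prorh local, say $S_\alpha = \OO_\alpha\times_{K_\alpha}A_\alpha$, by the first part. Filtered colimits commute with the finite limit defining the fibre product, so $R^h = (\colim_\alpha\OO_\alpha)\times_{\colim_\alpha K_\alpha}(\colim_\alpha A_\alpha)$. Here $\colim_\alpha\OO_\alpha = (R^h)_{\red} = \OO^h$ is a filtered colimit of valuation rings along local injections, hence a valuation ring, and henselian by construction; while $\colim_\alpha A_\alpha = A^h$ is zero-dimensional local. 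Therefore $R^h = \OO^h\times_{K^h}A^h$ is procdh local by Proposition~\ref{prop:procdhLocal}. Since henselisation is faithfully flat, local, unramified and residue-field-preserving, the same dimension and dévissage arguments give $\dim R^h = \dim\OO^h = \dim\OO = \dim R$ and $\length Q(R^h) = \length A^h = \length A = \length Q(R)$.
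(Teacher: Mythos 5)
Your overall strategy is essentially the one the paper uses: base change the Milnor square $R=\OO\times_K A$ along the flat map $R\to S$, identify $S\otimes_R\OO$ as a local \'etale algebra over a valuation ring (hence a valuation ring), identify $S\otimes_R A$ as a zero\nobreakdash-dimensional local ring of the same length, and then pass to the henselisation as a filtered colimit, using that valuation rings are stable under filtered colimits and that filtered colimits commute with fibre products. Your hand\nobreakdash-made proof that a local \'etale algebra over a valuation ring is again a valuation ring (via the strict henselisation of $\OO_\q$ and the identity $\OO'=\OO^{sh}\cap\Frac(\OO')$) is a reasonable substitute for the paper's citation of \cite[0ASJ]{stacks-project}, and your dimension and length bookkeeping matches the paper's.

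The one step you assert rather than prove is ``Hence $S\cong \OO'\times_{K'}Q(S)$.'' Injectivity of $S\to S_{\red}\times_{K'}S_{\n S}$ does follow from what you established, but surjectivity does not come for free from knowing that the two factors are, respectively, a valuation ring and a zero\nobreakdash-dimensional local ring: one must check that $\Nil(S)\to\Nil(S)\cdot S_{\n S}$ is surjective (equivalently, that every element $a/u$ with $a\in\Nil(S)$, $u\notin \n S$ equals $b/1$ for some $b\in\Nil(S)$), which for the base ring $R$ was deduced in the proof of Proposition~\ref{prop:procdhLocal} from the divisibility property~($*$), a property you have not verified for $S$. It does hold here --- e.g.\ because $\n\to\n\otimes_R A=\m_A$ is surjective and hence so is $\n S\to \n S\otimes_R A$ after tensoring with the flat $R$-module $S$ --- but the cleanest fix is the one the paper uses: Ferrand's flat base change theorem for such conductor squares \cite[Thm.2.2(iv)]{Fer03} gives the isomorphism $S\stackrel{\sim}{\to}(S\otimes_R\OO)\times_{(S\otimes_R K)}(S\otimes_R A)$ directly, and the rest of your analysis then identifies the three terms as $S_{\red}$, $K'$ and $Q(S)$. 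With that point repaired the proof is complete.
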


\begin{proof}
The canonical morphism $S \to (S \otimes_R \OO) \times_{(S \otimes_R K)} (S \otimes_R A)$ is an isomorphism because $R \to S$ is flat, \cite[Thm.2.2(iv)]{Fer03}. Since $S$ is local, $S \otimes_R \OO = S \otimes_R (R_\red)$ is also local, and therefore a valuation ring of Krull dimension $\leq \dim \OO$, \cite[0ASJ]{stacks-project}. It follows that the étale $K$-algebra $S \otimes_R K$ is actually a finite separable extension of $K$, and since $(S \otimes_R A)_{\red} = (S \otimes_R (A_{\red}))_{\red} = (S \otimes_R K)_{\red}$, we find that $S \otimes_R A$ has exactly one prime ideal. %
Since $K \to S \otimes_R K$ is a field extension and $R \to S$ is flat, for any composition series $A \supset I_0 \supset I_1 \supset \dots$, the pullback $S \otimes_R A  \supset S \otimes_R I_0 \supset S \otimes_R I_1 \supset \dots$ is a composition series for $S \otimes_R A$. So $\length A = \length S \otimes_R A$.

In the case that $R \to S$ is a local homomorphism, $R_\red \to S_\red$ is also local, so it induces a bijection of value groups of these valuation rings, \cite[0ASF]{stacks-project}, and therefore a bijection of posets of prime ideals. Consequently, $\dim R = \dim R_\red = \dim S_\red = \dim S$. Now the henselisation $R^h$ is the colimit $\colim S_λ$ over étale algebras $R \to S_λ$ which are local homomorphisms of local rings. We have just seen that all transition morphisms $S_λ \to S_{μ}$ induce homeomorphisms on $\Spec$, so we deduce that the prime ideals of $R^h$ are in bijection with the prime ideals of $R$. Therefore, $\dim R^h = \dim R$. %
As above, since $K \to R^h \otimes_R K$ is a field extension and $R \to R^h$ is flat, pullback preserves composition series so $\length A = \length R^h \otimes_R A$.

It remains to show that $R^h$ is procdh local. We have $(R^h)_\red = (\colim S_λ)_\red = \colim S_{λ, \red}$ is a filtered colimit of valuation rings and therefore a valuation ring. It follows that $\n^h := \Nil(R^h)$ is a unique minimal prime ideal of $R^h$, and also that $R^h / \n^h$ is a \emph{henselian} valuation ring, since quotients of henselian rings are henselian. To conclude it suffices to show that $R^h \to (R^h / \n^h) \times_{k(\n^h)} (R^h_{\n^h})$ is an isomorphism. This follows the fact that filtered colimits commute with fibre products.
\end{proof}

\begin{prop} \label{prop:colimOfSmaller}
If $S$ is a Noetherian scheme (e.g., the spectrum of $\ZZ$ or a field) and $\Spec(R) \to S$ an affine $S$-scheme such that $R$ is procdh local, then $R$ is a filtered colimit of procdh local $\OO_S$-algebras such that each $\length Q(R_λ)$ is finite. If $S$ has finite valuative dimension, Def.\ref{defi:valuDIm}, then we can also assume each $R_λ$ has finite Krull dimension. 
\end{prop}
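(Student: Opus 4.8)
The plan is to run everything through the explicit description of Proposition~\ref{prop:procdhLocal}: write $R \cong \OO \times_K A$ with $A$ a zero-dimensional local ring, $K = A/\m_A$, and $\OO \subseteq K$ a henselian valuation ring. Since $\Spec(R)$ is local and $S$ is Noetherian, the structure map factors through an affine open $\Spec(\Lambda) \subseteq S$ with $\Lambda$ Noetherian, so I may treat $R$ as a $\Lambda$-algebra. I would then approximate each of the three pieces $A$, $K$, $\OO$ by a compatible filtered system of small ones, form the fibre products, and finally henselise, using Proposition~\ref{prop:etaleProrh} to control the invariants. The two formal facts that make the gluing work are that filtered colimits commute with fibre products (already used in Proposition~\ref{prop:etaleProrh}) and that henselisation commutes with filtered colimits.

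The crux is approximating $A$. I would let $A^0$ range over the filtered poset of finitely generated $\Lambda$-subalgebras of $A$ and set $A_\mu := (A^0)_{\p}$ where $\p := \Nil(A^0) = A^0 \cap \m_A$. The key observation is that $\p$ is the contraction of $\m_A$ along $A^0 \hookrightarrow A \to K$, hence a prime ideal; since it simultaneously equals the nilradical of $A^0$, it is the unique minimal prime of the Noetherian ring $A^0$ and therefore has height zero. Consequently $A_\mu = (A^0)_{\p}$ is a Noetherian local ring of dimension zero, i.e. Artinian of finite length, which is exactly what will make $\length Q(R_\mu)$ finite. These $A_\mu$ carry local transition maps, their reductions satisfy $\colim (A^0)_\red = A_\red = K$ (using $\m_A = \Nil(A)$), and so their residue fields $K_\mu := \kappa(\p) = \Frac((A^0)_\red)$ form a filtered system with $\colim K_\mu = K$ and $\colim A_\mu = A$.

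Next I would approximate $\OO$ compatibly by $\OO_\mu := \OO \cap K_\mu$, which is a valuation ring of $K_\mu$ (intersecting a valuation ring with a subfield), with $\colim \OO_\mu = \OO \cap K = \OO$. Then $R'_\mu := \OO_\mu \times_{K_\mu} A_\mu$ is prorh local, and since filtered colimits commute with fibre products, $\colim R'_\mu = \OO \times_K A = R$. Finally I set $R_\mu := (R'_\mu)^h$; by Proposition~\ref{prop:etaleProrh} each $R_\mu$ is procdh local with $\length Q(R_\mu) = \length Q(R'_\mu) = \length A_\mu < \infty$. Because henselisation commutes with filtered colimits and $R$ is already henselian, $\colim R_\mu = (\colim R'_\mu)^h = R^h = R$, which gives the first claim.

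For the refinement, when $S$ has finite valuative dimension I would note that $\dim R_\mu = \dim R'_\mu = \dim \OO_\mu$ (the kernel of $R'_\mu \to \OO_\mu$ is nilpotent, so $\Spec R'_\mu \cong \Spec \OO_\mu$), and that $\dim \OO_\mu$ is bounded by the valuative dimension of $S$, Def.~\ref{defi:valuDIm}, since $\OO_\mu$ is an $\OO_S$-valuation ring; hence $\dim R_\mu < \infty$. The main obstacle—and the only genuinely nonformal step—is the Artinian approximation of $A$: one must recognise that localising a finitely generated subalgebra at its nilradical yields a zero-dimensional, hence finite-length, local ring with the correct residue field. Everything else is a formal manipulation of filtered colimits, valuation rings, and the already-established Proposition~\ref{prop:etaleProrh}.
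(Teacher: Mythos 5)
Your construction is essentially the paper's: decompose $R=\OO\times_K A$ via Proposition~\ref{prop:procdhLocal}, approximate by prorh local rings built from finitely generated data, henselise, and invoke Proposition~\ref{prop:etaleProrh} together with the commutation of fibre products and henselisation with filtered colimits. The only real difference is the indexing: the paper runs over finitely presented $\OO_S$-algebras $B_\lambda$ \emph{mapping to} $R$ and extracts $\OO_\lambda,A_\lambda$ from the induced maps, whereas you run over finitely generated subalgebras $A^0\subseteq A$ and recover $\OO_\mu$ by intersecting. Both indexings work, and your Artinian approximation of $A$ (localising $A^0$ at $\Nil(A^0)=A^0\cap\m_A$) is correct.

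The one step that is not right as stated is the dimension bound: $\dim\OO_\mu$ is \emph{not} bounded by $\dim_v S$. Since you take $A^0\subseteq A$ rather than a subalgebra of (the image of) $R$, the image of $A^0$ in $K$ need not lie in $\OO$ (e.g.\ for $R=\ZZ_p+\QQ_p\epsilon$ one can have $A^0\ni 1/p$), so $\OO_\mu=\OO\cap K_\mu$ is not centred on $\Spec((A^0)_\red)$, and in any case even a centred valuation ring of the function field of a finite-type $S$-scheme can have rank exceeding $\dim_v S$ (cf.\ Lemma~\ref{lemm:valDimProp}(4)). What is true, and suffices, is that $\dim\OO_\mu<\infty$: choose finitely many generators of $K_\mu$ over the relevant residue field of $\OO_S$, replace each by its inverse if necessary so that they lie in the valuation ring $\OO_\mu$, and you get a finite-type integral model $C\subseteq\OO_\mu$ with $\Frac(C)=K_\mu$ on which $\OO_\mu$ is centred; then Lemma~\ref{lemm:valDimProp}(1),(5) gives finiteness. (The paper avoids this detour because its $\OO_\lambda=k(\p_\lambda)\cap(R/\n)$ automatically contains $B_\lambda/\p_\lambda$.) With that repair the argument is complete.
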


\begin{proof}
We immediately observe that since $R$ is local, we can assume $S$ is affine, say $S = \Spec(B_0)$. Then the idea is that for any factorisation $B_0 \to B \to R$, we can convert $B$ into a procdh local ring in a way which is functorial in $B$. If we write $R$ as a filtered colimit $R = \colim B_λ$ of $B_0$-algebras $B_λ$ of finite presentation, we can use this fact to functorially convert each $B_λ$ into a procdh local ring $R_λ$, giving the expression $R = \colim R_λ$. To conclude we observe that the $R_λ$ have the properties described in the statement if $B_0$ is Noetherian, resp. of finite valuative dimension. 

Now we carry out this plan. Let $B_0 \to B_λ \to R$ be any factorisation, and consider the preimage $\p_λ \subseteq B_λ$ of the minimal prime $\n$ of $R$. The valuation ring $R/\n$ of $R$ induces and factors through a valuation ring $\OO_λ = k(\p_λ) \cap (R / \n)$ of $k(\p_λ)$, and the localisation $R_\n$ of $R$ factors through the localisation $(B_λ)_{\p_λ}$. Form the epi-monic factorisation $(B_λ)_{\p_λ} \to\!\!\!\!\to A_λ \hookrightarrow R_\n$. 
As $\n R_\n = \Nil(R_\n)$, the preimage of this in $A_λ$ also consists entirely of nilpotents. Since this ideal of nilpotents is the maximal ideal of $A_λ$, it follows that $A_λ$ is a dimension zero local ring.  Glueing, we get a prorh local ring $P_λ := \OO_λ \times_{k({\p_λ})} A_λ$ and a commutative diagram
\[ \xymatrix@!R=6pt{
B_λ \ar[dr] \ar[rr] \ar[dd] && \overset{P_λ = }{\OO_λ \times_{k({\p_λ})} A_λ} \ar[dr] \ar[rr] \ar[dd]|\hole && R \ar[dr] \ar[dd]|\hole  && \\
& B_λ/{\p_λ} \ar[rr] \ar[dd] && \OO_λ \ar[dd] \ar[rr] && R / \n \ar[dd] & \\
(B_λ)_{\p_λ} \ar[dr] \ar[rr]|(0.45)\hole && A_λ \ar[dr] \ar[rr]|(0.6)\hole && R_\n \ar[dr] & \\
& k({\p_λ}) \ar@{=}[rr] && k({\p_λ}) \ar[rr] && k(\n) 
} \]
To conclude we take $R_λ = P_λ^h$ to be the henselisation, which is a procdh local ring by Proposition~\ref{prop:etaleProrh}. 

One checks that the choice of $\p_λ = $preimage$(\n) \subseteq B_λ$, and the constructions of $k({\p_λ})$, $\OO_λ = k(\p_λ) \cap (R / \n)$, $(B_λ)_{\p_λ}$, $A_λ = $image$\left ((B_λ)_{\p_λ} \to R_\n \right)$, $P_λ = \OO_λ \times_{k(\p_λ)} A_λ$, and $R_λ = P_λ^h$ are functorial in $B_λ$. Consider the system of factorisations $B_0 \to B_λ \to R$ with $B_0 \to B_λ$ of finite presentation. Note, $\colim B_λ = R$.
We get a factorisation 
\[ \colim B_λ \to \colim P_λ \to \colim R_λ \to R. \]
The composition is an isomorphism by construction. The maps $A_λ \to R_\n$ and $\OO_λ \to R / \n$ are injective by construction, so each $P_λ  = \OO_λ \times_{k(\p)} A_λ \to R$ is injective, so $ \colim P_λ \to R$ is injective, and therefore also an isomorphism. Finally, henselisation commutes with filtered colimits of local rings with local transition homomorphisms, \cite[07RP]{stacks-project}. %
So $\colim R_λ = \colim P_λ^h = (\colim P_λ)^h = R^h = R$.

If $B_0$ is Noetherian, then each $B_λ$ is also Noetherian, $(B_λ)_{\p_\lambda}$ is Noetherian, and the quotient $A_λ = Q(P_λ)$ is Noetherian, hence, of finite length. Included in Proposition~\ref{prop:etaleProrh} is that $\length Q(P_λ) = \length Q(R_λ)$.

If $B_0$ has finite valuative dimension, then so does each $B_λ$ and $B_λ /\p_\lambda$ so $\dim \OO_λ = \dim P_λ$ is finite. By Proposition~\ref{prop:etaleProrh} $\dim P_λ = \dim R_λ$.
\end{proof}

Sometimes finite length $Q(R)$ is not strong enough, and we would like $\Nil(R)$ to be finitely generated. This is not achievable for general procdh local rings.

\begin{exam}
Let $R$ be a procdh local ring such that $\n = \Nil(R)$ is finitely generated. Then $R = R_\n$. That is, $R$ is an local ring of Krull dimension zero. 

As $\n$ is finitely generated, there is some $n$ such that $\n^{n+1} = 0$ and $\n^n \neq 0$.\footnote{We allow $n = 0$, in which case $\n^0 = R$.}
Since $R = (R / \n) \times_{k(\n)} (R_\n)$ is defined by a Milnor square, 
the map $R \to R_\n$ induces an identification of $\n$ with $\n R_\n$, and consequently, 
an identification of $\n^n$ with $(\n R_\n)^n$. Since $\n^{n+1}$ (and therefore $(\n R_\n)^{n+1}$) is zero, $\n^n$ (resp. $(\n R_\n)^n$) is an $R/\n$-module (resp. $k(\n)$-vector space). Moreover, since $\n$ is a finitely generated $R$-module, so is $\n^n$. 
Changing notation to $\OO = R/\n$ and $M = \n^n$ to make the consequences of this clearer, we have a finitely generated nonzero module $M$ over a valuation ring $\OO$ such that $M \to M \otimes_{\OO} \Frac(\OO)$ is an isomorphism. This is only possible if $\OO = \Frac(\OO)$.%
\footnote{For valuation rings we have torsion free $\Leftrightarrow$ flat, and %
for integral domains we have finitely generated flat $\Leftrightarrow$ finitely generated projective, and %
for local rings we have projective $\Leftrightarrow$ free, so the isomorphism in question is isomorphic to the canonical inclusion $\OO^{\oplus r} \subseteq \Frac(\OO)^{\oplus r}$ for some $r > 0$.} %
Returning to the previous notation, we have shown that $R/\n = k(\n)$, and therefore $R = (R / \n) \times_{k(\n)} (R_\n) = R_\n$. 
\end{exam}

Now we start working towards Theorem~\ref{theo:colimfgNilredhvr} which says that we can get to finitely generated nilradical if we relax the conditions on the nilpotents slighty. 
Theorem~\ref{theo:colimfgNilredhvr}  is used in the proof of the comparison theorem, Corollary~\ref{cor;Znem-comparison}.

\begin{lemm} \label{lemm:section}
Suppose $R$ is a procdh local ring containing a field $\FF$ such that $Q(R)_\red / \FF$ is separably generated. Then the canonical projection $R \to R_\red$ admits a section in the category of $\FF$-algebras.
\end{lemm}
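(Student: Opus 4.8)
The plan is to invoke the structure theorem for procdh local rings (Proposition~\ref{prop:procdhLocal}) and reduce the statement to the existence of a coefficient field. Write $R \cong \OO \times_K A$, where $\OO = R_\red$ is a henselian valuation ring, $A = R_\n = Q(R)$ is a zero-dimensional local ring, and $K = Q(R)_\red = A/\m_A = \Frac(\OO)$. The projection $R \to R_\red$ is the first projection $(o,a) \mapsto o$, so by the pullback description an $\FF$-algebra section is the same datum as an $\FF$-algebra homomorphism $\sigma \colon \OO \to A$ lifting the inclusion $\OO \hookrightarrow K$ along $A \twoheadrightarrow K$; the section is then $o \mapsto (o, \sigma(o))$, which lands in $R$ precisely because $\OO \xrightarrow{\sigma} A \to K$ is the inclusion, and is $\FF$-linear because $\sigma$ is. I would produce $\sigma$ as the restriction to $\OO$ of an $\FF$-algebra section $K \to A$ of $A \to K$, i.e. a coefficient field for $A$ containing $\FF$; note that under these identifications the hypothesis is exactly that $K/\FF$ is separably generated.

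Next I would build the coefficient field in two steps along a separating transcendence basis. First, choose a separating transcendence basis $(x_i)_i$ of $K/\FF$ and arbitrary lifts $\tilde x_i \in A$. Since the $x_i$ are algebraically independent over $\FF$, every nonzero polynomial in them maps to a nonzero element of $K$, so its lift lies outside $\m_A = \Nil(A)$ and is therefore a unit of the local ring $A$; hence $\FF[\tilde x_i] \to A$ extends to the purely transcendental subfield $L := \FF(x_i) \to A$, lifting the inclusion $L \hookrightarrow K$. Second, I would extend along the separable algebraic extension $K/L$: writing $K$ as the filtered union of its finite separable subextensions $L(\theta) \subseteq K$, lift each primitive element $\theta$ to a root $\tilde\theta \in A$ of its (separable) minimal polynomial, with coefficients taken via $L \to A$, and assemble these into the desired map $K \to A$.

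The hard part will be the root-lifting in the second step. It would be immediate if $\m_A$ were nilpotent — one would just cite formal étaleness of $K/L$ — but a priori $\m_A$ is only a nil ideal. I would handle this by a direct Newton iteration: given a lift $a_0 \in A$ of the root $\theta$ of a separable polynomial $f$, the element $f(a_0)$ lies in $\m_A$ and is thus nilpotent, say $f(a_0)^N = 0$, while $f'(a_0)$ is a unit by separability; the iterates $a_{n+1} = a_n - f(a_n) f'(a_n)^{-1}$ then satisfy $f(a_n) \in (f(a_0))^{2^n}$, so $f(a_n) = 0$ as soon as $2^n \geq N$. This produces a root lifting $\theta$, and uniqueness of simple-root lifts makes the choices for different subextensions compatible, so they glue to a well-defined $\FF$-algebra map $K \to A$. (Equivalently, this verifies that any local ring whose maximal ideal is nil is henselian.) Restricting to $\OO$ yields $\sigma$, and packaging it into the pullback gives the required section.
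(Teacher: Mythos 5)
Your proof is correct and follows essentially the same route as the paper's: reduce to producing a coefficient field $K \to A$ splitting $A \to K = A/\m_A$, lift a separating transcendence basis arbitrarily (nonzero polynomials in the lifts land outside $\m_A = \Nil(A)$ and are therefore units), and then lift the separable algebraic part by a Hensel-type argument, gluing over the filtered system of finite subextensions. The only difference is in that last step: the paper factors the lift through $\FF_0[X]/\langle f(X)^n \rangle$ and invokes formal \'etaleness of the finite separable extension against a genuinely nilpotent kernel, whereas you run Newton iteration directly in $A$ using nilpotence of the single element $f(a_0)$ --- both correctly address the subtlety that $\m_A$ is only a nil ideal.
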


This essentially follows from $Q(R)_\red / \FF$ being formally étale, and $Q(R) \to Q(R)_\red$ being a nilpotent thickening, but we couldn't find a reference with exactly the statement we wanted.

\begin{proof}
Since $R = \OO \times_K A$ (with $\OO = R_\red$, $K = Q(R)_\red$, $A = Q(R)$), to get a factorisation $\OO \dashrightarrow R \to \OO$ of the identity (in the category of $\FF$-algebras), it is equivalent to find a factorisation $\OO \dashrightarrow A \to K$ of the canonical inclusion $\OO \subseteq K$ (in the category of $\FF$-algebras). Furthermore, since $K = \OO[(\OO \setminus \{0\})^{-1}]$ and an element of $A$ is a unit if and only if its image in $K$ is nonzero, giving such a factorisation is equivalent to giving a factorisation $K \dashrightarrow A \to K$ of the identity (in the category of $\FF$-algebras). That is, it suffices to solve the lifting problem
\[ \xymatrix{
K & \ar[l] \ar@{-->}[dl] \FF_1 \\
A \ar[u] & \ar[l] \ar[u] \FF_0
} \]
with $\FF_0 = \FF$ and $\FF_1 = K$.
We have assumed $K / \FF$ is separably generated, so by assumption there are elements $\{x_λ \in K\}_{λ \in \Lambda}$ indexed by a well-ordered set $\Lambda$ such that for every successor $λ < λ + 1$ in $\Lambda$ the element $x_{λ+1}$ is transcendental or finite separable over the subfield $K(x_μ : μ \leq λ)$. So by transfinite induction, we can assume that the field extension $\FF_1 / \FF_0$ is generated by a single element $x \in \FF_1$ which is transcendental or finite separable.

Suppose that $\FF_1 = \FF_0(x)$ with $x$ transcendental. Choosing any element of $A$ in the preimage of $x$, we get an induced factorisation $\FF_0[x] \dashrightarrow A \to K$. Since all non-zero polynomials $f(x) \in \FF_0[x]$ are non-zero in $\FF_0(x) \subseteq K = A / \m$, they are sent to units in $A$, so we get a factorisation $\FF_0(x) \dashrightarrow A \to K$.

Now suppose $x$ is finite separable with minimal polynomial $f(X) \in \FF_0[X]$, so $\FF_1 \cong \FF_0[X] / \langle f(X) \rangle$. Choose a lift $a \in A$ of $x \in \FF_1 \subseteq K$ and note that  $f(a) \in \m$ since $f(x) = 0$. Since $\m = \Nil(A)$, we have $f(a)^n = 0$ for some $n$, so we get an induced commutative diagram
\[ \xymatrix@!R=12pt{
K  & \ar[l] \FF_0[X] / \langle f(X) \rangle &\ar@{=}[l] \FF_1 \ar@{-->}[dl] \\
A \ar[u] & \ar[l] \FF_0[X] / \langle f(X)^n \rangle \ar[u]_\phi & \ar[l] \ar[u]_\iota \FF_0
} \]
with unique diagonal since $\iota$ is étale, and $\ker(\phi)$ is a nilpotent ideal, \cite[02HM]{stacks-project}.
\end{proof}

\begin{lemm} \label{lemm:fgNil}
Suppose $\FF$ is any ring and $R$ is any $\FF$-algebra such that $R \to R_\red$ admits a section in the category of $\FF$-algebras. Then we can write $R$ as a filtered colimit $R = \colim R_λ$ of sub-$\FF$-algebras $R_λ \subseteq R$ such that $(R_λ)_{\red} \cong R_\red$ but $\Nil(R_λ)$ is finitely generated for all $λ$.
\end{lemm}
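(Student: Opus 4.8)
The plan is to use the section $s : R_\red \to R$ to realise $R_\red$ as an $\FF$-subalgebra of $R$ and then to take the $R_\lambda$ to be the subalgebras generated over this copy of $R_\red$ by finitely many nilpotents. First I would record the structure coming from the section. Writing $\pi : R \to R_\red$ for the projection, the identity $\pi \circ s = \id$ shows that $s$ is injective and identifies $R_\red$ with a subring of $R$ which meets $\n := \Nil(R)$ only in $0$, since $R_\red$ is reduced. Moreover every $r \in R$ decomposes uniquely as $r = s\pi(r) + (r - s\pi(r))$ with $s\pi(r) \in s(R_\red)$ and $r - s\pi(r) \in \ker\pi = \n$, so $R = s(R_\red) \oplus \n$ as $R_\red$-modules. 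From here on I identify $R_\red$ with $s(R_\red) \subseteq R$.

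Next, for each finite subset $F = \{x_1, \dots, x_m\} \subseteq \n$ I would set $R_F := R_\red[F] \subseteq R$, the $\FF$-subalgebra generated by $R_\red$ together with $F$. As $F$ ranges over the directed poset of finite subsets of $\n$, the $R_F$ form a filtered system of $\FF$-subalgebras (inclusions inducing inclusions), and $\colim_F R_F = R_\red[\n] = R$, the last equality holding because the decomposition $R = R_\red \oplus \n$ already exhibits $R$ as generated over $R_\red$ by $\n$. So it remains only to analyse a single $R_F$. Since each $x_i$ is nilpotent, every monomial $x_1^{a_1}\cdots x_m^{a_m}$ of positive degree is nilpotent; hence $R_F$ is spanned over $R_\red$ by $1$ together with these nilpotent monomials, giving a decomposition $R_F = R_\red \oplus \mathfrak{n}_F$, where $\mathfrak{n}_F := R_F \cap \n$ is the $R_\red$-span of the positive-degree monomials. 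As nilpotency is intrinsic, $\mathfrak{n}_F = \Nil(R_F)$, and therefore $(R_F)_\red = R_F / \mathfrak{n}_F \cong R_\red$. Finally, every positive-degree monomial is divisible by some $x_i$, so $\mathfrak{n}_F = (x_1, \dots, x_m) R_F$ is finitely generated. Taking the $R_\lambda$ to be the $R_F$ then yields the claim.

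I expect the only step needing a little care to be the identity $\Nil(R_F) = (x_1,\dots,x_m) R_F$. One inclusion is immediate, since the $x_i$ are nilpotent, so the ideal they generate lies in the nilradical; the reverse inclusion is where the ambient splitting $R = R_\red \oplus \n$ does the work, forcing the degree-zero (constant) term of any element of $\mathfrak{n}_F \subseteq \n$ to vanish because $R_\red \cap \n = 0$, so that every such element is an $R_\red$-combination of positive-degree monomials and hence lies in $(x_1,\dots,x_m)R_F$. Everything else is formal manipulation of the section-induced decomposition and of filtered colimits of subalgebras.
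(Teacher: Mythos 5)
Your proof is correct and follows essentially the same route as the paper's: both form the subalgebras $R_\red[S]$ generated over the image of the section by finite sets of nilpotents, note that the resulting filtered colimit recovers $R$, and use the splitting $R = R_\red \oplus \Nil(R)$ to identify the nilradical of each $R_\red[S]$ with the (finitely generated) span of positive-degree monomials. Your write-up merely makes explicit the verifications that the paper leaves implicit, such as the identity $\Nil(R_F) = (x_1,\dots,x_m)R_F$ and the isomorphism $(R_F)_\red \cong R_\red$.
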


\begin{proof}
Given a set of elements $S \subseteq \Nil(R)$, write $R_\red[S] \subseteq R$ for the sub-$R_\red$-module generated by the monomials $x_{i_1}^{m_1}x_{i_2}^{m_2} \dots x_{i_n}^{m_n}$ for $x_i \in S$. Since elements of $\Nil(R)$ are nilpotent, if $S$ is finite, $\Nil(R_\red[S])$ is a finite $R_\red$-module.
As $S_\lambda$ ranges over all finite subsets of $\Nil(R)$, we obtain $R = \colim R_λ$ with $R_\lambda=R_\red[S_\lambda]$ as desired.
\end{proof}

\begin{theo} \label{theo:colimfgNilredhvr}
Every procdh local ring over a perfect field $\FF$ is a filtered colimit of $\FF$-algebras $R_λ$ (not necessarily procdh local rings) such that $\Nil(R_λ)$ is finitely generated and $(R_λ)_{\red}$ is a finite rank henselian valuation ring.
\end{theo}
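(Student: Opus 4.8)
The plan is to combine Proposition~\ref{prop:colimOfSmaller}, which produces finite rank valuation rings in the reductions, with Lemmas~\ref{lemm:section} and~\ref{lemm:fgNil}, which reduce to a finitely generated nilradical, and then to amalgamate the resulting two layers of filtered colimits into a single one.

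First I would apply Proposition~\ref{prop:colimOfSmaller} with $S = \Spec(\FF)$. Since $\Spec(\FF)$ is Noetherian of finite valuative dimension, this writes $R = \colim_\lambda R_\lambda$ as a filtered colimit of procdh local $\FF$-algebras $R_\lambda$ with $\length Q(R_\lambda)$ finite and $\dim R_\lambda$ finite. By Proposition~\ref{prop:procdhLocal} the reduction $(R_\lambda)_\red$ is a henselian valuation ring, and its rank equals $\dim (R_\lambda)_\red = \dim R_\lambda < \infty$, so $(R_\lambda)_\red$ is already a finite rank henselian valuation ring as required.

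The key point, and the one I expect to be the main obstacle, is to arrange that $Q(R_\lambda)_\red$ is separably generated over $\FF$, so that Lemma~\ref{lemm:section} applies: over a perfect field this is \emph{not} automatic, since e.g. $\FF_p(x^{1/p^\infty})$ is separable but not separably generated over $\FF_p$. What saves us is the explicit construction in Proposition~\ref{prop:colimOfSmaller}: there $(R_\lambda)_\red$ is the henselisation of a valuation ring $\OO_\lambda$ whose fraction field $k(\p_\lambda)$ is the residue field of a finite type $\FF$-algebra, hence finitely generated over $\FF$. As $\FF$ is perfect, $k(\p_\lambda)/\FF$ admits a separating transcendence basis, and by Proposition~\ref{prop:etaleProrh} the henselisation enlarges $k(\p_\lambda)$ only by a separable algebraic extension; by transitivity of separability for algebraic extensions, $Q(R_\lambda)_\red = \Frac((R_\lambda)_\red)$ is separable algebraic over a purely transcendental $\FF(t_1, \dots, t_d)$, i.e. separably generated over $\FF$.

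Granting this, Lemma~\ref{lemm:section} yields a section $(R_\lambda)_\red \to R_\lambda$ of $\FF$-algebras for each $\lambda$, whereupon Lemma~\ref{lemm:fgNil} expresses $R_\lambda = \colim_S R_{\lambda, S}$ as a filtered colimit of sub-$\FF$-algebras indexed by the finite subsets $S \subseteq \Nil(R_\lambda)$, with $\Nil(R_{\lambda,S})$ finitely generated and $(R_{\lambda,S})_\red \cong (R_\lambda)_\red$ the finite rank henselian valuation ring found above. Finally I would merge the two filtered colimits: the construction of Lemma~\ref{lemm:fgNil} is functorial in $\lambda$ (a finite $S \subseteq \Nil(R_\lambda)$ maps to its image in $\Nil(R_{\lambda'})$, inducing $R_{\lambda,S} = (R_\lambda)_\red[S] \to (R_{\lambda'})_\red[\mathrm{im}\,S]$), so the Grothendieck construction over the pairs $(\lambda, S)$ is again a filtered category and the diagram $(\lambda, S) \mapsto R_{\lambda,S}$ has colimit $\colim_\lambda \colim_S R_{\lambda,S} = \colim_\lambda R_\lambda = R$. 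Each $R_{\lambda,S}$ has finitely generated nilradical and finite rank henselian valuation reduction, which is exactly what is claimed.
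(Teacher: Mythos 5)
Your proposal is correct and follows the same route as the paper: reduce via Proposition~\ref{prop:colimOfSmaller}, apply Lemma~\ref{lemm:section} to split off the nilradical, then Lemma~\ref{lemm:fgNil}, and merge the two filtered colimits. In fact your proof is more careful than the paper's on one point: the hypothesis of Lemma~\ref{lemm:section} that $Q(R_\lambda)_\red/\FF$ be separably generated is not automatic over a perfect field (e.g.\ $\FF_p(x^{1/p^\infty})/\FF_p$ is separable but not separably generated), and the paper's one-line proof invokes the lemma without comment; your verification — that the construction in Proposition~\ref{prop:colimOfSmaller} produces $(P_\lambda)_\red$ with finitely generated fraction field $k(\p_\lambda)$ over the perfect field $\FF$, and that henselisation only adjoins separable algebraic elements — is exactly the missing justification. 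The amalgamation of the two colimits via the Grothendieck construction is also a detail the paper leaves implicit; it is standard but correctly handled here.
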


\begin{proof}
We can assume our initial ring $R$ has finite Krull dimension by Prop.\ref{prop:colimOfSmaller}. The map $R \to R_{\red}$ admits a section (as $\FF$-algebras) by Lem.\ref{lemm:section}, so then the result follows from Lem.\ref{lemm:fgNil}.
\end{proof}

%
%
%
%
%
%
%

\section{Nisnevich-Riemann-Zariski spaces} \label{sec:NRZspaces}

In this section we consider a Nisnevich version 
$\RZ(X_{\Nis})$ 
of the Riemann-Zariski space 
associated to a scheme $X$.
 These will be used as small sites. The main result of this section is Corollary~\ref{coro:RZColimitLim} which says that for each $X \in \Sch_S$, the canonical comparison functor $\Shv_{\procdh}(\Sch_S) \to \Shv(\RZ(X_{\Nis}))$ preserves colimits and finite limits, at least if $S$ is a qcqs scheme with Noetherian topological space. This result will be used in Section~\ref{sec:enoughPoints} to show that the topoi have enough points, and in Section~\ref{sec:homotopyDimension} to show that the homotopy dimension is finite.

\begin{defi}
By \emph{modification} we will mean a morphism of schemes $Y \to X$ which is proper, of finite presentation, and an isomorphism over a dense qc open $D \subseteq X$. We write $\Mod_X \subseteq \Sch_X$ for the full subcategory of modifications.
\end{defi}

\begin{rema} \label{rema:ModLimits} 
 If $X$ is qcqs and $Y' \to Y$, $Y'' \to Y$ are morphisms in $\Mod_X$ then $Y' \times_Y Y''$ is again in $\Mod_X$. In particular, $\Mod_X$ admits finite limits, calculated in $\Sch_X$, and is therefore is filtered.
\end{rema}

We do not ask modifications to be birational so that finite limits in $\Mod_X$ are more nicely behaved. We can of course often refine any object in $\Mod_X$ by one which is birational to $X$, Lem.\ref{lemm:rema:genXY}\eqref{lemm:rema:genXY:bir}.

%

\begin{defi}
Let $S$ be a qcqs scheme. For $X \in \Sch_S$ we define
\[ \RZ(X_{\Nis}) = \int_{Y \in \Mod_X} Y_{\Nis}. \]
Explicitly, $\RZ(X_{\Nis}) \subseteq \Arr(\Sch_X)$ is the category whose objects are morphisms $U \to Y$ such that $U \in Y_{\Nis}$ and $Y \in \Mod_X$, and morphisms are commutative squares
\[ \xymatrix{
U' \ar[d] \ar[r] & U \ar[d] \\
Y' \ar[r] & Y
} \]
We abbreviate $U \to Y$ to $(U/Y)$.
\end{defi}

\begin{rema}
As it is a category of arrows in a category admitting finite limits, $\Arr(\Sch_X)$ admits finite limits and they are calculated component wise: $\lim(A_i/B_i) = (\lim A_i/ \lim B_i)$. If each $(A_i/B_i)$ is in $\RZ(X_{\Nis})$, then one checks that $\lim(A_i/B_i)$ is again in $\RZ(X_{\Nis})$.\footnote{We observed $\lim B_i$ is in $\Mod_X$ in Remark~\ref{rema:ModLimits}. The fastest way to check that $\lim A_i \to \lim B_i$ is étale, is probably to observe that it is formally étale and of finite presentation, \cite[02HG, 00UR]{stacks-project}.} That is $\RZ(X_{\Nis})$ admits finite limits, and they are calculated termwise.
\end{rema}

%
%

\begin{defi} \label{defi:RZtop}
The category $\RZ(X_{\Nis})$ is canonically equipped with the Grothendieck topology generated by:
\begin{enumerate}
 \item families of the form
\begin{equation} \label{equa:RZNisCov} \tag{Nis}
\{ (U_i/Y) \to (U/Y) \}_{i \in I}	
\end{equation}
such that $\{U_i \to U\}$ is a Nisnevich covering, and
 
 \item families of the form
\begin{equation} \label{equa:cartCov} \tag{Car}
\{(Y' \times_Y U /Y') \to (U/Y)\} 
\end{equation}
for morphisms $Y'\to Y$ in $\Mod_X$.
%
\end{enumerate}
We will write $\Shv(\RZ(X_{\Nis}))$ for the topos associated to the topology generated by coverings of the form \eqref{equa:RZNisCov} and \eqref{equa:cartCov}.
\end{defi}


\begin{rema} \label{rema:carLoc}
Since the diagonal of a modification is again a modification, a presheaf (of sets) satisfies descent for all families \eqref{equa:cartCov} if and only if it sends each $(Y' \times_Y U /Y') \to (U/Y)$ to an isomorphism. Consequently, 
\[ \Shv_{\car}(\RZ(X_{\Nis})) = \lim_{Y \in \Mod_X} \PSh(Y_{\Nis}), \]
where the limit is along pushforwards $f_*: \PSh(Y'_{\Nis}) \to \PSh(Y_{\Nis})$ for morphisms $f: Y' \to Y$ in $\Mod_X$. This implies
\begin{equation}
\Shv(\RZ(X_{\Nis})) = \lim_{Y \in \Mod_X} \Shv_{\Nis}(Y_{\Nis}).
\end{equation}
\end{rema}

\begin{rema} \label{rema:modToEquivSpaces}
The same is true for presheaves of spaces. Suppose that $F$ satisfies descent for families of the form \eqref{equa:cartCov}. If $Y' \to Y$ in $\Mod_X$ is a closed immersion then $(Y')^{\times_Y n} = Y'$ so for any $(U/Y)$ in $\RZ(X_{\Nis})$ we have 
\begin{equation} \label{equa:UYUY}
F(U/Y) = \lim_n F((U'/Y')^{\times_{(U/Y)} (n+1)}) = \lim_n F(U'/Y') = F(U'/Y')
\end{equation}
 where $U' = Y' \times_Y U$. It follows that \eqref{equa:UYUY} also holds for a general $Y' \to Y$ in $\Mod_X$, since each diagonal $Y' \to (Y')^{\times_Y n}$ is a closed immersion in $\Mod_X$. Conversely, if $F$ sends families of the form \eqref{equa:cartCov} to equivalences, then it clearly satisfies \v{C}ech descent for such families.
\end{rema}

%


\begin{prop} \label{prop:sheafificationCalc}
Let $X$ be a qcqs scheme and suppose $F \in \PSh(\RZ(X_{\Nis}))$ has descent for the coverings \eqref{equa:RZNisCov}. Then the sheafification $aF \in \Shv(\RZ(X_{\Nis}))$ satisfies
\begin{equation} \label{equa:aFUY}
aF(U/Y) = \colim_{Y' \in (\Mod_X)_{/Y}} F(Y' \times_Y U / Y').
\end{equation}
The same is true for presheaves of spaces.
\end{prop}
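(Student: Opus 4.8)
The plan is to separate the two generating families \eqref{equa:RZNisCov} and \eqref{equa:cartCov} and to sheafify for the cartesian families first. Write $\car$ for the topology on $\RZ(X_\Nis)$ generated by the families \eqref{equa:cartCov} alone. A presheaf is a sheaf for the full topology if and only if it is local for each of the two generating families, i.e. if and only if it is both a $\car$-sheaf and Nisnevich-local. Since $F$ is assumed Nisnevich-local, it is then enough to prove two things: (a) the right-hand side of \eqref{equa:aFUY}, which I will call $G$, is the $\car$-sheafification $a_\car F$; and (b) $a_\car F$ is again Nisnevich-local. Granting these, $G$ is a sheaf for the full topology, the unit $F\to G$ is a $\car$-local equivalence and hence a local equivalence for the finer full topology, so $a_{\tau}F = a_{\tau}(a_\car F) = a_\tau G = G$, which is the claim.

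First I would record that the colimit in \eqref{equa:aFUY} is \emph{filtered}: because $F$ is contravariant, $Y'\mapsto F(Y'\times_Y U/Y')$ is a functor on $((\Mod_X)_{/Y})^{\op}$, and $(\Mod_X)_{/Y}$ is cofiltered since $\Mod_X$ admits finite limits (Remark~\ref{rema:ModLimits}, using that $X$ is qcqs). Functoriality of $G$ in $(U/Y)$ and the unit $F\to G$ (insertion of the term $Y'=Y$) are then routine. For the $\car$-locality of $G$---that each cartesian morphism is sent to an equivalence---I would invoke cofinality: base change $W\mapsto W\times_Y Y'$ is right adjoint to post-composition $(\Mod_X)_{/Y'}\to(\Mod_X)_{/Y}$, so on opposite categories post-composition becomes a right adjoint and hence cofinal functor, along which the diagram computing $G(U/Y)$ restricts---using $V\times_{Y'}(Y'\times_Y U)=V\times_Y U$---to the one computing $G(Y'\times_Y U/Y')$. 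This gives $G(U/Y)\xrightarrow{\sim}G(Y'\times_Y U/Y')$.

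To identify $G$ with $a_\car F$ I would then verify the universal property directly. For any $\car$-sheaf $H$, Remark~\ref{rema:carLoc} gives $H(Y'\times_Y U/Y')\cong H(U/Y)$ for every $Y'\to Y$, so $\colim_{Y'}H(Y'\times_Y U/Y')\cong H(U/Y)$ as a filtered---hence weakly contractible---colimit of a constant diagram. Precomposition with $F\to G$ therefore induces an equivalence $\Map(G,H)\xrightarrow{\sim}\Map(F,H)$, natural in $H$; combined with the $\car$-locality of $G$ from the previous step, this exhibits $F\to G$ as the $\car$-sheafification, so $G\cong a_\car F$.

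It remains to see that $G=a_\car F$ is still Nisnevich-local. For fixed $Y'$ the functor $U\mapsto F(Y'\times_Y U/Y')$ is a Nisnevich sheaf on $Y_\Nis$, since the étale base change $Y'\times_Y(-)$ preserves distinguished Nisnevich squares and $F$ restricts to a Nisnevich sheaf on $Y'_\Nis$; and $G(-/Y)$ is the filtered colimit of these over the fixed index $((\Mod_X)_{/Y})^{\op}$. Because Nisnevich descent can be tested on the (finite-limit) cd-structure conditions---value $\ast$ on $\emptyset$ and distinguished squares sent to pullback squares---and filtered colimits commute with finite limits in $\Set$ and in $\cS$, the presheaf $G(-/Y)$ is again a Nisnevich sheaf. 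Reading every colimit, the cofinality step, and the constant-diagram collapse $\infty$-categorically gives the statement for presheaves of spaces verbatim. The main obstacle is exactly this last preservation step: it relies on the colimit being genuinely filtered---equivalently, on the finite-limit/cofiltered structure of $\Mod_X$ together with the contravariance of $F$---so that the finite limits encoding Nisnevich descent are undisturbed; by contrast the cartesian part is soft, being a formal matter of cofinality and localization.
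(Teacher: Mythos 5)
Your proposal is correct, and its computational core coincides with the paper's: both arguments rest on (i) the filteredness of the index category $((\Mod_X)_{/Y})^{\op}$ coming from finite limits in $\Mod_X$, (ii) the fact that the colimit formula inverts the cartesian morphisms \eqref{equa:cartCov} because it is indexed over modifications, and (iii) commutation of filtered colimits with finite limits to see that descent for \eqref{equa:RZNisCov} survives the colimit. Where you diverge is in certifying that the resulting sheaf is the sheafification. The paper does this in one stroke: having checked that $aF$ is a sheaf for the full topology, it observes that $a$ restricts to the identity on sheaves (sheaves already invert modifications by Remarks~\ref{rema:carLoc} and \ref{rema:modToEquivSpaces}), which pins $a$ down as the sheafification. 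You instead factor through the intermediate topology $\car$ generated by \eqref{equa:cartCov} alone, verify the universal property $\Map(G,H)\xrightarrow{\sim}\Map(F,H)$ against $\car$-sheaves $H$ via the collapse of a filtered colimit of a constant diagram, and then pass to the finer topology; your cofinality argument (base change being right adjoint to postcomposition on slices) is a clean justification of the $\car$-locality that the paper leaves implicit. The one place you should add a line is the ``therefore'' in the universal-property step: knowing that $G$ is $\car$-local and that the construction $L$ fixes $\car$-local objects does not by itself give $\Map(LF,H)\simeq\Map(F,H)$; one also needs $L(\eta_F)$ to be an equivalence (the standard $L\eta\simeq\eta L$ recognition criterion for localizations). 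This follows by the same cofinality and constant-diagram observations you already made, so it is not a gap in substance, but it is a step worth recording --- it is also the step the paper's own ``it suffices to show that $F\to aF$ is an equivalence for $F$ a sheaf'' quietly relies on.
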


\begin{proof}
%
%
First we show that the presheaf $aF$ defined via \eqref{equa:aFUY} is a sheaf. By definition, a presheaf on $\RZ(X_{\Nis})$ is a sheaf if and only if it has descent for coverings of the form \eqref{equa:RZNisCov} and \eqref{equa:cartCov} in Definition~\ref{defi:RZtop}. %
The presheaf $aF$ in the statement certainly sends modifications to isomorphisms, resp. equivalences, so it has descent for coverings of the form \eqref{equa:cartCov} by remarks \ref{rema:carLoc} and \ref{rema:modToEquivSpaces}. 

For Nisnevich coverings, we notice that a presheaf $F$ has descent for coverings of the form \eqref{equa:RZNisCov} if and only if the restriction to the small Nisnevich site $Y_{\Nis}$ for each $Y \in \Mod_X$ has Nisnevich descent if and only if it sends distinguished Nisnevich squares to cartesian squares. If $\{U_0 \to U, U_1 \to U\}$ is a distinguished Nisnevich square, and $(U/Y) \in \RZ(X_{\Nis})$ then for any $Y' \to Y$ in $\Mod_X$ we have 
\[ F(Y' \times_Y U / Y') = F(Y' \times_Y U_0 / Y') \times_{F(Y' \times_Y U_{01} / Y')} F(Y' \times_Y U_1 / Y') \]
where $U_{01} = U_0 \times_U U_1$ by the assumption that $F$ has descent for \eqref{equa:RZNisCov}. Taking the colimit over $Y'$ and using the fact that filtered colimits commute with fibre products we find 
\[ aF(U / Y) = aF(U_0 / Y) \times_{aF(U_{01} / Y)} aF(U_1 / Y). \]

So $aF$ is a sheaf. To conclude that $a$ is the sheafification functor, it suffices to show that if $F$ is already a sheaf, then $F \to aF$ is an equivalence. But this is clear, since sheaves send modifications to isomorphisms, Rem.\ref{rema:carLoc}, resp.\ equivalences, Rem.\ref{rema:modToEquivSpaces}.
%
\end{proof}

\begin{defi}
Let $S$ be a qcqs scheme. For $X\in \Sch_S$, we consider the canonical projection functor
\[ \rho_X: \RZ(X_{\Nis}) \to \Sch_S; \quad (U/Y) \mapsto U \]
and the functor induced by composition
\[ \PSh(\Sch_S) \to \PSh(\RZ(X_{\Nis})); \qquad F \mapsto F \circ \rho_X. \]
By composing this with the sheafification functor $\PSh(\RZ(X_{\Nis})) \to \Shv(\RZ(X_{\Nis}))$, we get  
\begin{equation}\label{rhoX}
\rho^*_X: \Shv_{\procdh}(\Sch_S) \stackrel{}{\to} \Shv(\RZ(X_{\Nis})). 
\end{equation}
\end{defi}

\begin{rema}\label{rema;rhoX}
Using Proposition~\ref{prop:sheafificationCalc} we have the following concrete description.
\[ (\rho^*_X F)(U/Y) = \colim_{Y' \in \Mod_X} F(Y' {\times}_X U). \]
\end{rema}

Recall that a morphism of sites $\phi: C \to D$ is \emph{cocontinuous} if for every $U \in C$ and covering family $\sU = \{U_i \to \phi U \}_{i \in I}$ there is a covering family $\{V_i \to U_i\}$ such that $\{\phi V_i \to \phi U\}_{i \in I}$ refines $\sU$, \cite[Def.III.2.1]{SGA41}, \cite[00XJ]{stacks-project}.

\begin{prop} \label{prop:cocont}
Let $X$ be a qcqs scheme whose underlying topological space is Noetherian.
Then the morphism $\rho$ is cocontinuous. 
%
%
\end{prop}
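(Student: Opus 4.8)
The plan is to verify the lifting property of cocontinuity directly against the three generating classes of procdh coverings of $U = \rho_X(U/Y)$, and then to bootstrap to arbitrary procdh coverings. For the bootstrap I would use the finite tree description of Remark~\ref{rema:genDefi}: given a procdh covering $\sU$ of $U$, refine it by a finite-height tree of generating coverings in $\Sch_S$ and lift this tree one level at a time. At each node we will have an object $(U'/Y')$ of $\RZ(X_\Nis)$ with $\rho_X(U'/Y') = U'$, and the per-generator lift (established for an arbitrary such object) produces an $\RZ(X_\Nis)$-covering whose $\rho_X$-image refines the child coverings; since the tree has finite height this terminates and assembles to a lift of $\sU$.

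For Zariski and distinguished Nisnevich coverings $\{U_i \to U\}$ of $U$ the lift is immediate: $\{(U_i/Y) \to (U/Y)\}$ is a covering of the form \eqref{equa:RZNisCov}, and its $\rho_X$-image is literally $\{U_i \to U\}$.

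The essential case is a proabstract blowup square $\{Z_n \to U\}_{n \in \NN} \sqcup \{Y' \to U\}$, with $Y' \to U$ proper of finite presentation and an isomorphism over $U \setminus Z_0$, where $Z_0 = V(\sI)$ and $\sI$ is finitely generated. Since the underlying space of $U$ is Noetherian it has finitely many components; after a preliminary modification I may assume $U$ has no embedded components, and I split it into its finitely many primary components $\Spec(B/\q_j)$, calling a component \emph{thick} if its generic point lies in $Z_0$ and \emph{thin} otherwise. On a thin component $Z_0$ is nowhere dense, so $Y'$ restricts there to a modification; on a thick component every generator of $\sI$ lies in $\p_j = \sqrt{\q_j}$ and is therefore nilpotent in $B/\q_j$, whence $\sI^{N}=0$ on it for some $N$ and the component factors through $Z_N$. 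I then assemble a single modification $\tilde U \to U$ as the disjoint union, over the components, of a modification of $Y'|_{\Spec(B/\q_j)}$ for each thin $j$ and of $\Spec(B/\q_j)$ itself for each thick $j$. This $\tilde U \to U$ is a modification (a composite of the disjoint-union-of-components modification with componentwise modifications), its pieces are open and closed in $\tilde U$ hence form a Zariski covering, and by construction each piece maps to $U$ through $Y'$ (thin case) or through some $Z_N$ (thick case). Thus the image family refines the proabstract blowup.

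It remains to realize this data inside $\RZ(X_\Nis)$. Because $U$ is étale over $Y \in \Mod_X$, I would extend the modification $\tilde U \to U$ to a modification $\tilde Y \to Y$ —after enlarging $Y$ within the filtered category $\Mod_X$, Remark~\ref{rema:ModLimits}— using Lemma~\ref{lemm:rema:genXY}; then $(\tilde Y \times_Y U / \tilde Y) \to (U/Y)$ is a covering of the form \eqref{equa:cartCov}, and the open-closed decomposition of $\tilde U$ is a covering of the form \eqref{equa:RZNisCov}, whose composite image refines the blowup. I expect the realizability step to be the main obstacle: producing a modification of $Y$ (equivalently of $X$) that pulls back to the chosen modification $\tilde U$ of the étale $Y$-scheme $U$. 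This is exactly where the Noetherian-space hypothesis (finiteness of components, finite presentation of the splitting) and Lemma~\ref{lemm:rema:genXY} enter; a secondary point to check is that the factorizations through $Y'$ and through $Z_N$ hold on each entire piece rather than merely generically, which is ensured respectively by building an honest morphism $\tilde U \to Y'$ on the thin part and by the nilpotence of $\sI$ on the thick components.
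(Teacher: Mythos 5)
Your overall architecture matches the paper's: reduce to generating coverings by induction on the tree of Remark~\ref{rema:genDefi}, dispose of Nisnevich coverings immediately, and split the proabstract blowup case according to whether a generic point of $U$ lies in $Z_0$ (your ``thick'' components, where the generators of $\sI$ are nilpotent and $U$ factors through some $Z_N$) or not (your ``thin'' components, where one must factor through the proper piece). The thick case is handled correctly and agrees with the paper's Case~1. However, there is a genuine gap exactly where you flag ``the realizability step'': you construct a modification $\tilde U \to U$ of the \emph{étale $Y$-scheme} $U$, but the coverings of $\RZ(X_{\Nis})$ only allow you to pull back along modifications $Y'' \to Y$ of the \emph{base} and then take Nisnevich coverings of $Y'' \times_Y U$. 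An arbitrary modification of $U$ (in particular your $\tilde U_j$, which is proper and non-étale over $Y$) is not an object of $\RZ(X_{\Nis})$, and Lemma~\ref{lemm:rema:genXY} does not extend a modification of $U$ to one of $Y$ --- it only writes non-finitely-presented proper morphisms as limits of finitely presented ones and adjusts generic points. The missing idea is Raynaud--Gruson platification \cite[081R]{stacks-project}: one blows up $Y$ in a center contained in the (nowhere dense) closure of the image of $Z_0$ so that the strict transform of $W \to Y$ becomes flat; then Lemma~\ref{lemm:etaleFlat} makes it flat over $U'' = Y'' \times_Y U$, and Lemma~\ref{lemm:properFlat} (flat $+$ proper $+$ generically an isomorphism $\Rightarrow$ isomorphism) shows $U'' \cong W''$, producing the required factorisation $U'' \to W \to U$ with $U''$ honestly of the form $Y'' \times_Y U$. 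Without this flattening step your thin case does not close.

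A secondary problem is the preliminary decomposition. You propose to split $U$ into its primary components $\Spec(B/\q_j)$, but the ring $B$ need not be Noetherian (only the underlying topological space of $U$ is), so a primary decomposition need not exist; and even when it does, the resulting disjoint union is again a modification of $U$ rather than something induced from $\Mod_X$. The paper's Lemma~\ref{lemm:RZHausdorff} is designed precisely to replace this step: it produces a \emph{cartesian} square in which a modification $Y' \to Y$ of finite presentation pulls $U$ back to a disjoint union realizing the given clopen decomposition of $U^{\gen}$ --- and its proof again runs through Raynaud--Gruson flattening together with Lemmas~\ref{lemm:etaleFlat} and~\ref{lemm:properFlat}. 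So both the decomposition and the thin case require the flattening technique performed at the level of $Y$, which is the essential content your proposal leaves unproved.
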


\begin{proof}
For $(U/Y) \in \RZ(X_{\Nis})$ and a procdh covering $\{V_i \to U\}_{i \in I}$ in $\Sch_S$, we want to find a covering $\{(U_j/Y_j) \to (U/Y)\}_{j \in J}$ in $\RZ(X_{\Nis})$, 
a function $J \to I$; $j \mapsto i_j$, and commutative triangles
\[ \xymatrix{
U_j \ar@{-->}[r] \ar[dr] & V_{i_j} \ar[d] \\
& U.
} \]

Since procdh coverings are refined by finite length compositions of generator procdh coverings, using induction on the height of the tree in Remark~\ref{rema:genDefi}, it suffices to prove the claim for distinguished Nisnevich coverings and proabstract blowup squares. 


For Nisnevich coverings the statement is obvious since for any $(U/Y) \in \RZ(X_{\Nis})$, a Nisnevich covering $\{U_i \to U\}_{i \in I}$ of $U$ gives rise to a Nisnevich covering $\{
(U_i/Y) \to (U/Y)\}_{i \in I}$ of $(U/Y)$. 

Consider $(U/Y) \in \RZ(X_{\Nis})$ and a proabstract blowup square $\sU = \{Z_n \to U\}_{n \in \NN} \sqcup \{W \to U\}$. Our task is to find a morphism 
$Y' \to Y$
in $\Mod_X$
and a Nisnevich covering
$\{V_{j} \to U' := Y' \times_{Y} U\}_{j \in J}	$ 
such that for each $j$, we have a commutative diagram on the left for some $n$ or on the right.
\begin{equation} \label{equa:VZUVWU}
\xymatrix@!R=3pt{
V_j \ar[d] \ar[r] & Z_n \ar[d] && V_j \ar[r] \ar[d] & W \ar[d] \\
U' \ar[r] \ar[d] & U \ar[d] && U' \ar[r] \ar[d] & U \ar[d] \\
Y' \ar[r] & Y && Y' \ar[r] & Y
}
\end{equation}
%
Since $U$ has finitely many generic points, by Lemma~\ref{lemm:RZHausdorff}, we can assume that $U$ is locally irreducible. As such it suffices to treat the following two cases.

Case 1: $U^\gen \subseteq Z_0$.
In this case, $(Z_{0})_{\red} = U_{\red}$. This means the (finitely many) generators of $\sI_{Z_{0}}$ are nilpotent, so $Z_{n} = U$ for some $n$. Hence, $Y' = Y$ and the trivial covering $\{U \to U\}$ give a square on the left of \eqref{equa:VZUVWU}.

Case 2: $U^\gen \cap Z_0 = \varnothing$.
We will build a square as on the right of \eqref{equa:VZUVWU} with $V_j = U'$. %
By the assumption $U^\gen \cap Z_0 = \varnothing$, the morphism $W \to U$ is an isomorphism over the generic points of $U$. These all lie over generic points of $Y$ (because $U \to Y$ is étale) so $W \to U \to Y$ is generically flat. 
More precisely, letting $T\subset Y$ be the closure of the image of $Z_0$ in $Y$, 
the morphism $W\to Y$ is flat over $Y\backslash T$ and $T$ is nowhere dense in $Y$ by the assumption $U^\gen \cap Z_0 = \varnothing$. 
Write $T=\varprojlim_{\lambda} T_\lambda$ for a cofiltered system of closed immersions $T_\lambda\to T$ of finite presentation. Since $Y^{\gen}$ is finite by the assumption, there exists $\lambda$ such that $T_\lambda$ is nowhere dense.

By Raynaud-Gruson \cite[Th.5.2.2]{RG71}, \cite[081R]{stacks-project}, there is a blowup $Y' \to Y$ (projective, but not necessarily of finite presentation) with a center contained in $T_\lambda$ %
such that the strict transform $W' \to Y'$ of $W \to Y$ is flat. Since $U' := Y' \times_Y U \to Y'$ is étale, this implies that $W' \to U'$ is flat, Lemma~\ref{lemm:etaleFlat}.
  So now we have a flat proper morphism which is generically an isomorphism. This implies it is globally an isomorphism, Lemma~\ref{lemm:properFlat}.
 So we obtain a factorisation $U' \cong W' \to W \to U$. 

If $Y' \to Y$ is not of finite presentation, then we can at least write it as a filtered limit $Y' = \lim Y_λ'$ such that each $Y'_λ \to Y$ is in $\Mod_Y$ and the transition morphisms are affine, Lemma~\ref{lemm:rema:genXY}.
 Setting $U'_λ = Y_λ' \times_Y U$ we also have $U' = \lim U_λ'$ and we can descend $U' \to W$ to some $U'_λ$, \cite[Prop.8.13.1]{EGAIV3}.
Noting that $Y'_λ \in \Mod_Y$ implies $Y'_λ \in \Mod_X$, we have found a square on the right of \eqref{equa:VZUVWU}.
\end{proof}

\begin{coun}
Let $k$ be a field, $I$ a set, and $k[x^I]$
the polynomial ring in $I$-many variables. Define $R^I := k[x^I] / \langle x_ix_j: i \neq j \rangle$ and $\XX^I = \Spec R^I$. So $\XX^I$ is $I$-many copies of the affine line joined at the origin and $(\XX^I)^\gen$ is homeomorphic to $I$ with the discrete topology. Given a subset $J \subseteq I$ there is an associated closed immersion $\XX^J \subseteq \XX^I$ defined by $x_i \mapsto 0$ for $i \notin J$ which is finite presentation if and only if $I \setminus J$ is finite. 

For any two cofinite $J, J' \subseteq I$ with $J \cup J' = I$, the set $\{\XX^J \to \XX^I, \XX^{J'} \to \XX^I\}$ induces a proabstract blowup square. This cannot be refined by a modification (of finite presentation), so the functor $\RZ(\XX^I_{\Nis}) \to \Sch_{\XX^I}$ is \emph{not} cocontinuous. 

Indeed, by Chevalley's Theorem \cite[054K]{stacks-project}, the image of any summand $Y_0$ of a modification $Y_0 \amalg Y_1 \to \XX^I$ is a closed subscheme of finite presentation. Since modifications are isomorphisms over a dense open, this implies that for any modification $Y \to \XX^I$ the scheme $Y$ is connected. Since it is an isomorphism over the generic points of $\XX$, it cannot factor through any $\XX^J \to \XX^I$ unless $J = I$.
\end{coun}


\begin{coro} \label{coro:RZColimitLim}
Let $S$ be a qcqs scheme whose underlying topological space is Noetherian and $X \in \Sch_S$.
The canonical functor $\rho_X^*$ from \eqref{rhoX}
preserves colimits and finite limits, \cite[III.2.3]{SGA41}, \cite[00XL]{stacks-project}.
\end{coro}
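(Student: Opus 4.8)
The plan is to recognise $\rho_X^*$ as the inverse image functor of a morphism of topoi, after which both preservation properties become formal. The only genuinely nontrivial ingredient, cocontinuity of $\rho_X$, is supplied by Proposition~\ref{prop:cocont}; everything else is an application of the standard formalism \cite[III.2.3]{SGA41}, \cite[00XL]{stacks-project}.

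First I would verify that $X$ satisfies the hypotheses of Proposition~\ref{prop:cocont}. Since $X \to S$ is of finite presentation and $S$ is qcqs, $X$ is qcqs; and since $X \to S$ is of finite type and $S$ has Noetherian underlying topological space, so does $X$. I would isolate this last implication as a small lemma, reducing along finitely many affine charts and a closed immersion into affine space to the assertion that $\Spec A[t]$ has Noetherian topological space whenever $\Spec A$ does. Granting this, Proposition~\ref{prop:cocont} shows that $\rho_X : \RZ(X_{\Nis}) \to \Sch_S$ is a cocontinuous functor of sites, where $\Sch_S$ carries the procdh topology and $\RZ(X_{\Nis})$ the topology of Definition~\ref{defi:RZtop}.

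Next I would invoke the general theory of cocontinuous functors: such a functor $u : C \to D$ induces a morphism of topoi $g : \Shv(C) \to \Shv(D)$ whose inverse image $g^{-1} : \Shv(D) \to \Shv(C)$ is the sheafification of the restriction functor $F \mapsto F \circ u$, \cite[III.2.3]{SGA41}, \cite[00XL]{stacks-project}. Taking $u = \rho_X$, $C = \RZ(X_{\Nis})$ and $D = \Sch_S$, the functor $g^{-1}$ sends a procdh sheaf $F$ to the sheafification of $F \circ \rho_X$, which is exactly the functor $\rho_X^*$ of \eqref{rhoX}. Finally, as the inverse image functor of a morphism of topoi, $\rho_X^* = g^{-1}$ is left adjoint to $g_*$ and is left exact; hence it preserves all small colimits and all finite limits, as claimed.

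I do not expect a serious obstacle here, since the substantive geometric work is already done in Proposition~\ref{prop:cocont}. The two points deserving care are (i) the inheritance of a Noetherian topological space from $S$ to $X$, which is needed to apply that proposition, and (ii) matching conventions, i.e.\ checking that the $\rho_X^*$ assembled in \eqref{rhoX} from presheaf restriction followed by sheafification is literally the inverse image functor produced by \cite[00XL]{stacks-project}, rather than a variant. Both are routine once cocontinuity is in hand.
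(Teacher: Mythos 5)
Your proposal is correct and is essentially the paper's own argument: the corollary carries the inline citations \cite[III.2.3]{SGA41}, \cite[00XL]{stacks-project} precisely because, once Proposition~\ref{prop:cocont} supplies cocontinuity of $\rho_X$, the functor of \eqref{rhoX} is by construction the inverse image $g^{-1}=(-\circ\rho_X)^{\#}$ of the induced morphism of topoi, hence preserves colimits and finite limits. The one step you should not wave through as routine is the inheritance of a Noetherian topological space from $S$ to $X$: the paper devotes Lemma~\ref{lemm:noethTopFt} to exactly this, and even there proves it only under an added finite-dimensionality hypothesis via an induction on dimension; without that hypothesis your reduction to ``$\Spec A[t]$ has Noetherian spectrum whenever $\Spec A$ does'' is still a true statement, but it is the Ohm--Pendleton theorem on rings with Noetherian spectrum rather than an exercise, so either quote that result or add the finite-dimension assumption as the paper's lemma does.
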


Here are some lemmas that were used above.

\begin{lemm} \label{lemm:rema:genXY}
Suppose $X$ is a qcqs scheme. 
\begin{enumerate}
 \item \label{lemm:rema:genXY:bir} If the underlying topological space of $X$ is Noetherian, then for every $Y \in \Mod_X$ there is a closed immersion $Y' \to Y$ in $\Mod_X$ such that $(Y')^\gen = X^\gen$.
 \item If $Y \to X$ is a proper morphism, not necessarily of finite presentation, which is an isomorphism over a dense qc open $D \subseteq X$, then $Y$ can be written as a cofiltered limit $Y = \lim Y_λ$ with $Y_λ \in \Mod_X$ and whose transition morphisms are closed immersions $Y_λ \to Y_μ$ which are isomorphisms over $D$.
\end{enumerate}
\end{lemm}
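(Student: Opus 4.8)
The plan is to treat (2) first, since (1) then follows by the same limit techniques in the easier finitely presented setting. For (2), my first observation is that a cofiltered limit $Y=\lim_\lambda Y_\lambda$ whose transition maps are closed immersions has all its projections $Y\to Y_\mu$ closed immersions as well: restricting to the cofinal subsystem $\{\lambda\geq\mu\}$, the limit is just the scheme-theoretic intersection $\bigcap_{\lambda\geq\mu}Y_\lambda$ inside $Y_\mu$, cut out by the quasi-coherent ideal $\bigcup_\lambda\mathcal I_{Y_\lambda}$. Thus the statement is \emph{equivalent} to exhibiting a single finitely presented modification $W\to X$ (an isomorphism over $D$) together with a closed immersion $Y\hookrightarrow W$ over $X$. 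Given such a $W$, I would write the quasi-coherent ideal $\mathcal I_Y\subseteq\OO_W$ as the filtered union of its finite-type quasi-coherent subideals $\mathcal I_\lambda$ (possible since $W$ is qcqs) and set $Y_\lambda=V(\mathcal I_\lambda)$. Each $Y_\lambda\hookrightarrow W$ is then a finitely presented closed immersion, so $Y_\lambda\to X$ is proper and finitely presented, the transition maps are closed immersions, and $Y=\lim_\lambda Y_\lambda$. The condition ``isomorphism over $D$'' is automatic: since $Y\hookrightarrow W$ is already an isomorphism over $D$ we have $\mathcal I_Y|_D=0$, hence $\mathcal I_\lambda|_D=0$ for every $\lambda$, so each $Y_\lambda$ is an isomorphism over $D$ and lies in $\Mod_X$.

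Everything thus reduces to producing the closed immersion $Y\hookrightarrow W$ into a finitely presented modification. For this I would invoke the standard approximation that a finite type morphism to a qcqs base is a cofiltered limit $Y=\lim_i Y_i$ of finitely presented $X$-schemes in which the transition maps, and the projections $Y\to Y_i$, are \emph{all closed immersions} (locally one simply retains finitely many of the defining equations; see the limits chapter of \cite{stacks-project} and \cite[8.10.5]{EGAIV3}). Since $Y\to X$ is proper it is of finite type, so such a system exists. Closed immersions are affine, so the transition maps are affine and \cite[8.10.5]{EGAIV3} applies: as $Y=\lim_i Y_i$ is proper over $X$ with each $Y_i\to X$ finitely presented, $Y_i\to X$ is proper for all large $i$; likewise, restricting over $D$ gives $Y_D=\lim_i(Y_i)_D$ with $Y_D\cong D$, so $(Y_i)_D\to D$ is an isomorphism for large $i$. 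Passing to the cofinal subsystem of such $i$, each $Y_i$ is a finitely presented modification that is an isomorphism over $D$, and any one of them serves as the required $W$ (transitions between them are closed immersions which are isomorphisms over $D$, since both source and target are). I expect this approximation step to be the main obstacle, precisely because the ambient $W$ is forced on us: the closed-immersion limit cannot be achieved by the more familiar affine-transition approximation, which would only realise $Y$ as \emph{affine}, and hence finite, over its approximants rather than as a closed subscheme.

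For (1) the morphism $Y\to X$ is itself finitely presented and $X$ has Noetherian underlying space, hence finitely many generic points, all lying in $D$. I would take $Y'$ to be the scheme-theoretic closure of the qc open $Y_D:=Y\times_X D\cong D$ inside $Y$. Then $Y'\hookrightarrow Y$ is a closed immersion which is an isomorphism over $D$, so $Y'\to X$ is proper and an isomorphism over $D$, and the generic points of $Y'$ are exactly those of $Y_D\cong D$, i.e.\ $(Y')^\gen=X^\gen$. The only point to verify is that $Y'\to X$ is finitely presented, equivalently that $Y'\hookrightarrow Y$ is finitely presented. I would obtain this by reducing to a Noetherian base: write $X=\lim_s X_s$ with $X_s$ Noetherian and affine transition maps, descend the modification to $Y_{s_0}\to X_{s_0}$ over a Noetherian $X_{s_0}$ (finite presentation, properness, and the isomorphism over a dense $D_{s_0}$ all descend by \cite[01ZM]{stacks-project}, \cite[8.10.5]{EGAIV3}), form the scheme-theoretic closure $Y'_{s_0}$ of $(Y_{s_0})_{D_{s_0}}$ there, where it is automatically finitely presented since $Y_{s_0}$ is Noetherian, and base change back, $Y':=Y'_{s_0}\times_{X_{s_0}}X$. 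This is a finitely presented closed subscheme of $Y$, proper and an isomorphism over $D$, so $Y'\in\Mod_X$.

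The one piece of delicate bookkeeping in (1) is checking that this base change produces no spurious generic points over $X\setminus D$, so that $(Y')^\gen=X^\gen$ rather than merely $\supseteq$. This is routine given that $Y'_{s_0}$ is birational to $X_{s_0}$ (each of its components dominates a component of $X_{s_0}$, as $(Y'_{s_0})^\gen=(X_{s_0})^\gen\subseteq D_{s_0}$), so every component of $Y'$ dominates a component of $X$; since $X^\gen\subseteq D$ and $Y'$ is an isomorphism over $D$, each generic point of $Y'$ then lies in $Y'_D\cong D$ and maps to a point of $X^\gen$, forcing equality. With this, both parts are complete, the substantive input being the closed-immersion approximation of finite type morphisms together with \cite[8.10.5]{EGAIV3}.
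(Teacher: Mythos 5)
Your part (2) is essentially the paper's argument: both rest on writing a proper morphism over a qcqs base as a cofiltered limit of finitely presented schemes with closed-immersion transition maps (\cite[09ZR, 09ZQ]{stacks-project}), then using standard limit results to get properness and the isomorphism over $D$ at a finite stage; your detour through a single ambient $W$ and its finite-type subideals is correct but not needed once you have that system. The problem is in part (1), where you take a genuinely different route (Noetherian approximation of the base, schematic closure over the Noetherian model, base change back), and the final step does not hold up. The claim that every component of $Y'=Y'_{s_0}\times_{X_{s_0}}X$ dominates a component of $X$ ``because'' this is true for $Y'_{s_0}$ over $X_{s_0}$ is false: irreducible components are not stable under base change, and new components can appear precisely over the locus where the modification is not an isomorphism. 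Concretely, take $X_{s_0}=\AA^2$, $D_{s_0}=\AA^2\setminus\{0\}$, $Y'_{s_0}=\Bl_0(\AA^2)$ (which is its own schematic closure of $D_{s_0}$, being integral), and base change along the affine map $\uSpec k[x,y]/\langle x^2,xy\rangle\to\AA^2$. The target has Noetherian space with $D$ dense, yet on the chart $\Spec k[x,y/x]$ the fibre product is $\Spec k[x,u]/\langle x^2\rangle$, a thickened exceptional line lying entirely over the origin; the exceptional $\PP^1$ is then a full irreducible component of $Y'$ over $X\setminus D$, so $(Y')^{\gen}\supsetneq X^{\gen}$. This is exactly the phenomenon part (1) is supposed to rule out (recall the paper deliberately does \emph{not} require modifications to be birational), so it is the crux and cannot be waved through as routine. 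Arranging $X\to X_{s_0}$ to be schematically dominant does not obviously repair the generization-lifting you would need, since $X\to X_{s_0}$ is not flat.

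The paper sidesteps this by never leaving $Y$: it forms the ideal $\sI=\ker(\OO_Y\to j_*\OO_D)$ of the schematic closure of $D$ in $Y$, writes $\sI=\colim\sI_\lambda$ with $\sI_\lambda$ of finite presentation, picks $\lambda$ so that $\uSpec(\OO_Y/\sI_\lambda)\to Y$ is still an isomorphism over $D$ (\cite[Thm.8.10.5(i)]{EGAIV3}), and then uses that $|Y|$ is a Noetherian topological space to enlarge $\lambda$ until $V(\sI_\lambda)$ acquires no generic points outside $X^{\gen}$ — the descending chain of closed subsets $V(\sI_\lambda)\cap(|Y|\setminus\overline{|D|})$ has empty intersection and hence terminates. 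If you want to keep your base-change strategy, you would need to add a final correction step of exactly this kind inside $Y$ itself; as written, the construction only produces some finitely presented closed $Y'\subseteq Y$ in $\Mod_X$, not one with $(Y')^{\gen}=X^{\gen}$.
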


\begin{proof}
\begin{enumerate}
 \item Let $D \subseteq X$ be a dense qc open over which $Y \to X$ is an isomorphism. Let $j:D \to Y$ be the induced open immersion. Write $\sI = \ker(\OO_Y \to j_*\OO_D)$ as a filtered colimit $\sI = \colim \sI_λ$ of ideals $\sI_λ$ of finite presentation, \cite[Cor.6.9.15]{EGAI}. Since $\uSpec (\OO_Y / \sI) \to Y$ is an isomorphism over the open $D \subseteq Y$, there is some $λ$ for which $Y'_λ := \uSpec (\OO_Y / \sI_λ) \to Y$ is an isomorphism over $D \subseteq Y$, \cite[Thm.8.10.5(i)]{EGAIV3}. Since $X$ has Noetherian topological space, so does $Y$, Lem.\ref{lemm:noethTopFt}, so up to changing $λ$, we can assume $Y'_λ \cap (Y^\gen \setminus X^\gen) = \varnothing$, i.e., $(Y'_λ)^\gen = X^\gen$.

\item We can write $Y \to X$ as $Y = \lim Y_λ$ with each $Y_λ \to X$ proper and of finite presentation and all transition morphisms closed immersions by \cite[09ZR, 09ZQ]{stacks-project}. 
The isomorphism $D \times_X Y \cong D$ induces closed immersions $D \to D \times_X Y_λ$ which are of finite presentation by \cite[00F4(4)]{stacks-project}, and therefore defined by coherent sheaves of ideals, \cite[01TV]{stacks-project}. Since the $D \times_X Y_λ$ are all quasi-compact, there is a $λ$ for which $D = D \times_X Y_μ$ for all $\mu \geq λ$.
\end{enumerate}
\end{proof}

\begin{lemm} \label{lemm:etaleFlat}
Suppose that $W \to U$ is any morphism of schemes, $U \to Y$ is étale and $W \to Y$ is flat. Then $W \to U$ is also flat.
\end{lemm}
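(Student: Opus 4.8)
The plan is to realise $f : W \to U$ as the composite of an open immersion with a flat morphism, exploiting the graph of $f$ relative to $Y$. Throughout I write $f$ for the morphism $W \to U$, and I take $W \to Y$ to be the composite $W \to U \to Y$, which is flat by hypothesis. Consider the fibre product $W \times_Y U$ together with its projections $p_1 : W \times_Y U \to W$ and $p_2 : W \times_Y U \to U$. Since flatness is stable under base change, the hypothesis that $W \to Y$ is flat gives that $p_2$, being the base change of $W \to Y$ along $U \to Y$, is flat.

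Next I would introduce the graph $\Gamma_f = (\id_W, f) : W \to W \times_Y U$, which is a section of $p_1$ and satisfies $f = p_2 \circ \Gamma_f$. The crucial observation is that $\Gamma_f$ is an open immersion. This is because $\Gamma_f$ is the base change of the diagonal $\Delta_{U/Y} : U \to U \times_Y U$ along the morphism $W \times_Y U \to U \times_Y U$, $(w,u) \mapsto (f(w), u)$; concretely, the square
\[
\xymatrix{
W \ar[r]^-{f} \ar[d]_{\Gamma_f} & U \ar[d]^{\Delta_{U/Y}} \\
W \times_Y U \ar[r] & U \times_Y U
}
\]
is cartesian. Since $U \to Y$ is étale, it is in particular unramified, so $\Delta_{U/Y}$ is an open immersion, \cite[02GE]{stacks-project}; hence so is its base change $\Gamma_f$.

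Putting these together, $f = p_2 \circ \Gamma_f$ is the composite of an open immersion (hence flat) with the flat morphism $p_2$, and is therefore flat. The only nonformal step---and the one I expect to be the crux---is identifying $\Gamma_f$ with a base change of the diagonal and invoking unramifiedness of $U \to Y$ to conclude it is an open immersion; once that is in hand, the rest is just stability of flatness under base change and composition. One could instead argue with the local flatness criterion on the local rings $\OO_{Y,y} \to \OO_{U,u} \to \OO_{W,w}$, using that $\OO_{Y,y} \to \OO_{U,u}$ is étale local, but the graph argument avoids any computation and is the approach I would write up.
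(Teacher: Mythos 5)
Your proof is correct and follows essentially the same route as the paper: both factor $W \to U$ as the graph $W \to W \times_Y U$ followed by the projection $W \times_Y U \to U$ (the base change of the flat $W \to Y$), using that $U \to Y$ unramified makes $\Delta_{U/Y}$, hence the graph, an open immersion. Your write-up is in fact slightly more economical, since by invoking only flatness of open immersions you avoid the paper's reduction to the separated case and its use of closedness of the diagonal to split off a clopen summand.
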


\begin{proof}
If $U \to Y$ is not already assumed to be separated, we can reduce to this case by replacing $U$ with an open affine covering. Then the diagonal $δ: U \to U \times_Y U$ is open (resp. closed) because $U \to Y$ is unramified (resp. separated). Considering the cartesian squares
\[ \xymatrix{
W \ar[r]^-{\delta'} \ar[d] & U \times_Y W \ar[r]^-{pr_2} \ar[d] & W \ar[d] \\
U \ar[r]^-\delta & U \times_Y U \ar[r]^-{pr_2} & U
} \]
one sees that $δ': W \to U \times_Y W$ is also both open and closed. So we have $U \times_Y W = δ'(W) \sqcup W'$ for some $W'$. Since $W \to U$ factors as the composition $W \to δ'(W) \sqcup W' = U \times_Y W \to U$ of the inclusion of a summand, and the pullback $U \times_Y W \to U$ of the flat morphism $W \to Y$, it is flat.
\end{proof}

\begin{lemm} \label{lemm:properFlat}
Suppose that $f: W \to U$ is a flat, proper morphism of schemes, and $D \subseteq U$ is a schematically dense open such that $D \times_U W \to D$ is an isomorphism. Then $W \to U$ is an isomorphism.
\end{lemm}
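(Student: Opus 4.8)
The plan is to show that $f$ is finite locally free of rank one, and hence an isomorphism. Since $f$ is proper it is of finite type, and after replacing $U$ by an affine open cover and, if necessary, reducing to the Noetherian case by a standard limit argument, I may assume $f$ is moreover of finite presentation; all the facts used below hold for flat, proper, finitely presented morphisms.

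First I would show that every fibre of $f$ is $0$-dimensional. For a flat morphism that is locally of finite presentation the fibre dimension function $w \mapsto \dim_w(W_{f(w)})$ is locally constant on $W$, so the locus $W_0 = \{ w \in W : \dim_w(W_{f(w)}) = 0\}$ is open and closed. Over the preimage $f^{-1}(D) \cong D$ the morphism is an isomorphism, so $W_0 \supseteq f^{-1}(D)$. Now $f$ is open, being flat and locally of finite presentation, so $f^{-1}(D)$ is dense in $W$: any nonempty open $V \subseteq W$ has $f(V)$ open and nonempty, hence meeting the dense open $D$, which forces $V \cap f^{-1}(D) \neq \varnothing$. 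A closed set containing a dense set is the whole space, so $W_0 = W$ and all fibres of $f$ are $0$-dimensional. A proper morphism with $0$-dimensional fibres is quasi-finite, and a proper quasi-finite morphism is finite; thus $f$ is finite.

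Being finite and flat (and of finite presentation), $f$ is finite locally free, so $\mathcal{A} := f_* \OO_W$ is a finite locally free $\OO_U$-module and its rank is a locally constant function on $U$. Over $D$ we have $\mathcal{A}|_D = \OO_D$, of rank one, and the locus $\{u \in U : \operatorname{rank}_u \mathcal{A} = 1\}$ is open and closed and contains the dense open $D$; as before it is therefore all of $U$, so $\mathcal{A}$ is invertible. Finally, a finite locally free morphism of rank one is an isomorphism: working locally on $U$ one may assume $U = \Spec R$ with $R$ local and $\mathcal{A} \cong R$ as an $R$-module, and then the unit $R \to \mathcal{A}$ is an $R$-linear map of free rank-one modules whose reduction modulo $\m_R$ is the unit of the nonzero fibre ring $\mathcal{A} \otimes_R \kappa(\m_R)$, hence nonzero; by Nakayama the unit is surjective, thus an isomorphism. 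Therefore $\OO_U \to f_* \OO_W$ is an isomorphism and $f$ is an isomorphism.

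The main obstacle is the first step---showing the fibres are $0$-dimensional---since this is exactly where flatness is essential: it fails for blowups, which are isomorphisms over dense opens yet have positive-dimensional exceptional fibres. The crucial inputs are the local constancy of fibre dimension for flat, finitely presented morphisms and the elementary observation that an open-and-closed set containing a dense set must be everything. The finite-presentation hypothesis needed for these facts (and for ``finite flat $\Rightarrow$ locally free'') is harmless, since the morphisms to which the lemma is applied are finitely presented, or one reduces to the Noetherian situation; note also that only topological density of $D$ is actually used, so the schematic density in the hypothesis is more than enough.
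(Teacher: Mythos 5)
Your argument is correct and genuinely different from the paper's as long as $f$ is of finite presentation: the paper instead shows $f$ is \emph{faithfully} flat (generic points of $U$ lie in $D$ because the schematically dense open $D$ pulls back to a schematically dense open of the one-point scheme $U_\eta$), and then uses fpqc descent to reduce to showing the diagonal $W \to W\times_U W$ is an isomorphism, which follows because it is a closed immersion through which the schematically dense open $D \times_U W\times_U W \cong D$ factors. Your route via local constancy of fibre dimension, quasi-finiteness, and finite locally free of rank one is a nice alternative, and your observation that it only uses topological density of $D$ is correct \emph{in the finitely presented case}; the schematic density is what powers the paper's argument in general.

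The genuine gap is the reduction to the finitely presented (or Noetherian) case, which you dismiss as a "standard limit argument" but which does not go through. Proper means only finite type, and every tool you invoke --- local constancy of $w \mapsto \dim_w(W_{f(w)})$, openness of flat maps, and finite $+$ flat $\Rightarrow$ finite locally free with locally constant rank --- requires local finite presentation. The usual approximation $W = \varprojlim W_\lambda$ with $W_\lambda \to U$ proper of finite presentation and closed immersion transition maps does not help, because flatness of $W$ over $U$ does not descend to any $W_\lambda$; nor can one descend the whole situation to a Noetherian base without already knowing $f$ is of finite presentation. This is not a hypothetical worry: in the paper the lemma is applied to strict transforms along Raynaud--Gruson flatifications, which are explicitly flagged as "not necessarily of finite presentation", so the non-f.p.\ case is exactly the one that matters. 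The paper's proof avoids the issue entirely, using only closedness (from properness), separatedness (so the diagonal is a closed immersion), quasi-compact faithfully flat descent, and the stability of schematic density under flat base change. To repair your proof you would either need to add a finite presentation hypothesis (and then verify it in the applications, which is not automatic) or switch to an argument, like the paper's, that never leaves the finite type world.
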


\begin{proof}
First note that $f$ is faithfully flat: Since it is proper, it is closed, so it suffices to show all generic points $η$ of $U$ are in the image. The inclusion of the localisation $U_η \to U$ is flat so $U_η \times_U D \to U_η$ is schematically dense. But $U_η$ has a single point, so $U_η \times_U D$ is non-empty, i.e., $η \in D$, so $η \in f(W)$.

Since $W \to U$ is faithfully flat, to show it is an isomorphism it suffices to show that $pr_1: W \times_U W \to W$ is an isomorphism, or equivalently, that the diagonal $δ: W \to W \times_U W$ is an isomorphism. 
The two morphisms $W \times_U W \stackrel{pr_1}{\to} W \to U$ are flat, so $D \times_U W \to W$ and $D \times_U W \times_U W \to W \times_U W$ are schematically dense. But $W \to U$ is an isomorphism over $D$ so $D \cong D \times_U W \cong D \times_U W \times_U W$. So we have a factorisation $D \stackrel{j}{\to} W \stackrel{δ}{\to} W \times_U W$ with both $j$ and $δj$ schematically dense. It follows that $δ$ is schematically dense. Since it is also a closed immersion ($f$ is proper, so separated) this implies $\delta$ is an isomorphism.
\end{proof}

\begin{lemm} \label{lemm:RZHausdorff}
Suppose $Y$ is qcqs with Noetherian underlying space, let $U \to Y$ be an étale morphism, and suppose $U^{\gen} = η_0 \sqcup η_1$ is a decomposition of the space of generic points of $U$ into clopens. Then there exists a \emph{cartesian} square
\[ \xymatrix{
U'_0 \sqcup U_1' \ar[d] \ar[r] & U \ar[d] \\
Y' \ar[r] & Y
} \]
such that
\begin{enumerate}
	\item $Y' \to Y$ is a proper morphism of finite presentation which is an isomorphism over a dense qc open of $Y$, and
	\item there are identifications $η_0 = (U_0')^{\gen}$ and $η_1 = (U_1')^{\gen}$.
\end{enumerate}
\end{lemm}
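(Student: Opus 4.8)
The plan is to separate $\eta_0$ from $\eta_1$ by using Raynaud--Gruson to flatten, over $Y$, the closure of $\eta_0$, and then to transfer flatness back up to $U$ in order to turn a closed immersion into a clopen decomposition. First I would record the behaviour over generic points. Let $\bar U_0 \subseteq U$ be the scheme-theoretic closure of $\eta_0$; topologically it is $\overline{\eta_0}$ and its set of generic points is exactly $\eta_0$. Since $U \to Y$ is étale it is flat, hence sends generic points to generic points, and over any $y \in Y^{\gen}$ the fibre $U_y$ is étale over $\kappa(y)$, a disjoint union of spectra of separable field extensions all of whose points are generic in $U$. Consequently $\bar U_0 \cap U_y$ is the union of the $\eta_0$-components and is flat over $\kappa(y)$. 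As the flat locus of a finitely presented morphism is open and, as just seen, contains $Y^{\gen}$, the morphism $\bar U_0 \to Y$ is flat over a dense quasi-compact open $D' \subseteq Y$ (recall a dense open of a scheme with Noetherian space contains every generic point).

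Next I would apply Raynaud--Gruson flattening, \cite[Th.5.2.2]{RG71}, \cite[081R]{stacks-project}, to $\bar U_0 \to Y$ over $D'$: this produces a $D'$-admissible blow-up $Y' \to Y$, proper and an isomorphism over the dense open $D'$, such that the strict transform $U_0' \to Y'$ of $\bar U_0 \to Y$ is flat. Writing $U' := U \times_Y Y'$, the strict transform $U_0'$ is a closed subscheme of the total transform $\bar U_0 \times_Y Y' \subseteq U'$, so $U_0' \to U'$ is a closed immersion; since $U' \to Y'$ is étale and $U_0' \to Y'$ is flat, Lemma~\ref{lemm:etaleFlat} shows $U_0' \to U'$ is flat. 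A flat closed immersion of finite presentation is an open immersion, \cite{stacks-project}, so $U_0'$ is clopen in $U'$; setting $U_1' := U' \setminus U_0'$ to be the complementary clopen, we get $U_0' \sqcup U_1' = U' = U \times_Y Y'$, which is exactly the cartesian square demanded by the statement.

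It remains to identify the generic points. Because $Y' \to Y$ is an isomorphism over a dense open containing $Y^{\gen}$, the map $U' \to U$ is an isomorphism over a dense open containing $U^{\gen}$, whence $(U')^{\gen} = \eta_0 \sqcup \eta_1$. The strict transform $U_0'$ is the scheme-theoretic closure of the part of $\bar U_0$ lying over the isomorphism locus, so its generic points are those of $\bar U_0$, namely $\eta_0$; as $U_0'$ is clopen, it follows that $(U_1')^{\gen} = (U')^{\gen} \setminus \eta_0 = \eta_1$, giving condition (2). For finite presentation: in our situation $U \to Y$ is of finite presentation, so a finitely presented closed structure on $\bar U_0$ with the correct generic fibre exists and Raynaud--Gruson returns a $D'$-admissible, in particular finitely presented, blow-up. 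Should one wish to avoid any finite-presentation hypothesis on the closed structure, the blow-up $Y'$ can be written as a cofiltered limit $Y' = \lim_\lambda Y_\lambda'$ of finitely presented modifications in $\Mod_Y$ with closed-immersion transition maps that are isomorphisms over $D'$, by Lemma~\ref{lemm:rema:genXY}(2); the clopen decomposition then descends to $U \times_Y Y_\lambda'$ for $\lambda$ large (clopen data being finitely presented), and the generic points are unchanged since everything is pinned down over $D'$.

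The main obstacle I expect is the interplay between the strict transform and the total transform: Raynaud--Gruson flattens only the strict transform, so one must check that it is precisely the strict transform $U_0'$, and not the full pullback $\bar U_0 \times_Y Y'$, whose generic points recover $\eta_0$ and which sits inside $U'$ as a clopen. This works because the two transforms agree over the dense isomorphism locus $D'$, where $\eta_0$ already lives, and the flat closed immersion there is an open immersion. A secondary technical point is the finite-presentation bookkeeping, which is handled either directly (admissibility of the Raynaud--Gruson blow-up) or by the descent via Lemma~\ref{lemm:rema:genXY}.
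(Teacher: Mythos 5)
Your argument is correct in substance and uses the same main ingredients as the paper's proof (finitely presented closed subschemes with prescribed generic points, Raynaud--Gruson flattening over a dense open, Lemma~\ref{lemm:etaleFlat} to transfer flatness from $Y'$ to $U'$, and descent along the cofiltered presentation of Lemma~\ref{lemm:rema:genXY}(2) to repair finite presentation of $Y' \to Y$), but it diverges at the step that actually produces the decomposition. The paper flattens the \emph{disjoint union} $U_0 \sqcup U_1 \to Y$ and invokes Lemma~\ref{lemm:properFlat} (flat $+$ proper $+$ generically an isomorphism $\Rightarrow$ isomorphism) to identify the strict transform with all of $U \times_Y Y'$; you flatten only the closure of $\eta_0$ and use the fact that a flat, finitely presented closed immersion is a clopen immersion, then take $U_1'$ to be the complementary clopen. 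Your route avoids Lemma~\ref{lemm:properFlat} entirely but leans on Raynaud--Gruson also returning the strict transform \emph{of finite presentation} over $Y'$ (which \cite[081R]{stacks-project} does guarantee), so both work. Two small imprecisions are worth flagging. First, the scheme-theoretic closure $\bar U_0$ need not be finitely presented over $U$ when $Y$ is not a Noetherian scheme (only its topological space is assumed Noetherian), so you must start from a finitely presented closed subscheme with generic scheme $\eta_0$, as you acknowledge and as the paper does in its footnote; relatedly, ``the flat locus is open'' is an openness statement in the \emph{source}, so to get a dense qc open $D' \subseteq Y$ over which $\bar U_0$ is flat you should instead spread out the isomorphism $U_0 \sqcup U_1 \to U$ from $U^{\gen}$ to $U \times_Y D'$ over a qc neighbourhood $D'$ of the finitely many generic points of $Y$. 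Second, your primary claim that a $D'$-admissible blowup is ``in particular finitely presented'' is false for non-Noetherian $Y$ (the paper explicitly warns of this), but your fallback via Lemma~\ref{lemm:rema:genXY}(2) and descent of the idempotent defining the clopen is exactly the paper's fix, so the proof stands.
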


\begin{proof}
Choose any closed subschemes 
$U_0 \subseteq U$ 
and 
$U_1 \subseteq U$ 
of finite presentation whose generic schemes are $η_0$ and $η_1$ respectively (here we are using that $U^\gen$ is finite).%
\footnote{For example, let $\sI_Z = \ker(\OO_U \to \OO_{U_Z^{\gen}})$. Write $\sI_Z$ as a filtered union of ideals $\sI_{Z,\lambda}$ of finite presentation. Use the fact that $U^\gen$ is a disjoint union of finitely many local schemes of dimension zero to deduce that there is some $\sI_\lambda$ with $(\uSpec \OO_U / \sI_{\lambda}) \cap U^\gen_W = \varnothing$.}  %
Since $U_0 \sqcup U_1 \to U \to Y$ is generically étale, there is some dense open $D \subseteq Y$ over which it is étale, \cite[07RP]{stacks-project}, %
and in particular, flat and of finite presentation. %
So we can apply Raynaud-Gruson platification, \cite[081R]{stacks-project}, to find a blowup $Y' \to Y$ (not necessarily of finite presentation) %
which is an isomorphism over $D \subseteq Y$, and for which the strict transform $U_0' \sqcup U_1' \to Y'$ of $U_0 \sqcup U_1 \to Y$ is flat. %
By Lemma~\ref{lemm:etaleFlat} this implies $U_0' \sqcup U_1' \to Y' \times_Y U$ is also flat. It is proper and an isomorphism over a dense open by construction, so it is in fact an isomorphism, Lem.\ref{lemm:properFlat}. Now the second condition is satisfied, since $U_0' \to U$ factors through $U_0' \to U_0$ and this latter is an isomorphism generically by construction.

The morphism $Y' \to Y$ might not be of finite presentation, but it is a cofiltered limit of some $Y_λ' \to Y$ which are proper, of finite presentation, and isomorphisms over dense qc opens $D_λ \subseteq Y$, Lem.\ref{lemm:rema:genXY}. There is some $λ$ for which we already have a decomposition $U_λ' = U_{λ,0}' \sqcup U_{λ,1}'$ where of course we have set $U_λ' = Y_λ \times_Y U$ and $U_{λ,i}' = Y'_λ \times_Y U_i$. %
Replacing $Y'$ with this $Y'_λ$, both conditions are now satisfied.
\end{proof}

\begin{lemm} \label{lemm:noethTopFt}
Suppose that $X$ is a scheme with Noetherian topological space of finite dimension and $Y \to X$ is a morphism of finite type. Then $Y$ also has Noetherian topological space of finite dimension.
\end{lemm}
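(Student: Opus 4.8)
The plan is to peel off the formal reductions until only a polynomial ring in one variable remains, treat finite dimension by an elementary fibrewise chain count, and reduce Noetherianity of the space to the classical fact that a polynomial extension preserves Noetherian spectra.

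First I would reduce to the affine one-variable case. As $Y \to X$ is of finite type it is quasi-compact, and $X$ is quasi-compact since its space is Noetherian; hence $Y$ is quasi-compact and admits a finite cover by affines $\Spec B_k$ with $B_k$ a finitely generated $A_k$-algebra, $A_k = \OO_X(U_k)$ for an affine open $U_k \subseteq X$. Each $U_k$ is an open subspace of $X$, so it has Noetherian underlying space and satisfies $\dim U_k \le \dim X < \infty$. Since a finite union of Noetherian subspaces is Noetherian and the dimension of a scheme is the maximum of the dimensions over a finite affine open cover, it suffices to treat one $\Spec B$ with $B$ finite type over $A$ and $\Spec A$ Noetherian of finite dimension. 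A surjection $A[x_1,\dots,x_n] \twoheadrightarrow B$ exhibits $\Spec B$ as a closed subspace of $\Spec A[x_1,\dots,x_n]$, and closed subspaces inherit both properties; so, inducting on $n$, the whole statement reduces to: if $\Spec A$ is Noetherian of finite dimension, then so is $\Spec A[x]$.

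For finite dimension I would argue directly with $\pi\colon \Spec A[x] \to \Spec A$. A chain $Q_0 \subsetneq \dots \subsetneq Q_m$ of primes of $A[x]$ contracts to a weakly increasing chain $\p_0 \subseteq \dots \subseteq \p_m$ in $A$, and no value of $\p_i$ can occur three times: three primes over a common $\p$ would produce a length-two chain in the fibre ring $\kappa(\p)[x]$, contradicting $\dim \kappa(\p)[x]=1$. Hence at least $\lceil (m+1)/2 \rceil$ of the $\p_i$ are distinct, giving $m \le 2\dim A + 1$; thus $\dim A[x] \le 2\dim A + 1 < \infty$ (Seidenberg's bound), and induction handles several variables.

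The remaining, and genuinely non-formal, point is that $\Spec A[x]$ has Noetherian underlying space whenever $\Spec A$ does. This is the theorem of Ohm--Pendleton that the class of rings with Noetherian spectrum (equivalently, satisfying the ascending chain condition on radical ideals) is stable under finitely generated algebra extensions, which I would cite. If a self-contained argument is wanted, I would run Noetherian induction on $\Spec A$: splitting off the finitely many irreducible components reduces to $A$ a domain with fraction field $F$, whose generic fibre $\Spec F[x]$ is Noetherian because $F[x]$ is a Noetherian ring; one then spreads this out to a dense open $U \subseteq \Spec A$ over which $\pi^{-1}(U)$ is Noetherian and applies the inductive hypothesis to the complement, the union being Noetherian. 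The hard part is exactly this spreading-out: converting Noetherianity of the single generic fibre into Noetherianity over a dense open without any Noetherian hypothesis on $A$, which forces one to work with ACC on radical ideals rather than on all ideals.
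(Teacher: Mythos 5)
Your reduction to the case $\Spec A[x]$ over a Noetherian finite-dimensional $\Spec A$ is exactly the one the paper makes, and your fibrewise Seidenberg bound $\dim A[x]\le 2\dim A+1$ is correct (and is in fact more than the paper's own proof supplies: the written proof there only addresses Noetherianity of the space, not finiteness of dimension). Where you diverge is the core step. You dispose of Noetherianity of $\Spec A[x]$ by citing Ohm--Pendleton (stability of the Noetherian-spectrum property, i.e.\ ACC on radical ideals, under finitely generated algebra extensions), which is a correct and clean appeal to the literature and, notably, does not use the finite-dimension hypothesis at all. The paper instead proves this from scratch: it first shows a strictly decreasing chain of closed subsets can be replaced by a strictly decreasing chain of \emph{finitely presented} closed subschemes, then passes to the fraction field $K=\Frac(A)$ where $K[x]$ is a PID so the chain of ideals stabilises to a single generator $\tfrac{1}{a}g$, and finally clears the finitely many denominators by localising at one element $d$ and handling $V(d)$ by induction on $\dim A$ --- which is precisely where the paper consumes the finite-dimension hypothesis. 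Your sketched self-contained alternative (Noetherian induction, generic fibre, spread out to a dense open) is morally the same as the paper's argument, and you correctly identify the spreading-out as the only non-formal point; but as written that sketch is not a proof, so your argument stands on the Ohm--Pendleton citation, which is fine. In short: same skeleton, but you buy the hard step from the literature (gaining independence from the dimension hypothesis for the Noetherianity claim), whereas the paper pays for it with an explicit denominator-chasing argument over $K[x]$.
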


\begin{proof}
We want to show that every descending chain 
\[ Z_0 \supseteq Z_1 \supseteq Z_2 \supseteq \dots \]
of closed subspaces of $Y$ stabilises. Since $X$ has Noetherian topological space, it admits a finite open affine covering $\{\Spec(A_i)\}_{i = 1}^n$. It suffices to show that our chain stabilises in each $\Spec(A_i) \times_X Y$, that is, we can assume $X$ is affine.

Similarly, by definition, since $Y \to X$ is finite type it is quasi-compact, so we can also assume $Y$ is affine, say $Y = \Spec(B)$. In particular, this means there is a closed immersion $Y \to \AA^n_X$ for some $n$. So we can assume that $Y = \AA^n_X$. By induction we can assume $n = 1$. 

Since $X$ has Noetherian topological space, it has finitely many irreducible components. 
So we can assume that $X$ is integral. 

Now we use induction on the dimension of $X$. If $X$ is dimension -1 it is empty, and we are done (the dimension zero case is also easy).

In general, suppose that 
\[ \AA^1_X \supseteq Z_0 \supseteq Z_1 \supseteq  Z_2 \supseteq \dots \]
is a decreasing chain of closed subschemes. I claim that it suffices to show that chains of finite presentation closed subschemes stabilise. Indeed, if there exists a strictly decreasing chain of closed subsets, then we can manufacture a strictly decreasing chain of closed subsets of finite presentation: For each $i$ choose a prime $\p_i \in Z_i \setminus Z_{i+1}$ and an element $f_{i+1} \in I_{i+1} \setminus \p_i$ where $I_{i+1}$ is the ideal corresponding to $Z_{i+1}$. Then setting $W_i$ to be the closed corresponding to $\langle f_1, \dots, f_i \rangle$, the chain $W_0 \supseteq W_1 \supseteq W_2 \supseteq \dots$ contains $Z_0 \supseteq Z_1 \supseteq \dots$ as a subchain and also has $\p_i \in W_i \setminus W_{i+1}$, so it is strictly decreasing.

So we can assume $Z_n = \langle f_1, \dots, f_n \rangle$ for some $f_i \in A[x]$ where $A = \Gamma(X, \OO_X)$. Letting $K$ be the quotient field of $A$, the ring $K[x]$ is Noetherian, so the corresponding chain of ideals (not just closed subsets) in $K[x]$ stabilises. Moreover, $K[x]$ is a principal ideal domain, so there is some $a \in A \setminus \{0\}$ and $g(x) \in A[x]$ such that $\langle \tfrac{1}{a}g \rangle = \langle f_1, \dots, f_n \rangle$ for all $n \gg 0$. That is, we can write each $f_i$ as $f_i = \tfrac{1}{a} g \tfrac{1}{b_i} h_i$ for some $h_i \in A[x]$ and $b_i \in A \setminus \{0\}$, and $1 = \sum \tfrac{1}{b_i} h_i \tfrac{1}{c_i} k_i$ for some $k_i \in A[x]$ and $c_i \in A \setminus \{0\}$ (for simplicity, we choose $k_j = 0$ $c_j = 1$ for $j \gg 0$ so that the $k_j$ and $c_j$ are independent of $n$). If the $a, b_i, c_i$ are all 1, then $\langle g \rangle = \langle f_1, \dots, f_n \rangle$ holds in $A[x]$, not just $K[x]$. That is, our chain stabilises. Indeed, $f_i = gh_i$ and $g = g(\sum h_ik_i) = \sum f_ik_i$.

We now show how to reduce to the case all $a, b_i, c_i$ are 1. Set $a_i = a\prod b_i \prod c_i$. Since $\Spec(A)$ has Noetherian topological space, the chain of opens $\Spec(A[\tfrac{1}{a_0}]) \subseteq \Spec(A[\tfrac{1}{a_0a_1}]) \subseteq \Spec(A[\tfrac{1}{a_0a_1a_2}]) \subseteq \dots $ stabilises. That is, there is some $d \in A$ such that $A[d^{-1}] = A[\tfrac{1}{a_0a_1\dots a_n}]$ for all $n \gg 0$. Since $d$ is nonzero and $A$ is integral, $\Spec(A/\langle d \rangle)$ has dimension strictly smaller than $\Spec(A)$, so our chain of $Z_i$'s restricted to $\AA^1_{\Spec(A/\langle d \rangle)}$ stabilises by the induction hypothesis, and it suffices to show that it stabilises when restricted to $\AA^1_{\Spec(A[d^{-1}])}$. By construction, $a, b_i, c_i$ are all invertible in $A[d^{-1}]$. So replacing $A$ with $A[d^{-1}]$, we can assume they are all 1 by replacing $g, h_i, k_i$ with $\tfrac{1}{a}g$, $\tfrac{1}{b_i}h_i$ and $\tfrac{1}{c_i}k_i$.
\end{proof}

\section{Conservativity of the fibre functors} \label{sec:enoughPoints}

Recall that a topos is said to have \emph{enough points} when a morphism $f$ is an isomorphism if and only if $\phi(f)$ is an isomorphism for all fibre functors $\phi$, Def.\ref{defi:FF}, \cite[Exposé IV, Déf.6.4.1]{SGA41}. Equivalently, \cite[Exposé IV, Prop.6.5(a)]{SGA41}, a topos of the form $\Shv_\tau(C)$ has enough points when a family $\{Y_i \to X\}_{i \in I}$ in $C$ is a covering family if and only if $\sqcup_{i \in I} \phi(Y_i) \to \phi(X)$ is surjective for all fibre functors $\phi$.\footnote{Here, we have used the same symbol for an object $X$ of $C$ and the sheafification of the presheaf $\hom(-, X)$ it represents.}

Deligne's completeness theorem says that if $C$ is an essentially small category with fibre products, and every $\tau$-covering is refinable by a finite one, then $\Shv_\tau(C)$ has enough points, \cite[Prop.VI.9.0]{SGA42} or \cite[Thm.7.44, 7.17]{Joh77}. 

\begin{exam} \label{exam:RZNisenoughPoints}
If $X$ is a qcqs scheme then $\Shv(\RZ(X_{\Nis}))$ has enough points.
\end{exam}

%

\begin{theo} \label{theo:procdhEnoughPoints}
Suppose $S$ is a qcqs scheme with Noetherian topological space of finite Krull dimension. 
Then the topos $\Shv_{\procdh}(\Sch_S)$ has enough points.
\end{theo}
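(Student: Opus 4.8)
The plan is to deduce enough points for $\Shv_{\procdh}(\Sch_S)$ from the enough points of the small topoi $\Shv(\RZ(X_{\Nis}))$, transporting the latter along the comparison functors $\rho^*_X$ of \eqref{rhoX}. First I would record that the hypotheses on $S$ pass to every $X \in \Sch_S$: since $X \to S$ is of finite type and $S$ has Noetherian underlying space of finite dimension, Lemma~\ref{lemm:noethTopFt} shows $X$ is qcqs with Noetherian underlying space of finite dimension. This is exactly what is needed for Corollary~\ref{coro:RZColimitLim} and Example~\ref{exam:RZNisenoughPoints} to apply to each such $X$.

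The backbone of the argument is the following two inputs. By Corollary~\ref{coro:RZColimitLim} (or directly from cocontinuity, Proposition~\ref{prop:cocont}), each $\rho^*_X \colon \Shv_{\procdh}(\Sch_S) \to \Shv(\RZ(X_{\Nis}))$ preserves colimits and finite limits, hence is the inverse image part of a geometric morphism. By Example~\ref{exam:RZNisenoughPoints}, each target $\Shv(\RZ(X_{\Nis}))$ has enough points. Consequently, for every point $\phi^*$ of $\Shv(\RZ(X_{\Nis}))$ the composite $\phi^* \circ \rho^*_X$ again preserves colimits and finite limits, i.e.\ is a fibre functor of $\Shv_{\procdh}(\Sch_S)$ in the sense of Definition~\ref{defi:FF}. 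It therefore suffices to prove that the family $\{\rho^*_X\}_{X \in \Sch_S}$ is \emph{jointly conservative}: granting this, a morphism of procdh sheaves which is inverted by every fibre functor is in particular inverted by every $\phi^* \circ \rho^*_X$, hence (enough points downstairs) by every $\rho^*_X$, hence an isomorphism; so the composites $\phi^*\circ\rho^*_X$ form a conservative family of points.

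The crux is thus joint conservativity of $\{\rho^*_X\}$, and this is where finite dimensionality and the Riemann--Zariski machinery enter. I would argue it as follows. A morphism $f\colon F \to G$ of procdh sheaves is an isomorphism as soon as $F(W) \to G(W)$ is a bijection for every $W \in \Sch_S$, and since every procdh sheaf is in particular a Nisnevich sheaf, it is enough to check this on Nisnevich stalks, i.e.\ on henselian local rings. The role of $\rho^*_X$ is to reorganise these stalks along the modifications of $X$: by Remark~\ref{rema;rhoX} one has $(\rho^*_X F)(U/Y) = \colim_{Y' \in \Mod_X} F(Y' \times_X U)$, so that the points of $\Shv(\RZ(X_{\Nis}))$ induce, through $\rho^*_X$, exactly the stalks of $F$ at the henselian valuation rings arising as compatible systems of Nisnevich points of the modifications $Y' \to X$. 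By Proposition~\ref{prop:procdhLocal} these are precisely the data underlying the procdh local rings, and the proabstract blowup coverings let one reduce the value of a procdh sheaf at an arbitrary henselian local ring to its values at such valuation-type local rings. Finite Krull dimension ensures the valuation rings appearing have finite rank, so that this reduction terminates, the induction being on dimension exactly as in the cocontinuity argument of Proposition~\ref{prop:cocont}.

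I expect the main obstacle to be precisely this last reduction: that evaluation of a procdh sheaf at an arbitrary henselian local ring is controlled by its values at the henselian--valuation--ring points visible to the $\RZ(X_{\Nis})$, rather than the naive hope that $\rho^*_X$ recovers $F(X)$ directly. The latter fails, since $\rho^*_X F$ evaluated at the terminal object $(X/X)$ is the filtered colimit $\colim_{Y' \in \Mod_X} F(Y')$ over \emph{all} modifications, not $F(X)$; the difference is governed by the blowup centres. Showing that these centre contributions are absorbed in the passage to finer and finer modifications is exactly what Proposition~\ref{prop:cocont} and Corollary~\ref{coro:RZColimitLim} are built to make rigorous, and it is the step where the Noetherian and finite-dimensionality hypotheses on $S$ are indispensable.
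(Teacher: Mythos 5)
Your high-level architecture matches the paper's: both arguments manufacture fibre functors of $\Shv_{\procdh}(\Sch_S)$ by composing points of the topoi $\Shv(\RZ(X_{\Nis}))$ (which have enough points by Deligne, Example~\ref{exam:RZNisenoughPoints}) with the left-exact cocontinuous functors $\rho_X^*$ of Corollary~\ref{coro:RZColimitLim}, and both must then confront the fact that $\rho_X^*F$ only records the colimit $\colim_{Y' \in \Mod_X} F(Y')$ over modifications rather than $F(X)$ itself. But the step that actually closes this gap is missing from your writeup. You assert that the proabstract blowup coverings reduce the value of a procdh sheaf at an arbitrary henselian local ring to its values at valuation-type points, and that this is ``exactly what Proposition~\ref{prop:cocont} and Corollary~\ref{coro:RZColimitLim} are built to make rigorous''---but those results only establish that $\rho_X^*$ is the inverse image of a geometric morphism; they say nothing about recovering data at $X$ from data on its modifications. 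The paper supplies the missing mechanism explicitly: it works with the covering-family criterion for enough points (a family is covering iff it is surjective under every fibre functor) and inducts on the Krull dimension of $Y$. Surjectivity of $\rho_Y^*\cY$ in $\Shv(\RZ(Y_{\Nis}))$ yields a covering $\{(U_j/Y') \to (Y/Y)\}$ with $\{U_j \to Y' \to Y\}$ refining $\cY$; the modification $Y' \to Y$ is an isomorphism outside a nowhere dense finitely presented centre $Z_0$ with $\dim Z_0 < \dim Y$; the inductive hypothesis (applicable because fibre functors preserve finite limits, so the pullbacks $Z_n \times_Y \cY$ remain pointwise surjective) refines each $Z_n \times_Y \cY$ by a procdh covering; and the proabstract blowup family $\{Z_n \to Y\}_{n} \sqcup \{Y' \to Y\}$ composes everything into a procdh covering of $Y$ refining $\cY$. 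None of this inductive assembly appears in your argument.

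Your reformulation via joint conservativity of $\{\rho_X^*\}$ also introduces an avoidable obstruction: to deduce that $F(X) \to G(X)$ is an isomorphism from the fact that $\colim_{Y'} F(Y') \to \colim_{Y'} G(Y')$ is one, you would have to propagate an isomorphism of filtered colimits back to individual terms, which fails in general. The covering-family formulation is what makes the induction run, since a single modification $Y'$ extracted from a surjection of sheaves on $\RZ(Y_{\Nis})$ suffices and the error is confined to the centre. Finally, the identification of the composite points with procdh local rings via Proposition~\ref{prop:procdhLocal} is neither needed nor quite accurate as stated (the $\RZ$ points only see the reduced, valuative part); in the paper the composites $\phi \circ \rho_Y^*$ are used purely as abstract fibre functors to which the hypothesis on $\cY$ applies.
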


\begin{proof}
Suppose that $\cY = \{Y_i \to Y\}_{i \in I}$ is a family of morphisms in $\Sch_S$ such that the morphism of sets
$\sqcup_i \phi(Y_i) \to \phi(Y)$ 
is surjective for every fibre functor $\phi$. We want to show that $\cY$ is refinable by a $\procdh$-covering. We work by induction on the Krull dimension of $Y$, the base case being $Y = \varnothing$ with $\dim Y = -1$. In this base case, either $I$ is empty, or $I$ is nonempty and each $Y_i \to Y$ is an isomorphism. Both of these are already covering families, so no refinement is necessary.

Now we do the induction step. The functor
$\rho^*_Y:\Shv_{\procdh}(\Sch_S) \stackrel{}{\to} \Shv\RZ(Y_{\Nis})$ from \eqref{rhoX} preserves colimits and finite limits, Corollary \ref{coro:RZColimitLim}.
So by composition, every fibre functor $\phi$ of $\Shv\RZ(Y_{\Nis})$ induces a fibre functor 
\[\Shv_{\procdh}(\Sch_S) {\stackrel{\rho_Y^*}{\to}} \RZ(Y_{\Nis}) \stackrel{\phi}{\to} \Set.\]

 By assumption, our family $\cY$ is sent to a surjection of sets under each such fibre functor $\phi \circ \rho_Y^*$. Since $\RZ(Y_{\Nis})$ has enough points, Exam.\ref{exam:RZNisenoughPoints}, it follows that $\rho_Y^*\cY$ is a surjective family of sheaves in $\Shv(\RZ(Y_{\Nis}))$. This means that, locally, we can lift the section $\id_{Y}$ of $(\rho_Y^*Y)((Y/Y)) = \hom_{\Sch_S}(\rho((Y/Y)), Y) = \hom_{\Sch_S}(Y, Y)$. Explicitly, this means that there exists a covering $\{(U_j/Y') \to (Y/Y)\}_{j \in J}$ such that the family $\{U_j \to Y' \to Y\}_{j \in J}$ refines $\cY$. %
Since $Y' \to Y$ is a modification, there is a nowhere dense closed subscheme of finite presentation $Z_0 \to Y$ outside of which $Y' \to Y$ is an isomorphism. %
Since $Y$ has finite Krull dimension and $Z_0 \to Y$ is nowhere dense, $\dim Z_0 < \dim Y$. %
So by the induction hypothesis, the pullbacks $Z_n \times_Y \cY$ of $\cY$ to each $Z_n$ also admit refinements by $\procdh$-coverings (here we are using that fibre functors preserve finite limits to know that each $\phi(Z_n \times_Y \cY)$ is a surjective morphism of sets). Composing all these $\procdh$ coverings produces a $\procdh$-covering of $Y$ which refines the original $\cY$.
\end{proof}

%

\begin{coro} \label{coro:smallFamily}
Suppose $S$ is a Noetherian scheme of finite Krull dimension. 
Then procdh local $S$-rings $R$ with $\length Q(R)$ and $\dim R$ finite induce a conservative family of fibre functors of $\Shv_{\procdh}(\Sch_S)$.
\end{coro}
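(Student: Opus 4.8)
The plan is to deduce this from the fact that the \emph{full} collection of fibre functors is already known to be conservative, Theorem~\ref{theo:procdhEnoughPoints}, by showing that the smaller subcollection suffices. Recall from the preamble of this section, \cite[Exposé IV, Prop.6.5(a)]{SGA41}, that for a topos of the form $\Shv_\tau(C)$ a set of fibre functors is conservative if and only if it detects covering families, in the sense that a family $\cY = \{Y_i \to X\}_{i \in I}$ is a procdh covering precisely when $\coprod_{i} \phi(Y_i) \to \phi(X)$ is surjective for every $\phi$ in the set. Since the fibre functor attached to a procdh local ring $R$ sends a representable $Y$ to $\hom_{\Sch_S}(\Spec R, Y)$ (as $Y$ is of finite presentation), it therefore suffices to prove: if a family $\cY = \{Y_i \to X\}_{i \in I}$ in $\Sch_S$ has the property that
\[ \coprod_{i \in I} \hom_{\Sch_S}(\Spec R', Y_i) \to \hom_{\Sch_S}(\Spec R', X) \]
is surjective for every procdh local $S$-ring $R'$ with $\length Q(R')$ and $\dim R'$ finite, then $\cY$ is a procdh covering.

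By Theorem~\ref{theo:procdhEnoughPoints} it is enough to check that $\coprod_i \hom_{\Sch_S}(\Spec R, Y_i) \to \hom_{\Sch_S}(\Spec R, X)$ is surjective for an \emph{arbitrary} procdh local $S$-ring $R$. Here I would invoke Proposition~\ref{prop:colimOfSmaller}: since $S$ is Noetherian of finite Krull dimension it is of finite valuative dimension, Def.\ref{defi:valuDIm}, so we may write $R = \colim_\lambda R_\lambda$ as a filtered colimit of procdh local $\OO_S$-algebras $R_\lambda$ with each $\length Q(R_\lambda)$ and each $\dim R_\lambda$ finite. The transition maps of this system, and the structure maps $\Spec R \to \Spec R_\lambda$, are affine, so $\Spec R = \lim_\lambda \Spec R_\lambda$. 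As every $X \in \Sch_S$ is of finite presentation over $S$, passing to the limit gives
\[ \hom_{\Sch_S}(\Spec R, X) = \colim_\lambda \hom_{\Sch_S}(\Spec R_\lambda, X), \]
and likewise for each $Y_i$, by \cite[Prop.8.13.1]{EGAIV3}.

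With this in hand the surjectivity transfers formally. Given $g \in \hom_{\Sch_S}(\Spec R, X)$, the colimit description produces an index $\lambda$ and a morphism $g_\lambda \colon \Spec R_\lambda \to X$ whose composite with $\Spec R \to \Spec R_\lambda$ is $g$. By hypothesis, applied to the small procdh local ring $R_\lambda$, the morphism $g_\lambda$ lifts along some $Y_i \to X$ to a morphism $\Spec R_\lambda \to Y_i$; composing with $\Spec R \to \Spec R_\lambda$ yields a lift of $g$ along $Y_i \to X$. Hence $\coprod_i \hom_{\Sch_S}(\Spec R, Y_i) \to \hom_{\Sch_S}(\Spec R, X)$ is surjective for every procdh local $R$, so $\cY$ is a procdh covering by Theorem~\ref{theo:procdhEnoughPoints}, and the small family detects coverings and is therefore conservative. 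The only non-formal inputs are Proposition~\ref{prop:colimOfSmaller} and the finite-presentation colimit identity; I do not expect any single step to be a serious obstacle, since the real content of the corollary is entirely carried by the reduction to small rings in Proposition~\ref{prop:colimOfSmaller}.
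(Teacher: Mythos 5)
Your argument is correct and is exactly the intended one: the paper's proof is literally the single line ``Apply Proposition~\ref{prop:colimOfSmaller}'', and you have filled in the standard reduction (conservativity $=$ detection of coverings, $\hom(\Spec R,-)=\colim_\lambda\hom(\Spec R_\lambda,-)$ by finite presentation, then lift at a finite stage). No gaps.
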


\begin{proof}
Apply Proposition~\ref{prop:colimOfSmaller}.
\end{proof}

\section{Excision} \label{sec:excision}

In this section we discuss the equivalence of procdh excision and \v{C}ech descent. There is nothing particularly new in this section (apart from the results) but we include complete proofs for convenience and because the statements in the literature don't quite fit our situation.

\emph{Sheaves of spaces.}
Since \cite{HTT}, the default has become to define a \emph{sheaf of spaces} as a presheaf of spaces satisfying \v{C}ech descent.\footnote{As opposed to the hyperdescent used in work of 
Artin, Brown, Deligne, Friedlander, Gersten, Jardine, Joyal, Mazur, Morel, Thomason, Verdier, Voevodsky, and many others in work dating back at least to the 70's.
} 
That is, a presheaf of spaces such that 
every $τ$-covering $\{Y_i \to X\}_{i \in I}$ we have an equivalence
\begin{equation} \label{equa:cech}
F(X) = \lim_n \prod_{(i_0, \dots, i_n) \in I^{n+1}} F(Y_{i_0} \times_X \dots \times_X Y_{i_n}).
\end{equation}
in the category of spaces.
Equivalently, a $τ$-sheaf is a presheaf such that
\[ \Map(X, F) \to \Map(R, F) \]
is an equivalence for every $τ$-covering sieve $R \subseteq \Map(-, X)$. This ``equivalently'' is essentially because, writing $\cY_n := \coprod_{i \in I^{n+1}} \Map(-, Y_{i_0} \times_X \dots \times_X Y_{i_n})$, 
\begin{equation} \label{equa:cech2}
\cY_0
\stackrel{p}{\to}
\colim_n \cY_n = R
\stackrel{j}{\to} %
\Map(-, X)
\end{equation}
is the canonical factorisation in $\PSh(C, \cS)$ such that for all $T \in C$, the morphism of spaces $p(T)$ is surjective on connected components and the morphism of spaces $j(T)$ is an inclusion of connected components, \cite[Prop.6.2.3.4]{HTT}. That is, $R$ is the sieve associated to $\{Y_i \to X\}_{i \in I}$.

\begin{theo} \label{theo:descentConditions}
Let $S$ be a scheme and consider the following conditions on a presheaf of spaces $F \in \PSh(\Sch_S, \cS$). 
\begin{enumerate}
 \item
 \textnormal{
 \emph{Excision.} 
 For every distighished Nisnevich square $\{U \to X, V \to X\}$ and every proabstract blowup square $\{Z_n \to X\}_{n \in \NN} \sqcup \{Y \to X\}$ in $\Sch_S$, Def.\ref{defi:procdh}, we have
\begin{align}
F(X) &\stackrel{\sim}{\to} F(U) \times_{F(U \times_X V)} F(V), %
\\
F(X) &\stackrel{\sim}{\to} F(Y)\times_{\lim F(Z_n \times_X Y)} \lim F(Z_n).
\end{align}
}

 \item
 \textnormal{
 \emph{\v{C}ech descent.} 
 For every procdh covering family $\{Y_λ \to X\}_{λ \in \Lambda}$ we have 
 \begin{equation}
 F(X) \stackrel{\sim}{\to} \lim_{n} 
F(\cY_n)
 \end{equation}
where we write $F(\cY_n)$ for $\prod_{i \in I^n} F(Y_{i_1} \times_X \dots \times_X F_{i_n})$.
That is, in the terminology of \cite{HTT}, $F$ is a procdh sheaf.
}

\end{enumerate}
If $S$ is qcqs then (Excision) $\Leftrightarrow$ (\v{C}ech descent). 
\end{theo}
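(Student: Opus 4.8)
The implication (\v{C}ech descent) $\Rightarrow$ (Excision) is the routine direction, and I would obtain it by specialising \v{C}ech descent to the two generating families of Definition~\ref{defi:procdh} and checking that the cosimplicial totalisation collapses. For a distinguished Nisnevich square $\{U \to X, V \to X\}$ the quasi-compact open immersion $U \to X$ is a monomorphism, so $U \times_X U = U$, while the diagonal of the étale map $V \to X$ is a clopen immersion, whence $V \times_X V = V \sqcup W$ with $W \to X$ factoring through $U$ (over $X \setminus U$ the map $V\to X$ is an isomorphism, so off $U$ the fibre product is just the diagonal). Feeding this splitting into the \v{C}ech nerve lets the totalisation degenerate to the Mayer--Vietoris square $F(X) \stackrel{\sim}{\to} F(U) \times_{F(U \times_X V)} F(V)$. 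For a proabstract blowup family one uses instead that the thickenings $Z_n \to X$ are nested closed immersions, so that $Z_m \times_X Z_n = Z_{\min(m,n)}$, and that interleaving the cosimplicial limit with the limit $\lim_n$ over thickenings reduces the totalisation to the second equivalence in \eqref{equa:procdhExcision}.

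For the substantial direction (Excision) $\Rightarrow$ (\v{C}ech descent) the plan is first to upgrade the two excision conditions to \v{C}ech descent for the \emph{individual} generating covers, and then to propagate descent along arbitrary procdh coverings by induction. The first task reverses the collapse just described: using the monomorphism $U \to X$ and the clopen diagonal of $V \to X$ in the Nisnevich case, and the nestedness $Z_m \times_X Z_n = Z_{\min(m,n)}$ together with the fact that $Y \to X$ is an isomorphism away from $Z_0$ in the blowup case, I would identify the sieve generated by the cover with an iterated (homotopy) pushout of representables, so that $\Map(R, F)$ computes as exactly the finite limit appearing in the excision condition. Thus sending a distinguished square to a pullback, resp. a proabstract blowup family to the limit square, is \emph{equivalent} to \v{C}ech descent for the corresponding two-element, resp. countable, cover.

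To propagate, I would phrase descent via sieves: by \cite{HTT}, $F$ is a procdh sheaf if and only if $\Map(X, F) \to \Map(R, F)$ is an equivalence for every procdh covering sieve $R \hookrightarrow X$. Writing $\mathcal{L}_F$ for the class of monomorphisms of presheaves against which $F$ is local, the previous paragraph shows $\mathcal{L}_F$ contains the sieve inclusions of both generating covers; since $\mathcal{L}_F$ is strongly saturated and the procdh topology is the one generated by these covers, it suffices to establish a transitivity statement: if $\{Y_i \to X\}$ is a generating cover whose sieve lies in $\mathcal{L}_F$ and $R_i \hookrightarrow Y_i$ are sieves in $\mathcal{L}_F$, then the composite sieve on $X$ lies in $\mathcal{L}_F$. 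Granting this, the rooted-tree description of Remark~\ref{rema:genDefi} permits an induction on the height of the tree: an arbitrary procdh covering is refined by a finite composition of generating covers, and transitivity assembles descent for the refinement, which in turn forces descent for the original family.

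The main obstacle will be Step~1 and the transitivity statement in the proabstract blowup case, because of the interaction of the cosimplicial \v{C}ech direction with the limit $\lim_n$ over the infinitesimal thickenings. Unlike the Nisnevich situation, the diagonal of the proper morphism $Y \to X$ is only a closed immersion, so $Y \times_X Y$ does not split off a clopen piece; the argument must instead exploit that $Y \to X$ is an isomorphism over $X \setminus Z_0$ together with the pro-system $\{Z_n\}$ to run the limit comparison. Verifying that these two limits commute, and that the relevant diagram of sieves is a homotopy pushout so that no higher correction terms survive, is the technical heart of the section.
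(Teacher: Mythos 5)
Your overall architecture for (Excision) $\Rightarrow$ (\v{C}ech descent) --- verify descent for the two generating covers, then propagate by saturation along the rooted tree of Remark~\ref{rema:genDefi} --- is reasonable, and your treatment of the distinguished Nisnevich covers is correct in outline. The gap is in the proabstract blowup case, and it is not merely a missing verification: the pivot of your Step~1, that the sieve $R$ generated by $\{Z_n \to X\}_{n}\sqcup\{Y\to X\}$ is equivalent in $\PSh(\Sch_S,\cS)$ to the homotopy pushout of $``\colim"_n Z_n \leftarrow ``\colim"_n E_n \to Y$ (so that excision for that single square would be \emph{equivalent} to \v{C}ech descent for that single cover), is false. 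Take any closed subscheme $Z\subseteq X$ of finite presentation, $\varnothing\neq Z\neq X$, and set $Y = X\amalg Z$; this is a legitimate proabstract blowup square per Definition~\ref{defi:procdh}. Evaluating on $T = X\amalg Z$ with its structure map to $X$: this map factors through no $Z_n$, so $R(T)$ contains a single element over it, whereas it admits two distinct lifts to $Y$ (send the $Z$-summand to the $Z$-summand, or into the $X$-summand), and these lie in different connected components of the homotopy pushout because there are no edges out of them (an edge would require $T\to X$ to factor through some $Z_n$). So $\Map(R,F)$ and $F(Y)\times_{\lim_n F(E_n)}\lim_n F(Z_n)$ are genuinely different functors of the presheaf $F$, and neither single-cover condition formally yields the other. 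The same issue sinks your ``routine'' direction, whose proposed collapse of the totalisation silently discards all the $F(Y^{\times_X k})$ terms.

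The paper's proof shows how to route around this. The pushout identification of the sieve holds only \emph{after procdh sheafification}, and proving that (Proposition~\ref{prop:hyperExci}) uses descent for the auxiliary covering families $\{E_\mu^{\times_{Z_\mu}(k+1)}\to Y^{\times_X(k+1)}\}_{\mu}\sqcup\{Y\xrightarrow{\Delta}Y^{\times_X(k+1)}\}$ together with a two-out-of-three argument for cocartesian squares --- so it is irreducibly a statement about the whole topology, not about one cover, which is why the paper only asserts the implication (\v{C}ech descent for all covers) $\Rightarrow$ (Excision). Conversely, for (Excision) $\Rightarrow$ (\v{C}ech descent) the paper never identifies the sieve with a colimit at all: Proposition~\ref{prop:excisionCech} applies excision \emph{levelwise} to the pullback of the refining proabstract blowup square along each term $Y_{i_0}\times_X\cdots\times_X Y_{i_n}$ of the \v{C}ech nerve of the given cover, commutes the two limits, and closes the induction on the height of the tree. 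If you want to keep your saturation framework you must replace Step~1 (and the corresponding half of the converse) with arguments of this kind; as written, the proabstract blowup case --- which you correctly flag as the technical heart --- rests on an identification that fails.
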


The implication (Excision) $\Rightarrow$ (\v{C}ech descent) is Proposition~\ref{prop:excisionCech} below, and
the implication (Excision) $\Leftarrow$ (\v{C}ech descent) is Proposition~\ref{prop:hyperExci} below. 

\begin{rema}
	The equivalence (Excision) $\Leftrightarrow$ (\v{C}ech descent) is essentially due to Voevodsky, \cite{Voe10}, building on work of Brown and Gersten, \cite{BG73}. Our proofs are essentially the ones from \cite[Thm.3.2.5]{AHW15}, although the techniques are in \cite[Exp.Vbis \S 3.3]{SGA42}, and the key points for (Excision) $\Leftarrow$ (\v{C}ech descent)---that the horizontal morphisms are categorical monomorphisms and the diagonal squares are distinguished---are isolated in \cite[Def.2.10]{Voe10}.
\end{rema}

\begin{prop} \label{prop:excisionCech}
Suppose that $S$ is a qcqs scheme and $F \in \PSh(\Sch_S, \cS)$ a presheaf of spaces. 
If $F$ satisfies procdh excision then $F$ satisfies procdh \v{C}ech descent, cf.Thm.\ref{theo:descentConditions}.
\end{prop}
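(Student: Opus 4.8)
The plan is to reformulate descent as locality against sieve inclusions and then propagate from the two generating square-types. Recall from the discussion around \eqref{equa:cech2} that, writing $R_\cY = \colim_n \cY_n \hookrightarrow X$ for the sieve generated by a covering $\cY = \{Y_i \to X\}_{i}$, the presheaf $F$ satisfies \v{C}ech descent for $\cY$ if and only if $\Map(-,F)$ carries the monomorphism $R_\cY \hookrightarrow X$ to an equivalence of spaces. Let $\mathcal{W} \subseteq \Fun(\Delta^1, \PSh(\Sch_S,\cS))$ be the class of all morphisms inverted by $\Map(-,F)$. By the general theory of localizations, \cite[\S 5.5.4]{HTT}, $\mathcal{W}$ is strongly saturated: it is closed under two-out-of-three, cobase change, retracts, and all small colimits computed in the arrow category. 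The goal is to show $R_\cY \hookrightarrow X \in \mathcal{W}$ for every procdh covering $\cY$.

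First I would treat the generating coverings, i.e.\ show that procdh excision forces $R_\cY \hookrightarrow X \in \mathcal{W}$ when $\cY$ is a distinguished Nisnevich square or a proabstract blowup square. For a distinguished Nisnevich square $\{U \to X, V \to X\}$ this is the classical Brown--Gersten/Morel--Voevodsky computation: because $i$ is an open immersion and $j$ is étale and an isomorphism away from $U$, the fibre products occurring in the \v{C}ech nerve $\cY_\bullet$ are themselves of distinguished type, so the totalization $\lim_\Delta F(\cY_\bullet)$ collapses to $F(U) \times_{F(U\times_X V)} F(V)$; thus excision is exactly locality against the sieve. For a proabstract blowup square $\{Z_n \to X\}_{n} \sqcup \{Y \to X\}$, indexed by $I = \NN \sqcup \{Y\}$, I would analyze $\cY_\bullet$ using the identities $Z_n \times_X Z_m = Z_{\min(n,m)}$ and $Z_n \times_X Y =: E_n$, together with the fact that $Y \to X$ is an isomorphism over the dense open $X \setminus Z_0$. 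Since the diagonal $Y \to Y\times_X Y$ again fits into a proabstract blowup square (regularity, cf.\ \cite[Def.2.10]{Voe10}), excision applies recursively to collapse the $Y$-columns of the nerve; reorganizing the two directions and commuting $\lim_n$ past $\lim_\Delta$ should identify the totalization with $F(Y) \times_{\lim_n F(E_n)} \lim_n F(Z_n)$, which is precisely procdh excision, Eq.\eqref{equa:procdhExcision}.

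Second I would propagate from the generating coverings to all procdh coverings, following \cite{AHW15}. The key preliminary is that the generating coverings are stable under base change: pullbacks of distinguished Nisnevich squares are again distinguished Nisnevich squares, and pullbacks of proabstract blowup squares are again proabstract blowup squares, the latter because $\sI_{Z_0}^n \OO_{X'} = (\sI_{Z_0}\OO_{X'})^n$, so that $Z_n \times_X X'$ is the $n$th infinitesimal thickening of $Z_0 \times_X X'$. Given this, for a general procdh covering I would induct on the height of the tree $T \to \Sch_S$ of Remark~\ref{rema:genDefi}. At the root the children form a generating covering of $X$, whose sieve inclusion is in $\mathcal{W}$ by the first part. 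The inclusion of the refined (leaf) sieve into the root sieve is a colimit, over the \v{C}ech nerve of the root covering, of the subtree sieve inclusions and their base changes to the higher nerve terms; by stability these base changes are again sieve inclusions of generating coverings, hence in $\mathcal{W}$ by the inductive hypothesis, so the colimit lies in $\mathcal{W}$. Two-out-of-three then places the leaf sieve inclusion $R_{\cY}\hookrightarrow X$ in $\mathcal{W}$, which is \v{C}ech descent for $\cY$.

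The main obstacle I expect is the proabstract blowup case of the first part. Unlike the Nisnevich square, the covering family is infinite and mixes the single proper morphism $Y$ with the pro-system $\{Z_n\}_n$, and $Y \times_X Y$ does \emph{not} collapse to $Y$. The two delicate points are: (i) controlling the extra components of $Y \times_X Y$ supported over $Z_0$ via the recursive ``regular'' structure of the square, so that the $Y$-part of the nerve still contributes only $F(Y)$ and the cross terms $F(E_n)$; and (ii) verifying that the pro-limit $\lim_n$ commutes with the \v{C}ech totalization $\lim_\Delta$, so that the two limits amalgamate into the single fibre product over $\lim_n F(E_n)$ demanded by procdh excision.
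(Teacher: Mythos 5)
Your overall architecture---reformulating \v{C}ech descent as inverting sieve inclusions, using that the class $\mathcal{W}$ of maps inverted by $\Map(-,F)$ is strongly saturated, and propagating from the generating coverings by base-change stability and induction on the refinement tree---is sound and close in spirit to the \cite{AHW15} framework the paper also invokes; the propagation step and the distinguished Nisnevich case are fine. The genuine gap is exactly where you flag it: you never prove that excision forces \v{C}ech descent for a single proabstract blowup covering, i.e.\ that $\lim_\Delta F(\cY_\bullet)$ collapses to $F(Y)\times_{\lim_n F(E_n)}\lim_n F(Z_n)$. Your two ``delicate points'' (the components of $Y\times_X Y$ over $Z_0$, and commuting $\lim_n$ past the totalization) are the entire content of that step, and the recursive ``regularity'' you appeal to---the diagonal $Y\to Y\times_X Y$ sitting inside another proabstract blowup square---is what the paper uses for the \emph{converse} implication (Proposition~\ref{prop:hyperExci}), where one may already assume descent for the auxiliary coverings; in the forward direction that appeal is either circular or requires a further unproven collapse. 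Since your step 2 relies on step 1 for the root covering, the argument as written does not close.

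The paper's proof sidesteps this difficulty entirely. It inducts on the height of the refinement tree and at each stage applies excision \emph{termwise to the \v{C}ech nerve of the given covering} $\cY$: each $F(Y_{i_0}\times_X\cdots\times_X Y_{i_n})$ decomposes as the limit dictated by the pulled-back generating square, one commutes $\lim_\Delta$ with the finite limit and with $\lim_{\NN}$, and the pulled-back coverings $\cY\times_X T$ (for $T=U,V,W$, resp.\ $T=Z_m,B,E_m$) acquire sections, so their sieve inclusions are monomorphisms with a section and hence equivalences (the paper's ``Step 0''). A final application of excision then recovers $F(X)$. In particular the \v{C}ech nerve of a generating covering is never computed directly. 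To repair your route, either carry out the collapse of the proabstract blowup nerve honestly, or replace your step 1 by this termwise-excision-plus-section argument.
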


\begin{rema}
If we restrict our attention to presheaves of sets, then \v{C}ech descent is just the usual sheaf condition.
\end{rema}

\begin{proof}
Suppose that $F$ satisfies excision and $\{Y_i \to X\}_{i \in I}$ is a procdh covering. By definition, every covering is refinable by a composition of finite length of generator coverings, cf. Remark~\ref{rema:genDefi}. We work by induction on the length of the composition, that is, on the height of the tree from  Remark~\ref{rema:genDefi}. 

\emph{Step 0.} The case of length zero is when there exists a factorisation $X \to Y_i \to X$ for some $i$. This induces a section 
\[ X \to \colim_n \sqcup_{i \in I^{n+1}} Y_{i_0} \times_X \dots \times_X Y_{i_n} \stackrel{j}{\to} X \]
 in $\PSh(\Sch_S, \cS)$ where we have written just $T$ for $\Map(-, T)$. Since $j$ is a categorical monomorphism, \cite[Prop.6.2.3.4]{HTT}, existence of a section forces it to be an equivalence. Hence, the morphism $\Map(j, F)$ is an equivalence. But this is none-other-than $F(X) \to \lim_n \prod_{i \in I^{n+1}} F(Y_{i_0} \times_X \dots \times_X Y_{i_n})$ being an equivalence.

\emph{Step $(n+1)$.}  Suppose $\{Y_i \to X\}_{i \in I}$ is refinable by a distinguished Nisnevich covering $\{U \to X, V \to X\}$. 
Note that this means that for $T = U, V, W$ the covering $\{Y_i \times_X T \to T\}_{i \in I}$ is the case of length zero we just considered.
Setting $W = U \times_X V$ and $F(Y_{n,T}) = \prod_{i \in I^{n+1}} F(Y_{i_0} \times_X \dots \times_X Y_{i_n} \times_X T)$ we get 
\begin{align*}
\lim_n F(Y_n) 
&=
\lim_n  \biggl ( F(Y_{n,U}) \times_{F(Y_{n,W})} F(Y_{n,V}) \biggr ) 
& (\textrm{Excision}) 
\\
&= 
\lim_n F(Y_{n,U}) \times_{\lim_n F(Y_{n,W})} \lim_n F(Y_{n,V}) 
& (\textrm{Limits commute with limits}) 
\\ &=
F(U) \times_{F(W)} F(V) 
& (\textrm{Step 0}) 
\\ 
&=
F(X).
& (\textrm{Excision})
\end{align*}
A similar argument works for a procdh covering. Explicitly, if $\{Z_n \to X\}_{n \in \NN} \sqcup \{B \to X\}$ is a proabstract blowup square refining $\{Y_i \to X\}_{i \in I}$, and $E_n := Z_n \times_X B$ we have 
\begin{align*}
\lim_n F(Y_n) 
&=
\lim_n  \biggl ( \lim_m F(Y_{n,Z_m}) \times_{\lim_m F(Y_{n,E_m})} F(Y_{n,B}) \biggr ) \\
&= 
\lim_m \lim_n  F(Y_{n,Z_m}) \times_{\lim_m \lim_n  F(Y_{n,E_m})} \lim_n  F(Y_{n,B})  \\
&= \lim_m F(Z_m) \times_{\lim_m F(E_m)} F(B) \\ 
&= F(X).
\end{align*}
\end{proof}

\begin{prop} \label{prop:hyperExci}
Suppose that $S$ is a qcqs scheme and $F \in \PSh(\Sch_S, \cS)$ a presheaf of spaces. 
If $F$ satisfies procdh \v{C}ech descent then it satisfies procdh excision, cf.Thm.\ref{theo:descentConditions}.
\end{prop}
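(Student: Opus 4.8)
The plan is to show that \v{C}ech descent forces $F$ to send each distinguished Nisnevich square and each proabstract blowup square to a cartesian square, and the first step is to repackage both excision conditions as a single statement about mapping spaces. Since $\PSh(\Sch_S,\cS)$ is an $\infty$-topos, colimits are universal, so $-\times_X Y$ preserves colimits and $\lim_n\Map(Z_n,F)=\Map(\colim_n Z_n,F)$. Thus the Nisnevich condition reads $\Map(P_{\Nis},F)\simeq F(X)$ for the pushout $P_{\Nis}=U\sqcup_W V$ with $W=U\times_X V$, while the blowup condition reads $\Map(P_{bl},F)\simeq F(X)$ for $P_{bl}=Y\sqcup_{E_\infty}\Zinf$, where $\Zinf=\colim_n Z_n$ and $E_\infty=\colim_n(Z_n\times_X Y)=\Zinf\times_X Y$ are formed in the presheaf $\infty$-topos and I abbreviate $E_n=Z_n\times_X Y$. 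In both cases it then suffices to prove that the canonical map $P\to X$ induces an equivalence $\Map(P,F)\simeq F(X)$.

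Next I would feed in the hypothesis. For the covering $\{U\to X,V\to X\}$, respectively $\{Z_n\to X\}_{n\in\NN}\sqcup\{Y\to X\}$, \v{C}ech descent gives $F(X)\simeq\lim_n F(\vC_n)=\Map(R,F)$, where $R=\colim_n\vC_n\hookrightarrow\Map(-,X)$ is the covering sieve, i.e.\ the image of $\coprod_i Y_i\to X$, exactly as in the factorisation \eqref{equa:cech2}. Because $P$ is a colimit of the same objects $U,V,W$ (resp.\ $Y,Z_n,E_n$), each of which maps into the subobject $R$, the structure map $P\to X$ factors as $P\to R\hookrightarrow X$, and this factorisation is manifestly surjective onto $R$. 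Hence the proposition is reduced to the purely geometric claim, independent of $F$, that $P\to R$ is an equivalence in $\PSh(\Sch_S,\cS)$; equivalently, that $P\to X$ is a categorical monomorphism.

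Finally I would establish this monomorphism using precisely the two structural features isolated by Voevodsky, \cite[Def.2.10]{Voe10}, and recalled in the remark above: the horizontal legs are monomorphisms and the diagonal squares are again distinguished. A monomorphism in the $\infty$-topos is detected by the diagonal $P\to P\times_X P$ being an equivalence, so by universality of colimits I can compute $P\times_X P$ from the pushout diagram. The monomorphism legs contribute the collapsing identities $U\times_X U=U$, $U\times_X V=W$, and, crucially for the pro-system, $Z_n\times_X Z_m=\uSpec\OO_X/\sI_Z^{\min(n,m)}=Z_{\min(n,m)}$; the remaining off-diagonal contributions of the non-monomorphism legs ($V\times_X V$ in the Nisnevich case, and the $Y\times_X Y$ and $E_n$ terms in the blowup case) are absorbed by the regularity condition that the diagonal squares are distinguished, following the computation of \cite[Thm.3.2.5]{AHW15}. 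This identifies $P\times_X P$ with $P$, so the diagonal is an equivalence and $P\simeq R$, whence $\Map(P,F)\simeq\Map(R,F)\simeq F(X)$. I expect the proabstract blowup square to be the main obstacle: the covering is infinite and carries the pro-system $\{Z_n\}$, so one must interchange the \v{C}ech limit $\lim_n$ with the pro-colimit $\colim_n$, check that base change commutes with these colimits, and handle the leg $Y\to X$—which is only proper, not a monomorphism—entirely through the regularity of its diagonal square rather than through a naive fibre-product identity.
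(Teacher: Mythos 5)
Your overall strategy---reduce excision to showing that the pushout $P$ maps to $X$ by an effective epimorphism which is also a monomorphism, using Voevodsky's regularity---belongs to the same family of arguments as the paper's (both descend from \cite[Thm.3.2.5]{AHW15}), but the pivotal step is wrong as stated and the real difficulty is left unaddressed. The claim that $P\to R$ is an equivalence \emph{in $\PSh(\Sch_S,\cS)$} (equivalently, that $P\to X$ is a presheaf monomorphism) is false: already for a distinguished Nisnevich square, take $T=T_1\sqcup T_2$ with $T_1\to X$ factoring through $U$ and admitting two lifts to $V$, and $T_2\to X$ not factoring through $U$ but lifting to $V$. Then $U(T)=\varnothing$, hence $W(T)=\varnothing$, so the fibre of $P(T)\to X(T)$ over this map is a discrete set with at least two points. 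The map $P\to X$ only becomes a monomorphism after procdh sheafification, so your reduction must take place in $\Shv_{\procdh}(\Sch_S,\cS)$, and that is where all the work lies.

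Even granting this, ``the off-diagonal contributions are absorbed by the regularity condition'' is not a proof. In $P\times_X P$ the terms $V\times_X V$, resp.\ $Y\times_X Y$ and the mixed terms $Z_n\times_X Y$, do not collapse; what one must show is that the pushout of the \emph{derived} square maps to $Y\times_X Y$ by an equivalence of sheaves, which is a statement of exactly the type being proven, so there is a circularity to break. (The $1$-categorical shortcut ``the diagonal is split, hence an isomorphism as soon as it is an epimorphism'' is unavailable for sheaves of spaces: $\ast\to S^1$ is a split-mono effective epimorphism that is not an equivalence.) You also never verify the key geometric input for the blowup case, namely that $\{E_\mu^{\times_{Z_\mu}(n+1)}\to Y^{\times_X(n+1)}\}_{\mu\in\NN}\sqcup\{Y\to Y^{\times_X(n+1)}\}$ is again a procdh covering with $Y\to Y^{\times_X(n+1)}$ a closed immersion. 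The paper sidesteps the circularity by pulling the square back along each simplex $A_{\lambda_0}\times_X\dots\times_X A_{\lambda_n}$ of the \v{C}ech nerve: over any $Z_{\lambda_i}$ the pro-system stabilises and the square degenerates; over $Y^{\times_X(n+1)}$ one uses the closed-immersion case together with two-out-of-three for cocartesian squares. You would need to supply an argument of comparable substance to close the gap.
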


\begin{proof}
Consider the two squares 
\begin{equation} \label{equa:NisPorcdhSq}
\xymatrix{
W \ar[r] \ar[d] & V \ar[d] & \ar@{}[d]|{\textstyle{\textnormal{ and }}}& ``\colim"_n E_n \ar[r] \ar[d] \ar@{}[dr]|{(\ast)} & Y \ar[d] \\
U \ar[r] & X && ``\colim"_n Z_n \ar[r] &  X
} 
\end{equation}
of $\PSh(\Sch_S, \cS)$ corresponding to those in Proposition~\ref{prop:excisionCech}.
We want to show that for every procdh sheaf $F$ the functor $\Map(-, F)$ sends these squares to cartesian squares. 
We consider only $(\ast)$ as the Nisnevich case is heavily studied in the literature (and also, the argument for the left square is the same as for the right but slightly simpler combinatorially). 

By Yoneda it suffices to show that $(\ast)$ becomes a cocartesian after sheafification. The game is the following. Set $A_λ := Z_λ$ for $λ \geq 0$ and $A_{-1} := Y$. Then consider $\cA_0 = \sqcup_{λ \geq -1} A_λ$ (taking the coproduct in $\PSh(\Sch_S, \cS)$), and set $\cA_n = \cA_0^{\times_X (n+1)}$ for $n > 0$. Then $\colim \cA_n \to X$ becomes an isomorphism in $\Shv_{\procdh}(\Sch_S, \cS)$ since it is exactly the sieve associated to the family $\{A_λ \to X\}_{λ \geq -1}$. So it suffices to show that the square $(\ast) \times_X \colim \cA_n$ is cocartesian in $\Shv_{\procdh}(\Sch_S, \cS)$. We now repeatedly use universality of colimits without further comment, \cite[\S 6.1]{HTT}.

To show $(\ast) \times_X \colim \cA_n$ is cocartesian it suffices to show that each $(\ast) \times_X \cA_n$ becomes cocartesian after sheafification, and for this it suffices to show that $(\ast) \times_X (A_{λ_0} \times_X  \dots \times_X A_{λ_n})$ becomes cocartesian after sheafification for each $(λ_0, \dots, λ_n) \in \NN_{\geq -1}^{n+1}$. If any of the $λ_i$ are $\geq 0$, then we have a square of the form
\begin{equation}
\xymatrix{
``\colim"_{μ \geq 0} (Z_{λ_i} \times_X E_μ) \ar[d] \ar[r] & E_{λ_i} \ar[d] \\
``\colim"_{μ \geq 0} (Z_{λ_i} \times_X Z_μ) \ar[r] & Z_{λ_i} 
} 	
\end{equation}
which is cocartesian since $Z_{λ_i} \times_X Z_μ = Z_{λ_i}$ and $ Z_{λ_i} \times_X E_μ = E_{λ_i}$ for all $μ \geq λ_i$. Similarly, if all $λ_i$ are $-1$ but $Y \to X$ is a \emph{closed immersion}, then 
our square becomes 
\[ \xymatrix{
``\colim"_{μ \geq 0} E_μ \ar[d] \ar[r] & Y \ar[d] \\
``\colim"_{μ \geq 0} E_μ \ar[r] & Y.
} \]
Finally, for a general $Y$ and $λ_i = -1$ for all $i$, the square $(\ast) \times_X (A_{λ_0} \times_X  \dots \times_X A_{λ_n})$ is the lower square $(\ast\ast)$ below.
\[ \xymatrix{
``\colim"_{μ \geq 0} \biggl ( E_μ  \biggr ) \ar[r] \ar[d]_{(\id_{E_μ}, \Delta^{n+1}_{E_μ})} \ar@{}[dr]|{(\ast\ast\ast)} & Y \ar[d]^{(\id_Y, \Delta_Y^{n+1})} \\
``\colim"_{μ \geq 0} \biggl ( E_μ \times_{Z_μ} E_μ^{\times_{Z_μ} {(n+1)}} \biggr ) \ar[r] \ar[d]_{pr_2} \ar@{}[dr]|{(\ast\ast)} & Y \times_X Y^{\times_X {(n+1)}} \ar[d]^{pr_2} \\
``\colim"_{μ \geq 0} \biggl ( E_μ^{\times_{Z_μ} {(n+1)}} \biggr ) \ar[r] & Y^{\times_X {(n+1)}}
} \]
Then since $\{E_μ^{\times_{Z_μ} (n+1)} \to Y^{\times_X (n+1)}\}_{μ \in \NN} \sqcup \{Y \stackrel{\Delta_Y^{n+1}}{\to} Y^{\times_X (n+1)}\}$ is a procdh covering family for any $n \geq 0$ and the $Y \to Y^{\times_X (n+1)}$ are closed immersions, the square $(\ast {\ast}\ast)$ and the outside square are both cocartesian by the previous case. Then it follows from the two-out-or-three property for cocartesian squares, \cite[Lem.4.4.2.1]{HTT},  that the third square $(\ast \ast)$ is also cocartesian.
\end{proof}

\section{Homotopy dimension} \label{sec:homotopyDimension}

\subsection{Valuative dimension}

\begin{defi} \label{defi:valuDIm}
Recall that the \emph{valuative dimension} of a scheme is the supremum of the ranks of all valuation rings of residue fields of generic points of $X$ centred on $X$.
\[ \dim_v(X) = \sup \left \{
\dim R\ \middle |\ 
\begin{array}{c}
\exists\ x \in X^{\gen}; R \textrm{ is a valuation ring of } k(x)  \\
\exists\ \Spec(R) \to X \textrm{ compatible with } R \subseteq k(x)
\end{array}
\right \}. \]
\end{defi}

The valuative dimension is of interest to us because it controls the size of $\RZ(X)$. Indeed, the $R$ appearing in the definition of valuative dimension are precisely the reductions of the local rings of the locally ringed topological space $\lim_{Y \in \Mod_X} Y$.

%
%
%
The following basic properties of valuative dimension follow directly from well-known facts about valuation rings.

\begin{lemm}  \label{lemm:valDimProp}
Let $X$ be a scheme.
\begin{enumerate}
 \item If $Y \subseteq X$ is a subscheme, or $f: Y \to X$ is an étale morphism, then  
 \[ \dim_v Y \leq \dim_v X. \]
 \item If $Y \subseteq X$ is a nowhere dense subscheme and $\dim_v(Y)$ is finite, then
 \[ \dim_v Y \lneq \dim_v X. \]
 \item If $Y \to X$ is a proper morphism inducing a bijection $Y^{\gen} = X^{\gen}$ then 
 \[ \dim_v Y = \dim_v X. \]
 \item We have
 \[ \dim_v \AA^1_X = \dim_v X + 1. \]
 \item \label{lemm:valDimProp:ft} If $X$ has finite valuative dimension then so does every $X$-scheme of finite type.
\end{enumerate}
\end{lemm}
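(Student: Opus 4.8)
The plan is to reduce every assertion to the case of an integral scheme and then quote standard valuation theory. Writing $X_x := \overline{\{x\}}_{\red}$ for the reduced irreducible component through a generic point $x \in X^{\gen}$, every valuation ring occurring in Definition~\ref{defi:valuDIm} has fraction field $k(x)$ and a center on some $X_x$; since the spectrum of a valuation ring is integral, a center on $X$ is the same as a center on $X_x$. Hence $\dim_v X = \sup_{x \in X^{\gen}} \dim_v X_x$, and it suffices to compare ranks of valuation rings carrying a center on integral schemes with function field $K$. The three tools I would use are: (a) Chevalley's theorem, that every local subring of $K$ is dominated by a valuation ring of $K$; (b) that a valuation of a field extends to any larger field without its rank dropping; and (c) that a composite valuation (the preimage of a valuation of the residue field) has rank equal to the sum of the two ranks.

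For (1) and (2), take $y \in Y^{\gen}$ with $Y \subseteq X$ a subscheme and choose a generic point $\xi$ of $X$ generising $y$; then $B := \OO_{X_\xi, y}$ is a local domain with fraction field $k(\xi)$ and residue field $k(y)$, and $\dim B = \operatorname{ht}$ of the prime cutting out $\overline{\{y\}}$. By (a) there is a valuation ring $W$ of $k(\xi)$ dominating $B$; given a valuation ring $V$ of $k(y)$ centred on $Y$, extend $V$ along $k(y) \hookrightarrow \kappa_W$ using (b) and form the composite $V'$ of $W$ with this extension using (c). Then $V'$ is centred on $X$ (it dominates $B$, hence $\OO_{X,c}$) and $\operatorname{rank} V' \ge \operatorname{rank} W + \operatorname{rank} V$, giving $\dim_v X \ge \operatorname{rank} V$; taking the supremum proves (1) for subschemes. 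For (2), nowhere density forces $y \ne \xi$, so $\m_B \ne 0$ and thus $\operatorname{rank} W \ge 1$, whence $\operatorname{rank} V' \ge \operatorname{rank} V + 1$; as $\dim_v Y$ is finite this yields the strict inequality. The étale case of (1) is simpler: flatness of relative dimension zero sends $y \in Y^{\gen}$ to $f(y) \in X^{\gen}$ with $k(y)/k(f(y))$ finite separable, so restricting a valuation ring of $k(y)$ to $k(f(y))$ preserves the rank and descends the center.

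Part (3) is the valuative criterion of properness. A valuation ring centred on $Y$ maps, via $Y \to X$, to one centred on $X$ of the same rank (after restricting along the finite extension $k(f(y))\subseteq k(y)$), so $\dim_v Y \le \dim_v X$. Conversely, given $V$ centred on $X$ with fraction field $k(x)$, choose a valuation ring of $k(y)$ above $V$ (same rank) and use the generic point $y \mapsto x$ together with the properness of $Y \to X$ to lift the center through $Y$; this gives $\dim_v X \le \dim_v Y$, hence equality.

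Part (4) is the crux. For the inequality $\dim_v \AA^1_X \ge \dim_v X + 1$, given a rank-$d$ valuation ring $V\subseteq K$ with residue field $\kappa$ and center $c$, the composite of the Gauss valuation on $K(t)$ (residue field $\kappa(t)$, rank $d$) with the $t$-adic valuation $\kappa[t]_{(t)}$ is a rank-$(d+1)$ valuation ring of $K(t)$ dominating $\OO_{X,c}[t]$ at the origin $t=0$, hence centred on $\AA^1_X$. For $\dim_v \AA^1_X \le \dim_v X + 1$, given $S \subseteq K(t)$ of rank $e$ with a center on $\AA^1_X$, the restriction $R := S \cap K$ has a center on $X$ (via $\AA^1_X \to X$), so $\operatorname{rank} R \le \dim_v X$, and the key point is $\operatorname{rank} S \le \operatorname{rank} R + 1$. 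This I would deduce from Abhyankar's inequality for $K(t)/K$ (rational rank of $\Gamma_S/\Gamma_R$ plus residue transcendence degree is at most $\operatorname{trdeg} K(t)/K = 1$) together with the general estimate $\operatorname{rank}\Gamma_S \le \operatorname{rank}\Gamma_R + \operatorname{ratrank}(\Gamma_S/\Gamma_R)$, proved by intersecting a maximal chain of convex subgroups of $\Gamma_S$ with $\Gamma_R$. This Abhyankar-type control of how much a simple transcendental extension can raise the rank is the main obstacle; the purely formal parts above do not touch it. Once it is established, (5) is immediate: any finite type $X$-scheme is, locally, a closed subscheme of some $\AA^n_U$ with $U \subseteq X$ open, so by (1) and (4) its valuative dimension is bounded by $\dim_v \AA^n_X = \dim_v X + n < \infty$.
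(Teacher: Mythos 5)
Your proposal is correct and follows essentially the same route as the paper's proof: composite (specialisation) valuations for (1)--(2), the valuative criterion for (3), the Abhyankar rank inequality plus an explicit composite valuation ring of $K(t)$ for (4), and the combination of (1) and (4) for (5); the only cosmetic differences are which composite you build in (4) and your derivation of the rank inequality from the rational-rank version. One small point in your favour: in the étale case of (1) you correctly note that restricting along the finite extension $k(f(y)) \subseteq k(y)$ preserves the rank, whereas the paper's argument via going-down only yields $\dim (R \cap k(f(y))) \leq \dim R$, which is the inequality pointing the wrong way for the claim (though equality does hold, for the reason you give).
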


\begin{proof}
\begin{enumerate}
 \item[1a.] If $x_0, x_1 \in X$ are points with $x_0 \in \overline{\{x_1\}}$ then there exists a morphism $\Spec(R) \to X$ such that $R \subseteq k(x_1)$ is a valuation ring and the image of $\Spec(R/\m)$ is $x_0$, \cite[00IA]{stacks-project}. Also, if $R' \subseteq k(x_0)$ is another valuation ring, then there exists an extension $R''$ of $R'$ to $R/\m$, \cite[pg.89,Ch.VI,\S 1.3,Cor.3]{Bou64}. Then $R''' = R'' \times_{R/\m} R \subseteq k(x_1)$ is a valuation ring with $\dim R''' = \dim R + \dim R'' \geq \dim R + \dim R'$.

 \item[1b.] If $\Spec(R) \to Y$ is a morphism with $R$ a valuation ring of $k(y)$ for some $y \in Y$ then $R' = R \cap k(f(y))$ is a valuation ring of $k(x)$ equipped with a morphism $\Spec(R') \to X$ such that $R / R'$ is flat, \cite[0539]{stacks-project}. In particular, it satisfies going down, \cite[00HS]{stacks-project}, so $\dim R' \leq \dim R$.

 \item[2.] If $x_0 \neq x_1$ in part 1(a) then $\dim R > 0$.

 \item[3.] Use the valuative criterion for proper morphisms.
 
 \item[4.] Since $\dim_v X = \sup \{ \dim_v W \}$ for integral affine subschemes $W \subseteq X$ we can assume $X$ is integral affine, say $X = \Spec(A)$. Let $K = \Frac(A)$. For any extension $R' / R$ of valuation rings with fraction field extension $K' / K$ and residue extension $κ'/κ$ we have the inequality, \cite[pg.162,Ch.VI,\S 10.3,Cor.2]{Bou64} and \cite[pg.111,Ch.VI,\S 4.4,Prop.5]{Bou64},
\[ \dim R' + \mathrm{tr.deg}(κ'/κ) \leq \dim R + \mathrm{tr.deg}(K'/K). \]
 Applying this to a valuation ring $A[t] \subseteq R' \subseteq \Frac(A)(t)$ with $\dim R' = \dim_v A[t]$ and $R = \Frac(A) \cap R'$ gives
 \[ \dim_v A[t] \leq \dim_v A[t] + \mathrm{tr.deg}(κ'/κ) \leq \dim R + 1 \leq \dim_v A + 1. \]
 
We get the other inequality $\dim_v A[t] \geq \dim_v A + 1$ from the composition valuation ring $A[t] \subseteq K[t]_{(t)} \times_{K} \OO \subseteq K(t)$ where $A \subseteq \OO \subseteq K$ is a valuation ring and $K[t]_{(t)} \to K$ is $t \mapsto 0$. 

 \item[5.] This follows from (4) and (1).
\end{enumerate}
\end{proof}


\subsection{Homotopy dimension}

\begin{rema}
The first author points out that despite the heavy citing of \cite{HTT}, quasi-categories are completely unnecessary for our purposes, and sometimes not even preferable, cf.the proof of Prop.\ref{prop:enoughPoints}, or more specifically, the mess that is Def.\ref{defi:minusOneConnective}. Everything below can be comfortably carried out using appropriate model categories of presheaves of simplicial sets, and the theory of Bousfield localisations as developed in \cite{Hir03}. In fact, many of the quasi-categorical statements that we cite from \cite{HTT} are proved using such model categorical arguments.
\end{rema}




\emph{Truncated spaces.} 
Recall that for $n \geq -2$ one says that a space $K \in \cS$ is \emph{$n$-truncated} if $\Map(D^{n+2}, K) \stackrel{\sim}{\to} \Map(S^{n+1}, K)$, where $D^{n+2}$ is the $(n+2)$-disc, $S^{n+1}$ is its boundary, the $(n+1)$-sphere, so $S^0 = \ast \sqcup \ast$ and $S^{-1} := \varnothing$, \cite[Lem.5.5.6.17]{HTT}. For $n \geq -1$, this is equivalent to asking that $\pi_i(K, k) \cong \ast$ for all $k \in \pi_0(K)$ and all $i > n$, \cite[p.xiv]{HTT}.
This leads to a decreasing sequence of full subcategories
\[ \cS \hookleftarrow \dots \hookleftarrow \cS_{\leq 1} \hookleftarrow \cS_{\leq 0} \hookleftarrow \cS_{\leq -1} \hookleftarrow \cS_{\leq -2} = \{\ast\} \]
which in low degrees is 
\[ \cS_{\leq 1} = \textrm{ 1-groupoids, } \quad 
\cS_{\leq 0} = \textrm{ discrete spaces, } \quad 
\cS_{\leq -1} = \{\varnothing \to \ast\}, \quad
\cS_{\leq -2} = \{\ast\}.
\]
By definition $\cS_{\leq n} \subset \cS$ is the subcategory of $\{S^{n+1} {\to} D^{n+2}\}$-local objects, \cite[Def.5.5.4.1]{HTT}, so localisation%
\footnote{\label{foot:localisationHTT} We mean localisation in the sense of \cite[Def.5.2.7.2]{HTT}. So $\cS \to \cS_{\leq n}$ is the universal functor sending every morphism in the \emph{strong saturation}, \cite[Def.5.5.4.5]{HTT}, of \{$S^{n+1} \to D^{n+2}\}$ to an equivalence, \cite[Prop.5.2.7.12]{HTT}, \cite[Prop.5.5.4.15]{HTT}.%
} %
at $\{ S^{n+1} \to D^{n+2}\}$ is a left adjoint 
\[ (-)_{\leq n}: \cS \to \cS_{\leq n} \]
to inclusion.\footnote{\label{foot:local} Existence of the left adjoint can be deduced from the adjoint functor theorem, \cite[Cor.5.5.2.9]{HTT} or constructed directly using the small object argument applied to the set $\{S^i {\to} D^{i+1}\}_{i > n}$. See Hatcher's textbook \cite[Exam.4.17]{Hatcher} for an extremely concrete construction.}



\emph{Truncated objects.} The notion of $n$-truncatedness is extended to a general ∞-category $T$, such as $T = \PSh(C, \cS)$, by declaring an object $F \in T$ to be \emph{$n$-truncated} if the space $\Map(G, F)$ is $n$-truncated for all objects $G$ of $T$. %
If $T$ admits finite colimits then an object is $n$-truncated if and only if it is $\{\Sigma^{n+1} G \to G\}_{G \in T}$-local 
where $\Sigma^{-1}G := \varnothing$ and $\Sigma^{n+1}G := G \sqcup_{\Sigma^nG} G$. %
Dually, if $T$ admits finite limits, then an object $F$ is $n$-truncated if and only if $F \stackrel{\sim}{\to} \Omega^{n+1}F$  
where $\Omega^{-1}F := \ast$ and $\Omega^{n+1}F := F \times_{\Omega^nF} F$.%

If $T$ is generated under small colimits by a set $C$ of objects of $T$, %
then $F$ is $n$-truncated if and only if it is $\{\Sigma^{n+1} X \to X\}_{X \in C}$-local. %
If furthermore objects in the generating set $C$ are compact then filtered colimits of $n$-truncated objects are $n$-truncated. %
That is, the inclusion $T_{\leq n} \subseteq T$ is $\omega$-accessible. %
Finally, if $T$ is presentable%
\footnote{This means that $T$ admits all small colimits and is of the form $T = \Ind(T')$ for some small category $T'$.} %
then 
the inclusion admits a left adjoint, \cite[Prop.5.5.6.18]{HTT},
\[ (-)_{\leq n}: T \to T_{\leq n}. \]

\emph{Functoriality.} %
Suppose $T, T'$ are ∞-categories admitting finite limits and $ρ: T' \to T$ is a functor. %
As observed above, an object $F$ is $n$-truncated if and only if $F \stackrel{\sim}{\to} \Omega^{n+1}F$. Consequently, if $ρ$ preserves finite limits, then it preserves $n$-truncated objects, cf.\cite[Prop.5.5.6.16]{HTT}, and we obtain the commutative square on the right hand side of the following diagram.
\begin{equation} \label{equa:truncComm}
\xymatrix{
T \ar[d]_{(-)_{\leq n}} \ar[r]^\lambda & T' \ar[d]^{(-)_{\leq n}}
&&
T & \ar[l]_\rho T' \\
T_{\leq n} \ar@{-->}[r]^-{\lambda_{\leq n}} & T'_{\leq n}  
&& 
T_{\leq n} \ar[u]^{inc.} & \ar@{-->}[l]_-{\rho_{\leq n}} T'_{\leq n} \ar[u]_{inc.} 
} 
\end{equation}
If $\rho$ and the inclusions $T_{\leq n} \subseteq T$, $T'_{\leq n} \subseteq T'$ all admit left adjoints $\lambda, (-)_{\leq n}, (-)_{\leq n}$ respectively, then an adjunction argument shows that the functor $\lambda_{\leq n} := (-)_{\leq n} \circ \lambda \circ inc.$
produces the commutative square on the left hand side of Eq.\eqref{equa:truncComm} above. In fact, since the $inc.$ are fully faithful, $\lambda_{\leq n}$ is a left adjoint to $\rho_{\leq n}$. %
Note that $inc.$ and $(-)_{\leq n}$ preserve final objects. So if $λ$ also preserves the final object, we have
\[ 
λ_{\leq n}(\ast) 
= 
(-)_{\leq n} \circ \lambda \circ inc.(\ast)
= 
\ast.
\]
Therefore, for $F \in T$ we have $F_{\leq n} \cong \ast$ implies $λ(F)_{\leq n} \cong \ast$. 

\begin{exam} \label{exam:truncFunc}\ 
\begin{enumerate}
 \item If $\Phi: \Shv_τ(C, \cS) \to \cS$ is any fibre functor\footnote{As in the case of sets, $\Phi$ is a fibre functor if it preserves all colimits and finite limits, cf.\cite[
 Rem.6.3.1.2, 
 Cor.5.5.2.9, 
 Thm.6.1.0.6
]{HTT}.} and $F \in \Shv_τ(C, \cS)$, we have $F_{\leq n} \cong \ast \implies \Phi(F)_{\leq n} \cong \ast$. 

 \item If $X \in C$ is any object and $F \in \Shv(C, \cS)$, 
we have $F_{\leq n} \cong \ast \implies (F|_{X})_{\leq n} \cong \ast$, where
$(-)|_X: \Shv(C, \cS) \to \Shv(C_{/X}, \cS)$ is the restriction functor with $C_{/X}$  equipped with the induced topology: coverings in $C_{/X}$ are precisely those families which are sent to coverings in $C$; the projection $C_{/X} \to C$ is a continuous and cocontinuous morphism of sites.

 \item If $X$ is a qcqs scheme with Noetherian topological space and $F \in \Shv_{\procdh}( \Sch_S )$ we have $F_{\leq n} \cong \ast \implies (\rho_X^*F)_{\leq n} \cong \ast$ in $\Shv(\RZ(X_{\Nis}), \cS)$, cf. Corollary~\ref{coro:RZColimitLim}.
\end{enumerate}
\end{exam}

As a right adjoint, global sections $\Map(\ast, -)$ does not preserve connectivity in general. Homotopy dimension describes how badly this fails.


\begin{defi}[{\cite[Prop.6.5.1.12, Def.7.2.1.1]{HTT}}] \label{defi:homDim}
One says the $\infty$-topos $\Shv_\tau(C, \cS)$ has \emph{homotopy dimension $\leq d$} if for every sheaf $F \in \Shv_\tau(C, \cS)$ we have 
\[ F_{\leq d-1} \cong \ast \textrm{ implies }\Map(*, F)_{\leq -1} \cong \ast. \]
Note that the latter condition is equivalent to $\Map(\ast, F)_{\leq -1} \neq \varnothing$.
\end{defi}

\begin{rema} \label{rema:ndconn}
More generally, if $\Shv_\tau(C, \cS)$ has homotopy dimension $\leq d$ then we have 
\[ F_{\leq d+n} \cong \ast \textrm{ implies } \Map(*, F)_{\leq n} \cong \ast \]
for all $n \geq -1$, \cite[Def.7.2.1.6, Lem.7.2.1.7]{HTT}.
\end{rema}

\begin{rema} \label{rema:locFHD}
Even more generally, one says that $\Shv_τ(C, \cS)$ is \emph{locally of finite homotopy dimension} if for every $X \in C$ there is some $d_X < ∞$ such that $F_{\leq d_X + n} \cong \ast$ implies $\Map(X, F)_{\leq n} \cong \ast$ for all $n \geq -1$. 

By the canonical adjunction $\Shv_τ(C, \cS) \rightleftarrows \Shv_τ(C_{/X}, \cS)$ (cf. Example \ref{exam:truncFunc}(2)), this is equivalent to asking that each $\Shv_τ(C_{/X}, \cS)$ has finite homotopy dimension.
\end{rema}


\begin{exam} \label{exam:Rlim}
Consider the category $\NN = \{0 \to 1 \to 2 \to \dots \}$. An object of $\PSh(\NN, \cS)$ is a diagram $\dots \to K(2) \to K(1) \to K(0)$ and the global sections functor is $\{K(n)\}_{n\in \NN} \mapsto \lim_{n\in \NN} K(n)$. If $(K(n))_{\leq 0} \cong \ast$, that is, $π_0K(n) \cong \ast$ for all $n$, then it follows from the short exact sequences of pointed sets, \cite[\S 7.4]{BK72},
\begin{equation}
\ast \to {\lim_{n \in \NN}}^1 π_1 K(n) \to π_0 \lim_{n \in \NN} K(n) \to \lim_{n \in \NN} π_0 K(n) \to \ast
\end{equation}
that $(\lim K(n))_{\leq -1} \cong \ast$. So the topos $\PSh(\NN, \cS)$ has homotopy dimension $\leq 1$. The sequence of non-empty discrete spaces $K(n) = \NN_{\geq n}$ shows that the homotopy dimension is $\not \leq 0$ since $K(n)_{\leq {-1}} = \ast$ for all $n$ but $\hom(\ast, \{K(n)\}_{n\in \NN})_{\leq -1} = \varnothing$. \\
%
\end{exam}

\begin{exam}[{\cite[Cor.3.11, Thm.3.18]{CM21}}] \label{exam:RZhomDim}
If $C_λ$ is a filtered system of finitary\footnote{A site is finitary if it has finite limits and every covering family is refineable by a finite one.} excisive\footnote{A site is excisive if for all $U \in C$, the functor $F \mapsto \Map(U, F)$ commutes with filtered colimits.} sites with colimit $C$, then Clausen and Mathew show that $\Shv(C, \cS)$ has homotopy dimension $\leq d$ if all $\Shv(C_λ, \cS)$ do. Using this they show that for any qcqs algebraic space whose underlying topological space has Krull dimension $\leq d$, the ∞-topos $\Shv(X_{\Nis}, \cS)$ has homotopy dimension $\leq d$.

It also follows from this that if $X$ is a qcqs scheme of valuative dimension $d$ then $\RZ(X_{\Nis})$ has homotopy dimension $\leq d$.
\end{exam}

\begin{theo} \label{theo:bounded}
Let $S$ be a qcqs scheme of finite valuative dimension $d \geq 0$ with Noetherian underlying topological space. 
Then $\Shv_{\procdh}(\Sch_S, \cS)$ has homotopy dimension $\leq 2d$.
\end{theo}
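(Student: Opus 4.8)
The statement is precisely the assertion (Definition~\ref{defi:homDim}) that for every procdh sheaf $F$ with $F_{\leq 2d-1}\cong\ast$ the global sections $\Gamma(F)=\Map(\ast,F)=F(S)$ (evaluation at the terminal object $S$ of $\Sch_S$) satisfy $F(S)_{\leq -1}\cong\ast$, i.e.\ $F(S)$ is nonempty. The plan is to prove this by induction on $d=\dim_v S$, treating the two ``directions'' of the procdh topology separately: the geometric modification direction, which does \emph{not} lower the valuative dimension and must therefore be absorbed by the Riemann--Zariski space of Section~\ref{sec:NRZspaces}, and the infinitesimal thickening direction, which does lower the dimension and can be fed into the induction at the cost of one application of $\lim_\NN$, using that $\PSh(\NN,\cS)$ has homotopy dimension $\leq 1$ (Example~\ref{exam:Rlim}).

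First I would dispatch the modification direction. Since $\rho_S^*$ preserves finite limits and truncations (Corollary~\ref{coro:RZColimitLim}, Example~\ref{exam:truncFunc}(3)), the hypothesis gives $(\rho_S^*F)_{\leq 2d-1}\cong\ast$, and since $\RZ(S_\Nis)$ has homotopy dimension $\leq d$ (Example~\ref{exam:RZhomDim}) its global sections $\colim_{Y'\in\Mod_S}F(Y')$ are nonempty (indeed $(d-1)$-connected). As this is a filtered colimit (Remark~\ref{rema:ModLimits}) we extract a single modification $Y'\to S$ and a point $y\in F(Y')$; refining by Lemma~\ref{lemm:rema:genXY}(1) we may take $Y'\to S$ birational, an isomorphism over a dense open $D\subseteq S$. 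For $d=0$ every nowhere dense subscheme is empty (Lemma~\ref{lemm:valDimProp}), so every modification is an isomorphism, $\colim_{Y'}F(Y')=F(S)$, and we are already done; this is the base case.

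For $d\geq 1$, equip the nowhere dense $Z_0:=S\setminus D$ with a finite presentation closed structure, let $Z_n$ be its infinitesimal thickenings and $E_n:=Z_n\times_S Y'$, so that $\{Z_n\to S\}_n\sqcup\{Y'\to S\}$ is a proabstract blowup square. Procdh excision (Theorem~\ref{theo:descentConditions}) then gives
\[ F(S)\;\simeq\;F(Y')\times_{\lim_n F(E_n)}\lim_n F(Z_n). \]
Both $Z_0\subseteq S$ and $E_0\subseteq Y'$ are nowhere dense (the latter because $Y'\to S$ is birational, so its isomorphism locus is dense in $Y'$, and $\dim_v Y'=d$ by Lemma~\ref{lemm:valDimProp}(3)), whence by Lemma~\ref{lemm:valDimProp}(2),(5) every $Z_n$ and $E_n$ is a qcqs scheme with Noetherian topological space (Lemma~\ref{lemm:noethTopFt}) and valuative dimension $\leq d-1$. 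The restrictions $F|_{Z_n}$ and $F|_{E_n}$ are procdh sheaves whose truncation is still $(2d-1)$-connected (Example~\ref{exam:truncFunc}(2)), so the induction hypothesis says their topoi have homotopy dimension $\leq 2(d-1)$; since $2d-1=2(d-1)+1$, Remark~\ref{rema:ndconn} yields that $F(Z_n)$ and $F(E_n)$ are $1$-connected for every $n$. Feeding the resulting towers into the homotopy dimension $\leq 1$ bound for $\PSh(\NN,\cS)$ (Example~\ref{exam:Rlim}, Remark~\ref{rema:ndconn}) shows $\lim_n F(Z_n)$ and $\lim_n F(E_n)$ are connected, in particular nonempty. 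Since $F(Y')$ contains $y$ and the base $\lim_n F(E_n)$ is connected, the images of $y$ and of any chosen point of $\lim_n F(Z_n)$ lie in the same path component, so the displayed fibre product is nonempty and $F(S)\neq\varnothing$, closing the induction.

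I expect the main obstacle to be the sharp connectivity bookkeeping that forces the bound $2d$ rather than $d$: the modification $Y'$ cannot be fed into the dimension induction (Lemma~\ref{lemm:valDimProp}(3) says it keeps valuative dimension $d$), so it is the Riemann--Zariski space that pays for it, while the thickenings only become usable after the limit $\lim_\NN$, which costs one connectivity level. The binding requirement is that the base $\lim_n F(E_n)$ of the excision fibre product be \emph{connected} (one level beyond nonempty) so that the two legs can be glued; this forces $F(E_n)$ to be $1$-connected, i.e.\ demands $2(d-1)+1=2d-1$ connectivity of $F$, which is exactly homotopy dimension $2d$. Secondary points to check carefully are that the colimit over $\Mod_S$ is genuinely filtered, that restriction to the slices lands in procdh sheaves with the claimed connectivity (Example~\ref{exam:truncFunc}(2)), and that $F$, as a procdh sheaf, does satisfy proabstract blowup excision (Theorem~\ref{theo:descentConditions}).
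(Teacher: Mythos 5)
Your proof is correct and follows essentially the same route as the paper's: induction on $\dim_v S$, with the Riemann--Zariski topos $\Shv(\RZ(S_{\Nis}),\cS)$ absorbing the modification direction to produce a point of some $F(Y')$, and the proabstract blowup excision square plus the homotopy-dimension-$\leq 1$ bound for $\PSh(\NN,\cS)$ handling the thickenings via the inductive hypothesis. The connectivity bookkeeping ($F(Z_n), F(E_n)$ being $1$-truncation-trivial so that the limits are connected and the fibre product is nonempty) matches the paper's argument exactly.
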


\begin{proof}
The proof is by induction on the valuative dimension of $S$.
Suppose $F \in \Shv_{\procdh}(\Sch_S, \cS)$ has $F_{\leq 2d-1} = \ast$. We want to show that $F(S)_{\leq -1} \cong \ast$.

Since $S$ has Noetherian topological space, 
\[\rho^*=\rho_S^*: \Shv_{\procdh}(\Sch_S, \cS) \to \Shv(\RZ(S_{\Nis}), \cS)\]
is a left adjoint of a morphism of ∞-topoi, Cor.\ref{coro:RZColimitLim}, so $(\rho^*F)_{\leq 2d-1} \cong \ast$, Exam.\ref{exam:truncFunc}. Since the homotopy dimension of $\Shv(\RZ(S_{\Nis}), \cS)$ is $\leq d$, Exam.\ref{exam:RZhomDim}, we have $(\rho^*F)(S)_{\leq -1} \cong \ast$, that is, the space $(\rho^*F)(S)$ is non-empty. Concretely, one can calculate $(\rho^*F)(S)$ as $\colim_{Y \in \Mod_S} F(Y)$, Remark \ref{rema;rhoX}, 
so 
we can find a modification $Y \to S$ such that $F(Y)$ is non-empty.  

If $d = 0$, we have $Y = S$ and we are done with this step. 

If $d > 0$, up to refining $Y$ we can assume that $Y^{\gen} = S^{\gen}$, Lem.\ref{lemm:rema:genXY}. In particular, there exists a nowhere dense non-empty closed subscheme of finite presentation $Z_0 \subseteq S$ such that 
$Y \to S$ is an isomorphism over $S\backslash Z_0$ and
$E_0 := Z_0 \times_S Y$ is also a nowhere dense closed subscheme of finite presentation. Note we now have $0 \leq \dim_v Z_0 \leq d-1$ and similar for $E_0$, Lem.\ref{lemm:valDimProp}. We continue to have $(F|_{\Sch_{Z_n}})_{\leq 2d-1} \cong \ast$ and $(F|_{\Sch_{E_n}})_{\leq 2d-1} \cong \ast$, Exam.\ref{exam:truncFunc}. By the induction hypothesis $\Shv_{\procdh}(\Sch_{Z_n}, \cS)$ and $\Shv_{\procdh}(\Sch_{E_n}, \cS)$ all have homotopy dimension $\leq 2d-2$, so $F(Z_n)_{\leq 1}, F(E_n)_{\leq 1} \cong \ast$ for all $n$, Rem.\ref{rema:ndconn}. We have seen that $\PSh(\NN, \cS)$ has homotopy dimension $\leq 1$, Exam.\ref{exam:Rlim} so $(\lim_{n \in \NN} F(Z_n))_{\leq 0} \cong \ast$ and $(\lim_{n \in \NN} F(E_n))_{\leq 0} \cong \ast$. Combining this with $F(Y)_{\leq -1} \cong \ast$, cartesianness of the square, Prop.\ref{prop:hyperExci},
\[ \xymatrix{
F(S) \ar[r] \ar[d] & F(Y) \ar[d] \\
\lim_{n \in \NN} F(Z_n) \ar[r] & \lim_{n \in \NN} F(E_n)
} \]
implies that $F(S)_{\leq -1} \cong \ast$. Indeed, both $\lim_{n \in \NN} F(Z_n)$ and $\lim_{n \in \NN} F(E_n)$ are non-empty connected and $F(Y)$ is non-empty, so the pullback is also non-empty.
\end{proof}

\begin{coro} \label{coro:finCohDim}
Let $S$ be a qcqs scheme of finite valuative dimension $d \geq 0$ with Noetherian underlying topological space. 
For any sheaf of abelian groups $F \in \Shv_{\procdh}(\Sch_S, \Ab)$ we have
\[ H_{\procdh}^n(S, F) = 0; \qquad n > 2d. \]
\end{coro}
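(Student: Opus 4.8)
The plan is to deduce this purely formally from Theorem~\ref{theo:bounded}, which gives that $\Shv_{\procdh}(\Sch_S, \cS)$ has homotopy dimension $\leq 2d$, using the standard fact that homotopy dimension bounds cohomological dimension, \cite[\S 7.2]{HTT}. I would, however, prefer to spell out the short direct argument via Eilenberg--MacLane sheaves, since Remark~\ref{rema:ndconn} has already packaged exactly the input that is needed.

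First I would recall that in any $\infty$-topos, sheaf cohomology is corepresented by Eilenberg--MacLane objects: for the sheaf of abelian groups $F$ and each $n \geq 1$ there is an object $K(F,n) \in \Shv_{\procdh}(\Sch_S, \cS)$ whose only nonvanishing homotopy sheaf is $\pi_n K(F,n) \cong F$, and the global sections space $\Map(\ast, K(F,n))$ has homotopy groups $\pi_i \Map(\ast, K(F,n)) \cong H^{n-i}_{\procdh}(S, F)$, where $\ast = S$ is the terminal object of $\Sch_S$, \cite[\S 7.2.2]{HTT}. In particular $H^n_{\procdh}(S,F) = \pi_0 \Map(\ast, K(F,n))$.

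The key observation is that $K(F,n)$ is highly connective: since all of its homotopy sheaves in degrees $\leq n-1$ vanish, we have $(K(F,n))_{\leq n-1} \cong \ast$. Now I would apply Remark~\ref{rema:ndconn} to the topos $\Shv_{\procdh}(\Sch_S, \cS)$, which by Theorem~\ref{theo:bounded} has homotopy dimension $\leq 2d$: writing $n - 1 = 2d + m$ with $m = n - 1 - 2d$, the hypothesis $(K(F,n))_{\leq 2d + m} \cong \ast$ holds, so for $m \geq -1$ the remark yields $\Map(\ast, K(F,n))_{\leq m} \cong \ast$. For $n > 2d$ we have $m = n - 1 - 2d \geq 0$, whence $\pi_0 \Map(\ast, K(F,n)) \cong \ast$, i.e. $H^n_{\procdh}(S, F) = 0$. (In fact one recovers $H^j_{\procdh}(S,F) = 0$ for all $2d < j \leq n$ simultaneously from the truncation statement.)

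The only real care needed --- and the main obstacle --- is bookkeeping the conventions: matching the truncation index in Remark~\ref{rema:ndconn} (stated there for a homotopy dimension bound $d$, to be read here with $2d$ in its place) against the connectivity $(K(F,n))_{\leq n-1} \cong \ast$, and confirming that the global-sections mapping space at the terminal object $S$ genuinely computes $H^n_{\procdh}(S,F)$ with the indexing $\pi_i \cong H^{n-i}$. Once these are pinned down the vanishing is immediate, and there is no geometric input beyond Theorem~\ref{theo:bounded}.
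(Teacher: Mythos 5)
Your argument is correct and is essentially the paper's approach: the paper's entire proof is the citation \cite[Cor.7.2.2.30]{HTT} (homotopy dimension bounds cohomological dimension) applied to Theorem~\ref{theo:bounded}, which is exactly your opening sentence. The rest of your write-up just inlines the proof of that cited result via Eilenberg--MacLane objects and Remark~\ref{rema:ndconn}, and the index bookkeeping ($(K(F,n))_{\leq n-1}\cong\ast$, $m=n-1-2d\geq 0$, $\pi_0\Map(\ast,K(F,n))=H^n_{\procdh}(S,F)$) is accurate.
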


\begin{proof}
This is \cite[Cor.7.2.2.30]{HTT}. Cf. also \cite[Def.7.2.2.14, Rem.7.2.2.17]{HTT}.
\end{proof}

\subsection{Hypercompleteness}

\begin{defi}
An ∞-topos is called \emph{hypercomplete} if for every object $F$ we have
\[ F = \lim_n τ_{\leq n}F. \]
\end{defi}

\begin{exam} \label{exam:hypercomplete} \ 
\begin{enumerate} 
 \item The category $\cS$ of spaces is hypercomplete, \cite[IX.3.1]{BK72}. 

 \item (Jardine) If $\Shv(C, \cS)$ is locally of finite homotopy dimension, Rem.\ref{rema:locFHD}, then $\Shv(C, \cS)$ is hypercomplete, \cite[Cor.7.2.1.12]{HTT}.
 \end{enumerate}
\end{exam}

\begin{coro} \label{coro:procdhHypercomplete}
Let $S$ be a qcqs scheme of finite valuative dimension with Noetherian underlying topological space. Then $\Shv_{\procdh}(\Sch_S, \cS)$ is hypercomplete.
\end{coro}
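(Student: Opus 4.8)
The plan is to deduce hypercompleteness from Jardine's criterion, Example~\ref{exam:hypercomplete}(2), which reduces the problem to showing that $\Shv_{\procdh}(\Sch_S, \cS)$ is \emph{locally} of finite homotopy dimension. By Remark~\ref{rema:locFHD} this is in turn equivalent to verifying that for every $X \in \Sch_S$ the slice topos $\Shv_{\procdh}((\Sch_S)_{/X}, \cS)$ has finite homotopy dimension. Thus the whole corollary becomes an application of the global bound already established in Theorem~\ref{theo:bounded}, once the slices have been identified correctly.

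First I would identify $(\Sch_S)_{/X}$ with $\Sch_X$, the category of $X$-schemes of finite presentation, equipped with the procdh topology. Every object $T \to X$ of $(\Sch_S)_{/X}$ is an $S$-scheme of finite presentation mapping to $X$; since both $T \to S$ and $X \to S$ are of finite presentation, the structure morphism $T \to X$ is itself of finite presentation, and conversely composition with $X \to S$ turns a finite-presentation $X$-scheme into a finite-presentation $S$-scheme. The induced topology on $(\Sch_S)_{/X}$---whose covers are exactly the families sent to covers in $\Sch_S$, cf. Example~\ref{exam:truncFunc}(2)---coincides with the procdh topology on $\Sch_X$, because the generating families (distinguished Nisnevich squares and proabstract blowup squares) are insensitive to whether one regards the schemes as living over $S$ or over $X$.

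Next I would check that $X$ inherits the three hypotheses needed to run Theorem~\ref{theo:bounded} on $\Sch_X$. Since $X \to S$ is of finite presentation it is quasi-compact and quasi-separated, and $S$ is qcqs, so $X$ is qcqs. Finite valuative dimension of $S$ forces finite Krull dimension of $S$, so $S$ has Noetherian topological space of finite dimension, and Lemma~\ref{lemm:noethTopFt} then gives that the finite-type $S$-scheme $X$ also has Noetherian topological space. Finally, Lemma~\ref{lemm:valDimProp}\eqref{lemm:valDimProp:ft} ensures $\dim_v X < \infty$. Applying Theorem~\ref{theo:bounded} to the base $X$ yields that $\Shv_{\procdh}(\Sch_X, \cS)$ has homotopy dimension $\leq 2\dim_v X < \infty$.

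Putting this together, each slice $\Shv_{\procdh}((\Sch_S)_{/X}, \cS) \simeq \Shv_{\procdh}(\Sch_X, \cS)$ has finite homotopy dimension, so by Remark~\ref{rema:locFHD} the topos $\Shv_{\procdh}(\Sch_S, \cS)$ is locally of finite homotopy dimension, and hence hypercomplete by Example~\ref{exam:hypercomplete}(2). I expect the only genuinely delicate point to be the bookkeeping in the slice identification---matching the induced topology on $(\Sch_S)_{/X}$ with the procdh topology on $\Sch_X$ and confirming the finite-presentation cancellation for $T \to X$---whereas the homotopy-dimension input is entirely carried by Theorem~\ref{theo:bounded}, which is already available.
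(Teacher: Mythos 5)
Your proposal is correct and follows essentially the same route as the paper: deduce that each $X \in \Sch_S$ inherits finite valuative dimension (Lemma~\ref{lemm:valDimProp}\eqref{lemm:valDimProp:ft}) and Noetherian topological space (Lemma~\ref{lemm:noethTopFt}), apply Theorem~\ref{theo:bounded} slice-wise to get local finiteness of homotopy dimension, and conclude via Example~\ref{exam:hypercomplete}(2). Your extra care with the identification $(\Sch_S)_{/X} \simeq \Sch_X$ and the finite-presentation cancellation is a reasonable elaboration of a step the paper leaves implicit.
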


\begin{proof}
Lemma~\ref{lemm:valDimProp}\eqref{lemm:valDimProp:ft} %
and 
Lemma~\ref{lemm:noethTopFt} %
say that each $X \in \Sch_S$ also have finite valuative dimension with Noetherian underlying topological space so by Theorem~\ref{theo:bounded} the ∞-topos $\Shv_{\procdh}(\Sch_S)$ is locally of finite homotopy dimension, so by Example~\ref{exam:hypercomplete} it is hypercomplete.
\end{proof}

\subsection{Points of ∞-topoi}

As in the 1-topos case, one says that an ∞-topos $\Shv(C, \cS)$ has \emph{enough points} when a morphism $f$ is an equivalence if and only $\phi^*f$ is an equivalence for every geometric morphism of ∞-topoi $\phi^*: \Shv(C, \cS) \rightleftarrows \cS: \phi_*$, \cite[Rem.6.5.4.7]{HTT}. Unlike the 1-topos case, in general, detecting equivalences is not equivalent to detecting coverings unless $\Shv(C, \cS)$ is hypercomplete, \cite[Prop.A.4.2.1]{SAG}.%
\footnote{This is unsurprising as the definition of sieve in \cite{HTT} basically encodes the notion of \v{C}ech descent, cf.Eq.\eqref{equa:cech2}.
}

The purpose of this subsection is to prove Proposition~\ref{prop:enoughPoints} which says that if the 1-topos of a site has enough points and the ∞-topos is hypercomplete, then the ∞-topos also has enough points. 

\begin{rema}
	This should be compared to Lurie's result  \cite[Thm.A.4.0.5]{SAG}, which says that any ∞-topos which is locally coherent and hypercomplete has enough points. We cannot apply this to the procdh topos because it is not locally coherent.%
	\footnote{For example the generator covering family Eq.\eqref{equa:twodash} in Remark~\ref{rema:genprocdh} does not contain a finite subfamily which is still covering. For any choice of finite subcovering there will be an Artinian quotient $\Spec \ZZ[x,y] / \langle x^n, y^n \rangle \to \Spec \ZZ[x, y]$ which does not factor through this subfamily.}

Conversely, combining Deligne's 1-categorical completeness theorem with our Proposition~\ref{prop:enoughPoints} recovers \cite[Thm.A.4.0.5]{SAG}.
\end{rema}

We apply Proposition~\ref{prop:enoughPoints} to the procdh topos in Corollary~\ref{coro:enoughInfinityPoints}.

\begin{lemm} \label{lemm:truncatedPoints}
Let $(C, τ)$ be a site admitting finite limits. 
Every fibre functor $\phi^*_{\Set}: \Shv(C, \Set) \to \Set$ of the 1-topos $\Shv(C, \Set)$ induces a fibre functor $\phi^*_{\cS}$ of the ∞-topos $\Shv(C, \cS)$ fitting into a commutative square as below. 

Conversely, 
every fibre functor $\phi^*_{\cS}: \Shv(C, \cS) \to \cS$ of the ∞-topos $\Shv(C, \cS)$ induces a fibre functor $\phi^*_{\Set}$ of the 1-topos $\Shv(C, \Set)$ fitting into a commutative square as below.
\[ \xymatrix{
\Shv(C, \cS) \ar[r]^-{(-)_{\leq 0}} \ar[d]_{\phi^*_{\cS}} & \Shv(C, \Set) \ar[d]^{\phi^*_{\Set}} \\
\cS \ar[r]^{(-)_{\leq 0}} & \Set
} \]
\end{lemm}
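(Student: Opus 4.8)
The plan is to describe both kinds of fibre functor through their restriction to the site $C$, exploiting that $(C,\tau)$ admits finite limits. Recall from SGA4 (\cite[Exp.IV]{SGA41}) that fibre functors $\phi^*_{\Set}$ of the $1$-topos $\Shv(C,\Set)$ are in bijection with functors $p\colon C\to\Set$ that are left exact (equivalently flat, as $C$ has finite limits) and send every $\tau$-covering family to a jointly surjective family; the bijection takes $\phi^*_{\Set}$ to its restriction along the representable embedding $X\mapsto h_X^{\Set}$, and sends $p$ to its left exact, colimit-preserving extension. By the parallel $\infty$-categorical universal property (\cite[\S 6.1.5, Prop.6.2.3.20]{HTT}) fibre functors $\phi^*_{\cS}$ of $\Shv(C,\cS)$ correspond in the same way to left exact functors $q\colon C\to\cS$ carrying $\tau$-coverings to effective epimorphisms, i.e. to $\pi_0$-surjective families.

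The observation that fuses the two descriptions is that representable sheaves are $0$-truncated. Indeed, the presheaf $\hom_C(-,X)$ is discrete and sheafification is left exact, hence preserves $0$-truncated objects, so the sheaf $h_X\in\Shv(C,\cS)$ it generates lies in $\Shv(C,\cS)_{\leq 0}$. Since $\phi^*_{\cS}$ preserves finite limits it preserves truncated objects, \cite[Prop.5.5.6.16]{HTT}, so the associated functor $q=\phi^*_{\cS}(h_{(-)})$ takes values in $\cS_{\leq 0}=\Set$. As $\cS_{\leq 0}$ is closed under finite limits and the inclusion $\iota\colon\Set\hookrightarrow\cS$ is fully faithful and left exact, we may write $q=\iota p$ with $p\colon C\to\Set$ left exact and unique, and $p$ is cover-preserving because between discrete objects an effective epimorphism in $\cS$ is just a surjection of sets. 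Conversely, for any left exact cover-preserving $p\colon C\to\Set$ the composite $\iota p$ is left exact and cover-preserving into $\cS$. Thus the site-level datum underlying a fibre functor is the same for both topoi — a left exact cover-preserving functor $C\to\Set$ — and the assignments $\phi^*_{\Set}\leftrightarrow p\leftrightarrow \iota p\leftrightarrow\phi^*_{\cS}$ furnish the two required (mutually inverse) constructions.

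To verify the commutative square I would reduce to representables. Both composites $(-)_{\leq 0}\circ\phi^*_{\cS}$ and $\phi^*_{\Set}\circ(-)_{\leq 0}$, regarded as functors $\Shv(C,\cS)\to\Set$, preserve colimits: the fibre functors do by definition, and both truncation functors $(-)_{\leq 0}$ are left adjoints, hence colimit-preserving. Since $\Shv(C,\cS)$ is generated under colimits by the $h_X$, it suffices to compare the composites there. On one side $(h_X)_{\leq 0}\cong h_X^{\Set}$, because the discrete presheaf $\hom_C(-,X)$ has $0$-truncated $\cS$-sheafification which coincides with its $\Set$-sheafification; applying $\phi^*_{\Set}$ yields $p(X)$. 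On the other side $(\phi^*_{\cS}h_X)_{\leq 0}=(q(X))_{\leq 0}=p(X)$, since $q(X)=\iota p(X)$ is already a set. The two composites agree on representables, hence everywhere, giving the square; the same computation serves both directions since they rest on the same site-level data.

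The main obstacle is the careful matching of the $1$- and $\infty$-categorical universal properties: one must confirm that \emph{flat} and \emph{left exact} coincide (which uses that $C$ has finite limits) and that \emph{cover-preserving} has the same meaning once surjections of sets are identified with $0$-truncated effective epimorphisms. The conceptual crux, which does the real work, is that representable sheaves are $0$-truncated; this forces an $\infty$-categorical point to be determined by $\Set$-valued data and so collapses the two notions of point onto a single functor on $C$. Granting this, finite-limit preservation of the constructed functors is immediate (the fibre functors and $\iota$ are left exact), and colimit preservation is exactly what the comparison on representables establishes.
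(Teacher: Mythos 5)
Your proof is correct, and it takes a route that is parallel to but not identical with the paper's. The paper works at the level of presheaf topoi: it invokes the correspondence between colimit-preserving left exact functors $\PSh(C,?)\to\,?$ and pro-objects of $C$ (\cite[Cor.5.3.5.4]{HTT}), which is literally the same datum in the $1$- and $\infty$-settings since $C$ is a $1$-category, and then checks that the induced presheaf-level fibre functor factors through sheaves by observing that a covering sieve is a monomorphism, hence is sent to an inclusion of connected components of a space, which is an equivalence precisely when it becomes an isomorphism after $\tau_{\leq 0}$. You instead work directly at the sheaf level, classifying points of $\Shv(C,?)$ by left exact cover-preserving functors on $C$ (SGA4 in the $1$-categorical case, \cite[Prop.6.2.3.20]{HTT} in the $\infty$-categorical case, both using that $C$ has finite limits), and then collapse the two notions onto a single functor $C\to\Set$ by noting that representable sheaves are $0$-truncated and that left exact functors preserve truncatedness. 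Both arguments hinge on the same phenomenon --- a point is determined by $0$-truncated data --- but your version leans on the heavier classification of geometric morphisms into $\Shv(C,\cS)$, whereas the paper needs only the presheaf-level corepresentability plus an elementary check on sieves; in exchange, your verification of the commutative square by reduction to representables (both composites preserve colimits, and agree on $h_X$ because $q(X)$ is already discrete) is arguably cleaner than the paper's diagram chase, and you get for free that the two constructions are mutually inverse, which is slightly more than the lemma asks for.
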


\begin{proof}
Since $C$ admits finite limits, in both the 1-category and ∞-category setting there is a canonical correspondence between fibre functors $\phi^*_?$ of $\PSh(C, ?)$ and pro-objects $P$ of $C$,  given by $(P:\Lambda \to C) \mapsto (F \mapsto \colim_{\lambda\in \Lambda} F(P_λ))$ and $\phi^* \mapsto (P:\int_C F \to C)$, \cite[Cor.5.3.5.4]{HTT}. So, for $\phi^*_{\Set}: \Shv(C, \Set) \to \Set$ as in the lemma, we automatically get the extended diagram on the left.
\begin{equation} \label{equa:PShShvSet}
\xymatrix@!R=3pt{
\PSh(C, \cS) \ar[r]^-{(-)_{\leq 0}} \ar[d] \ar@/_3em/[dd]_{\phi^*_{\PSh}} & \PSh(C, \Set) \ar[d] 
&&
\PSh(C, \cS) \ar[r]^-{(-)_{\leq 0}} \ar[d] & \PSh(C, \Set) \ar[d] \ar@/^3em/[dd]^{\phi^*_{\PSh}} \\
\Shv(C, \cS) \ar[r]^-{(-)_{\leq 0}} \ar@{}[d] & \Shv(C, \Set) \ar[d]^{\phi^*_{\Set}} 
&&
\Shv(C, \cS) \ar[r]^-{(-)_{\leq 0}} \ar@{}[d] \ar[d]_{\phi^*_{\cS}} & \Shv(C, \Set)  \\
\cS \ar[r]^{(-)_{\leq 0}} & \Set
&&
\cS \ar[r]^{(-)_{\leq 0}} & \Set
} 
\end{equation}
Since $\Shv(C, \cS)$ is the topological localisation of $\PSh(C, \cS)$ generated by covering sieves, \cite[Def.5.5.4.5, Def.6.2.1.4]{HTT} 
and $\phi^*_{\PSh}$ preserves colimits, to show that $\phi^*_{\PSh}$
factors through 
$\Shv(C, \cS)$, it suffices to show it sends covering sieves to equivalences. 
Any sieve\footnote{Or more generally, any monomorphism, i.e., any morphism $F \to G$ such that $F \to F \times_G F$ is an equivalence.} in $\PSh(C, \cS)$ is sent by $\phi^*_{\PSh}$ to an inclusion of connected components $A \to A \sqcup B$ in $\cS$. Such an inclusion is an equivalence if and only if it induces an isomorphism on connected components, i.e., if it is an isomorphism after $τ_{\leq 0}$. So the result follows from the fact that covering sieves are sent to isomorphisms in $\Set$ by the outside square in Eq.\eqref{equa:PShShvSet}.

Conversely, by the same argument, any $\phi_\cS^*$ gives rise to a $\phi^*_{\PSh}$ as in the diagram on the right. Again, this factors through $\Shv(C, \Set)$ if and only if it sends covering sieves to isomorphisms, but this is guaranteed by the fact that $\PSh(C, \cS) \to \Shv(C, \cS)$ sends covering sieves to equivalences.
\end{proof}

\begin{defi} \label{defi:minusOneConnective}
A morphism $f: E \to B$ in an ∞-topos $\Shv_τ(C, \cS)$ is \emph{$(-1)$-connective} if it satisfies the following equivalent conditions:
\begin{enumerate}
 \item $f$ is an effective epimorphism, i.e., $\colim_n E^{\times_B n} \to B$ is an equivalence, \cite[pg.584]{HTT}.

 \item $f_{\leq 0}$ is an effective epimorphism in the 1-topos $\Shv_τ(C, \Set)$. In other words, 
 $\textrm{coeq}(E_{\leq 0} \times_{B_{\leq 0}} E_{\leq 0} \rightrightarrows E_{\leq 0})_{\leq 0} \to B_{\leq 0}$ is an isomorphism, \cite[Prop.7.2.1.14]{HTT}.
\end{enumerate}
For $n \geq 0$, a morphism $f: E \to B$ is \emph{$n$-connective} if it satisfies the following equivalent conditions:
\begin{enumerate}
 \item[(a)] $f$ is $(-1)$-connective and $π_k(f) = \ast$ for $0 \leq k < n$,\footnote{Here $π_k(f) = τ_{\leq 0}^{/E}(\Omega^nE \to E)$ where $\Omega^0E = E \times_B E$ and $\Omega^nE = E \times_{\Omega^{n-1}E} E$.} \cite[Def.6.5.1.1, pg.657, Def.6.5.1.10]{HTT}.

 \item[(b)] $f$ is $(-1)$-connective and $E \to E \times_B E$ is $(n{-}1)$-connective, \cite[Prop.6.5.1.18]{HTT}.

 \item[(c)] $τ_{\leq n-1 }^{/B} f \to B$ is an equivalence, 
 where $τ_{\leq m}^{/B}: \Shv_τ(C, \cS)_{/B} \to (\Shv_τ(C, \cS)_{/B})_{\leq m}$ is the truncation in the slice ∞-topos
  \cite[Prop.6.5.1.12]{HTT}.\footnote{Notice that the other two conditions are independent of whether we consider $E \to B$ as an object of $\Shv_τ(C, \cS)$ or $\Shv_τ(C, \cS)_{/B}$, so \cite[Prop.6.5.1.12]{HTT} really does apply.}
\end{enumerate}
\end{defi}

\begin{lemm} \label{lemm:detectConnectivity}
Let $C$ be a site and suppose that the 1-topos $\Shv(C, \Set)$ has enough points as a 1-topos. Then for any $n \geq -1$ and any morphism $f \in \Shv(C, \cS)$, the morphism $f$ is $n$-connective if 
for every fibre functor $\phi^*: \Shv(C, \cS) \to \cS$ the morphism of spaces $\phi^*f$ is $n$-connective.
\end{lemm}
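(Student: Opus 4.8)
The plan is to induct on $n$, using the two characterisations of $n$-connectivity recorded in Definition~\ref{defi:minusOneConnective}: that $(-1)$-connectivity of $f\colon E \to B$ is detected after $0$-truncation in the $1$-topos, and that for $n \geq 0$ a map is $n$-connective precisely when it is $(-1)$-connective and its relative diagonal $\delta\colon E \to E \times_B E$ is $(n-1)$-connective. The inductive step will be essentially formal; the content lives in the base case.

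For the base case $n = -1$, I would recall that $f$ is $(-1)$-connective if and only if $f_{\leq 0}$ is an effective epimorphism in the $1$-topos $\Shv(C, \Set)$, i.e.\ an epimorphism of sheaves of sets. Since $\Shv(C, \Set)$ has enough points by hypothesis, such a map is an effective epimorphism exactly when $\psi^*_{\Set}(f_{\leq 0})$ is surjective for every fibre functor $\psi^*_{\Set}$ of the $1$-topos. Given any such $\psi^*_{\Set}$, Lemma~\ref{lemm:truncatedPoints} furnishes a fibre functor $\psi^*_{\cS}$ of the $\infty$-topos fitting into the commuting square there, so that $\psi^*_{\Set}(f_{\leq 0}) = (\psi^*_{\cS} f)_{\leq 0}$. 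By hypothesis $\psi^*_{\cS} f$ is $(-1)$-connective in $\cS$, that is, surjective on $\pi_0$, hence its $0$-truncation is a surjection of sets. Thus $\psi^*_{\Set}(f_{\leq 0})$ is surjective for every $1$-topos fibre functor, and the base case follows. Note this direction uses only the first half of Lemma~\ref{lemm:truncatedPoints}, producing an $\infty$-point from each classical point.

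For the inductive step, assume the statement for $n-1$ (for all morphisms) and suppose $\phi^*_{\cS} f$ is $n$-connective for every fibre functor $\phi^*_{\cS}$. In particular each $\phi^*_{\cS} f$ is $(-1)$-connective, so the base case shows $f$ is $(-1)$-connective. It remains to control the relative diagonal $\delta$. Because fibre functors preserve finite limits, $\phi^*_{\cS}$ carries $E \times_B E$ to $\phi^*_{\cS}(E) \times_{\phi^*_{\cS}(B)} \phi^*_{\cS}(E)$ and $\delta$ to the relative diagonal of $\phi^*_{\cS} f$. Characterisation (b) applied in $\cS$ says that $n$-connectivity of $\phi^*_{\cS} f$ forces this diagonal to be $(n-1)$-connective, so $\phi^*_{\cS}(\delta)$ is $(n-1)$-connective for every $\phi^*_{\cS}$. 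The induction hypothesis applied to $\delta$ then gives that $\delta$ is $(n-1)$-connective, and another application of (b) shows $f$ is $n$-connective.

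The main obstacle is the bookkeeping in the base case: one must pass between $\infty$-topos and $1$-topos fibre functors correctly, using precisely the compatibility square of Lemma~\ref{lemm:truncatedPoints}, and check that $(-1)$-connectivity in spaces (that is, $\pi_0$-surjectivity) is exactly the input needed to invoke the enough-points hypothesis for $\Shv(C, \Set)$. Once the base case is in place, the induction rests only on exactness of fibre functors and characterisation (b), and carries through without further difficulty.
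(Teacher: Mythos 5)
Your proof is correct and follows essentially the same route as the paper's: the base case $n=-1$ combines Lemma~\ref{lemm:truncatedPoints} with the fact that effective epimorphisms in a 1-topos with enough points are detected by its points, and the higher cases reduce to the base case via the iterated relative diagonals, which fibre functors preserve because they are left exact. The only cosmetic difference is that you package the diagonal reduction as an explicit induction using characterisation (b) of Definition~\ref{defi:minusOneConnective}, while the paper states directly that $n$-connectivity amounts to all $n+1$ iterated diagonals being effective epimorphisms.
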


\begin{proof}
The case $n = -1$ follows from Lemma~\ref{lemm:truncatedPoints}  and the fact that in a 1-topos with enough points, effective epimorphisms are detected by points.\footnote{Indeed, $Y \to X$ is an effective epimorphism in a 1-topos if and only if $\textrm{coeq}(Y \times_X Y \rightrightarrows Y) \to X$ is an isomorphism.}


The $n > -1$ cases then follows immediately, since by definition a morphism $E \to B$ is $n$-connective if and only if each of the $n+1$ iterated diagonals $E \to B$, $E \to E \times_B E$, $E \to E \times_{E \times_B E} E$, etc, is an effective epimorphism.
\end{proof}

Lemma~\ref{lemm:detectConnectivity} can be converted to a statement about truncations using the following lemma.

\begin{lemm} \label{lemm:connTruncEquiv}
Suppose $\Shv_τ(C, \cS)$ is an ∞-topos and $f:E \to B$ a morphism. We have the following implications.
\begin{enumerate}
 \item $f$ is $n$-connective $\implies$ $f_{\leq n-1}$ is an equivalence.
 \item $f_{\leq n}$ is an equivalence $\implies$ $f$ is $n$-connective.
\end{enumerate}
\end{lemm}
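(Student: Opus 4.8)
The first point to settle is the meaning of $f_{\leq m}$: truncation in the arrow $\infty$-topos $\Fun(\Delta^1, \Shv_\tau(C,\cS))$ is computed objectwise, so $f_{\leq m}$ is the induced morphism $E_{\leq m} \to B_{\leq m}$, and ``$f_{\leq m}$ is an equivalence'' means exactly $E_{\leq m} \stackrel{\sim}{\to} B_{\leq m}$. The whole argument rests on two standard facts from \cite[\S6.5.1]{HTT}: each truncation unit $X \to X_{\leq m}$ is $(m{+}1)$-connective, and the class of $n$-connective morphisms is closed under composition and satisfies right cancellation (if $gf$ is $n$-connective then $g$ is $n$-connective).

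For (1), I would form the commutative square with vertical truncation units $u\colon E \to E_{\leq n-1}$ and $v\colon B \to B_{\leq n-1}$ and bottom edge $f_{\leq n-1}$. Both $u$ and $v$ are $n$-connective (units of $\tau_{\leq n-1}$), and $f$ is $n$-connective by hypothesis, so the composite $v\circ f = f_{\leq n-1}\circ u$ is $n$-connective; right cancellation applied to $f_{\leq n-1}\circ u$ then shows $f_{\leq n-1}$ is $n$-connective. On the other hand $f_{\leq n-1}$ is a morphism between $(n{-}1)$-truncated objects, hence itself $(n{-}1)$-truncated. Since the $n$-connective and $(n{-}1)$-truncated morphisms are the two classes of the orthogonal factorisation system underlying Def.~\ref{defi:minusOneConnective}(c), their intersection consists of equivalences, so $f_{\leq n-1}$ is an equivalence. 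The boundary case $n=-1$ is trivial, as $\tau_{\leq -2}$ lands in the terminal object.

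For (2) I would assume $n \geq 0$ and $f_{\leq n}\colon E_{\leq n}\stackrel{\sim}{\to} B_{\leq n}$. Applying $\tau_{\leq n-1}$ shows $f_{\leq n-1}$ is an equivalence, and since equivalences are effective epimorphisms, $f_{\leq n}$ is an effective epimorphism. The plan is then to invoke the characterisation that $f$ is $n$-connective precisely when $f_{\leq n-1}$ is an equivalence and $f_{\leq n}$ is an effective epimorphism, which yields the claim at once.

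The hard part is this last characterisation, equivalently the form of \emph{left} cancellation needed to conclude that $f$ itself (and not merely one factor of a composite) is $n$-connective; right cancellation alone sends the composite's information to the wrong factor. I would supply it through the long exact sequence of homotopy sheaves $\pi_k$ attached to the composable pair $E \to E_{\leq n}$, $E_{\leq n} \to B_{\leq n}$ in the $\infty$-topos: vanishing of the relative homotopy sheaves of the $(n{+}1)$-connective truncation units, together with $f_{\leq n}$ being an equivalence, forces $\pi_k(f) = \ast$ for $k < n$, which is Def.~\ref{defi:minusOneConnective}(a). I would also flag the indexing boundary: statement (2) genuinely requires $n \geq 0$, since at $n=-1$ the inclusion $\ast \to \ast \sqcup \ast$ in $\cS$ has $f_{\leq -1}$ an equivalence yet is not an effective epimorphism; this is precisely the case treated directly in the proof of Lemma~\ref{lemm:detectConnectivity}.
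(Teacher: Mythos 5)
Your argument for (1) is correct but takes a genuinely different route from the paper's. The paper deduces $E_{\leq n-1}\stackrel{\sim}{\to}B_{\leq n-1}$ from Definition~\ref{defi:minusOneConnective}(c) by applying the commutative square Eq.\eqref{equa:truncComm} to the adjunction between the slice topos and the ambient topos, so that the relative truncation $\tau^{/B}_{\leq n-1}$ computes the absolute one after forgetting down to $\Shv_\tau(C,\cS)$. You instead use $n$-connectivity of the units $E\to E_{\leq n-1}$ and $B\to B_{\leq n-1}$, cancellation for connective morphisms, and orthogonality of $n$-connective against $(n-1)$-truncated morphisms; both are standard, and yours trades the slice-topos bookkeeping for an appeal to the factorisation system. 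Your proof of (2) lands, in substance, on the same computation as the paper's: the paper runs the long exact sequence of \cite[Rem.6.5.1.5]{HTT} for $E\to B\to\ast$ using $\pi_k(E)\cong f^*\pi_k(B)$ for $k\leq n$, while you run it for $E\to B\to B_{\leq n}$ using the $(n{+}1)$-connectivity of the truncation units. Your observation that (2) fails at $n=-1$ (via $\ast\to\ast\sqcup\ast$) is correct and worth recording: the paper's own treatment of the $(-1)$-connectivity step passes through $E_{\leq 0}\to B_{\leq 0}$ and hence tacitly assumes $n\geq 0$; the application in Proposition~\ref{prop:enoughPoints} only needs large $n$, so nothing breaks downstream.

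One assertion in your write-up of (2) is false and should be deleted, even though your proof does not actually depend on it: the claimed characterisation that $f$ is $n$-connective precisely when $f_{\leq n-1}$ is an equivalence and $f_{\leq n}$ is an effective epimorphism. The paper's remark following the lemma already contains the counterexample: for $f\colon\ast\to S^n$ in $\cS$ with $n\geq 1$, the truncation $f_{\leq n-1}$ is an equivalence and $f_{\leq n}\colon\ast\to\tau_{\leq n}S^n$ is an effective epimorphism (both spaces are connected, so it is surjective on $\pi_0$), yet $f$ is not $n$-connective since $\pi_{n-1}(\fib f)\cong\pi_n(S^n)\neq\ast$. Absolute truncations cannot detect $n$-connectivity of a morphism; only the relative truncation of Definition~\ref{defi:minusOneConnective}(c) does. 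The long exact sequence argument you then sketch is fine precisely because it uses the full hypothesis that $f_{\leq n}$ is an \emph{equivalence} (hence $\infty$-connective), not merely an effective epimorphism; so you should state and prove only the implication asserted in the lemma, not the purported equivalence.
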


\begin{rema}
Clearly, the above implications are strict. The morphism 
$f: \ast \to S^{n}$ is not $n$-connective but $f_{\leq n-1}$ is an equivalence%
, and the morphism 
$f: S^{n} \to \ast$ is $n$-connective but $f_{\leq n}$ is not an equivalence.
\end{rema}

\begin{proof}
Recall that a morphism $f$ is $n$-connective if and only if $τ_{\leq n-1}^{/B} f \to B$ is an equivalence in the slice category $\Shv(C, \cS)_{/B}$, Def.\ref{defi:minusOneConnective}. Applying the commutative square Eq.\eqref{equa:truncComm} to the adjunction 
\[\textrm{forget}: \Shv(C, \cS)_{/B} \rightleftarrows \Shv(C, \cS): - \times B\]
we get a commutative square 
\[ \xymatrix@C=3em{
\Shv(C, \cS)_{/B} \ar[r]^-{\textrm{forget}} \ar[d]^{τ_{\leq n-1}^{/B}} & \Shv(C, \cS) \ar[d]^{(-)_{\leq n-1}} \\
\Shv(C, \cS)_{/B} \ar[r]^-{\textrm{forget}_{\leq n}} & \Shv(C, \cS)_{\leq n}
} \]
Hence, we find that if $f$ is $n$-connective then $E_{\leq n-1} \to 
B_{\leq n-1}$ is an equivalence. 

Conversely, suppose that $E_{\leq n} \to B_{\leq n}$ is an equivalence. We will show Def.\ref{defi:minusOneConnective}(a) is satisfied. Since $E_{\leq n} \to B_{\leq n}$ is an equivalence, $E_{\leq 0} \to B_{\leq 0}$ is an equivalence, so $f$ is certainly $(-1)$-connective. Furthermore, we have $π_k(E) \cong f^*π_k(B)$ for all $k \leq n$, \cite[Lem.6.5.1.9]{HTT}. Then the long exact sequence 
\[ \cdots \to f^*\pi_{k+1}(B) \to \pi_k(f) \to \pi_k(E) \to f^*\pi_k(B)\to \cdots\]
from \cite[Rem.6.5.1.5]{HTT} applied to $E \to B \to \ast$ shows that $π_k(f) \cong \ast$ for all $k < n$. 
\end{proof}

\begin{prop} \label{prop:enoughPoints}
Let $(C, τ)$ be a small site admitting finite limits such that the 1-topos $\Shv(C, \Set)$ has enough points as a 1-topos. Then the hypercompletion $\Shv(C, \cS)^\wedge$ has enough points as an ∞-topos.
\end{prop}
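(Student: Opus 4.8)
The plan is to reduce the statement to the connectivity criterion of Lemma~\ref{lemm:detectConnectivity} and then feed the conclusion into hypercompleteness. Write $\mathcal{X} = \Shv(C, \cS)$, let $L : \mathcal{X} \to \mathcal{X}^\wedge$ be the hypercompletion (left adjoint to the fully faithful inclusion $\iota : \mathcal{X}^\wedge \hookrightarrow \mathcal{X}$, so the counit $L\iota \simeq \mathrm{id}$), and recall that $\mathcal{X}^\wedge$ is the localisation of $\mathcal{X}$ at the class of $\infty$-connective morphisms. Suppose $f : E \to B$ is a morphism of $\mathcal{X}^\wedge$ such that $\psi^* f$ is an equivalence of spaces for every point $\psi^* : \mathcal{X}^\wedge \to \cS$; we must show that $f$ is an equivalence.

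The first, and most delicate, step is to identify the points of $\mathcal{X}^\wedge$ with those of $\mathcal{X}$. Post-composition with $L$ turns a point $\psi^*$ of $\mathcal{X}^\wedge$ into the point $\psi^* \circ L$ of $\mathcal{X}$; conversely, every point $\phi^* : \mathcal{X} \to \cS$ is a colimit-preserving, left-exact functor into the \emph{hypercomplete} $\infty$-topos $\cS$, Example~\ref{exam:hypercomplete}(1). Since inverse images preserve $n$-connectivity for each $n$, \cite{HTT}, $\phi^*$ sends $\infty$-connective morphisms to $\infty$-connective ones, which are equivalences in $\cS$; hence $\phi^*$ inverts the localising class and factors uniquely as $\phi^* \simeq \psi^* \circ L$ for a point $\psi^*$ of $\mathcal{X}^\wedge$. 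Because $L\iota \simeq \mathrm{id}$, for the morphism $\iota f$ of $\mathcal{X}$ we get $\phi^*(\iota f) \simeq \psi^*(f)$. Thus the hypothesis is exactly equivalent to the statement that $\phi^*(\iota f)$ is an equivalence for \emph{every} fibre functor $\phi^*$ of $\mathcal{X}$.

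With this in hand, the rest is formal. An equivalence of spaces is $\infty$-connective, so each $\phi^*(\iota f)$ is $n$-connective for every $n \geq -1$. Since $\Shv(C, \Set)$ has enough points by assumption, Lemma~\ref{lemm:detectConnectivity} applies and shows that $\iota f$ is $n$-connective in $\mathcal{X}$ for every $n \geq -1$, i.e.\ $\iota f$ is $\infty$-connective. I would then conclude in either of two equivalent ways: directly, since $\mathcal{X}^\wedge$ is the localisation of $\mathcal{X}$ at the $\infty$-connective morphisms and $\iota E, \iota B$ are local (hypercomplete), an $\infty$-connective morphism between them is an equivalence, whence $f$ is an equivalence by full faithfulness of $\iota$; or, staying closer to the lemmas already proved, by Lemma~\ref{lemm:connTruncEquiv}(1) each $(\iota f)_{\leq n}$ is an equivalence, these truncations agree with $\tau_{\leq n} f$ (truncated objects being hypercomplete), and hypercompleteness of $E$ and $B$ gives $f \simeq \lim_n \tau_{\leq n} f$, an equivalence.

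The main obstacle is the point-identification in the second paragraph: one must check honestly that every fibre functor of $\mathcal{X}$ factors through $L$, since Lemma~\ref{lemm:detectConnectivity} is phrased in terms of \emph{all} points of $\mathcal{X}=\Shv(C,\cS)$, whereas the hypothesis only furnishes points of the hypercompletion. This is precisely the place where hypercompleteness of $\cS$ is indispensable, and everything else is a routine assembly of the preceding lemmas.
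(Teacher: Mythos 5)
Your proof is correct and follows essentially the same route as the paper: detect $n$-connectivity for all $n$ via Lemma~\ref{lemm:detectConnectivity}, convert to equivalences of truncations via Lemma~\ref{lemm:connTruncEquiv}, and conclude with $f = \lim_n f_{\leq n}$ by hypercompleteness. The one genuine addition is your second paragraph, which makes explicit the identification of fibre functors of $\Shv(C,\cS)$ with those of $\Shv(C,\cS)^\wedge$ (using hypercompleteness of $\cS$ and preservation of $\infty$-connectivity by inverse images) --- a step the paper's proof uses implicitly when it feeds points of the hypercompletion into Lemma~\ref{lemm:detectConnectivity}, which is stated for points of $\Shv(C,\cS)$.
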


\begin{proof}
Suppose $f$ is a morphism of $\Shv(C, \cS)$ such that $\phi^*f$ is an equivalence for all fibre functors $\phi^*: \Shv(C, \cS)^\wedge \to \cS$. Then 
$(\phi^*f)_{\leq n}$ is an equivalence for all $n$ and $\phi^*$, 
so $\phi^*f$ is $n$-connective for all $n$ and $\phi^*$, Lem.\ref{lemm:connTruncEquiv}, 
so $f$ is $n$-connective, Lem.\ref{lemm:detectConnectivity}, 
so $f_{\leq n - 1}$ is an equivalence for all $n$, Lem.\ref{lemm:connTruncEquiv}, 
so $f = \lim_n f_{\leq n}$ is an equivalence in $\Shv(C, \cS)^\wedge$ by hypercompleteness.
\end{proof}

The proof of Proposition~\ref{prop:enoughPoints} used a lot of ∞-categorical machinary compressed into Definition~\ref{defi:minusOneConnective}, so as a sanity check, we also give a proof using model categories.

\begin{proof}[Alternative proof of Proposition~\ref{prop:enoughPoints}.]
We have the following facts.
\begin{enumerate}
 
  \item $\PSh(C, \cS)[S^{-1}] \cong (L_S\PSh(C, \Set_{\Delta}))^\circ$ for any set of morphisms $S$ in $\PSh(C, \Set_{\Delta})$. 
  
  That is, the localisation (in the sense of \cite[Def.5.2.7.2]{HTT}) of the ∞-category $\PSh(C, \cS)$ at the image of $S$ is the ∞-category associated to the Bousfield localisation of the simplicial model category $\PSh(C, \Set_{\Delta})$ (with either the injective or projective model structures). 

  \item $\Shv(C, \cS)$ is the localisation of $\PSh(C, \cS)$ at the class of \v{C}ech hypercoverings.
  
  \item $\Shv(C, \cS)^\wedge$ is the localisation of $\PSh(C, \cS)$ at the class $HR$ of all hypercoverings, \cite[Thm.6.5.3.12]{HTT}.
 
  \item the weak equivalences in $L_{HR}\PSh(C, \Set_{\Delta})$ are the morphisms such that each of the induced maps
\begin{align*}
π_0E &\to π_0B \\
π_n(E|_{C_{/X}}, b) &\to π_n(B|_{C_{/X}}, b), \qquad X \in C, b \in B(X)
\end{align*}
are isomorphisms of sheaves, \cite[Prop.2.8]{Jar87}, \cite{DHI04}.
\end{enumerate}
Putting these facts together, the result holds almost by definition.
\end{proof}

\begin{coro} \label{coro:enoughInfinityPoints}
Let $S$ be a qcqs scheme of finite valuative dimension $d \geq 0$ with Noetherian underlying topological space. Then $\Shv_{\procdh}(\Sch_S, \cS)$ has enough points as an ∞-topos.
\end{coro}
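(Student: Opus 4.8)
The plan is to deduce this corollary formally from three results already in place: Theorem~\ref{theo:procdhEnoughPoints} (the \emph{classical} procdh topos has enough points), Proposition~\ref{prop:enoughPoints} (enough points for the $1$-topos of a site with finite limits transfers to enough points for the associated hypercomplete $\infty$-topos), and Corollary~\ref{coro:procdhHypercomplete} (under the present hypotheses the procdh $\infty$-topos is \emph{already} hypercomplete). No new geometric input is needed; the argument is pure bookkeeping among these three statements.

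First I would check the hypotheses of Proposition~\ref{prop:enoughPoints} for the site $(\Sch_S, \procdh)$. It is essentially small, since $S$-schemes of finite presentation form an essentially small category, and it admits finite limits: the terminal object is $S$ itself, and fibre products of finite-presentation $S$-schemes are again of finite presentation. To feed in Theorem~\ref{theo:procdhEnoughPoints} I would observe that finite valuative dimension forces finite Krull dimension, since $\dim X \leq \dim_v X$ always holds (a specialisation chain of length $n$ starting at a generic point is realised by a valuation ring of rank $\geq n$, exactly as in the construction of Lemma~\ref{lemm:valDimProp}(1)). Hence $S$ is qcqs with Noetherian topological space of finite Krull dimension, so the $1$-topos $\Shv_{\procdh}(\Sch_S)$ has enough points by Theorem~\ref{theo:procdhEnoughPoints}. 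Proposition~\ref{prop:enoughPoints} then gives that the hypercompletion $\Shv_{\procdh}(\Sch_S, \cS)^\wedge$ has enough points as an $\infty$-topos.

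Finally I would discard the hypercompletion: by Corollary~\ref{coro:procdhHypercomplete} the $\infty$-topos $\Shv_{\procdh}(\Sch_S, \cS)$ is already hypercomplete under the stated hypotheses, so $\Shv_{\procdh}(\Sch_S, \cS)^\wedge = \Shv_{\procdh}(\Sch_S, \cS)$ and the claim follows. There is no genuine obstacle at this stage, precisely because the substantive work has been done upstream --- conservativity of the Nisnevich--Riemann--Zariski fibre functors in Theorem~\ref{theo:procdhEnoughPoints}, the homotopy-dimension bound $\leq 2d$ underlying hypercompleteness, and the $\infty$-categorical transfer encoded in Proposition~\ref{prop:enoughPoints}. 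The only items demanding any care are the two verifications above: that the site possesses finite limits, and that finite valuative dimension supplies the finite Krull dimension required by Theorem~\ref{theo:procdhEnoughPoints}.
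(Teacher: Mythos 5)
Your proposal is correct and follows exactly the paper's own argument: combine Theorem~\ref{theo:procdhEnoughPoints}, Corollary~\ref{coro:procdhHypercomplete}, and Proposition~\ref{prop:enoughPoints}. The extra verifications you supply (finite limits in the site, and $\dim \leq \dim_v$ to match the hypotheses of Theorem~\ref{theo:procdhEnoughPoints}) are left implicit in the paper but are accurate.
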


\begin{proof}
The 1-topos $\Shv_{\procdh}(\Sch_S, \Set)$ has enough points, Thm.\ref{theo:procdhEnoughPoints}, and the ∞-topos $\Shv_{\procdh}(\Sch_S, \cS)$ is hypercomplete, Cor.\ref{coro:procdhHypercomplete}, so it has enough points by Proposition~\ref{prop:enoughPoints}.
\end{proof}

Using points we get a quick proof of the fact that hyperdescent implies excision (which we already knew by Proposition~\ref{prop:hyperExci}).

\begin{prop} \label{prop:hyperExciPoint}
Suppose that $S$ is a qcqs scheme of finite valuative dimension with Noetherian topological space, and $F \in \PSh(\Sch_S, \cS)$ is a presheaf of spaces. If $F$ satisfies procdh hyperdescent then $F$ satisfies excision, cf.Thm.\ref{theo:decsentStateIntro}.
\end{prop}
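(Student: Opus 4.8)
The plan is to use that the procdh $\infty$-topos has enough points, Cor.\ref{coro:enoughInfinityPoints}, to reduce the two excision equivalences to an elementary computation at each procdh local ring. First I would recast excision as a cocartesianness statement. Since $S$ has finite valuative dimension and Noetherian topological space, the topos is hypercomplete, Cor.\ref{coro:procdhHypercomplete}, so a presheaf satisfying hyperdescent is simply an object $F \in \Shv_{\procdh}(\Sch_S, \cS)$, and $F(T) = \Map(T, F)$ where $T$ also denotes the associated sheaf. Writing $``\colim"_n Z_n$ for the presheaf colimit as in Prop.\ref{prop:hyperExci} and $E_n := Z_n \times_X Y$, we have $\lim_n F(Z_n) = \Map(``\colim"_n Z_n, F)$ and likewise for $E_n$. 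Hence the blowup excision equivalence is exactly the assertion that $\Map(-, F)$ sends the square $(\ast)$ of \eqref{equa:NisPorcdhSq} to a cartesian square; as $F$ is a sheaf, it suffices to show $(\ast)$ is cocartesian in $\Shv_{\procdh}(\Sch_S, \cS)$. The distinguished Nisnevich square is treated identically.

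Next I would check cocartesianness on points. In any $\infty$-topos with enough points, a square with initial corner $Q$ and maps to $A$, $B$, $C$ is cocartesian iff the canonical map $A \sqcup_Q B \to C$ is an equivalence; since each fibre functor $\phi^*$ preserves colimits, this map is an equivalence iff $\phi^*$ of it is, i.e.\ iff $\phi^*(\ast)$ is cocartesian in $\cS$ for every $\phi^*$. By Prop.\ref{prop:procdhLocal} the points are the procdh local rings $R \cong \OO \times_K A$, and on a representable $T \in \Sch_S$ we have $\phi^*(T) = \colim_{\Spec(R)\to X' \to S}\Hom(X', T) = \Hom(\Spec R, T)$, a discrete space. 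Thus $\phi^*(\ast)$ is the square of sets
\[
\xymatrix{
\colim_n \Hom(\Spec R, E_n) \ar[r] \ar[d] & \Hom(\Spec R, Y) \ar[d] \\
\colim_n \Hom(\Spec R, Z_n) \ar[r] & \Hom(\Spec R, X),
}
\]
which I must show is a pushout of sets.

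Finally I would verify this pushout by hand, reusing the analysis in the proof of Prop.\ref{prop:procdhLocal}. Since $\Hom(\Spec R, -)$ preserves fibre products and filtered colimits commute with finite limits, the top-left corner is $\bigl(\colim_n \Hom(\Spec R, Z_n)\bigr)\times_{\Hom(\Spec R, X)}\Hom(\Spec R, Y)$, so the square is cartesian. The closed immersions $Z_n \hookrightarrow X$ make $\colim_n \Hom(\Spec R, Z_n)\hookrightarrow \Hom(\Spec R, X)$ injective with image the set $\mathcal{X}_{\mathrm{in}}$ of maps whose generic point lands in $Z_0$; by the valuative criterion and separatedness of $Y \to X$ (an isomorphism away from $Z_0$) the lift to $Y$ over the complementary set $\mathcal{X}_{\mathrm{out}}$ is unique, so $\Hom(\Spec R, Y)\to \Hom(\Spec R, X)$ restricts to a bijection onto $\mathcal{X}_{\mathrm{out}}$. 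As the right leg is then the inclusion of the maps lying over $\mathcal{X}_{\mathrm{in}}$, pushing out glues precisely that part onto $\colim_n \Hom(\Spec R, Z_n)\cong\mathcal{X}_{\mathrm{in}}$ and leaves $\mathcal{X}_{\mathrm{out}}$, giving a bijection onto $\mathcal{X}_{\mathrm{in}}\sqcup\mathcal{X}_{\mathrm{out}}=\Hom(\Spec R, X)$. The Nisnevich case is the standard henselian computation.

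The main obstacle is conceptual rather than computational: fibre functors preserve colimits and finite limits but \emph{not} the infinite limits $\lim_n F(Z_n)$ occurring in excision, so one cannot apply $\phi^*$ to the excision square directly. The essential move is to transpose excision, via Yoneda, into the cocartesianness of the colimit-square $(\ast)$ of representables---a statement about colimits, which $\phi^*$ does preserve---after which the points argument applies and only the elementary set-level pushout above remains.
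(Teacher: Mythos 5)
Your proposal is correct and follows essentially the same route as the paper's proof: Yoneda transposes the excision statements (which involve infinite limits that fibre functors do not preserve) into cocartesianness of the colimit squares of representables in the hypercomplete topos, conservativity of the fibre functors (Corollary~\ref{coro:enoughInfinityPoints}) reduces this to procdh local rings $R = \OO \times_K A$, and the set-level verification then rests on henselianness for the Nisnevich square and the valuative criterion for the proabstract blowup square. The only difference is one of detail: you spell out the pushout-of-sets check (cartesianness, the monomorphism $\colim_n Z_n(R) \hookrightarrow X(R)$ with image the maps whose generic point lands in $Z_0$, and the bijection on complements) that the paper compresses into the remark that the horizontal morphisms are monomorphisms.
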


\begin{proof}
By Yoneda, it suffices to show that the squares \eqref{equa:NisPorcdhSq}
corresponding to those in Proposition~\ref{prop:excisionCech} are sent to cocartesian squares in the hypercomplete ∞-topos $\Shv_{\procdh}(\Sch_S, \cS)$.
Here $W = U \times_X V$ and $E_n = Z_n \times_X Y$. Since $\Shv_{\procdh}(\Sch_S, \cS)$ has enough points, Cor.\ref{coro:enoughInfinityPoints}, it suffices to show that for every procdh local ring $R = \OO \times_K A$, evaluation of the corresponding representable presheaves on $\Spec(R)$ gives cocartesian squares of spaces. Consider the left square. The horizontal morphisms are monomorphisms, so it suffices to show that $V(R) \setminus W(R) \to X(R) \setminus U(R)$ is an isomorphism. This follows directly from $R$ being henselian, \cite[04GG(8)]{stacks-project}, and the condition that $V \to X$ is an isomorphism outside of $U$. The square on the right uses the same argument with the valuative criterion for properness for the equivalence $Y(R) \setminus \colim_n E_n(R) \stackrel{\sim}{\to} X(R) \setminus \colim_n Z_n(R)$.
\end{proof}

\subsection{Counterexample} \label{subsec:counterHomDim}

We give an example that suggests the bound on the homotopy dimension in Theorem~\ref{theo:bounded} is strict.

\begin{prop} \label{prop:Nisprocdh}
Let $S$ be scheme with finitely many generic points and such that for all $s \in S$ the henselisation $\OO_{S,s}^h$ is procdh local (e.g., a smooth curve over a field, or a procdh local scheme). Suppose $F \in \Shv_{\Nis}(\Sch_S, *)$ is a Nisnevich sheaf of sets / abelian groups / chain complexes of abelian groups / spaces. 
Then $F(S) = F_{\procdh}(S)$.
\end{prop}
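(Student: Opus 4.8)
The plan is to turn the global comparison $F(S)=F_{\procdh}(S)$ into a stalkwise statement on the small Nisnevich site $S_{\Nis}$ and then to feed in the hypothesis that each $\OO_{S,s}^h$ is procdh local. Write $\theta\colon F\to \iota F_{\procdh}$ for the unit of the adjunction $a_{\procdh}\colon\Shv_{\Nis}(\Sch_S)\rightleftarrows\Shv_{\procdh}(\Sch_S)\colon\iota$, where $F_{\procdh}=a_{\procdh}F$ and $\iota$ is fully faithful because every procdh sheaf is a Nisnevich sheaf (Nisnevich coverings are procdh coverings). Since $F(S)$ and $F_{\procdh}(S)$ are the sections of $F$ and $\iota F_{\procdh}$ at the terminal object $S$, they depend only on the restrictions to $S_{\Nis}$, so it suffices to show that $\theta|_{S_{\Nis}}$ is an isomorphism (resp.\ equivalence). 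As $S_{\Nis}$ carries a conservative family of points given by the henselian local rings $\OO_{S,s}^h$, $s\in S$, I would reduce to checking that $\theta$ is an isomorphism on each such stalk.

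The key preliminary is a compatibility between Nisnevich and procdh points, which I would isolate as a small lemma. For a henselian local $S$-ring $R$, evaluation $G\mapsto \colim_{\Spec R\to X\to S}G(X)$ is the Nisnevich fibre functor $q_R^*$ attached to $R$ in the sense of Definition~\ref{defi:tauLocal}, and for $R=\OO_{S,s}^h$ this recovers the small-site stalk at $s$: any map $\Spec\OO_{S,s}^h\to X$ with $X\in\Sch_S$ lands in an affine open and is given by finitely many coordinates and relations, so using $\OO_{S,s}^h=\colim_U\OO(U)$ over Nisnevich neighbourhoods it is already defined over some $U$, showing that Nisnevich neighbourhoods are cofinal among all factorisations. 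When $R$ is in addition procdh local, the same formula defines the procdh fibre functor $p_R^*$, and I claim $q_R^*=p_R^*\circ a_{\procdh}$: both functors preserve colimits and finite limits, and both send a representable $\hom(-,T)$ (which is a sheaf in either topology) to $\hom(\Spec R,T)$, hence they agree on all of $\Shv_{\Nis}(\Sch_S)$.

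Granting this, the conclusion is formal. For $s\in S$ the ring $R:=\OO_{S,s}^h$ is procdh local by hypothesis, so the stalk of $\theta$ at $s$ is $q_R^*(\theta)=p_R^*(a_{\procdh}\,\theta)$, and $a_{\procdh}\,\theta$ is an isomorphism by the triangle identity, the counit $a_{\procdh}\iota\cong\id$ being invertible since $\iota$ is fully faithful. Thus $\theta|_{S_{\Nis}}$ is a stalkwise isomorphism and we are done. The argument is uniform in the coefficients: for sheaves of abelian groups and chain complexes nothing changes, while in the space-valued case detecting equivalences by the points $\OO_{S,s}^h$ uses hypercompleteness of $\Shv_{\Nis}(S_{\Nis},\cS)$, which holds once $S$ is finite-dimensional; the hypothesis that $S$ has finitely many generic points (satisfied by a smooth curve or a procdh local scheme) is what keeps the relevant fibre functors under control and lets us invoke the enough-points results of Section~\ref{sec:enoughPoints}. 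The main obstacle I expect is precisely the cofinality claim of the second paragraph — verifying that, for a Nisnevich sheaf, the big-site stalk at $\OO_{S,s}^h$ is computed by Nisnevich neighbourhoods of $s$ — and the bookkeeping identifying $q_R^*$ with $p_R^*\circ a_{\procdh}$; once the two points are matched, the triangle-identity step is immediate.
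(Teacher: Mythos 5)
Your argument is correct and is essentially the paper's own proof: both reduce to stalks on the small Nisnevich site (using hypercompleteness in the space-valued case) and then identify the Nisnevich stalk functor at $s$, i.e.\ the colimit over Nisnevich neighbourhoods of $\Spec(\OO_{S,s}^h)$, with the procdh fibre functor attached to the procdh local ring $\OO_{S,s}^h$, so that sheafification does not change the stalk. The extra bookkeeping you add (the unit/counit triangle identity and the explicit cofinality check) is just a more detailed spelling-out of the same step.
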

%
%

\begin{proof}
Since $F$ and $F_{\procdh}$ are both Nisnevich sheaves, and the (small) Nisnevich ∞-topos is hypercomplete, it suffices to show that
%
%
%
%
for $s \in S$ we have $\phi^*_{\Nis,s}F = \phi^*_{\Nis,s}F_{\procdh}$
where $\phi^*_{\Nis,s}G$ %
means $\colim_{s \to V \to S} G(V)$ and the colimit is over all factorisations with $V \to S$ étale of finite presentation. By assumption $\OO_{S,s}^h$ is procdh local.
Note that $\{s \to V \to S : V \in S_{\Nis} \}$ is a pro-object of $\Sch_S$ and $\Spec(\OO_{S,s}^h) = \lim_{\substack{s \to V \to S \\ V \in S_{\Nis}}} V$ 
so
the functor $\phi^*_{\Nis,s}$ is the fibre functor of $\Shv_{\procdh}(\Sch_S)$ 
associated to the procdh local ring $\OO_{S,s}^h$. Hence, $\phi^*_{\Nis,s}F = \phi^*_{\Nis,s}F_{\procdh}$. 
\end{proof}

\begin{exam} \label{exam:counterCohDim}
The idea is to find a scheme $S$ of valuative dimension $1$, a proabstract blowup square
\begin{equation} \label{equa:examSq}
\xymatrix{
``\underset{n \in \NN}{\colim}" E_n \ar[r] \ar[d] & X \ar[d] \\
``\underset{n \in \NN}{\colim}" Z_n \ar[r] & S 
}
\end{equation}
and a sheaf of abelian groups $F$ such that 
\begin{align} 
H_{\procdh}^i(X, F) = 0 & \textrm{ for all } i \label{equa:HXF} \\
H_{\procdh}^i(Z_n, F) = 0 & \textrm{ for all } i \label{equa:HZn} \\
H_{\procdh}^i(E_n, F) = 0 & \quad i > 0. \label{equa:HEn}
\end{align}
Then the corresponding long exact sequence is
\[ \dots \to 0 \to H_{\procdh}^i(S, F) \to R^{i-1}\lim_{n \in \NN} F(E_n) \to 0 \to \dots. \]
So if we can also arrange that the ${\lim}^1_{n \in \NN}$ is non-zero, we have non-zero $H_{\procdh}^2(S, F)$ giving a counterexample to the statement that $\Shv_{\procdh}(\Sch_S, \cS)$ has homotopy dimension $\leq 1$.

Our example is two affine lines joined at the origin, $S = \AA^1 \sqcup_{\{0\}} \AA^1$, with normalisation $X = \AA^1_k \sqcup \AA^1_k$ and associated 
proabstract blowup square
\begin{equation} \label{equa:examSq}
\xymatrix{
``\underset{n \in \NN}{\colim}" E_n \ar[r] \ar[d] & X \ar[d] \\
``\underset{n \in \NN}{\colim}" Z_n \ar[r] & S 
}
\end{equation}
where $Z_n$ is the $n$th thickening of the origin and $E_n = Z_n \times_S X$. Note that $E_n$ decomposes into two components corresponding to the components of $X$, namely, $E_n = \Spec k[x] / x^n \sqcup \Spec k[y] / y^n =: E_{n,x} \sqcup E_{n,y}$.

To a sequence of abelian groups $\dots A_2 \to A_1 \to A_0$ we associated the presheaf of abelian groups on $\Sch_S$,
\[ F: T \mapsto \left \{ \begin{array}{lr}
A_m, & m = \min\{ i\ |\ \exists\ T \to E_{i,x} \to S\} \\
0, & \forall i, \ \not \exists\  T \to E_{i,x} \to S.
\end{array} \right . \]
For $τ = \Nis, \procdh$, let $F_{τ}$ be the $τ$-sheafifiaction of $F$ considered as a presheaf of chain complexes. So $H^i(F_τ(S)) = H_τ^i(S, F)$, etc. 
The only $V \in X_{\Nis}$ admitting an $S$-morphism $V \to E_{i,x}$ is the empty scheme, so $F_{\Nis}(X) = 0$ and by Proposition~\ref{prop:Nisprocdh} we have $F_{\procdh}(X) = F_{\Nis}(X)$, so \eqref{equa:HXF} holds. We have \eqref{equa:HZn} for the same reason. The condition \eqref{equa:HEn} holds because $E_n$ is a disjoint union of procdh local schemes. More precisely we have 
$$ F_{\procdh}(E_n) 
= F_{\procdh}(E_{n,x}) \times F_{\procdh}(E_{n,y}) 
$$
$$
= F(E_{n,x}) \times F(E_{n,y}) 
= A_n \times \{0\}.$$
where the $0$ is because there are no factorisations $E_{n,y} \to E_{i,x}$ for $n > 1$. So choosing any sequence $(A_n)_{n \in \NN}$ with ${\lim}^1_{n \in \NN} A_n \neq 0$ produces an $F$ with $H^2_{\procdh}(S, F) \neq 0$.
\end{exam}

\begin{rema}
It seems possible to push the above technique further to at least show that the cohomological dimension of certain surfaces is 4. For example, Gabber proposed trying the surface $S \times S$, where $S$ is the scheme from Example~\ref{exam:counterCohDim}.
%
\end{rema}

\section{Application to $K$-theory}  

In this section, we work with the category $\Sch^\qcqs$ of qcqs schemes and 
the full subcategory $\Sch^\noe$ of noetherian schemes.
For $\cC = \Spt, D(\ZZ), D(\QQ), D(\FF_p)$, 
we write $\PSh(\Sch^\qcqs,\cC)$ for the $\infty$-category of presheaves on $\Sch^\qcqs$ with values in $\cC$ and write 
$\Shv_{\procdh}(\Sch^\qcqs,\cC) $ for the full subcategory of the procdh sheaves 
with 
the sheafification functor
\[ a_\procdh: \PSh(\Sch^\qcqs,\cC) \to \Shv_{\procdh}(\Sch^\qcqs,\cC). \]

\begin{rema}
Cf. \cite[Rem.4.1.2]{BS13}. Sheafification on large sites is rare in the literature---and for good reason, it can fail to exist. Indeed, the fpqc topology on $\Sch^{\qcqs}$ does not admit a sheafification functor, essentially because fpqc coverings can have arbitrarily large cardinality, see Example~\ref{exam:fpqc}. For the procdh topology (and many other topologies), one way to proceed is as follows. Choose an uncountable strong limit cardinal $κ$, and implicitly set $\Sch^\qcqs$ to be the category $\Sch^{\qcqs, κ}$ of qcqs schemes of cardinality $< κ$. Since procdh coverings are refinable by covering families whose morphisms are of finite presentation, for any second larger uncountable strong limit cardinal $κ < κ'$ the following two obvious squares commute.
\[ \xymatrix{
\PSh(\Sch^{κ',\qcqs},\cC) 
\ar[d]_{\textrm{restriction}}
\ar@/^0.5ex/[r]^-{a_{\procdh}}  &  
\ar@/^0.5ex/[l]^-{\textrm{inclusion}}
\Shv_{\procdh}(\Sch^{κ',\qcqs},\cC) \ar[d]^{\textrm{restriction}} \\
\PSh(\Sch^{κ,\qcqs},\cC) 
\ar@/^0.5ex/[r]^-{a_{\procdh}}  &  
\ar@/^0.5ex/[l]^-{\textrm{inclusion}}
\Shv_{\procdh}(\Sch^{κ,\qcqs} ,\cC)
} \]
\end{rema}

\begin{exam} \label{exam:fpqc}
Let ${\operatorname {Field}}_\QQ$ be the category of all field extensions of $\QQ$ and consider the presheaf $F$ that sends an extension $K/\QQ$ to the the smallest ordinal $ω$ of the same cardinality as $K$, so  $|F(K)| = |K|$. Note that for any inclusion $K \subseteq L$ with $|K| = |L|$ we have \emph{equality} $F(K) = F(L)$. Suppose that $F$ has a sheafification $F_{\operatorname{fpqc}}$ for the topology on ${\operatorname {Field}}_\QQ$ induced by the fpqc topology, and set $κ = |F_{\operatorname{fpqc}}(\QQ)|$. Choose an extension $K/\QQ$ with $κ < |K|$. 
Let $R = im(\hom(K, -) \to \hom(\QQ, -))$ be the sieve generated by $K$ so $R(L)$ is $\ast$ or $\varnothing$ according to whether there exists a morphism $K \to L$ or not.
All transitions $F(K) \to F(K')$ are inclusions, so $F \to F_{\operatorname{fpqc}}$ is an inclusion of presheaves. This implies that $\hom(R, F) \to \hom(R, F_{\operatorname{fpqc}})$ is also injective. But we have $\hom(R, F) = \lim_{L \in R} F(L) = F(K) = |K|$, and therefore $κ < |K| = |\hom(R, F)| \leq |\hom(R, F_{\operatorname{fpqc}})|$. Since $κ = F_{\operatorname{fpqc}}(\QQ) = 
\hom(\hom(\QQ,-), F_{\operatorname{fpqc}})$, we cannot have 
$\hom(R, F_{\operatorname{fpqc}}) = \hom(\hom(\QQ,-), F_{\operatorname{fpqc}})$, contradicting the assumption that $F_{\operatorname{fpqc}}$ is an fpqc sheaf. Note that this can be turned into an example on $\Sch^{\qcqs}$ by left Kan extending along ${\operatorname {Field}}_\QQ^{op} \to \Sch^{\qcqs}$.
\end{exam}

\subsection{Algebraic $K$-theory} %
By \cite{KST-Weibel}, non-connective $K$-theory satisfies procdh excision for Noetherian schemes. Consequently, by  Proposition~\ref{prop:excisionCech} it defines a sheaf $K\in \Shv_{\procdh}(\Sch^\noe,\Spt)$. We have the following topos-theoretic interpretation of the Bass construction.

\begin{theo}\label{thm;apcdhKconn=K}
For $X\in \Sch^\noe$ with $\dim(X)<\infty$, there exists a natural equivalence
\[ (a_{\procdh} τ_{\geq 0}K)(X) \simeq K(X).\]
\end{theo}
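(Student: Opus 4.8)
The plan is to compare the two procdh sheaves directly on stalks. By \cite{KST-Weibel} the nonconnective $K$-theory presheaf $K$ satisfies procdh excision on Noetherian schemes, so by Proposition~\ref{prop:excisionCech} it is already a procdh sheaf, $K \simeq a_{\procdh}K$. The connective cover provides a natural transformation $\tau_{\geq 0}K \to K$ of presheaves of spectra, and applying $a_{\procdh}$ yields a map $a_{\procdh}\tau_{\geq 0}K \to a_{\procdh}K \simeq K$. I would prove that this map is an equivalence after evaluation at any $X \in \Sch^\noe$ with $\dim X < \infty$.

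First I would reduce to a stalkwise statement. Fixing such an $X$, restrict both sheaves to the site $\Sch_X$ of finite presentation $X$-schemes; since the projection $\Sch_X \to \Sch^\qcqs$ is continuous and cocontinuous, restriction is exact and commutes with $a_{\procdh}$, and $X \in \Sch_X$ so evaluation at $X$ is unchanged. As $X$ is Noetherian of finite dimension, every object of $\Sch_X$ is Noetherian with Noetherian topological space and finite valuative dimension (Lemma~\ref{lemm:valDimProp}, Lemma~\ref{lemm:noethTopFt}), so $\Shv_{\procdh}(\Sch_X, \Spt)$ is hypercomplete (Corollary~\ref{coro:procdhHypercomplete}) and has enough points (Corollary~\ref{coro:enoughInfinityPoints}). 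Consequently a map of sheaves of spectra is an equivalence as soon as it is on every stalk, and the stalks correspond to procdh local rings $R \cong \OO \times_K A$ (Proposition~\ref{prop:procdhLocal}). Because connective $K$-theory is finitary and sheafification does not change stalks, the stalk of $a_{\procdh}\tau_{\geq 0}K$ at $R$ is $\tau_{\geq 0}K(R)$, while the stalk of $K$ is $K(R)$; the map between them is the connective cover $\tau_{\geq 0}K(R) \to K(R)$. Thus the theorem reduces to the assertion that $K(R)$ is connective for every procdh local ring $R$.

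The heart of the argument --- and the step I expect to be the main obstacle --- is this connectivity lemma. Using Proposition~\ref{prop:colimOfSmaller} and the fact that connectivity is preserved under filtered colimits, I would first reduce to the case $\length Q(R) < \infty$, so that $A = Q(R)$ is an Artinian (hence Noetherian, dimension zero) local ring and $\OO = R_{\red}$ is a henselian valuation ring of the field $K = A/\m_A$. The defining pullback $R = \OO \times_K A$ is a Milnor square, since $A \to K$ is surjective, so the classical Bass--Milnor Mayer--Vietoris sequence in low and negative $K$-groups applies:
\[ \cdots \to K_n(R) \to K_n(\OO) \oplus K_n(A) \to K_n(K) \to K_{n-1}(R) \to \cdots \quad (n \leq 1). \]
Now $K(\OO)$ is connective by the connectivity theorem for valuation rings \cite{KM}, $K(A)$ is connective because $A$ is Noetherian of dimension zero (Weibel vanishing, \cite{KST18}), and $K(K)$ is connective as $K$ is a field. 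Feeding these vanishings into the sequence kills $K_{-i}(R)$ for $i \geq 2$ by descending induction, while $K_{-1}(R)$ is the cokernel of $K_0(\OO) \oplus K_0(A) \to K_0(K)$, which is surjective since $K_0(\OO) = K_0(K) = \ZZ$ via the rank. Hence $K_{-i}(R) = 0$ for all $i > 0$, i.e.\ $K(R)$ is connective, completing the reduction. The remaining routine points --- that stalks of a spectrum-valued sheafification agree with presheaf stalks, and the bookkeeping relating the big site $\Sch^\qcqs$ to the finite-dimensional site $\Sch_X$ --- I would treat as formal consequences of the results already established.
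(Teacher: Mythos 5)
Your proposal is correct, and it arrives at the same pivotal reduction as the paper --- everything comes down to showing that $K_{-i}(R)=0$ for $i>0$ when $R$ is a procdh local ring --- but both the global bookkeeping and the proof of that vanishing differ from the paper's. Globally, the paper does not pass to stalks of sheaves of spectra; it compares the two bounded descent spectral sequences $H^p_{\procdh}(X,\tK_{-q})\Rightarrow K_{-p-q}(X)$ and $H^p_{\procdh}(X,\conc\tK_{-q})\Rightarrow \pi_{-p-q}(a_{\procdh}\tau_{\geq 0}K(X))$ (bounded by Corollary~\ref{coro:finCohDim}) and observes that they agree once the homotopy \emph{sheaves} $\tK_i$ vanish for $i<0$, which is then checked on points of the $1$-topos via Theorem~\ref{theo:procdhEnoughPoints}; your stalkwise argument in the hypercomplete $\infty$-topos is an equivalent packaging, though you should be explicit that passing from stalkwise equivalence of homotopy sheaves to an equivalence of sheaves of spectra uses Postnikov convergence, i.e.\ exactly the finite (local) homotopy dimension the paper encodes in the bounded spectral sequence. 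For the key vanishing, the paper's route is shorter: nil-invariance of negative $K$-theory gives $K_i(R)=K_i(R/\fN)$ for $i<0$ for an \emph{arbitrary} procdh local ring, and $R/\fN$ is a henselian valuation ring, so \cite[Th.1.3]{KM} finishes; no reduction to finite length is needed. Your route instead invokes Proposition~\ref{prop:colimOfSmaller} to make $A=Q(R)$ Artinian and then runs the Bass--Milnor Mayer--Vietoris sequence for the Milnor square $R=\OO\times_K A$, using connectivity of $K(\OO)$, $K(A)$, $K(K)$ and surjectivity of $K_0(\OO)\to K_0(K)$. This is valid (the square is Milnor since $A\to K$ is surjective, and the sequence does extend through all negative degrees), and it has the mild virtue of only using vanishing inputs for the three corner rings rather than nil-invariance for the non-Noetherian ring $R$ itself; the cost is the extra limit argument and the dependence on Weibel vanishing for $A$, which is itself most easily seen via the very nil-invariance ($K_{-i}(A)=K_{-i}(A_{\red})=K_{-i}(K)=0$) that the paper applies directly to $R$.
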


\begin{proof}
For $X$ as above, we have the descent spectral sequences
\[ E_2^{p,q} = H^p_{\procdh}(X,\tK_{-q}) \Rightarrow K_{-p-q}(X),\]
\[ E_2^{p,q} = H^p_{\procdh}(X,\conc\tK_{-q}) \Rightarrow \pi_{-p-q}(a_{\procdh} τ_{\geq 0}K(X)),\]
where $\tK_i$ is the procdh sheafification of the presheaf $K_i=\pi_i K$ of abelian groups on $\Sch_X$, and  $\conc\tK_i=\tK_i$ for $i\geq 0$ and $\conc\tK_i=0$ for $i< 0$.
By Corollary \ref{coro:finCohDim}, the spectral sequences are bounded.
So, it suffices to show that $\tK_i=0$ for $i<0$.
By Theorem~\ref{theo:procdhEnoughPoints} the procdh 1-topos has enough points, so it is enough to show that $\phi^*K_i = 0$ for all $i < 0$ and fibre functors $\phi^*: \Shv_{\procdh}(\Sch_X) \to \Set$. Since $K$-theory commutes with filtered colimits of rings, we have $\phi^*K_i = K_i(R)$ where $R = \colim R_λ$ for $(\Spec(R_λ) \to X)_{\lambda \in \Lambda}$ the proobject corresponding to $\phi^*$.
Let $\fN\subset R$ be the ideal of nilpotent elements.
Then, for $i<0$, we get $K_i(R) =  K_i(R/\fN) = 0$, where the first equality follow from the nil-invariance of the negative $K$-theory
and the last equality follows from \cite[Th.1.3]{KM} since $R/\fN$ is a valuation ring. This completes the proof.
\end{proof}

%
\begin{rema}\label{rmk;apcdhKconn=K}
%
The proof of Theorem~\ref{thm;apcdhKconn=K} shows the following. Note that we use Corollary~\ref{coro:smallFamily} to reduce to the smaller class of those procdh local rings $R$ with $\length Q(R)$ and $\dim R$ finite.

\begin{prop} \label{prop:procdhNonconnectification}
Take $\FF = \FF_p$ or $\QQ$, set $\cC = \Spt$ or $D(\ZZ)$, and let $\cE \in \PSh(\Sch^{\qcqs}_\FF, \cC)$ be a presheaf. Consider the following conditions.%
\begin{enumerate}
 \item[(\conDescent)] For each Noetherian $\FF$-scheme $X$ the restriction $\cE|_{\Sch_X}$ is a procdh sheaf.
 \item[(\conFinitary)] $\cE$ is finitary, in the sense that it preserves filtered colimits of $\FF$-algebras.
 \item[(\conBB)$_{\geq N}$] For each procdh local ring $R$ with $\length Q(R)$ and $\dim R$ finite, the procdh stalk $\cE(R)$ is homologically bounded below $N$. That is, $π_i \cE(R) = 0$ for $i < N$.
\end{enumerate}
If $\cE$ satisfies (\conDescent), (\conFinitary), and (\conBB)$_N$ then for every Noetherian $\FF$-scheme $X$ there is a natural equivalence
\begin{equation} \label{eq;rmk;apcdhKconn=K}
a_{\procdh} (F_{\geq N} )(X) \simeq F(X).
\end{equation}
\end{prop}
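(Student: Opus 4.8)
The plan is to follow the proof of Theorem~\ref{thm;apcdhKconn=K} almost verbatim, with the three concrete inputs used there (nil-invariance of negative $K$-theory, connectivity of $K$-theory of valuation rings, and finitariness) abstracted into the hypotheses (\conDescent), (\conFinitary) and (\conBB)$_{\geq N}$. Fix a Noetherian $\FF$-scheme $X$ of finite dimension, so that Corollary~\ref{coro:finCohDim} applies exactly as in the proof of Theorem~\ref{thm;apcdhKconn=K}. By (\conDescent) the restriction $\cE|_{\Sch_X}$ is already a procdh sheaf, so $a_\procdh\cE \simeq \cE$ on $\Sch_X$, and the canonical map $\tau_{\geq N}\cE \to \cE$ induces a map $a_\procdh(\tau_{\geq N}\cE) \to \cE$; the goal is to prove it is an equivalence after evaluation at $X$.

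First I would set up the two procdh descent spectral sequences
\[ E_2^{p,q} = H^p_\procdh(X, a_\procdh\pi_{-q}\cE) \Rightarrow \pi_{-p-q}\cE(X), \]
\[ {}'E_2^{p,q} = H^p_\procdh(X, a_\procdh\pi_{-q}(\tau_{\geq N}\cE)) \Rightarrow \pi_{-p-q}\bigl(a_\procdh(\tau_{\geq N}\cE)(X)\bigr), \]
where $\pi_i(-)$ is the homotopy presheaf and $a_\procdh\pi_i(-)$ its procdh sheafification, which is the homotopy sheaf of the sheafified object. By Corollary~\ref{coro:finCohDim} both spectral sequences are bounded, hence convergent, so it suffices to show that the map of $E_2$-pages induced by $\tau_{\geq N}\cE \to \cE$ is an isomorphism. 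Since $\pi_i(\tau_{\geq N}\cE) = \pi_i\cE$ for $i \geq N$ and $\pi_i(\tau_{\geq N}\cE) = 0$ for $i < N$, this reduces to the single claim that the procdh homotopy sheaf $a_\procdh\pi_i\cE$ vanishes for every $i < N$.

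To prove that claim I would use that the procdh $1$-topos has enough points, Theorem~\ref{theo:procdhEnoughPoints}, so $a_\procdh\pi_i\cE = 0$ may be tested on stalks; moreover by Corollary~\ref{coro:smallFamily} it is enough to use the conservative family of fibre functors attached to procdh local rings $R$ with $\length Q(R)$ and $\dim R$ finite. The stalk of $a_\procdh\pi_i\cE$ at the fibre functor of such an $R$ is $\colim_\lambda \pi_i\cE(R_\lambda) = \pi_i\,\colim_\lambda\cE(R_\lambda)$, and by finitariness (\conFinitary) this colimit is $\cE(R)$. Hence the stalk is $\pi_i\cE(R)$, which vanishes for $i < N$ by (\conBB)$_{\geq N}$. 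This establishes $a_\procdh\pi_i\cE = 0$ for $i < N$, makes the two $E_2$-pages agree, and yields the desired equivalence $a_\procdh(\tau_{\geq N}\cE)(X) \simeq \cE(X)$.

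The step I expect to be the main obstacle is the identification of the procdh stalk of $a_\procdh\pi_i\cE$ with $\pi_i\cE(R)$. This requires knowing simultaneously that homotopy groups commute with the filtered colimit defining the fibre functor, that the homotopy sheaf of the sheafification is the sheafification of the homotopy presheaf, and, most delicately, that finitariness permits replacing evaluation on the pro-object $\colim_{\Spec(R)\to Y\to X} Y$ by evaluation on the single limit ring $R = \colim_\lambda R_\lambda$ --- precisely the pro-object versus limit-ring distinction emphasised earlier for $K$-theory. Once this is in place, everything else is routine bookkeeping with the bounded descent spectral sequences.
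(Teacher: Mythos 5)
Your proposal is correct and follows essentially the same route as the paper: the paper's ``proof'' of Proposition~\ref{prop:procdhNonconnectification} is literally the observation that the argument for Theorem~\ref{thm;apcdhKconn=K} goes through with the three concrete $K$-theoretic inputs replaced by (\conDescent), (\conFinitary) and (\conBB)$_{\geq N}$, together with Corollary~\ref{coro:smallFamily} to restrict to the conservative family of procdh local rings with $\length Q(R)$ and $\dim R$ finite --- exactly the bounded descent spectral sequence comparison and stalkwise vanishing of $a_{\procdh}\pi_i\cE$ for $i<N$ that you carry out.
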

\end{rema}

We will produce a number of presheaves that satisfy the conditions of Proposition~\ref{prop:procdhNonconnectification} using the following definition.

\begin{defi}
Let $\FF$ and $\cC$ be as in Proposition~\ref{prop:procdhNonconnectification}. For $\cE \in \PSh(\Sch^{\qcqs}_\FF, \cC)$, define
\begin{equation} \label{NilF}
{ \Nil \cE:=\fib(\cE \to a_{\cdh} \cE)\in \PSh(\Sch_\FF^{\qcqs},\cC),}
\end{equation}
where $a_{\cdh}: \PSh(\Sch_\FF^{\qcqs},\cC)\to \Shv_{\cdh}(\Sch_\FF^{\qcqs},\cC)$ is the cdh sheafification functor.
\end{defi}

\subsection{Negative cyclic homology} %
Recall that for $k \to R$ a morphism of commutative rings, one defines $\HN(R/k) = HH(R/k)^{hS^1}$ as the homotopy fixed points of the Hochsdhild homology and for qcqs $k$-schemes using Zariski descent. In \cite[Thm.1.1]{An}, Antieau defines a functorial complete decreasing multiplicative $\ZZ$-indexed filtration,%
\footnote{It is not necessarily exhaustive in general, but is exhaustive if $X/k$ is $L_{X/k}$ has Tor-amplitude contained in $[0,1]$.}
\begin{equation} \label{HKR}
\biggl\{\FHKR^n \HN(X/k)\biggr\}_{n\in \ZZ}\;\text{on $\HN(X/k)$}
\end{equation}
and natural equivalences  
\begin{equation} \label{equa:grFHKR}
\gr_{\FHKR}^n \HN(X/k) \simeq \hLOH n {X/k}[2n].
\end{equation}
Here, $\hLO {X/\QQ}$ is the Hodge-completed derived de Rham complex equipped with the Hodge filtration $\bigl\{\hLOH n {X/k}\bigl\}_{n\in \NN}$.%
\footnote{That is,
$
\hLO {X/k} = \varprojlim_{n} \LO {X/k} ^{< n}
$ 
where for affines $X$, the complex $\LO {X/k} ^{< n}$ is the totalisation of the simplicial chain complex $[r] \mapsto \sigma^{< n}\Omega^*_{P_r/A}$ for $P_\bullet \to B$ a polynomial $A$-algebra resolution of $B$ and where $σ^{< n}$ refers to the stupid truncation in cohomological degrees $< n$.} %
The graded pieces of this filtration are computed by
\begin{equation} \label{equa:hLO}
gr^n \hLO {X/k}  \simeq \wedge^n L_{X/k}[-n]
\end{equation} 
where $L_{X/k}\in \PSh(\Sch^\qcqs_k,D(k)))$ is the cotangent complex, \cite[Construction~4.1]{Bha12a}.

By \cite[Lem.4.5]{EM}, 
the cofibre ${\operatorname{cofib}}(\FHKR^0\HN(-/\QQ) \to \HN(-/\QQ))$ is a cdh sheaf on $\Sch^\qcqs_\QQ$, so 
the canonical morphism
$\Nil\FHKR^0\HN(-/\QQ) \stackrel{\sim}{\to} \Nil\HN(-/\QQ) $
is an equivalence of presheaves. Therefore, applying $\Nil$ to \eqref{HKR} produces a complete and exhaustive $\NN$-indexed filtration 
\begin{equation} \label{NilHKR0}
{\biggl\{\FHKR^n \Nil \HN(-/\QQ)\biggl\}_{n\in \NN}\;\text{ on $\Nil\HN(-/\QQ)$}} 
\end{equation}
with identifications
\begin{equation} \label{NilHKR}
{ \gr^n_{\FHKR} \Nil \HN(-/\QQ) \simeq \Nil\hLOH n {-/\QQ}[2n].}
\end{equation}

\begin{lemm} \label{lemm:nilhLO}
The presheaves $\Nil \hLO {-/\QQ}^{\geq n} \in \PSh(\Sch^\qcqs_\QQ,D(\QQ))$ satisfy (\conDescent), (\conFinitary), and (\conBB)$_{\geq -n} $.
\end{lemm}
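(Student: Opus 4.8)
The plan is to handle the three conditions separately, throughout exploiting that $\Nil(-) = \fib(- \to a_{\cdh}(-))$ is an exact functor and that, by \eqref{equa:hLO}, the Hodge filtration on $\hLO{-/\QQ}^{\geq n}$ is complete with graded pieces $\wedge^m L_{-/\QQ}[-m]$ for $m \geq n$. Applying $\Nil$ to this filtration, $\Nil\hLO{-/\QQ}^{\geq n}$ acquires a complete filtration whose graded pieces are $\Nil(\wedge^m L_{-/\QQ})[-m]$, $m \geq n$. So the strategy is to establish each property first for these cotangent-complex pieces and then transport it across the completed Hodge filtration.

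For (\conFinitary), the cotangent complex and its wedge powers commute with filtered colimits of $\QQ$-algebras, and since the $\cdh$ topology is finitary its sheafification preserves finitary presheaves; hence each graded piece $\Nil(\wedge^m L_{-/\QQ})[-m]$ is finitary. The passage to the completion is where the lower bound (\conBB) feeds back in: it forces the defining tower to stabilise in each fixed homological degree, so that the limit computing $\Nil\hLO{-/\QQ}^{\geq n}$ is a finite limit degreewise and therefore still commutes with filtered colimits. For (\conDescent), since $a_{\cdh}\cE$ is a $\cdh$ sheaf, hence a procdh sheaf, the fibre sequence $\Nil\cE \to \cE \to a_{\cdh}\cE$ shows that $\Nil\cE$ is a procdh sheaf on $\Sch_X$ exactly when $\cE = \hLO{-/\QQ}^{\geq n}$ is. As procdh sheaves are closed under limits, completeness of the Hodge filtration reduces this to each $\wedge^m L_{-/\QQ}$ being a procdh sheaf, i.e. to Nisnevich descent (the étale, quasi-coherent descent of the cotangent complex) together with proabstract-blowup descent. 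The latter is the derived incarnation of Grothendieck's theorem on formal functions \cite[Thm.4.1.5]{EGAIII1} for the wedge powers of the cotangent complex, namely that along a proabstract blowup square, with $E_n = Z_n \times_X Y$, the canonical map
\[ \wedge^m L_{X/\QQ} \to \wedge^m L_{Y/\QQ} \times_{\lim_n \wedge^m L_{E_n/\QQ}} \lim_n \wedge^m L_{Z_n/\QQ} \]
is an equivalence; I would reduce (\conDescent) to this formal-functions statement.

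Finally, for (\conBB)$_{\geq -n}$, using Corollary~\ref{coro:smallFamily} and Theorem~\ref{theo:colimfgNilredhvr} I would reduce to a procdh local ring $R$ over $\QQ$ with $\fN = \Nil(R)$ finitely generated and $\OO = R_{\red}$ a finite-rank henselian valuation ring. Because $\OO$ is ind-smooth over the field $\QQ$, the sheaf $\wedge^m L_{\OO/\QQ} = \Omega^m_{\OO/\QQ}$ is concentrated in degree $0$ and satisfies $\cdh$ descent at such points; combined with nil-invariance of $\cdh$ sheaves this identifies $a_{\cdh}\hLO{-/\QQ}^{\geq n}(R)$ with $\hLO{\OO/\QQ}^{\geq n}$, so that $\Nil\hLO{-/\QQ}^{\geq n}(R) \simeq \fib\bigl(\hLO{R/\QQ}^{\geq n} \to \hLO{\OO/\QQ}^{\geq n}\bigr)$, controlled by the relative cotangent complex of the nilpotent thickening $\OO = R/\fN$.

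The hard part will be this last lower bound. The individual Hodge-graded pieces $\Nil(\wedge^m L_{-/\QQ})[-m]$ are only bounded below by $-m$, which is \emph{worse} than $-n$ once $m > n$, so the uniform bound $-n$ cannot come termwise and must instead emerge from the completion. Concretely, I expect to use the finite generation of $\fN$ (via Theorem~\ref{theo:colimfgNilredhvr}) to show that the relative cotangent complex has bounded Tor-amplitude, forcing $\Nil(\wedge^m L_{-/\QQ})(R)$ to vanish in the offending low degrees for $m \gg 0$ and thereby keeping the completed fibre bounded below by $-n$. Making this convergence and degree-bookkeeping argument precise, rather than the descent or finitariness, is where I anticipate the main technical difficulty.
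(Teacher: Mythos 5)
Your reduction of (\conDescent) to procdh descent for the wedge powers $\wedge^m L_{-/\QQ}$ (a derived formal-functions statement, which the paper imports from Morrow) is essentially the paper's argument and is fine. But your treatment of (\conFinitary) and (\conBB)$_{\geq -n}$ via the \emph{infinite} part of the Hodge filtration has a genuine gap. The graded pieces $\Nil(\wedge^m L_{-/\QQ})[-m]$ sit in cohomological degrees $\leq m$ but are unbounded \emph{below}, so for any fixed cohomological degree $i$ every stage $m \geq i$ of the completion tower contributes; the tower does not stabilise degreewise, and the limit is not a finite limit in each degree. Hence your finitariness argument does not go through. The same problem undermines your plan for (\conBB)$_{\geq -n}$: the proposed mechanism --- finite generation of $\fN$ forcing bounded Tor-amplitude of the relative cotangent complex so that $\Nil(\wedge^m L_{-/\QQ})(R)$ vanishes in low degrees for $m \gg 0$ --- fails for non-lci procdh local rings such as $R = \QQ[x,y]/(x,y)^2$, where $L_{R/\QQ}$ has unbounded Tor-amplitude and the termwise pieces really are unbounded below uniformly in $m$. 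You correctly sensed that this is the hard point, but the convergence cannot be rescued along these lines.

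The missing idea is that one should never work with the infinite tail of the Hodge filtration at all: by \cite[Lem.4.5]{EM} the full Hodge-completed complex $\hLO{-/\QQ}$ is already a \emph{cdh} sheaf, so $\Nil\hLO{-/\QQ} = 0$ and the fibre sequence $\hLO{-/\QQ}^{\geq n} \to \hLO{-/\QQ} \to \LO{-/\QQ}^{<n}$ gives $\Nil\hLO{-/\QQ}^{\geq n} \simeq \Nil\LO{-/\QQ}^{<n}[-1]$. The right-hand side carries a \emph{finite} filtration with graded pieces shifts of $\wedge^i L_{-/\QQ}$ for $i < n$, so (\conFinitary) is immediate (finitely many finitary pieces; cdh sheafification preserves finitariness), and (\conBB)$_{\geq -n}$ reduces to showing that $\Nil\LO{R/\QQ}^{<n}$ lives in cohomological degrees $\leq n-1$. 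Since $\LO{A/\QQ}^{<n}$ is supported in degrees $\leq n-1$ for any local $\QQ$-algebra $A$, and $(a_{\cdh}\LO{-/\QQ}^{<n})(R) = \LO{(R/\fN)/\QQ}^{<n}$ by nil-invariance and the fact that valuation rings are cdh points, the only possible obstruction is a degree-$n$ class in the fibre, which is killed by the surjectivity of $\Omega^{n-1}_{R/\QQ} \to \Omega^{n-1}_{(R/\fN)/\QQ}$. This finite-truncation trick is what your proposal is missing.
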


\begin{proof}\ 
\begin{enumerate}
 \item[(\conDescent)] By \cite[Thm.2.10]{Morrow}, the $\wedge^n L_{-/\QQ}\in \PSh(\Sch^\qcqs_\QQ,D(\QQ))$ are procdh sheaves on $\Sch_\QQ^{\noe}$. Hence, the  $L\Omega^{< n}_{-/\QQ}$, and their limit $\hLO {-/\QQ} = \lim_{n} L\Omega^{< n}_{-/\QQ}$ are also procdh sheaves. From this we deduce that the fibres $\hLO {-/\QQ} ^{\geq n} = \fib(\hLO {-/\QQ} \to \LO {-/\QQ} ^{< n})$ are procdh sheaves. Since all cdh sheaves are procdh sheaves, the fibres $\Nil\hLOH n {-/\QQ} = \fib(\hLO {-/\QQ} ^{\geq n} \to a_{\cdh}\hLO {-/\QQ} ^{\geq n})$ are procdh sheaves.

 \item[(\conFinitary)]	%
  Above we saw that $\hLO {-/\QQ}$ is a procdh sheaf. In fact, it is a cdh sheaf, \cite[Lem.4.5]{EM}, so $\Nil \hLO {-/\QQ} = 0$ leading to an equivalence 
  \begin{equation} \label{equa:nilhLOLO}
\Nil \hLO {-/\QQ}^{\geq n} \cong \Nil \LO {-/\QQ}^{<n}[-1].
\end{equation}
So it suffices to show finitarity of $\Nil \LO {-/\QQ}^{<n}$.
The presheaves $\wedge^i L_{-/\QQ}$ are finitary 
and $\LO {-/\QQ}^{<n}$ admits a finite filtration whose graded quotients are shifts of $\wedge^i L_{-/\QQ}$,
so it follows that the $\LO {-/\QQ}^{<n}$ are finitary. The cdh sheafification functor preserves finitary sheaves so the $a_{\cdh}\LO {-/\QQ}^{<n}$, and therefore $\Nil \LO {-/\QQ}^{<n}$ are finitary. %

 \item[{(\conBB)$_{\geq -n}$} ] By Eq.\eqref{equa:nilhLOLO} it suffices to show that $\LO {R/\QQ}^{<n}$ is supported in cohomological degree $\leq n-1$ for all $R$ as in Proposition~\ref{prop:procdhNonconnectification}.
We have 
\begin{equation}\label{eq1;cor;apcdhKconn=K}
{ (a_{\cdh}\LO{-/\QQ}^{<n})(R)= (a_{\cdh}\LO{-/\QQ}^{<n})(R/\fN) =\LO{(R/\fN)/\QQ}^{<n},}
\end{equation}
where the first (resp. second) equality holds since any finitary cdh sheaf is nil-invariant (resp. a valuation ring is a point of the cdh topos).
Since $\LO {A/\QQ}^{<n}$ for a local $\QQ$-algebra $A$ is supported in degrees $\leq n-1$, we are reduced to showing the surjectivity of the map
\[ H^{n-1}(\LO{R/\QQ}^{<n}) \to H^{n-1}(\LO{(R/\fN)/\QQ}^{<n}) .\]
This holds since the map is identified with
$\Omega^{n-1}_{R/\QQ}\to \Omega^{n-1}_{(R/\fN)/\QQ}$.
\end{enumerate}
\end{proof}

\begin{prop}\label{lemm:nilHN}
Let $X$ be a Noetherian $\QQ$-scheme of finite Krull dimension. Then $\Nil\HN(-/\QQ)\in \PSh(\Sch_{X}, D(\QQ))$ satisfy (\conDescent), (\conFinitary), and (\conBB)$_{\geq 0}$. Therefore 
\[a_{\procdh} (\Nil\HN(-/\QQ))_{\geq 0} (X) \simeq \Nil\HN(X/\QQ).\]
\end{prop}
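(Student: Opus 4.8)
The plan is to deduce each of the three conditions for $\Nil\HN(-/\QQ)$ from the corresponding conditions for the graded pieces of the filtration \eqref{NilHKR0}, and then to invoke Proposition~\ref{prop:procdhNonconnectification}. Recall that \eqref{NilHKR0} is a complete and exhaustive $\NN$-indexed filtration $\{\FHKR^n \Nil\HN(-/\QQ)\}_{n \in \NN}$ whose graded pieces are $\gr^n_{\FHKR} \Nil\HN(-/\QQ) \simeq \Nil\hLOH{n}{-/\QQ}[2n]$ by \eqref{NilHKR}. Lemma~\ref{lemm:nilhLO} says $\Nil\hLOH{n}{-/\QQ}$ satisfies (\conDescent), (\conFinitary) and (\conBB)$_{\geq -n}$; since the shift $[2n]$ is an equivalence of $\PSh(-,D(\QQ))$ that preserves procdh-sheafiness and finitarity while translating the connectivity bound by $2n$, each graded piece $\gr^n$ satisfies (\conDescent), (\conFinitary) and (\conBB)$_{\geq n}$. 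In particular, for $n \geq 0$ the graded pieces are homologically bounded below by $n \geq 0$ on the special rings of Proposition~\ref{prop:procdhNonconnectification}.

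For (\conDescent) and (\conBB)$_{\geq 0}$ I would write $Q_n := \Nil\HN(-/\QQ) / \FHKR^n \Nil\HN(-/\QQ)$, so that $Q_n$ is a finite iterated extension of $\gr^0, \dots, \gr^{n-1}$. As procdh sheaves in $D(\QQ)$ are closed under finite limits, each $Q_n|_{\Sch_X}$ is a procdh sheaf for $X$ Noetherian, and by completeness $\Nil\HN(-/\QQ) = \lim_n Q_n$ is a limit of procdh sheaves, hence a procdh sheaf (the inclusion of sheaves into presheaves preserves limits); this gives (\conDescent). For (\conBB)$_{\geq 0}$, on a special ring $R$ each $\gr^n$ is bounded below by $n$, so in every fixed degree $i$ the tower $\{\pi_i Q_n(R)\}_n$ stabilizes once $n$ is large (depending only on $i$), whence the Milnor $\lim^1$ term vanishes and $\pi_i \Nil\HN(R) = \pi_i Q_n(R)$ for $n \gg 0$; since each $Q_n$ has vanishing homotopy in negative degrees, $\pi_i \Nil\HN(R) = 0$ for $i < 0$.

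The main obstacle is (\conFinitary), because $\Nil\HN(-/\QQ) = \lim_n Q_n$ is an infinite (countable) limit of finitary presheaves, and such limits do not commute with filtered colimits in general. The remedy is a connectivity bound on the graded pieces that is \emph{uniform over all} $\QQ$-algebras $R$ and tends to $\infty$: given such a bound, the tower $\{\pi_i Q_n(R)\}_n$ stabilizes at a level $N(i)$ independent of $R$, so the $\lim^1$ vanishes uniformly and $\pi_i \Nil\HN = \pi_i Q_{N(i)}$ is finitary, being a homotopy group of the finitary presheaf $Q_{N(i)}$. To produce the uniform bound I would use \eqref{equa:nilhLOLO}, which identifies $\Nil\hLOH{n}{-/\QQ} \simeq \Nil\LO{-/\QQ}^{<n}[-1]$, together with the fact that $\LO{-/\QQ}^{<n}$ is a finite iterated extension of the $\wedge^i L_{-/\QQ}[-i]$ for $0 \leq i < n$. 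Since $\wedge^i L_{R/\QQ}$ is connective for every $R$ and its cdh sheafification $a_{\cdh}\wedge^i L_{-/\QQ} \simeq a_{\cdh}\Omega^i_{-/\QQ}$ is a discrete cdh sheaf in characteristic zero (resolution of singularities / cdh descent for Kähler differentials), each $\Nil\wedge^i L_{-/\QQ} = \fib(\wedge^i L_{-/\QQ} \to a_{\cdh}\wedge^i L_{-/\QQ})$ is $(-1)$-connective uniformly in $R$. Tracking the shifts then shows $\gr^n = \Nil\hLOH{n}{-/\QQ}[2n]$ is bounded below by $n-1$ uniformly in $R$, which is exactly the required bound.

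Finally, with (\conDescent), (\conFinitary) and (\conBB)$_{\geq 0}$ established, Proposition~\ref{prop:procdhNonconnectification} (with $N = 0$) yields the asserted equivalence $a_{\procdh}(\Nil\HN(-/\QQ))_{\geq 0}(X) \simeq \Nil\HN(X/\QQ)$ for every Noetherian $\QQ$-scheme $X$ of finite Krull dimension.
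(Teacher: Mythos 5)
Your overall strategy---reducing to the graded pieces of the filtration \eqref{NilHKR0} and then invoking Proposition~\ref{prop:procdhNonconnectification}---is the paper's, and your treatments of (\conDescent) and of (\conBB)$_{\geq 0}$ are sound. The genuine gap is in (\conFinitary), exactly at the step you flag as the crux. The claim that $\Nil\wedge^i L_{-/\QQ}$ is $(-1)$-connective \emph{uniformly in $R$} is false: while $a_{\cdh}\wedge^i L_{-/\QQ}\simeq a_{\cdh}\Omega^i_{-/\QQ}$ is indeed discrete \emph{as a cdh sheaf}, its value on an affine scheme is the derived global sections $R\Gamma_{\cdh}(\Spec R, a_{\cdh}\Omega^i_{-/\QQ})$, which carries cdh cohomology in positive cohomological degrees. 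Already for $i=0$ and $R$ the coordinate ring of the affine cone over an elliptic curve one has $H^1_{\cdh}(\Spec R,\OO)\neq 0$ (cdh descent along a resolution sees $H^1(E,\OO_E)$), and the fibre sequence defining $\Nil$ gives $\pi_{-2}\Nil\,\OO(R)\cong H^1_{\cdh}(\Spec R,\OO)\neq 0$. So there is no connectivity bound on $\gr^n$ that is uniform over all $\QQ$-algebras, and your stabilisation level $N(i)$ cannot be chosen independently of $R$; the degreewise identification $\pi_i\Nil\HN=\pi_iQ_{N(i)}$ collapses.

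The paper's replacement for your uniform bound is Corollary~\ref{coro:finCohDim}: on a scheme $X$ of valuative dimension $\leq d$ with Noetherian underlying space the procdh cohomological dimension is $\leq 2d$, so the pointwise bound (\conBB)$_{\geq -n}$ for $\Nil\hLOH{n}{-/\QQ}$ from Lemma~\ref{lemm:nilhLO} propagates through the descent spectral sequence to give $H^m(\Nil\hLOH{n}{X/\QQ})=0$ for $m>2d+n$, i.e.\ $\gr^n(X)$ is $(n-2d)$-connective. This bound depends on $d$ but still tends to $\infty$ with $n$, which is enough to make the filtration spectral sequence $E_2^{i,j}=H^{i-j}(\Nil\hLOH{-j}{X/\QQ})\Rightarrow H^{i+j}\Nil\HN(X/\QQ)$ bounded, and both (\conFinitary) and (\conBB)$_{\geq 0}$ are then read off from the corresponding properties of the graded pieces. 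If you repair your argument by replacing ``uniformly in $R$'' with ``uniformly over schemes of bounded valuative dimension, with constant $n-2d-O(1)$'', the rest of your tower argument goes through and recovers the paper's proof.
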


\begin{proof}
The property (\conDescent) 
follows from the corresponding property for the graded pieces $\Nil \hLO {-/\QQ}^{\geq n}$ of the filtration \eqref{NilHKR0}. For (\conFinitary) and (\conBB)$_{\geq 0}$ consider the spectral sequence induced by \eqref{NilHKR0}
\[ E_2^{i,j}= H^{i-j}(\Nil\hLOH {-j} {X/\QQ}) \Rightarrow H^{i+j}\Nil\HN(X/\QQ).\]
If $X\in \Sch^\qcqs_{\QQ}$ has finite valuative dimension $d$ with Noetherian underlying topological spaces, Corollary~\ref{coro:finCohDim} and 
(\conBB)$_{\geq j}$ for $\Nil\hLOH {-j} {-/\QQ}$ from Lemma~\ref{lemm:nilhLO}
imply $E_2^{i,j}=0$ for $i-j>2d-j$, 
which implies that 
the spectral sequence is bounded. Hence, (\conFinitary) and (\conBB)$_{\geq 0}$ for $\HN(-/\QQ)$ follows from (\conFinitary) and (\conBB)$_{\geq j}$ 
for $\Nil\hLOH {-j} {-/\QQ}$, Lemma \ref{lemm:nilhLO}.
\end{proof}

\subsection{Integral topological cyclic homology}

For $X \in \Sch^{\qcqs}_{\FF_p}$ write $\TC(X)$ for the integral topological cyclic homology of $X$. By \cite{BMS2}, for any qcqs $\FF_p$-scheme $X$, there exists a functorial complete decreasing $\NN$-indexed filtration\footnote{\cite{BMS2} treats quasi-syntomic rings and it is extended to all $p$-complete rings in \cite{AMMN}.} 
\begin{equation} \label{eq;BMSfilt}
{\biggl\{ \FBMS^n \TC(X)\biggl\}_{n\in \NN} \text{ on } \TC(X)} 
\end{equation}
with associated graded quotients 
\[\gr_{\FBMS}^n \TC(X) \simeq \Znsyn(X)[2n]\]
for a natural object $\Znsyn\in \PSh(\Sch_{\FF_p},D(\ZZ_p))$ called the syntomic complex. 

\begin{lemm} \label{prop:NilZnsyn}
The presheaf $\Nil\Znsyn \in \PSh(\Sch^{\qcqs}_{\FF_p}, \Spt)$ satisfies (\conDescent), (\conFinitary) and (\conBB)$_{\geq -n}$.
\end{lemm}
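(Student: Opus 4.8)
The plan is to follow the template of Lemma~\ref{lemm:nilhLO} and Proposition~\ref{lemm:nilHN}, replacing the Hodge-completed de Rham complex by its characteristic-$p$ counterpart. Recall that $\Znsyn$ is assembled from the wedge powers $\wedge^i L_{-/\FF_p}$ of the cotangent complex: these are the graded pieces of the conjugate filtration on Hodge--Tate cohomology $\overline{\Delta}_{-}$, hence, through the Nygaard filtration, of the Nygaard-completed prismatic cohomology $\widehat{\Delta}_{-}\{n\}$, and $\Znsyn = \fib(\varphi_n - \mathrm{can}\colon \mathcal{N}^{\geq n}\widehat{\Delta}_{-}\{n\} \to \widehat{\Delta}_{-}\{n\})$. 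The one external input I would quote is the characteristic-$p$ analogue of \cite[Thm.2.10]{Morrow}: the presheaves $\wedge^i L_{-/\FF_p}$ are procdh sheaves on $\Sch^\noe_{\FF_p}$, are finitary, and --- evaluated on the finite-dimensional procdh local rings $R$ of Corollary~\ref{coro:smallFamily}, whose reduction $R_{\red}$ is a finite-rank henselian valuation ring over the perfect field $\FF_p$ --- are cohomologically bounded, with $\wedge^i L_{R_{\red}/\FF_p}$ concentrated in degree $0$.

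For (\conDescent), I would argue that procdh sheaves are closed under limits and fibres. Thus the conjugate-filtered pieces of $\overline{\Delta}_{-}$, their Nygaard completions, the Breuil--Kisin twists, and finally the Frobenius fibre defining $\Znsyn$ are all procdh sheaves on $\Sch_X$ for Noetherian $X$. Since every cdh sheaf is a procdh sheaf, $a_{\cdh}\Znsyn$ is a procdh sheaf, and therefore so is $\Nil\Znsyn = \fib(\Znsyn \to a_{\cdh}\Znsyn)$.

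For (\conFinitary) and (\conBB)$_{\geq -n}$ I would pass to the graded pieces. Because $\Nil = \fib(\mathrm{id} \to a_{\cdh})$ is exact, the Nygaard (and conjugate) filtration induces a filtration on $\Nil\Znsyn$ whose graded pieces are, up to twist and shift, the objects $\Nil\wedge^i L_{-/\FF_p}$. Each of these is finitary ($\wedge^i L_{-/\FF_p}$ is finitary and $a_{\cdh}$ preserves finitary presheaves) and, on the local rings $R$ above, is computed by $\fib(\wedge^i L_{R/\FF_p} \to \wedge^i L_{R_{\red}/\FF_p})$ --- using that finitary cdh sheaves are nil-invariant and $R_{\red}$ is a cdh point --- so it lies in a bounded range of cohomological degrees. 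The conjugate shift places $\wedge^i L[-i]$ in degree about $i$, so the graded pieces have increasing connectivity as $i \to \infty$. I would then invoke Corollary~\ref{coro:finCohDim}: finite procdh cohomological dimension makes the resulting spectral sequence converge with only finitely many graded pieces contributing in each total degree, exactly as in Proposition~\ref{lemm:nilHN}. Consequently finitarity and the bound (\conBB)$_{\geq -n}$ descend from the graded pieces to $\Nil\Znsyn$; the precise top-degree bound comes from the surjectivity of $\Omega^{n-1}_{R/\FF_p} \to \Omega^{n-1}_{R_{\red}/\FF_p}$, the characteristic-$p$ analogue of the argument in Lemma~\ref{lemm:nilhLO}.

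The main obstacle, and the place where characteristic $p$ genuinely differs from the rational case, is that prismatic cohomology is \emph{not} nil-invariant, so there is no cdh-sheaf shortcut: $\Nil\widehat{\Delta}_{-}\neq 0$, unlike $\Nil\hLO{-/\QQ} = 0$. The delicate point is therefore the interaction of the cdh sheafification $a_{\cdh}$ --- a left adjoint, which does not commute with the infinite Nygaard limit --- with the completion defining $\Znsyn$. Establishing that the induced filtration on $\Nil\Znsyn$ is nonetheless complete is where the finite cohomological dimension of Corollary~\ref{coro:finCohDim} is essential: it bounds the cohomological amplitude of $a_{\cdh}$ and hence preserves the convergence of the completed filtration.
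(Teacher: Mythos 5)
Your strategy of reducing everything to the wedge powers of the cotangent complex via the Nygaard/conjugate filtrations does not work here, and the two places it breaks are exactly where the paper's proof uses different inputs. For (\conFinitary): the Nygaard filtration on $\Znsyn$ is an \emph{infinite} complete filtration whose graded pieces involve $\wedge^i L_{-/\FF_p}[-i]$ for all $i$; since the cotangent complex of a general (non-lci) $\FF_p$-algebra is unbounded, infinitely many graded pieces contribute to each fixed homotopy group, so finitarity does not pass through the completion --- and Corollary~\ref{coro:finCohDim} cannot repair this, since it bounds the procdh cohomological dimension of a fixed Noetherian scheme, whereas finitarity is a statement about arbitrary filtered colimits of $\FF_p$-algebras. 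The paper instead first proves $\Nil\Znsyn[\tfrac1p]=0$, using that $\FBMS^\bullet\TC$ splits after inverting $p$, the pullback square \eqref{eq;Kinfsquare} relating $K$, $KH$, $\TC$ and $a_{\cdh}\TC$, and the vanishing $\fib(K\to KH)[\tfrac1p]=0$ from \cite[Th.9.6]{TT90}; this reduces finitarity to $\Znsyn(-)/p$, which by \cite[Lem.4.16]{EM} carries a \emph{finite} filtration with graded pieces shifts of $\wedge^i L\Omega_{-/\FF_p}$ with $i\le n$, so finitarity passes trivially. This rational/mod-$p$ d\'ecollage is the missing idea in your argument.

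For (\conBB)$_{\geq -n}$ your proposal is also off target: the bound on $\Nil\Znsyn(R)$ is not obtained from a surjectivity of $\Omega^{n-1}$ as in Lemma~\ref{lemm:nilhLO}; it is a genuinely nontrivial theorem about syntomic cohomology of henselian pairs, namely \cite[Th.5.2]{AMMN}, which you never invoke. Moreover, because $\Znsyn$ is neither nil-invariant nor finitary, one must first use Theorem~\ref{theo:colimfgNilredhvr} (available since $\FF_p$ is perfect) to reduce to henselian local rings $R$ whose nilradical $\fN$ is \emph{finitely generated} and with $R/\fN$ a valuation ring; only under that hypothesis does $a_{\cdh}\Znsyn(R)=\Znsyn(R/\fN)$ hold, and then $(R,\fN)$ being a henselian pair feeds into AMMN. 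Your (\conDescent) argument is workable in outline but needlessly indirect and rests on an unproved ``characteristic-$p$ analogue'' of procdh descent for $\wedge^i L$ assembled through infinite (co)limits; the paper simply quotes \cite[Th.8.6]{EM} for procdh descent of $\Znsyn$ on Noetherian schemes and closes under fibres.
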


\begin{proof}\ 
\begin{enumerate}
 \item[(\conDescent)] This property for $\Nil\Znsyn$ follows from the fact that  $\Znsyn$ is a procdh sheaf on $\Sch^\noe_{\FF_p}$ by \cite[Th.8.6]{EM} and Proposition \ref{prop:excisionCech}.

 \item[(\conFinitary)] The argument is taken from the proof of \cite[Th.4.24(4)]{EM}.
First, we claim $\Nil\Znsyn[\frac{1}{p}]=0$. Using the fact that $\FBMS^\bullet\TC$ from \eqref{eq;BMSfilt} naturally splits after inverting $p$, the claim is reduced to 
$\Nil\TC[\frac{1}{p}]=0$. To prove this, we use the following pullback square in $\PSh(\Sch^\qcqs,\Spt)$
\begin{equation} \label{eq;Kinfsquare}
{\xymatrix{
K \ar[d] \ar[r]^-{\tr}  & \TC \ar[d] \\
KH \ar[r]^-{\tr^{cdh}}& a_{\cdh} \TC\\},}
\end{equation}
where $KH$ is the homotopy $K$-theory and $\tr$ is the cyclotomic trace (\cite{BHM}, \cite{DGM}), and $\tr^{cdh}$ is induced by $\tr$ via the equivalence $KH \simeq a_{\cdh} K$, \cite[Th.6.3]{KST-Weibel}, \cite{Cis13}, \cite{KM}.

The pullback square follows from the latter equivalence and the fact that 
the fiber of $\tr$ is a cdh sheaf by \cite[Th. A.3]{LandTamme}.
By \eqref{eq;Kinfsquare}, $\Nil\TC[\frac{1}{p}]=0$ follows from 
$\fib(K\to KH)[\frac{1}{p}]=0$ by \cite[Th. 9.6]{TT90}.
Thus, it suffices to show that $\Znsyn(-)/p$ is finitary noting that the cdh sheafification of a finitary presheaf is finitary.
By \cite[Lem.4.16]{EM}, $\Znsyn(-)/p$ admits a finite increasing filtration whose graded pieces are some shifts of $\wedge^i L\Omega_{-/\FF_p}$ with $i\leq n$,
so the desired assertion follows from the finitarity of $\wedge^i L\Omega_{-/\FF_p}$.

 \item[ {(\conBB)$_{\geq -n}$} ] By the finitarity and Theorem~\ref{theo:colimfgNilredhvr}, it suffices to show that 
$\Nil\Znsyn(R)\in D(\ZZ_p))^{\leq n}$ for any henselian local ring $R$ such that the ideal $\fN\subset R$ of nilpotent elements is finitely generated and $R/\fN$ is a valuation ring. 
Similarly to \eqref{eq1;cor;apcdhKconn=K}, we have 
\[a_{\cdh} \Znsyn(R)=a_{\cdh} \Znsyn (R/\fN)=\Znsyn(R/\fN).\]
Note that $\Znsyn$ and $a_{\cdh} \Znsyn$ are not finitary in general so the first equality requires the assumption that $\fN$ is finitely generated.
Hence,  {(\conBB)$_{\geq -n}$} follows from \cite[Th.5.2]{AMMN} noting $(R,\fN)$ is a henselian pair. This completes the proof of the claim.
\end{enumerate}
\end{proof}

\begin{prop} \label{theo:apcdhTC=TC}
The presheaf $\Nil \TC \in \PSh(\Sch_{\FF_p}^{\qcqs}, \Spt)$ satisfies (\conDescent), (\conFinitary), and (\conBB)$_{\geq 0}$. Consequently, for any Noetherian $\FF_p$-scheme $X$ with $\dim(X) < \infty$ we have 
\[a_{\procdh} (\Nil\TC)_{\geq 0} (X) \simeq \Nil\TC(X).\]
\end{prop}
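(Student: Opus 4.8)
The plan is to verify the three hypotheses (\conDescent), (\conFinitary), and (\conBB)$_{\geq 0}$ of Proposition~\ref{prop:procdhNonconnectification} for the presheaf $\cE = \Nil\TC$ with $N = 0$; once these hold, the displayed equivalence is exactly the conclusion of that proposition. The whole argument runs parallel to the proof of Proposition~\ref{lemm:nilHN}, with the BMS filtration \eqref{eq;BMSfilt} playing the role of the HKR filtration and Lemma~\ref{prop:NilZnsyn} replacing Lemma~\ref{lemm:nilhLO}. First I would apply $\Nil$ to the filtration \eqref{eq;BMSfilt}. Since $\Nil = \fib(\id \to a_{\cdh})$ and $a_{\cdh}$ preserves limits, the functor $\Nil$ is exact and commutes with the formation of the filtration and its graded pieces; applying it to \eqref{eq;BMSfilt} therefore yields a complete decreasing $\NN$-indexed filtration $\{\FBMS^n \Nil\TC\}_{n \in \NN}$ on $\Nil\TC$ with
\[ \gr^n_{\FBMS}\Nil\TC \simeq \Nil\Znsyn[2n]. \]

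For (\conDescent) I would fix a Noetherian $\FF_p$-scheme $X$. By Lemma~\ref{prop:NilZnsyn} each graded piece $\Nil\Znsyn[2n]$ restricts to a procdh sheaf on $\Sch_X$. Since the procdh sheaves form a full subcategory of $\PSh(\Sch_X, \Spt)$ that is closed under limits and under (co)fibre sequences, every truncation $\Nil\TC / \FBMS^n\Nil\TC$ is a procdh sheaf, and hence so is $\Nil\TC = \lim_n \Nil\TC/\FBMS^n\Nil\TC$ by completeness of the filtration.

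For (\conFinitary) and (\conBB)$_{\geq 0}$ I would run the hypercohomology spectral sequence of the filtration, exactly as in Proposition~\ref{lemm:nilHN}. The weight-$n$ graded piece $\Nil\Znsyn$ satisfies (\conBB)$_{\geq -n}$ by Lemma~\ref{prop:NilZnsyn}, i.e. $\pi_i\Nil\Znsyn = 0$ for $i < -n$, so after the shift $[2n]$ the graded piece $\gr^n_{\FBMS}\Nil\TC$ is $n$-connective. Combined with the vanishing $H^m_{\procdh}(X, -) = 0$ for $m > 2d$ from Corollary~\ref{coro:finCohDim} (valid once $X$ has finite valuative dimension $d$ and Noetherian topological space) and the fact that the filtration is $\NN$-indexed, one gets that in each total degree only finitely many graded pieces contribute; hence the spectral sequence is bounded and converges strongly. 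Boundedness transfers finitarity from the graded pieces (Lemma~\ref{prop:NilZnsyn}) to $\Nil\TC$, since the increasing connectivity makes the tower $\{\Nil\TC/\FBMS^n\Nil\TC\}$ stabilise in each homotopy degree, reducing each $\pi_m\Nil\TC$ to a finite extension of finitary pieces. Evaluated at a procdh local ring $R$, where the stalk functor is exact so that only the graded pieces themselves contribute, the same connectivity estimate shows that $\pi_m\Nil\TC(R)$ receives contributions only from the finitely many graded pieces with $0 \leq n \leq m$; hence $\pi_m\Nil\TC(R) = 0$ for $m < 0$, which is (\conBB)$_{\geq 0}$.

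The main obstacle is entirely absorbed into Lemma~\ref{prop:NilZnsyn}, which furnishes the three properties for the syntomic graded pieces; there the genuinely $\TC$-specific input appears, namely finitarity of $\Znsyn/p$ together with the vanishing of $\Nil\TC[\tfrac{1}{p}]$ via the trace-method pullback square \eqref{eq;Kinfsquare}, and the bound at henselian valuation rings with finitely generated nilradical via \cite{AMMN}. Given that lemma, the passage from graded pieces to $\Nil\TC$ is the same formal spectral-sequence bookkeeping as in the $\HN$ case, the only point requiring care being that it is the boundedness of the spectral sequence, rather than a naive limit argument, that guarantees strong convergence and hence both finitarity and the sharp lower bound $N = 0$.
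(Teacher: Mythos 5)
Your proposal is correct and follows essentially the same route as the paper: apply $\Nil$ to the BMS filtration \eqref{eq;BMSfilt}, import (\conDescent), (\conFinitary) and (\conBB)$_{\geq -n}$ for the graded pieces $\Nil\Znsyn$ from Lemma~\ref{prop:NilZnsyn}, and conclude via the bounded descent/filtration spectral sequence exactly as in the passage from Lemma~\ref{lemm:nilhLo} to Proposition~\ref{lemm:nilHN}. One small caution: sheafification $a_{\cdh}$ is a left adjoint and does not preserve arbitrary limits, so $\Nil$ does not formally commute with $\lim_n$; but as you yourself note at the end, the convergence issues are settled by the boundedness of the spectral sequence coming from Corollary~\ref{coro:finCohDim} and the connectivity of the graded pieces, which is also what the paper relies on.
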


\begin{proof}
The desired properties follow from Lemma~\ref{prop:NilZnsyn}
by the same argument deducing Proposition \ref{lemm:nilHN} from Lemma \ref{lemm:nilhLO}: The property (\conDescent) follows from 
(\conDescent) for $\Nil\Znsyn$ by the filtration \eqref{eq;BMSfilt}.
The properties (Fin) and (BB)$_{\geq 0}$ for $\Nil \TC$ follow from (Fin) and (BB)$_{\geq j}$ for $\Nil\ZZ_p(-j)^{syn}$ 
 by using the spectral sequence 
\[ E_2^{i,j}= H^{i-j}(\Nil\ZZ_p(-j)^{syn}(X)) \Rightarrow \pi_{-i-j}\Nil\TC(X),\]
arising from the filtration \eqref{eq;BMSfilt}.
Note that the spectral sequence is bounded since
Corollary~\ref{coro:finCohDim} and (\conBB)$_{\geq j}$ for $\Nil\ZZ_p(-j)^{syn}$ from Lemma~\ref{prop:NilZnsyn} imply $E_2^{i,j}=0$ for $i-j>2d-j$. 
\end{proof}

\section{Procdh motivic complex}

In this section, we address the following conjecture.

\begin{conj}[{Beilinson, \cite{Bei}, cf. also \cite[Introduction]{EM}}]\label{conj;Beilinson}
For any reasonable scheme $X$, there is a natural spectral sequence:
\begin{equation}\label{eq.AH}
E^{p,q}_{2}=H^{p-q}_{\mathcal{M}}(X, \ZZ(-q)) \Rightarrow K_{-p-q}(X)\end{equation}
where $K_*(X)$ is the non-connective algebraic $K$-theory of $X$, and  
$H^i_{\mathcal{M}}(X, \ZZ(n))$ is the \emph{motivic cohomology} of $X$ yet to be defined.
\end{conj}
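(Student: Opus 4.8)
The plan is to recognise that the substance of Conjecture~\ref{conj;Beilinson} has already been assembled in the earlier sections, so that the remaining task is to \emph{define} the motivic cohomology and then extract the spectral sequence \eqref{eq.AH} from the filtration of Theorem~\ref{thm;AH-Kpcdh}. Concretely, I would set
\[ H^i_{\cM}(X, \ZZ(n)) := H^i\bigl(\Znpcdh(X)\bigr), \]
the hypercohomology of the procdh motivic complex $\Znpcdh = a_{\procdh}\Lsm\Znsm$ from Definition~\ref{def;Znpcdh}. With this definition the word ``reasonable'' is justified a posteriori: by the comparison Corollary~\ref{cor;Znem-comparison} and the identification of $\Znsm$ with Voevodsky's and Bloch's complexes, $H^i_{\cM}$ recovers the expected motivic cohomology on smooth schemes and agrees with the Elmanto--Morrow theory in general. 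The version I would actually prove is thus Conjecture~\ref{conj;Beilinson} for Noetherian $\FF$-schemes of finite dimension.

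First I would invoke Theorem~\ref{thm;AH-Kpcdh}: for such $X$ there is a complete, multiplicative, decreasing $\NN$-indexed filtration $\{\Fpcdh^n K(X)\}_{n\in\NN}$ on $K(X)$ with $\gr^n_{\Fpcdh} K(X) \simeq \Znpcdh(X)[2n]$. The spectral sequence of a complete filtered spectrum has first page given by the homotopy of the graded pieces. Indexing the filtration by the weight $n = -q$ and using $\pi_j(C) = H^{-j}(C)$ for a complex $C$, the graded piece contributes
\[ \pi_{-p-q}\bigl(\gr^{-q}_{\Fpcdh} K(X)\bigr) = \pi_{-p-q}\bigl(\ZZ(-q)^{\procdh}(X)[-2q]\bigr) = H^{p-q}\bigl(\ZZ(-q)^{\procdh}(X)\bigr) = H^{p-q}_{\cM}(X, \ZZ(-q)), \]
which is precisely the $E_2$-term of \eqref{eq.AH}; the shift $[2n]$ is exactly what converts the first page of the filtration spectral sequence into the $E_2$-form displayed in the conjecture, and the abutment is $\pi_{-p-q}K(X) = K_{-p-q}(X)$.

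For convergence I would combine completeness of the filtration, already part of Theorem~\ref{thm;AH-Kpcdh}, which yields conditional convergence, with the finiteness statement of Corollary~\ref{coro:finCohDim}, which bounds the procdh cohomological dimension by $2d$ for $X$ of finite valuative dimension $d$. Since in addition $\Znpcdh(X)$ is concentrated in a bounded range of cohomological degrees, only finitely many graded pieces meet each fixed total degree $-p-q$; the associated filtration is then finite in each degree and the spectral sequence converges strongly. Naturality in $X$ is automatic, since $\Znpcdh$, the filtration $\Fpcdh$, and $a_{\procdh}$ are all functorial.

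The main obstacle is not the formal construction of the spectral sequence---this is essentially bookkeeping once Theorem~\ref{thm;AH-Kpcdh} is in hand---but rather the two conceptual points that make the statement meaningful: that the chosen $H^i_{\cM}$ is the correct object, and that convergence is genuine rather than merely conditional. The first is settled by the comparison theorems; the second rests squarely on the finite-cohomological-dimension bound of Corollary~\ref{coro:finCohDim}, which is the one place where the hypotheses ``Noetherian'' and ``finite-dimensional'' are genuinely used. Without such a bound the filtration spectral sequence need not converge to $K$-theory at all, so this is exactly where the work of the earlier sections on homotopy and cohomological dimension pays off.
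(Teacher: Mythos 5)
Your proposal is correct and follows essentially the same route as the paper: the paper resolves the conjecture for finite-dimensional Noetherian $\FF$-schemes by setting $H^i_{\mathcal{M}}(X,\ZZ(n)) := H^i(\Znpcdh(X))$, invoking the complete multiplicative filtration of Theorem~\ref{thm;AH-Kpcdh}, and establishing boundedness in Remark~\ref{rmk;thm;AH-Kpcdh} via the vanishing $H^i((\Lsm\Znsm)(A))=0$ for $i>n$ on local rings together with Corollary~\ref{coro:finCohDim}. Your convergence argument (completeness plus degreewise finiteness of the filtration from the cohomological dimension bound) is exactly the content of Eq.\eqref{equa:FpcdhLbound}.
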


When $X$ is smooth over a field $k$, an answer was given by the following theorem.

\begin{theo}[\cite{FrSus} and \cite{Lev08}]\label{thm;smooth}
Let $\Sm_k$ denote the category of smooth schemes separated of finite type over 
a field $k$. 
There exists a complete multiplicative decreasing $\NN$-indexed filtration $\biggl\{\Fmot^n K(X)\biggl\}_{n\in \NN}$ on $K(X)$ and identifications of spectra
\begin{equation} \label{equa:grKZn}
\gr_{\Fmot}^n K(X) \simeq \Znsm(X)[2n]
\end{equation}
functorial in $X\in \Sm_k$.
\end{theo}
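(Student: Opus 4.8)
The plan is to recall the two complementary constructions from the cited references, either of which produces the required filtration, since the statement is quoted verbatim from \cite{FrSus} and \cite{Lev08}. First I would build the filtration via Levine's homotopy coniveau tower. For a smooth $k$-scheme $X$, one filters the $K$-theory of the cosimplicial scheme $X \times \Delta^\bullet$ by codimension of support: let $\Fmot^n K(X)$ be the geometric realisation of the sub-presheaf of $K(X \times \Delta^\bullet)$ with supports in closed subsets of codimension $\geq n$. One then checks directly that the resulting filtration is decreasing, $\NN$-indexed, exhaustive, and complete (completeness and boundedness come from the fact that codimension is controlled by $\dim X$ once the simplicial direction is realised). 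Multiplicativity follows because the external product of cycles supported in codimensions $\geq n$ and $\geq n'$ lands in codimension $\geq n+n'$, combined with the multiplicative structure on the $K$-theory of products.

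Next I would identify the graded pieces. By the localisation sequence in $K$-theory applied to the stratification of $X \times \Delta^\bullet$ by codimension of support, the $n$-th layer $\gr^n$ is assembled from the $K$-theory with supports in exactly codimension $n$, i.e.\ from contributions indexed by the codimension-$n$ points $X^{(n)}$. By dévissage and homotopy invariance each such contribution reduces to a shift of the $K$-theory of residue fields, and the simplicial structure reassembles these into Bloch's cycle complex, which computes higher Chow groups and is, by definition, $\Znsm(X)$ up to the shift $[2n]$ reflecting the Tate twist. This produces the identification $\gr_{\Fmot}^n K(X) \simeq \Znsm(X)[2n]$. Equivalently, and more conceptually, I could pass to the motivic stable homotopy category and invoke the slice filtration on the spectrum representing algebraic $K$-theory: Levine's computation of its slices (confirming Voevodsky's conjecture) identifies them with shifts of the motivic Eilenberg--MacLane spectrum, whose cohomology is $\Znsm$, and the slice tower then yields both the filtration and the graded identification upon evaluation at $X$.

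The hard part will be exactly this identification of the graded pieces with $\Znsm$. Constructing a complete, multiplicative, $\NN$-indexed coniveau filtration on $K(X)$ is essentially formal, but pinning down its layers as motivic cohomology is the deep geometric input: it rests on the Bloch--Lichtenbaum theorem for fields and its globalisation to smooth $k$-schemes by Friedlander--Suslin, which requires moving lemmas and a careful comparison of the coniveau spectral sequence with Bloch's higher Chow groups. In practice I would cite \cite{FrSus} and \cite{Lev08} for these inputs, the above being a description of the mechanism rather than a self-contained argument.
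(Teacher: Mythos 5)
The paper offers no proof of this statement: it is quoted directly from \cite{FrSus} and \cite{Lev08} (and the surrounding text even notes that one may simply take the graded identification \eqref{equa:grKZn} as the \emph{definition} of $\Znsm(X)[2n]$), which is exactly where your argument also bottoms out. Your sketch of the homotopy coniveau tower, its multiplicativity and completeness, and the identification of the layers via localisation, d\'evissage and the Bloch--Lichtenbaum/Friedlander--Suslin and Levine slice computations is an accurate account of what those references prove, so the two ``proofs'' coincide in substance: both defer the deep input to the cited literature.
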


Above, $\Znsm(X)[2n]$ is a chain complex of abelian groups regarded as a spectrum via the Eilenberg-Maclane functor $D(\ZZ) \to \Spt$. As a complex of abelian groups, it can be defined as 
\begin{equation} \label{equa:ZZtrGm}
\Znsm(X) = 
\underline{C}_*(\ZZ_{tr}(\GG_m^{\wedge n}))(X)[-n] \qfor X\in \Sm_k,
\end{equation}
where $\underline{C}_*(\ZZ_{tr}(\GG_m^{\wedge q}))[-q]$ is Voevodsky's $\AA^1$-invariant motivic complex defined in \cite{SVBK}. This is strictly functorial in $\Sm_k$ in the sense that it defines a functor between the 1-category $\Sm_k$ and the 1-category of chain complexes of abelian groups. Scheme-wise this is shown to be quasi-isomorphic to Bloch's cycle complex, \cite{Bl}, in \cite[Cor.2]{Voe02}. Of course, another approach is to just take Eq.\eqref{equa:grKZn} as the definition for $\Znsm(X)[2n]$.


\medbreak

In what follows, we write $\FF=\QQ$ or $\FF_p$ and continue to let $\Sch^\qcqs_\FF$ be the category of qcqs schemes over $\FF$ and $\Sch^\noe_\FF$ be its full subcategory of noetherian schemes.
Recently, Elmanto-Morrow \cite{EM} extended Theorem \ref{thm;smooth} to all 
$X\in \Sch^\qcqs_\FF$ by using instead of $\Znsm$ a new motivic complex
\[ \Znem\in \PSh(\Sch^\qcqs_\FF,D(\ZZ))).\]
They construct $\Znem$ by modifying the cdh sheafification $\Zncdh$ of the left Kan extension of $\Znsm$ along $\Sm_\FF \to \Sch^\qcqs_\FF$ by using Hodge-completed derived de Rham complexes in case $\FF=\QQ$ and syntomic complexes in case $\FF=\FF_p$.
The construction is motivated by trace methods in algebraic $K$-theory using the cyclotomic trace map $\tr$ from \eqref{eq;Kinfsquare}.
The purpose of this section is to give a different approach to Conjecture \ref{conj;Beilinson} by using our procdh topology.

\begin{defi}\label{def;Znpcdh}
For integers $n\geq 0$, we define the \emph{procdh-local motivic complex}
\[\Znpcdh:=a_{\procdh} \Lsm \Znsm \in \Shv_{\procdh}(\Sch^\qcqs_\FF,D\ZZ)),\]
as the procdh sheafification of the left Kan extension $\Lsm \Znsm$ of $\Znsm$ along $\Sm_\FF\to \Sch^\qcqs_\FF$.
\end{defi}

\begin{rema}
Note that there have been various constructions of motivic cohomology on smooth schemes over Dedekind domains, \cite{Gei04}, \cite{Lev01}, \cite{Spi18}, \cite{CD19}, etc.,
and one could apply the same construction, i.e., procdh sheafification of left Kan extension, to any of these.
\end{rema}

\begin{theo}\label{thm;AH-Kpcdh}
For $X\in \Sch^\noe_\FF$ with $\dim(X)<\infty$, there exists a 
complete multiplicative%
\footnote{For compatibility of the monoidal structures with left Kan extension see \cite[\S 2.3]{EM}. Sheafification is also compatible since it is defined using filtered colimits, see \cite[Prop.6.2.2.7]{HTT} which works for a very general class of coefficient categories.}
decreasing $\NN$-indexed filtration $\biggl\{\Fpcdh^n K(X)\biggl\}_{n\in \NN}$ on $K(X)$ and identifications 
\[ \gr_{\Fpcdh}^n K(X) \simeq \Znpcdh(X)[2n].\]
\end{theo}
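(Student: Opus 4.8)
The plan is to transport the smooth motivic filtration of Theorem~\ref{thm;smooth} to $\Sch^\noe_\FF$ by left Kan extension followed by procdh sheafification, to read off the graded pieces from Definition~\ref{def;Znpcdh}, and then to identify the underlying object with non-connective $K$-theory via Theorem~\ref{thm;apcdhKconn=K}. Concretely, I would begin with the complete multiplicative filtration $\{\Fmot^n K\}_{n\in\NN}$ on $\Sm_\FF$, where $K=\tau_{\geq 0}K$ is connective, and regard it as a tower of presheaves of spectra. Applying $\Lsm$ along $\Sm_\FF\to\Sch^\qcqs_\FF$ and then $a_{\procdh}$ — both left adjoints, hence preserving cofibre sequences and suspensions — produces a decreasing $\NN$-indexed filtration
\[ \Fpcdh^n := a_{\procdh}\Lsm \Fmot^n K \]
on $a_{\procdh}\Lsm K$. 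Since $\Lsm$ and $a_{\procdh}$ commute with the formation of graded pieces,
\[ \gr_{\Fpcdh}^n\bigl(a_{\procdh}\Lsm K\bigr) \simeq a_{\procdh}\Lsm \gr_{\Fmot}^n K \simeq a_{\procdh}\Lsm\Znsm[2n] = \Znpcdh[2n], \]
which is the asserted identification. Multiplicativity is inherited because $\Lsm$ and $a_{\procdh}$ are compatible with the relevant monoidal structures (the footnote to the statement), and completeness follows from Corollary~\ref{coro:finCohDim}: finite procdh cohomological dimension of a Noetherian finite-dimensional $X$ forces the associated spectral sequence to be bounded, hence convergent.

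The substantial step is the identification $a_{\procdh}\Lsm K(X)\simeq K(X)$, after which the filtration transports along this equivalence to a filtration on $K(X)$. Since $K|_{\Sm_\FF}=(\tau_{\geq 0}K)|_{\Sm_\FF}$, the Kan-extension adjunction supplies a natural map $\Lsm K\to\tau_{\geq 0}K$, and Theorem~\ref{thm;apcdhKconn=K} gives $a_{\procdh}(\tau_{\geq 0}K)(X)\simeq K(X)$; it therefore suffices to prove that $a_{\procdh}\Lsm K\to a_{\procdh}\tau_{\geq 0}K$ is an equivalence. Both sides are bounded-below procdh sheaves of spectra, so by Corollary~\ref{coro:enoughInfinityPoints} it is enough to check the claim on stalks, i.e.\ at procdh local rings $R=\OO\times_K A$, which by Corollary~\ref{coro:smallFamily} may be taken with $\length Q(R)$ and $\dim R$ finite.

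To evaluate these stalks I would feed in the $\Nil$/cdh decomposition developed in the preceding sections. The cdh-local parts of the two presheaves agree, valuation rings being cdh points and $\Lsm K$ agreeing with $K$ on the smooth (hence cdh-local) pieces, so the discrepancy is concentrated in the nilpotent direction; this infinitesimal part is controlled by the trace-method pullback square~\eqref{eq;Kinfsquare} together with Proposition~\ref{theo:apcdhTC=TC} (for $\FF=\FF_p$) and Proposition~\ref{lemm:nilHN} (for $\FF=\QQ$), both applied through Proposition~\ref{prop:procdhNonconnectification}. Combining the cdh-local and nil contributions along the fibre sequence and using that $\Nil$ is compatible with the filtrations yields the equivalence stalk-wise, and hence globally.

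I expect the main obstacle to be exactly this last identification $a_{\procdh}\Lsm K\simeq K$. As emphasised by the warnings following Definition~\ref{defi:procdh}, procdh sheafification does \emph{not} commute with left Kan extension, so one cannot manipulate these functors formally; the argument must proceed stalk-wise, carefully separating the cdh-local and infinitesimal contributions to $K$-theory and verifying that the trace-method comparisons assemble compatibly with the weight filtration. The remaining bookkeeping — exactness of $\Lsm$ and $a_{\procdh}$, convergence of the spectral sequence, and multiplicativity — is routine given the results already established.
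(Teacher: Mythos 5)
Your overall architecture is the same as the paper's: apply $a_{\procdh}\Lsm$ to the filtration $\Fmot^\bullet K$ of Theorem~\ref{thm;smooth}, read off the graded pieces from Definition~\ref{def;Znpcdh} using exactness of the two left adjoints, identify the underlying object with $K(X)$ via Theorem~\ref{thm;apcdhKconn=K}, and get completeness from Corollary~\ref{coro:finCohDim}. Those parts are fine. But there is a genuine gap at the step you yourself flag as the substantial one, namely the identification $a_{\procdh}\Lsm (K|_{\Sm_\FF}) \simeq a_{\procdh}\tau_{\geq 0}K$. The paper does not prove this by a stalkwise descent argument at all: it invokes the theorem of Bhatt--Lurie (cited as \cite[Ex.~A.0.6]{EHKSY}) that connective $K$-theory is already left Kan extended from smooth algebras, i.e.\ $K_{\geq 0}\simeq \Lsm(K|_{\Sm_\FF})$ as presheaves on $\Sch^\qcqs_\FF$, with no sheafification required. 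Given that input, Theorem~\ref{thm;apcdhKconn=K} finishes the proof in one line.

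Your proposed substitute for this input does not work as described. Reducing to stalks at procdh local rings $R$ is legitimate, but you then need $\Lsm(K|_{\Sm_\FF})(R)\to \tau_{\geq 0}K(R)$ to be an equivalence, and your decomposition into a ``cdh-local part'' and a ``nil part'' begs the question on both halves. For the cdh-local part you assert that the two presheaves agree on valuation rings because ``$\Lsm K$ agrees with $K$ on the smooth pieces''; but a (henselian) valuation ring is neither smooth nor, in general, a filtered colimit of smooth $\FF$-algebras, so the statement $\Lsm(K|_{\Sm_\FF})(\OO)\simeq K(\OO)$ is itself an instance of the Bhatt--Lurie theorem, not a consequence of cdh locality. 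For the nil part, the square~\eqref{eq;Kinfsquare} and Propositions~\ref{lemm:nilHN} and~\ref{theo:apcdhTC=TC} compare $K$, $KH$, $\TC$ and their $a_{\procdh}\tau_{\geq 0}$-versions; none of them says anything about the left Kan extension $\Lsm$ from $\Sm_\FF$, so they cannot control the fibre of $\Lsm(K|_{\Sm_\FF})\to K$. To make your route work you would have to prove the analogous left-Kan-extension statements for $\TC$ (resp.\ $\HN$) and for $KH$ on valuation rings, which amounts to reproving the Bhatt--Lurie result in pieces. The clean fix is simply to cite it.
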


\begin{proof}
By Bhatt-Lurie (see \cite[Ex. A.0.6]{EHKSY}),
there is a natural equivalence
\[ K_{\geq 0} \simeq \Lsm (K_{|\Sm_\FF}),\]
where the right hand side is the left Kan extension of  $K_{|\Sm_\FF}$ along $\Sm_\FF\to \Sch^\qcqs_\FF$. 
Using Theorem \ref{thm;apcdhKconn=K}, we obtain a filtration on $K$ by procdh sheafifying the left Kan extension along $\Sm_\FF\to \Sch_\FF$ of $\Fmot^\bullet K_{|\Sm_\FF}$ from Theorem \ref{thm;smooth}. The identification of the graded pieces is clear. Completeness follows from the boundedness described in Eq.\eqref{equa:FpcdhLbound} below.
\end{proof}

\begin{rema}\label{rmk;thm;AH-Kpcdh}
By definition, Eq.\eqref{equa:ZZtrGm},
we have 
$\mathcal{H}_{\Zar}^i(\Znsm)=0$ for $i>n$, where $\mathcal{H}_{\Zar}^i(\Znsm)$ is the Zariski cohomology sheaf of $\Znsm$. It implies 
that for any local $k$-algebra $A$, we have
\begin{equation} \label{LZnsm-vanishing}
{H^i((\Lsm\Znsm)(A))=0\qfor i>n.} 
\end{equation}
Thus, Corollary \ref{coro:finCohDim} implies that
for $X\in \Sch^\qcqs_\FF$ of finite valuative dimension $d$ with Noetherian underlying topological space, we have 
\begin{equation} \label{Znpcdh-vanishing}
{
H^i(\Znpcdh(X))=0\qfor i> 2d+n.
}
\end{equation}
In particular, when $X$ is noetherian, we have for each $n\in \NN$
\begin{equation} \label{equa:FpcdhLbound}
 π_{-i}\Fpcdh^n K(X) = 0;  \qquad i > 2d -n 
 \end{equation}
 so the induced spectral sequence \eqref{eq.AH} with
\[H^i_{\mathcal{M}}(X, \ZZ(n)):=H^i(\Znpcdh(X))\] is bounded.
\end{rema}

\medbreak

Now, it is a natural question if two constructions $\Znem$ and $\Znpcdh$ coincide.

\begin{theo}\label{thm;Znpcdh-comparison}
Assume given $\cZ(n)\in \PSh(\Sch_\FF^{\qcqs},D(\ZZ)))$ and consider the following conditions. 
\begin{enumerate}
\item[(a)] \label{thm:enum:compa}
There is a natural comparison map 
$$\psi: \Znsm \to \cZ(n)_{|\Sm_\FF}$$
 in $\PSh(\Sm_\FF,D(\ZZ)))$, whose induced map $\phi:\Lsm\Znsm \to \cZ(n)$ has the properties that $\phi(R)$ is an equivalence for all procdh local rings $R$ with $\length Q(R)$ and $\dim R$ finite.

\item[(b)]
$\cZ(n)$ is finitary. That is, 
$$\cZ(n)(\lim_{\lambda} P_λ) = \underset{\lambda}{\colim} \cZ(n)(P_λ)$$
 for any cofiltered system $(P_λ)_{λ \in \Lambda}$ in $\Sch^{\qcqs}_\FF$ with affine transition morphisms.

\item[(c)] $\cZ(n)$ is a procdh sheaf on Noetherian schemes. That is,
$$\cZ(n)\in \Shv_{\procdh}(\Sch^\noe_\FF,D(\ZZ))). $$
%
\end{enumerate}
If (a), (b), and (c) are satisfied then, $\phi$ induces an equivalence $\Znpcdh(X) \simeq \cZ(n)(X)$ for any $X\in \Sch^\noe_\FF$.
\end{theo}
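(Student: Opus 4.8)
The plan is to compare the two procdh sheaves by applying $a_\procdh$ to $\phi$ and then checking that the resulting map is an equivalence on the conservative family of fibre functors provided by Corollary~\ref{coro:smallFamily}, matching the stalks by means of hypothesis (a). The three hypotheses play complementary roles: (c) lets me replace $\cZ(n)$ by its own sheafification on Noetherian schemes, (b) (together with the finitariness of $\Lsm\Znsm$) lets me identify the abstract stalks with honest values of the presheaves, and (a) is exactly the input that makes those values agree.

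First I would apply procdh sheafification to the comparison map, obtaining a morphism of procdh sheaves
\[ a_\procdh\phi\colon \Znpcdh = a_\procdh\Lsm\Znsm \longrightarrow a_\procdh\cZ(n). \]
By hypothesis (c), $\cZ(n)$ already satisfies procdh descent on Noetherian schemes; since every procdh covering of a Noetherian $X$ consists of finite presentation (hence Noetherian) $X$-schemes, the iterated plus-construction computing $a_\procdh$ stays within $\Sch^\noe_\FF$ and does not alter $\cZ(n)$ there, so the unit $\cZ(n)(X) \to (a_\procdh\cZ(n))(X)$ is an equivalence for every $X \in \Sch^\noe_\FF$. It therefore suffices to prove that $a_\procdh\phi$ is an equivalence after restriction to $\Sch^\noe_\FF$.

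For this I would fix $X \in \Sch^\noe_\FF$ of finite Krull dimension, where $\Shv_\procdh(\Sch_X, D(\ZZ))$ is hypercomplete with enough points, Corollary~\ref{coro:procdhHypercomplete} and Corollary~\ref{coro:enoughInfinityPoints}; a morphism of such sheaves is then an equivalence as soon as it is an equivalence on the stalks at the procdh local rings $R$ with $\length Q(R)$ and $\dim R$ finite of Corollary~\ref{coro:smallFamily}. The stalk of a sheafification agrees with the stalk of the underlying presheaf, namely the filtered colimit $\colim_\lambda G(R_\lambda)$ over the pro-system $(\Spec R_\lambda)_\lambda$ attached to $R$. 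Using that $\Lsm\Znsm$ is finitary—left Kan extension along $\Sm_\FF \to \Sch^\qcqs_\FF$ preserves filtered colimits of rings because smooth finite type $\FF$-schemes are finitely presented—and that $\cZ(n)$ is finitary by hypothesis (b), these stalks collapse to the honest values $\Lsm\Znsm(R)$ and $\cZ(n)(R)$. The induced map on the stalk at $R$ is then precisely $\phi(R)\colon \Lsm\Znsm(R) \to \cZ(n)(R)$, which is an equivalence by hypothesis (a). Hence $a_\procdh\phi$ is a stalkwise equivalence, and combining with the previous step gives $\Znpcdh(X) \simeq \cZ(n)(X)$, naturally in $X$.

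The hard part will be the bookkeeping around stalks on the large site: one must verify that the fibre functors of $\Shv_\procdh(\Sch_X, D(\ZZ))$ correspond to the pro-systems whose colimit is a procdh local ring $R$, that composing such a fibre functor with $a_\procdh$ computes the presheaf stalk, and that finitariness then identifies this stalk with the value at $R$—this is exactly the pro-object versus colimit distinction flagged after Proposition~\ref{prop:procdhLocal}, so it is where an error would most easily creep in. A secondary point needing care is the reduction from the $D(\ZZ)$-valued equivalence to a stalkwise statement, which I would carry out via the homotopy sheaves $\underline{\pi}_i$ and hypercompleteness, and the finite Krull dimension hypothesis used to invoke Corollary~\ref{coro:smallFamily}; the general Noetherian case should follow by the same stalk computation, since Noetherian local rings are finite-dimensional, but this reduction must be spelled out.
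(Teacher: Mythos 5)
Your proposal is correct and follows essentially the same route as the paper: both arguments reduce the statement to checking the comparison on stalks at procdh local rings $R$ with $\length Q(R)$ and $\dim R$ finite, identify those stalks with the honest values $\Lsm\Znsm(R)$ and $\cZ(n)(R)$ via finitariness, and then invoke hypothesis (a). The only difference is packaging: where you pass through hypercompleteness and $\infty$-topos points (and correctly flag that the unbounded $D(\ZZ)$-valued reduction needs the homotopy sheaves), the paper instead runs the procdh descent spectral sequence, bounded by Corollary~\ref{coro:finCohDim}, to reduce to an isomorphism of cohomology sheaves of abelian groups and then uses the $1$-topos points of Theorem~\ref{theo:procdhEnoughPoints} — the same underlying mechanism.
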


\begin{proof}
For procdh sheaves of chain complexes $\cE \in \Shv_{\procdh}(\Sch^{\noe}_{\FF}, D(\ZZ))$, we have a descent spectral sequence 
\[ E_2^{p,q} = H^p_{\procdh}(X,\mathcal{H}_{\procdh}^q\cE) \Rightarrow H^{p+q}\cE(X),\]
which converges by finiteness of cohomological dimension, Cor.\ref{coro:finCohDim}. Here $\mathcal{H}_{\procdh}^q\cE$ is the procdh sheafification of the presheaf of abelian groups $X \mapsto π_{-q}\cE(X)$. So since $\cZ(n)$ and $\ZZ(n)^{\procdh}$ are procdh sheaves, Assumption~(c), it suffices to show that the morphism 
\[ 
\mathcal{H}_{\procdh}^q\Znpcdh
\to 
\mathcal{H}_{\procdh}^q\cZ(n)
\]
of procdh cohomology sheaves of abelian groups is an isomorphism on Noetherian schemes. %
Since the 1-topos $\Shv_{\procdh}(\Sch_X)$ has enough points, Thm.\ref{theo:procdhEnoughPoints},
and cohomology commutes with filtered colimits, 
and both $\cZ(n)$ and $\Lsm\Znsm$ are finitary, Assumption~(b),  
it suffices to show that for all procdh local rings $\Spec(R) \to X$ with $R$ as in Assumption~(a), the comparison 
\[ 
H^q\Lsm\Znsm(R) 
 \to 
 H^q\cZ(n)(R) 
 \] 
is an isomorphism for all $q \in \ZZ$.
This is precisely Assumption~(a).
\end{proof}

\begin{theo}[Elmanto-Morrow]\label{thm;Znem-comparison}
The presheaf $\Znem$ satisfies the conditions (a), (b), (c) of Theorem \ref{thm;Znpcdh-comparison}.
\end{theo}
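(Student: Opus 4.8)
The plan is to read off all three properties from the defining structure of the Elmanto–Morrow complex, reducing them to the facts about nilpotent parts already established in Lemmas~\ref{lemm:nilhLO} and \ref{prop:NilZnsyn}. Write $T(n)$ for $\Znsyn$ when $\FF = \FF_p$ and for $\hLOH{n}{-/\QQ}$ when $\FF = \QQ$; these are the weight-$n$ additive graded pieces underlying the filtrations \eqref{eq;BMSfilt} and \eqref{HKR}. The two inputs I would take from \cite{EM} are that $a_{\cdh}\Znem \simeq \Zncdh$ and that the cdh-sheafification map sits in a fibre sequence
$$\Nil T(n) \to \Znem \to \Zncdh,$$
where, as in \eqref{NilF}, $\Nil T(n) = \fib(T(n) \to a_{\cdh}T(n))$. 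Concretely, $\Znem$ is glued as the pullback $\Zncdh \times_{a_{\cdh}T(n)} T(n)$, and left-exactness of $a_{\cdh}$ yields both assertions.

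Properties (c) and (b) are then formal. The term $\Zncdh = a_{\cdh}\Lsm\Znsm$ is a cdh sheaf, hence a procdh sheaf, and is finitary since $\Lsm\Znsm$ is finitary and $a_{\cdh}$ preserves finitary presheaves. The fibre term $\Nil T(n)$ satisfies (\conDescent) and (\conFinitary) by Lemma~\ref{prop:NilZnsyn} (when $\FF = \FF_p$) and Lemma~\ref{lemm:nilhLO} (when $\FF = \QQ$). As procdh sheaves and finitary presheaves are each closed under finite limits, the fibre sequence shows that $\Znem$ is a procdh sheaf on $\Sch^\noe_\FF$ and is finitary, which is (c) and (b).

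For (a), I would take $\psi : \Znsm \xrightarrow{\sim} \Znem|_{\Sm_\FF}$ to be the identification of $\Znem$ with Voevodsky's complex on smooth schemes \cite{EM}, and let $\phi : \Lsm\Znsm \to \Znem$ be its adjoint. Since the cdh-sheafification of $\Lsm\Znsm$ is $\Zncdh \simeq a_{\cdh}\Znem$ and $\phi$ is compatible with the projections onto $\Zncdh$, the map $a_{\cdh}\phi$ is an equivalence; by the fibre sequence above it then suffices to show that $\phi$ induces an equivalence $\Nil\Lsm\Znsm(R) \xrightarrow{\sim} \Nil T(n)(R)$ for every procdh local ring $R$ with $\length Q(R)$ and $\dim R$ finite. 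Here the $\Lsm\Znsm$ are the weight-$n$ graded pieces of the motivic filtration on $K_{\geq 0} \simeq \Lsm(K_{|\Sm_\FF})$, so this comparison is exactly the statement that the filtered cyclotomic trace induces an equivalence on nilpotent parts—valid because its fibre is a cdh sheaf by \cite[Th.A.3]{LandTamme}, precisely the mechanism already exploited in Propositions~\ref{lemm:nilHN} and \ref{theo:apcdhTC=TC}.

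The main obstacle is this nil-part comparison in (a): it rests on the filtered refinement of the cyclotomic trace compatible with the motivic and BMS/HKR filtrations and on the truncating invariance of its fibre, which is the substantive content supplied by \cite{EM} and \cite{LandTamme}. Everything else is a formal consequence of the fibre sequence together with the descent, finitarity, and boundedness statements already proved for $\Nil\Znsyn$ and $\Nil\hLOH{n}{-/\QQ}$.
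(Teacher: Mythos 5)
Your handling of (b) and (c) is fine and is in fact a more self-contained route than the paper's, which simply cites \cite[Th.4.10(5), Th.4.24(4)]{EM} for finitarity and \cite[Th.8.2]{EM} for procdh descent: deducing both from the defining fibre sequence $\Nil T(n) \to \Znem \to \Zncdh$ together with Lemmas~\ref{lemm:nilhLO} and \ref{prop:NilZnsyn} and closure of procdh sheaves and finitary presheaves under finite limits is legitimate.

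The problem is your proof of (a). After correctly reducing to showing that $\Nil\Lsm\Znsm(R) \to \Nil T(n)(R)$ is an equivalence for the relevant procdh local rings $R$, you justify this by asserting that the weight-$n$ graded piece of the cyclotomic trace, with source $\Lsm\Znsm$ and target $T(n)$, has cdh-sheaf fibre ``by \cite[Th.A.3]{LandTamme}.'' That premise is false, and one can see it must be: if $\fib(\Lsm\Znsm \to T(n))$ were a cdh sheaf, then comparing the fibre sequences of $\Lsm\Znsm \to T(n)$ and of its cdh sheafification $\Zncdh \to a_{\cdh}T(n)$ would exhibit $\Lsm\Znsm$ as the pullback $\Zncdh \times_{a_{\cdh}T(n)} T(n) = \Znem$ on \emph{all} qcqs schemes, i.e.\ Elmanto--Morrow motivic cohomology would coincide with the left Kan extension of Voevodsky's everywhere, which is exactly what fails for singular and non-reduced schemes. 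Land--Tamme gives that $\fib(K \to \TC)$ is truncating, hence a cdh sheaf, but this does not descend to the graded pieces of the motivic versus BMS/HKR filtrations; the graded statement that \emph{is} true by construction has source $\Znem$, not $\Lsm\Znsm$, and so is circular for your purposes. The paper avoids this entirely: it invokes \cite[Th.7.7]{EM}, which compares $\Lsm\Znsm(R)$ and $\Znem(R)$ for local rings $R$ in cohomological degrees $\leq n$, and thereby reduces (a) to showing that $\Znem(R)$ is supported in degrees $\leq n$ for every procdh local ring $R$; this boundedness is then extracted from the fibre sequence using the (\conBB)$_{\geq -n}$ parts of Lemmas~\ref{lemm:nilhLO} and \ref{prop:NilZnsyn} for the nil term and the identification $\Zncdh(R) = (\Lsm\Znsm)(R/\Nil(R))$ together with Eq.\eqref{LZnsm-vanishing} for the cdh term. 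You need some input of this low-degree/boundedness type; the filtered-trace mechanism you propose cannot supply it.
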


\begin{proof}
Condition (b), i.e., finitary-ness, is \cite[Th.4.10(5), Th.4.24(4)]{EM} and 
Condition (c), i.e., procdh descent, is \cite[Th.8.2]{EM}.

We prove (a). Existence of the morphism $\Znsm \to \cZ(n)_{|\Sm_\FF}$ is contained in \cite[Th.1.1(9)]{EM}, although we don't need the full force of their theorem since we are only asking for existence, not the stronger fact that the morphism is an equivalence. Now we want to show that the induced map $\Lsm\Znsm \to \cZ(n)$ is an equivalence on procdh local rings $R$ with $\length Q(R)$ and $\dim R$ finite.

 By \cite[Th.7.7]{EM}, it suffices to prove $\Znem(R)$ is supported in cohomological degrees $\leq n$ for any procdh local ring $R$.
By \cite[Th.4.10(2), Th.4.24.(2)]{EM}, there are fiber sequences (cf.  Eq.\eqref{NilF})
\[ \Nil\hLOH n {R/\QQ} \to  \Znem(R) \to \Zncdh(R)\;\text{ if } \FF=\QQ,\]
\[ \Nil\Znsyn(R) \to  \Znem(R) \to \Zncdh(R)\;\text{ if } \FF=\FF_p,\]
where $\Zncdh=a_{\cdh}\Lsm\Znsm$.
Note that $\Nil\hLOH n {R/\QQ}$ and $\Nil\Znsyn(R) $ are supported in degrees $\leq n$ by Lemma~\ref{lemm:nilhLO} and Lemma~\ref{prop:NilZnsyn}.
Noting that $\Znsm$ is finitary, the same argument as Lemma~\ref{prop:NilZnsyn} gives 
\[\Zncdh(R)=\Zncdh(R/\fN)=(\Lsm\Znsm)(R/\fN).\]
So, $\Zncdh(R)$ is supported in degrees $\leq n$ by Eq.\eqref{LZnsm-vanishing}, which  proves the desired assertion.
\end{proof}

\begin{coro}\label{cor;Znem-comparison}
There is an equivalence $\Znpcdh \simeq \Znem$ in $\PSh(\Sch^\noe_\FF,D(\ZZ)))$. 
 \end{coro}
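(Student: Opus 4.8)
The plan is to deduce the corollary directly from the general comparison criterion of Theorem~\ref{thm;Znpcdh-comparison}, specialised to the choice $\cZ(n) = \Znem$. Recall that Theorem~\ref{thm;Znpcdh-comparison} asserts that whenever a presheaf $\cZ(n) \in \PSh(\Sch^\qcqs_\FF, D(\ZZ))$ satisfies conditions (a), (b), and (c), the natural comparison map $\phi \colon \Lsm \Znsm \to \cZ(n)$ induces an equivalence $\Znpcdh(X) \simeq \cZ(n)(X)$ for every $X \in \Sch^\noe_\FF$. So the entire task reduces to checking those three hypotheses for the Elmanto--Morrow complex.

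But this is exactly the content of Theorem~\ref{thm;Znem-comparison}: there (b) finitariness and (c) procdh descent on Noetherian schemes are imported from \cite{EM}, while (a)---the existence of a comparison map $\psi \colon \Znsm \to \Znem|_{\Sm_\FF}$ whose left Kan extension $\phi$ is an equivalence on all procdh local rings $R$ with $\length Q(R)$ and $\dim R$ finite---is established there using the cohomological bound $\Znem(R) \in D(\ZZ)^{\leq n}$ together with \cite[Th.7.7]{EM}.

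Feeding Theorem~\ref{thm;Znem-comparison} into Theorem~\ref{thm;Znpcdh-comparison} then yields an equivalence $\Znpcdh(X) \simeq \Znem(X)$ for each Noetherian $\FF$-scheme $X$. Since this equivalence is induced by the natural transformation $\phi$, it is automatically functorial in $X$, and therefore assembles into an equivalence of presheaves $\Znpcdh \simeq \Znem$ in $\PSh(\Sch^\noe_\FF, D(\ZZ))$. There is no genuine obstacle remaining at this final stage: all the substantive input has been isolated in the two preceding theorems, and the only point left is the elementary observation that a natural transformation which is an objectwise equivalence is an equivalence in the functor category.
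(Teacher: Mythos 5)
Your proposal is correct and is exactly the argument the paper intends: the corollary is stated immediately after Theorem~\ref{thm;Znem-comparison} precisely because it follows by feeding that theorem's verification of conditions (a), (b), (c) into the criterion of Theorem~\ref{thm;Znpcdh-comparison}, with functoriality coming from the naturality of $\phi$. No difference in approach.
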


\def\tnu{\tilde{\nu}}

\bibliographystyle{amsalpha}
\bibliography{bib}

\newcommand{\etalchar}[1]{$^{#1}$}
\providecommand{\bysame}{\leavevmode\hbox to3em{\hrulefill}\thinspace}
\providecommand{\MR}{\relax\ifhmode\unskip\space\fi MR }
\providecommand{\MRhref}[2]{%
  \href{http://www.ams.org/mathscinet-getitem?mr=#1}{#2}
}
\providecommand{\href}[2]{#2}
\begin{thebibliography}{AMMN22}

\bibitem[AHW17]{AHW15}
Aravind Asok, Marc Hoyois, and Matthias Wendt, \emph{{Affine representability
  results in ${\mathbb{A}}^{1}$-homotopy theory, I: Vector bundles}}, Duke
  Mathematical Journal \textbf{166} (2017), no.~10, 1923 -- 1953.

\bibitem[AMMN22]{AMMN}
Benjamin Antieau, Akhil Mathew, Matthew Morrow, and Thomas Nikolaus, \emph{On
  the {B}eilinson fiber square}, Duke Mathematical Journal \textbf{171} (2022),
  no.~18, 3707--3806.

\bibitem[Ant19]{An}
Benjamin Antieau, \emph{Periodic cyclic homology and derived de rham
  cohomology}, Annals of K-theory \textbf{4} (2019), no.~3, 505--519.

\bibitem[Be{\u{\i}}87]{Bei}
A.~A. Be{\u{\i}}linson, \emph{Height pairing between algebraic cycles},
  {$K$}-theory, arithmetic and geometry ({M}oscow, 1984--1986), Lecture Notes
  in Math., vol. 1289, Springer, Berlin, 1987, pp.~1--25. \MR{923131}

\bibitem[BG73]{BG73}
Kenneth~S Brown and Stephen~M Gersten, \emph{Algebraic {$K$}-theory as
  generalized sheaf cohomology}, Higher $K$-{T}heories: {P}roceedings of the
  {C}onference held at the {S}eattle {R}esearch {C}enter of the {B}attelle
  {M}emorial {I}nstitute, from {A}ugust 28 to {S}eptember 8, 1972, Springer,
  1973, pp.~266--292.

\bibitem[Bha12]{Bha12a}
Bhargav Bhatt, \emph{Completions and derived de rham cohomology}, arXiv
  preprint arXiv:1207.6193 (2012).

\bibitem[BHM93]{BHM}
M.~B\"{o}kstedt, W.~C. Hsiang, and I.~Madsen, \emph{The cyclotomic trace and
  algebraic {$K$}-theory of spaces}, Invent. Math. \textbf{111} (1993), no.~3,
  465--539. \MR{1202133}

\bibitem[BK72]{BK72}
Aldridge~Knight Bousfield and Daniel~Marinus Kan, \emph{Homotopy limits,
  completions and localizations}, vol. 304, Springer Science \& Business Media,
  1972.

\bibitem[Blo86]{Bl}
Spencer Bloch, \emph{Algebraic cycles and higher {$K$}-theory}, Adv. in Math.
  \textbf{61} (1986), no.~3, 267--304. \MR{852815}

\bibitem[BMS19]{BMS2}
Bhargav Bhatt, Matthew Morrow, and Peter Scholze, \emph{Topological
  {H}ochschild homology and integral {$p$}-adic {H}odge theory}, Publ. Math.
  Inst. Hautes \'{E}tudes Sci. \textbf{129} (2019), 199--310. \MR{3949030}

\bibitem[Bou64]{Bou64}
N.~Bourbaki, \emph{\'{E}l\'ements de math\'ematique. {F}asc. {XXX}. {A}lg\`ebre
  commutative. {C}hapitre 5: {E}ntiers. {C}hapitre 6: {V}aluations},
  Actualit\'es Scientifiques et Industrielles, No. 1308, Hermann, Paris, 1964.
  \MR{0194450}

\bibitem[BS13]{BS13}
Bhargav Bhatt and Peter Scholze, \emph{The pro-{\'e}tale topology for schemes},
  arXiv preprint arXiv:1309.1198 (2013).

\bibitem[CD19]{CD19}
Denis-Charles Cisinski and Fr\'{e}d\'{e}ric D\'{e}glise, \emph{Triangulated
  categories of mixed motives}, Springer Monographs in Mathematics, Springer,
  2019.

\bibitem[Cis13]{Cis13}
Denis-Charles Cisinski, \emph{Descente par {\'e}clatements en $k$-th{\'e}orie
  invariante par homotopie}, Annals of Mathematics (2013), 425--448.

\bibitem[CM21]{CM21}
Dustin Clausen and Akhil Mathew, \emph{Hyperdescent and {\'e}tale k-theory},
  Inventiones mathematicae (2021), 1--96.

\bibitem[DG71]{EGAI}
Jean Dieudonn{\'e} and Alexandre Grothendieck, \emph{{\'E}l{\'e}ments de
  g{\'e}om{\'e}trie alg{\'e}brique}, vol. 166, Springer Berlin Heidelberg New
  York, 1971.

\bibitem[DGM13]{DGM}
Bj\o rn~Ian Dundas, Thomas~G. Goodwillie, and Randy McCarthy, \emph{The local
  structure of algebraic {K}-theory}, Algebra and Applications, vol.~18,
  Springer-Verlag London, Ltd., London, 2013. \MR{3013261}

\bibitem[DHI04]{DHI04}
Daniel Dugger, Sharon Hollander, and Daniel~C Isaksen, \emph{Hypercovers and
  simplicial presheaves}, Mathematical Proceedings of the Cambridge
  Philosophical Society, vol. 136, Cambridge University Press, 2004, pp.~9--51.

\bibitem[EHK{\etalchar{+}}20]{EHKSY}
Elden Elmanto, Marc Hoyois, Adeel~A. Khan, Vladimir Sosnilo, and Maria
  Yakerson, \emph{Modules over algebraic cobordism}, Forum Math. Pi \textbf{8}
  (2020), e14, 44. \MR{4190058}

\bibitem[EM23]{EM}
Elden Elmanto and Matthew Morrow, \emph{Motivic cohomology of
  equicharacteristic schemes}, arXiv preprint arXiv:2309.08463 (2023).

\bibitem[Fer03]{Fer03}
Daniel Ferrand, \emph{Conducteur, descente et pincement}, Bull. Soc. Math.
  France \textbf{131} (2003), no.~4, 553--585. \MR{2044495}

\bibitem[FS02]{FrSus}
Eric~M. Friedlander and Andrei Suslin, \emph{The spectral sequence relating
  algebraic {$K$}-theory to motivic cohomology}, Ann. Sci. \'{E}cole Norm. Sup.
  (4) \textbf{35} (2002), no.~6, 773--875. \MR{1949356}

\bibitem[Gei04]{Gei04}
Thomas Geisser, \emph{Motivic cohomology over dedekind rings}, Mathematische
  Zeitschrift \textbf{248} (2004), 773--794.

\bibitem[GH06]{GH06}
Thomas Geisser and Lars Hesselholt, \emph{Bi-relative algebraic {$K$}-theory
  and topological cyclic homology}, Invent. Math. \textbf{166} (2006), no.~2,
  359--395. \MR{2249803}

\bibitem[GH11]{GH11}
\bysame, \emph{On relative and bi-relative algebraic $k$-theory of rings of
  finite characteristic}, Journal of the American Mathematical Society
  \textbf{24} (2011), no.~1, 29--49.

\bibitem[GK15]{GK15}
Ofer Gabber and Shane Kelly, \emph{Points in algebraic geometry}, J. Pure Appl.
  Algebra \textbf{219} (2015), no.~10, 4667--4680. \MR{3346512}

\bibitem[GL01]{GL01}
T.~G. Goodwillie and S.~Lichtenbaum, \emph{A cohomological bound for the
  {$h$}-topology}, Amer. J. Math. \textbf{123} (2001), no.~3, 425--443.
  \MR{1833147}

\bibitem[Gro61]{EGAIII1}
Alexander Grothendieck, \emph{{\'E}l{\'e}ments de g{\'e}om{\'e}trie
  alg{\'e}brique. {III}. {\'e}tude cohomologique des faisceaux coh{\'e}rents.
  {I}}, Inst. Hautes {\'E}tudes Sci. Publ. Math. (1961), no.~11, 167.

\bibitem[Gro66]{EGAIV3}
\bysame, \emph{{\'E}l\'ements de g\'eom\'etrie alg\'ebrique : {IV}. {\'e}tude
  locale des sch\'emas et des morphismes de sch\'emas, {T}roisi\`eme partie},
  Publications Math\'ematiques de l'IH\'ES \textbf{28} (1966), 5--255 (fr).

\bibitem[Hae04]{Hae04}
Christian Haesemeyer, \emph{{Descent Properties of Homotopy K-Theory}}, Duke
  Mathematical Journal \textbf{125} (2004), no.~3, 589 -- 619.

\bibitem[Hat02]{Hatcher}
Allen Hatcher, \emph{Algebraic topology}, Cambridge University Press,
  Cambridge, 2002. \MR{1867354 (2002k:55001)}

\bibitem[Hir03]{Hir03}
Philip~S Hirschhorn, \emph{Model categories and their localizations}, no.~99,
  American Mathematical Soc., 2003.

\bibitem[Jar87]{Jar87}
John~F Jardine, \emph{Simplical presheaves}, journal of Pure and Applied
  Algebra \textbf{47} (1987), no.~1, 35--87.

\bibitem[Joh77]{Joh77}
P.~T. Johnstone, \emph{Topos theory}, Academic Press [Harcourt Brace
  Jovanovich, Publishers], London-New York, 1977, London Mathematical Society
  Monographs, Vol. 10. \MR{0470019}

\bibitem[Kel23]{Kel23}
Shane Kelly, \emph{Non-reduced valuation rings and descent for smooth blowup
  squares}, arXiv preprint (2023).

\bibitem[KM21]{KM}
Shane Kelly and Matthew Morrow, \emph{{$K$}-theory of valuation rings}, Compos.
  Math. \textbf{157} (2021), no.~6, 1121--1142. \MR{4264079}

\bibitem[KS02]{KS02}
Amalendu Krishna and V.~Srinivas, \emph{Zero-cycles and {$K$}-theory on normal
  surfaces}, Ann. of Math. (2) \textbf{156} (2002), no.~1, 155--195.
  \MR{1935844}

\bibitem[KST18a]{KST18}
Moritz Kerz, Florian Strunk, and Georg Tamme, \emph{Algebraic {$K$}-theory and
  descent for blow-ups}, Invent. Math. \textbf{211} (2018), no.~2, 523--577.
  \MR{3748313}

\bibitem[KST18b]{KST-Weibel}
\bysame, \emph{Algebraic {$K$}-theory and descent for blow-ups}, Invent. Math.
  \textbf{211} (2018), no.~2, 523--577. \MR{3748313}

\bibitem[Lev01]{Lev01}
Marc Levine, \emph{Techniques of localization in the theory of algebraic
  cycles}, Journal of Algebraic Geometry \textbf{10} (2001), no.~4, 299--364.

\bibitem[Lev08]{Lev08}
\bysame, \emph{The homotopy coniveau tower}, Journal of Topology \textbf{1}
  (2008), no.~1, 217--267.

\bibitem[LT19]{LandTamme}
Markus Land and Georg Tamme, \emph{On the {$K$}-theory of pullbacks}, Ann. of
  Math. (2) \textbf{190} (2019), no.~3, 877--930. \MR{4024564}

\bibitem[Lur09]{HTT}
Jacob Lurie, \emph{Higher topos theory}, Princeton University Press, 2009.

\bibitem[Lur18]{SAG}
\bysame, \emph{Spectral algebraic geometry}, preprint (2018).

\bibitem[Mor16a]{Mor16}
Matthew Morrow, \emph{A historical overview of pro cdh descent in algebraic
  $k$-theory and its relation to rigid analytic varieties}, arXiv preprint
  arXiv:1612.00418 (2016).

\bibitem[Mor16b]{Morrow}
\bysame, \emph{Pro cdh-descent for cyclic homology and {$K$}-theory}, J. Inst.
  Math. Jussieu \textbf{15} (2016), no.~3, 539--567. \MR{3505658}

\bibitem[Mor18]{Mor18}
\bysame, \emph{Pro unitality and pro excision in algebraic $k$-theory and
  cyclic homology}, Journal f{\"u}r die reine und angewandte Mathematik
  (Crelles Journal) \textbf{2018} (2018), no.~736, 95--139.

\bibitem[MR06]{MR06}
Michael Makkai and Gonzalo~E Reyes, \emph{First order categorical logic:
  model-theoretical methods in the theory of topoi and related categories},
  vol. 611, Springer, 2006.

\bibitem[Qui73]{Qui73}
Daniel Quillen, \emph{Higher algebraic k-theory: I}, Higher K-Theories:
  Proceedings of the Conference held at the Seattle Research Center of the
  Battelle Memorial Institute, from August 28 to September 8, 1972, Springer,
  1973, pp.~85--147.

\bibitem[RG71]{RG71}
Michel Raynaud and Laurent Gruson, \emph{Criteres de platitude et de
  projectivit{\'e}}, Inventiones mathematicae \textbf{13} (1971), no.~1-2,
  1--89.

\bibitem[SGA71]{SGA6}
\emph{Th\'{e}orie des intersections et th\'{e}or\`eme de {R}iemann-{R}och},
  Lecture Notes in Mathematics, vol. Vol. 225, Springer-Verlag, Berlin-New
  York, 1971, S\'{e}minaire de G\'{e}om\'{e}trie Alg\'{e}brique du Bois-Marie
  1966--1967 (SGA 6), Dirig\'{e} par P. Berthelot, A. Grothendieck et L.
  Illusie. Avec la collaboration de D. Ferrand, J. P. Jouanolou, O. Jussila, S.
  Kleiman, M. Raynaud et J. P. Serre. \MR{354655}

\bibitem[SGA72a]{SGA41}
\emph{Th\'{e}orie des topos et cohomologie \'{e}tale des sch\'{e}mas. {T}ome 1:
  {T}h\'{e}orie des topos}, Lecture Notes in Mathematics, Vol. 269,
  Springer-Verlag, Berlin-New York, 1972, S\'{e}minaire de G\'{e}om\'{e}trie
  Alg\'{e}brique du Bois-Marie 1963--1964 (SGA 4), Dirig\'{e} par M. Artin, A.
  Grothendieck, et J. L. Verdier. Avec la collaboration de N. Bourbaki, P.
  Deligne et B. Saint-Donat. \MR{0354652}

\bibitem[SGA72b]{SGA42}
\emph{Th\'{e}orie des topos et cohomologie \'{e}tale des sch\'{e}mas. {T}ome
  2}, Lecture Notes in Mathematics, Vol. 270, Springer-Verlag, Berlin-New York,
  1972, S\'{e}minaire de G\'{e}om\'{e}trie Alg\'{e}brique du Bois-Marie
  1963--1964 (SGA 4), Dirig\'{e} par M. Artin, A. Grothendieck et J. L.
  Verdier. Avec la collaboration de N. Bourbaki, P. Deligne et B. Saint-Donat.
  \MR{0354653}

\bibitem[Spi18]{Spi18}
Markus Spitzweck, \emph{A commutative {$\Bbb P^1$}-spectrum representing
  motivic cohomology over {D}edekind domains}, M\'{e}m. Soc. Math. Fr. (N.S.)
  (2018), no.~157, 110. \MR{3865569}

\bibitem[{Sta}18]{stacks-project}
The {Stacks Project Authors}, \emph{\textit{Stacks Project}},
  \url{https://stacks.math.columbia.edu}, 2018.

\bibitem[SV00]{SVBK}
Andrei Suslin and Vladimir Voevodsky, \emph{{B}loch-{K}ato conjecture and
  motivic cohomology with finite coefficients}, The arithmetic and geometry of
  algebraic cycles, Springer, 2000, pp.~117--189.

\bibitem[TT90]{TT90}
R.~W. Thomason and Thomas Trobaugh, \emph{Higher algebraic {$K$}-theory of
  schemes and of derived categories}, The {G}rothendieck {F}estschrift, {V}ol.
  {III}, Progr. Math., vol.~88, Birkh\"{a}user Boston, Boston, MA, 1990,
  pp.~247--435. \MR{1106918}

\bibitem[Voe02]{Voe02}
Vladimir Voevodsky, \emph{Motivic cohomology groups are isomorphic to higher
  chow groups in any characteristic}, International Mathematics Research
  Notices \textbf{2002} (2002), no.~7, 351--355.

\bibitem[Voe10]{Voe10}
\bysame, \emph{Cancellation theorem}, Doc. Math. (2010), no.~Extra vol.: Andrei
  A. Suslin sixtieth birthday, 671--685. \MR{2804268}

\bibitem[Wei89]{Wei89}
Charles~A Weibel, \emph{Homotopy algebraic {K}-theory}, Algebraic K-theory and
  algebraic number theory (Honolulu, HI, 1987) \textbf{83} (1989), 461--488.

\bibitem[Wei01]{Wei01}
Charles Weibel, \emph{The negative {$K$}-theory of normal surfaces}, Duke Math.
  J. \textbf{108} (2001), no.~1, 1--35. \MR{1831819}

\end{thebibliography}

\end{document}